\documentclass{amsart}

\usepackage{Preset}
\usepackage[makeroom]{cancel}
\usepackage{nccmath}

\usepackage{mathrsfs}
\usepackage[latin1]{inputenc}
\usepackage[T1]{fontenc}
\usepackage{tikz}
\usepackage{tikz-cd}
\usepackage[colorlinks=true,linkcolor=blue!50!black,anchorcolor=red,citecolor=blue,filecolor=black,menucolor=black,runcolor=black,urlcolor=black]{hyperref}

\usetikzlibrary{decorations.pathmorphing}
\usetikzlibrary{shapes,arrows,graphs}
\setlist[enumerate,1]{label={\textnormal{(\arabic*)}}}

\newcommand{\seq}[1]{\left\langle #1 \right\rangle}

\title[Lebesgue measure of the postcritical set]{Lebesgue measure of the postcritical set of neutral quadratic polynomials}
\author{Willie Rush Lim}
\address{Dept. of Mathematics, Brown University, RI 02912}
\email{willie\_rush\_lim@brown.edu}
\date{}

\begin{document}

\begin{abstract}
    We prove that the postcritical set of every quadratic polynomial with a neutral fixed point has zero Lebesgue measure. 
    In particular, no arithmetic condition on the rotation number is required. 
    Our proof uses sector renormalization and the pseudo-Siegel bounds of Dudko-Lyubich to derive uniform distortion estimates for the renormalization change of variables. 
    For sector renormalization towers, we also prove the same zero area statement for the associated postcritical set and the optimal Brjuno criterion for the interior of the associated Mother Hedgehog.
\end{abstract}

\maketitle

\setcounter{tocdepth}{1}
\tableofcontents

\section{Introduction}
\label{sec:intro}

\subsection{On neutral quadratic polynomials}

Consider the family of neutral quadratic polynomials
\[
    f_\theta(z) := e^{2\pi i \theta} z + z^2, \qquad \theta \in \R / \Z.
\]
It is the simplest family of global holomorphic maps that are non-linear and admit a neutral fixed point at $0$.
This family exhibits some of the most delicate behavior in one-dimensional complex dynamics.
It sits at the boundary between rigid rotation-like behavior and genuinely chaotic behavior.
Depending on the arithmetic properties of $\theta$, the neutral fixed point at 0 may be locally linearizable or non-linearizable, and the associated dynamical sets can have very different geometric properties.

The map $f_\theta$ has a unique critical point at 
\[
c_\theta := e^{-2\pi i \theta}/2.
\]
The \emph{postcritical set} of $f_\theta$ is defined as the closure of the critical orbit:
\[
    P(f_\theta) := \overline{\{ f_\theta^n(c_\theta)\}_{n\geq 1}}.
\]
The topology of $P(f_\theta)$ varies wildly depending on the arithmetic properties of $\theta$.
In complex dynamics, the postcritical set generally determines the global dynamical behavior of the system.
For instance, for $f_\theta$, it is the global measure-theoretic attractor in the following sense.
For almost every point $z$ in the Julia set of $f_\theta$, the orbit of $z$ under $f_\theta$ converges to $P(f_\theta)$.

The seminal work of Buff-Ch\'eritat \cite{BC12} asserts that the Julia set of $f_\theta$ can have positive area.
It is natural to ask whether a similar property holds for the postcritical set of $f_\theta$.
Our main result gives a complete answer to this question.

\begin{thmx}[Zero area]
    \label{main-thm-01}
    For every $\theta \in \R / \Z$, the postcritical set $P(f_\theta)$ of $f_\theta$ has zero Lebesgue measure.
\end{thmx}

This theorem is sharp in the sense that there exist examples of irrational numbers $\theta$ such that $P(f_\theta)$ has Hausdorff dimension two \cite{CDY}.
Bearing in mind the role of $P(f_\theta)$ as the measure-theoretic attractor, this theorem immediately implies:

\begin{corx}
\label{cor:non-recurrent}
    For every $\theta \in \R / \Z$, almost every point in the Julia set of $f_\theta$ is non-recurrent.
\end{corx}

By Poincar\'e recurrence theorem, we also have:

\begin{corx}
\label{cor:no-acip}
    For every $\theta \in \R / \Z$, the Julia set of $f_\theta$ admits no absolutely continuous invariant probability measure.
\end{corx}

When $\theta$ is rational, Theorem \ref{main-thm-01} is trivial because the critical orbit simply converges to the parabolic fixed point. 
When the rotation number $\theta$ is irrational, $P(f_\theta)$ is contained in the Julia set $J(f_\theta)$.
In this case, previous results established zero area under various arithmetic assumptions, particularly on the regular continued fraction expansion $\{a_n\}_{n\geq 1}$ of $\theta$.
\begin{enumerate}[leftmargin=0.4in]
    \item[(i)] \emph{Bounded type}: $\sup_n a_n < \infty$ \\
    Under this condition, the quasiconformal surgery designed by Douady and Ghys \cite{D87,G84} tells us that $P(f_\theta)$ is a quasicircle making up the boundary of the Siegel disk of $f_\theta$, and in particular, has zero area. 
    By using puzzle techniques and the circle model coming from the surgery, Petersen \cite{Pe96} further proved that $J(f_\theta)$ of $f_\theta$ is locally connected and has zero area. 
    \item[(ii)] \emph{PZ type}: $\log a_n = O(\sqrt{n})$ \\
    This condition holds for almost every irrational $\theta$. 
    Petersen and Zakeri \cite{PZ04} showed that this condition allows the activation of trans-quasiconformal surgery, a generalization of Douady-Ghys surgery, and proved that $J(f_\theta)$ is locally connected and has zero area. 
    \item[(iii)] \emph{High type}: $\inf_n a_n \gg 1$ \\
    Cheraghi \cite{Che13,Che19} proved zero area using the near-parabolic renormalization theory pioneered by Inou and Shishikura \cite{IS}.
    Unlike the first two types, this regime includes cases where $J(f_\theta)$ fails to be locally connected.
\end{enumerate}
In contrast, Theorem \ref{main-thm-01} works for all rotation numbers without imposing any arithmetic conditions.

The central difficulty in proving uniform results like Theorem \ref{main-thm-01} is that the small-scale geometry changes dramatically as the continued fraction entries vary.
To overcome this, we apply the novel breakthrough of Dudko and Lyubich \cite{DL22,DL26b} on uniform \emph{a priori bounds} for neutral renormalization.
The next two subsections further elaborate this new machinery.
Theorem \ref{main-thm-01} and its corollaries are indeed one of the first concrete applications of their machinery.

\subsection{The renormalization mechanism}

A key development in neutral dynamics has been the theory of sector renormalization. 
For a neutral holomorphic germ $g(z) =e^{2\pi i \theta}z + O(z^2)$, one can construct a fundamental sector near the fixed point at $0$ and conformally glue its sides using the dynamics; see Figure \ref{fig:sector-renormalization-local} for an illustration.
The first return map then produces a new neutral holomorphic germ $\Rsec g$. 
The procedure can be iterated to a sequence of renormalized maps $(\Rsec)^n g$.

Sector renormalization first appeared in the work of Douady and Ghys \cite{D87} in which they proved that the set 
\[
\mathcal{L} = \left\{ \theta \in \R \backslash \Q \: : \: \begin{array}{c}
\textnormal{every germ } g(z)=e^{2\pi i \theta}z+O(z^2) \\
\textnormal{is analytically linearizable near }0 
\end{array}
\right\}
\]
is invariant under the action of $\text{GL}(2,\Z)$.
An irrational number $\theta$ is called \emph{Brjuno} if the sum
\begin{equation}    
    \label{eqn:brjuno-original}
    \Brjuno(\theta) = \sum_{n=0}^\infty \frac{\log q_{n+1}}{q_n}
\end{equation}
converges, where $q_n$ is the regular $n$\textsuperscript{th} denominator continuant of $\theta$.
In \cite{Br71}, Brjuno studied the formal power series of linearization and proved that the set $\mathcal{L}$ contains the set of Brjuno numbers.
The pioneering work of Yoccoz \cite{Yoc95} carefully studied iterations of sector renormalizations and proved that $\mathcal{L}$ is indeed equal to the set of Brjuno numbers.

\begin{figure}
    \centering
    \begin{tikzpicture}[scale=1.3]
    \node[white] at (-4,0) {.};
    \draw[red!50!black] (-1,0) -- (-3,0) .. controls (-2.5,0.4) and (-1.75,0.5) .. (-1.25,1);
    \draw[green!50!black] (-1.25,1) .. controls (-1,0.6) .. (-1,0);
    \filldraw[color=black] (-3,0) circle (0.03);
    \node at (-3,-0.2) {\small $0$};
    \node at (-1.4,-0.2) {\small $\gamma$};
    \node at (-1.7,1) {\small $f(\gamma)$};
    \draw[-latex] (-2.1,0) .. controls (-2.05,-0.15) and (-2,-0.05) .. (-2,0.02) .. controls (-2,0.2) .. (-2.2,0.44);
    \node at (-1.85,0.25) {\small $f$};

    \draw[-latex] (-0.5,0.25) -- (0.9,0.25);
    \node at (0.2,0.5) {\small glue};
        
    \draw[green!50!black] (3,0.2) circle [radius = 1.5cm];
    \draw[red!50!black] (3,0.2) .. controls (2.5,0) and (2.1,-0.7) .. (1.82,-0.75);
    \filldraw[color=black] (3,0.2) circle (0.03);
    \node at (3,0) {\small $0$};

    \draw[-latex] (4.1,0.5) .. controls (4.7,0.5) and (4.6,1.3) .. (3.9,1);
    \node at (4.85,0.85) {\small $\Rsec f$};
\end{tikzpicture}
    \caption{A schematic picture of sector renormalization}
    \label{fig:sector-renormalization-local}
\end{figure}

Yoccoz's framework is local as the size of the fundamental sector generally depends on its rotation number.
For neutral quadratic polynomials, the construction of fundamental sectors that well-capture the critical orbit is highly desirable yet non-trivial.
In the past, this construction has been realized separately in the bounded-type regime \cite{DLS} and in the high type regime \cite{IS}.
In both cases, deep applications follow, most notably the local connectivity of the Mandelbrot set at some infinitely satellite renormalizable parameters \cite{DL23,CS15}.

The new machinery developed by Dudko and Lyubich \cite{DL22,DL26b} provides a unified framework of sector renormalization across all irrationals.
In \cite{DL22}, the authors showed that for all irrational $\theta$, the map $f_\theta$ admits a \emph{pseudo-Siegel disk} $\hat{Z}_\theta$, that is, a closed uniform quasidisk containing the neutral fixed point, the critical point, and the postcritical set, and on which $f_\theta$ is injective.
In \cite{DL26b}, they showed that these pseudo-Siegel disks are stable under sector renormalization: for all $\theta$, the renormalizations $(\Rsec)^n f_\theta$ can be constructed such that each of them admits a uniform pseudo-Siegel disk well-capturing the critical orbit.

Let us look at the set of forward orbits of sector renormalization 
\[
\towsec := \left\{ \seq{ (\Rsec)^{m+n} f_\theta }_{n \geq 0} \: : \: m \geq 0, \; \theta \in (0,1)\backslash \Q \right\}
\]
considered as a natural subspace of an infinite product space. 
\cite{DL26b} asserts that this space is pre-compact. 
For every renormalization tower $\fbold = \seq{ f_n }_{n\geq 0}$ in the closure $\overline{\towsec}$, there is a sequence of pre-renormalizations $f_0^{[n]}$ in the dynamical plane of $f_0$ that project to $f_n$ under the gluing of some appropriate sector. 
If $f_0$ is parabolic (for example, $z+z^2$), each $f_0^{[n]}$ will be a Lavaurs map of $f_0$; otherwise, $f_0^{[1]}$ is an iterate of $f_0$. 

For $\fbold = \seq{ f_n }_{n\geq 0} \in \overline{\towsec}$,
we can define the \emph{postcritical set} $P(\fbold)$ of $\fbold$ to be the closure of the forward critical orbit of the semigroup of commuting holomorphic maps generated by $f^{[n]}_0$, $n\geq 0$.
The compact hull of $P(\fbold)$ is called the \emph{Mother Hedgehog} $H(\fbold)$ of $\fbold$; it has the property that $f_0^{[n]}: H(\fbold) \to H(\fbold)$ is a self-homeomorphism for all $n \geq 0$.
The interior of $H(\fbold)$ is called the \emph{Siegel set}
\[
    Z(\fbold) := \textnormal{int}H(\fbold) = H(\fbold) \backslash P(\fbold).
\]
When the base map $f_0$ is a neutral quadratic polynomial with an irrational rotation number, $P(\fbold)$ coincides with the postcritical set of $f_0$ and $Z(\fbold)$ is the Siegel disk of $f_0$ if it exists.

Ultimately, the proof of zero area is not specific to quadratic polynomials.
Its natural setting is the space of sector renormalization towers:

\begin{thmx}[Zero area]
\label{main-thm-02}
    For any tower $\fbold$ in $\overline{\towsec}$, the postcritical set $P(\fbold)$ has zero Lebesgue measure.
\end{thmx}

Our analysis also enables us to extend Yoccoz's optimality of the Brjuno condition \cite{Yoc95} and Buff-Ch\'eritat's estimate of the size of Siegel disks \cite{BC04}. 

\begin{thmx}
    \label{side-theorem}
    For every $\fbold = \seq{ f_n  }_{n\geq 0}$ in $\overline{\towsec}$,
    \begin{enumerate}
        \item $Z(\fbold)$ is empty if and only if $\fbold$ is not eventually Brjuno;
        \item the Siegel disk of $f_0$ exists at $0$ if and only if $\fbold$ is Brjuno;
        \item if $\fbold$ is Brjuno, then the Siegel disk of $f_0$ is equal to the Siegel set $Z(\fbold)$ and its conformal radius about $0$ is $\asymp e^{-\Brjuno(\theta)}$;
        \item if $\fbold$ is eventually Brjuno but not Brjuno, then under iterated renormalization change of variables, $Z(\fbold)$ projects to the Siegel disk of $f_n$ for all sufficiently high $n \geq 0$.
    \end{enumerate}
\end{thmx}

Let us elaborate on the notation used in the theorem above.
The tower $\fbold$ is called \emph{eventually Brjuno} if there exists some $n \geq 0$ such that the rotation number of $f_n$ is a Brjuno irrational; if $n = 0$, then $\fbold$ is simply called \emph{Brjuno}.
In property (3), the constants involved in ``$\asymp$'' are independent of $\fbold$.

The next subsection provides a summary of the proof of these theorems.
In \S\ref{ss:applications}, we expand on two important future applications of this paper.

\subsection{Outline of the proof}
\label{ss:outline}

The proof of Theorem \ref{main-thm-02} (and thus \ref{main-thm-01}) and the proof of Theorem \ref{side-theorem} are intertwined. 
Ultimately, we show that the Mother Hedgehog $H(\fbold)$ with the Siegel disk or the lift of the Siegel disk of a high renormalization removed is (non-uniformly) porous at almost every point. 
There are two key ingredients.

The first one is pseudo-Siegel bounds. In Section \ref{sec:pseudo-Siegel}, we summarize the result from \cite{DL22,DL26b} on pseudo-Siegel disks and uniform bounds on sector renormalization. When $\theta$ is of bounded-type, the pseudo-Siegel disk $\hat{Z}_\theta$ of $f_\theta$ is constructed out of the Siegel disk of $f_\theta$ by filling in parabolic fjords. 
The uniform geometric control of parabolic fjords (Theorem \ref{thm:uniform-fjords}) is a particularly essential source of porosity.

The second ingredient is the distortion estimates in logarithmic coordinates, which we establish in Section \ref{sec:logarithmic}.
Pick a tower $\fbold = \seq{f_n}_{n \geq 0}$ in $\overline{\towsec}$.
Let $\theta_n \in [-\frac{1}{2},\frac{1}{2}]$ be the rotation number of $f_n$. 
There is a sector $S_n$ such that the first return map of $f_n$ back to $S_n$ projects under a gluing map $\rho_n : S_n \to \D$ to $f_{n+1}$.
We bring our dynamical plane to logarithmic coordinates by lifting under the universal covering map 
\[
\exp: \UHP \to \D^*,  \qquad \exp(z) := e^{2\pi i z}.
\]
We then pick an appropriate lift $\phi_n: \mathcal{S}_n \to \UHP$ of $\rho_n$ under $\exp$.
See Figure \ref{fig:euclidean-vs-logarithmic} for an illustration. 
The main result of Section \ref{sec:logarithmic} is the following estimate on $\phi_n$.

\begin{lemx}
    For any $n \geq 0$ and any sufficiently high $y >0$ (independent of $\theta$), whenever a point $\zeta$ in $\mathcal{S}_n$ satisfies 
    \[
    \imag(\zeta) \geq \frac{1}{2\pi} \log \frac{1}{|\theta_n|} + y,
    \]
    we have
    \[
        \Big| |\theta_n| \phi_n'(\zeta) - 1 \Big| = O(y^{-2})
        \quad \text{ and } \quad 
        \Big| |\theta_n| \phi_n(\zeta) - \zeta + \kappa_n \Big| \leq O(y^{-1}),
    \]
    where $\kappa_n = \kappa_n(f_n)$ is some complex number with 
    \[
        \kappa_n = \frac{i}{2\pi} \log\frac{1}{|\theta_n|} + O(1).
    \]
\end{lemx}
 
In the original dynamical plane of $f_n$, this lemma asserts that the renormalization change of variables $\rho_n$ is, in some sense, uniformly close to the power map $z\mapsto (|\theta_n| z)^{1/|\theta_n|}$ on a round disk about $0$ with radius comparable to $|\theta_n|$.

In Section \ref{sec:zero-area}, we apply these estimates to prove our main theorems.
An iterated application of the Key Distortion Lemma produces the Brjuno function measuring the size of the Siegel disk (part (3) of Theorem \ref{side-theorem}).
Every point $z$ in $H(\fbold)$ induces a sequence of points $z_0=z, z_1, z_2,\ldots$ where each $z_n$ lives in the dynamical plane of $f_n$ and is related to $z_{n+1}$ by repeatedly applying $f_n$ until landing on the sector $S_n$ and then applying $\rho_n$.
By virtue of the uniform expansion of the $\phi_n$'s, we show that for almost every point $z$ in $H(\fbold)$ that is not contained in the Siegel disk or a lift of the Siegel disk of a high renormalization, there exists infinitely many moments $n \in \N$ such that either $\theta_n = 0 $ or the induced point $z_n$ lies in the region where the Key Distortion Lemma is inapplicable (i.e. $|z_n| \succ |\theta_n|$).
For all such $z$ and $n$, we apply the geometric control of parabolic fjords to find a definite hole near $z_n$ in the complement of the postcritical set of the tower $\seq{f_k}_{k\geq n}$. 
By lifting such holes under the renormalization change of variables, we establish porosity of $H(\fbold)$ at such a point $z$.
This gives us the rest of Theorem \ref{side-theorem} as well as Theorem \ref{main-thm-02}.

In the high-type regime, Cheraghi's proof of the zero area theorem \cite{Che13,Che19} relies on elaborate distortion estimates in perturbed Fatou coordinates.
In this paper, we bypass the use of perturbed Fatou coordinates by using the uniform control of parabolic fjords and the Key Distortion Lemma above.
Nonetheless, Cheraghi-style estimates will come in handy for other problems; see \S\ref{sss:horseshoe}.

\subsection{Further applications}
\label{ss:applications}

Let us elaborate on two major applications of the results of this paper.
The first one is on the combinatorial rigidity of the full attractor of neutral renormalization.
The second one is on the topology of the postcritical set of neutral quadratic polynomials $f_\theta$ and more generally of towers $\fbold$.

\subsubsection{The full attractor of neutral renormalization}
\label{sss:horseshoe}

The precompactness coming from \cite{DL26b} guarantees the existence of the full renormalization attractor $\mathcal{A}$ of neutral quadratic polynomials.
Every element of $\mathcal{A}$ can be described as a bi-infinite sector renormalization tower $\underaccent{\bar}{\fbold} = \seq{ f_n }_{n\in\Z}$.
In a joint work with Dudko and Lyubich \cite{DLL}, 
we prove the combinatorial rigidity of $\mathcal{A}$: 
if two bi-infinite sector renormalization orbits $\seq{ f_n  }_{n\in\Z}$ and $\seq{ g_n  }_{n \in \Z}$ are combinatorially equivalent, then for all $n \in \Z$, $f_n$ and $g_n$ are conformally conjugate on some definite neighborhood of their Mother Hedgehogs. 

This rigidity theorem gives a complete dynamical universality of asymptotic self-similarities for neutral quadratic polynomials, extending McMullen's periodic-type result \cite{McM98}.
It establishes $\mathcal{A}$ modulo conformal equivalence as the Full Renormalization Horseshoe, bridging the two well-established renormalization regimes: the bounded type \cite{McM98,DLS} and the high type \cite{IS}.
Theorem \ref{main-thm-02} serves as one of the essential ingredients of the proof of this rigidity theorem.
Below, we will provide more details.

In \cite{DLL}, we demonstrate that every bi-infinite tower $\seq{ f_n  }_{n \in \Z}$ in $\mathcal{A}$ can be fit into a single dynamical plane as a \emph{neutral cascade} $\mathbf{F}$, that is, a commutative semigroup of global transcendental maps arising as rescaled limits of neutral quadratic polynomials.
The neutral cascade $\mathbf{F}$ captures the global geometry of $\seq{f_n}_{n\in\Z}$.
The Mother Hedgehogs associated to $f_n$ for all $n$ combine into a single Mother Hedgehog $\mathbf{H}(\mathbf{F})$ of $\mathbf{F}$, a closed unbounded $\mathbf{F}$-invariant subset of $\C$ whose boundary is the postcritical set of $\mathbf{F}$.
Theorem \ref{main-thm-02} implies that $\mathbf{H}(\mathbf{F})$ has zero area.
Moreover,

\begin{corx}
\label{cor:future-application}
    Given a neutral cascade $\mathbf{F}$ arising from a bi-infinite renormalization tower in $\mathcal{A}$, the support of any invariant line field of $\mathbf{F}$ on its Julia set must be disjoint from the grand orbit of its Mother Hedgehog.
\end{corx}

A separate hyperbolic expansion argument is supplied to rule out invariant line fields on the whole Julia set.
In \cite{DLL}, we construct a global quasiconformal conjugacy between two combinatorially equivalent cascades $\mathbf{F}$ and $\mathbf{G}$. 
The absence of line fields implies that this conjugacy must be affine.

\subsubsection{The topology of Mother Hedgehogs}
\label{sss:topology}

In a follow-up paper \cite{Lim}, we will show that the topology of the Mother Hedgehog $H(\fbold)$ for any tower $\fbold$ in $\overline{\towsec}$ must be homeomorphic to one of the five universal objects below:
\begin{enumerate}
    \item a (closed) Jordan disk if $\fbold$ is Herman;
    \item a (one-sided) hairy Jordan disk if $\fbold$ is Brjuno but not Herman;
    \item a Cantor bouquet if $\fbold$ is not eventually Brjuno;
    \item a pinched Jordan disk if $\fbold$ is eventually Herman but not Herman;
    \item a pinched hairy Jordan disk if $\fbold$ is eventually Brjuno but not Brjuno nor eventually Herman.
\end{enumerate}
The term \emph{Herman} refers to irrationals $\theta$ with the property that every real analytic circle diffeomorphism with rotation number $\theta$ is analytically linearizable.
Hairy Jordan disks and Cantor bouquets appear in the work of Aarts and Oversteegen \cite{AO93}; the latter is related to the topology of Julia sets of transcendental entire functions.
A \emph{pinched} (hairy) Jordan disk is a bouquet of countably many (hairy) Jordan disks $(D_i)_{i \in I}$ with the Hawaiian-earring-like property that every open neighborhood of the center of the bouquet contains all but finitely many $D_i$'s.

In the case of quadratic polynomials $f_\theta$ where $\theta$ is irrational, only cases (1), (2), and (3) arise.
Together, they give a positive answer to the longstanding open question on whether every quadratic Siegel disk is a Jordan disk.
This question was originally posed in the 1980s by Douady and Sullivan \cite{D83}.
In the high-type regime, 
Shishikura and Yang \cite{SY24} gave a positive answer to this question using Inou-Shishikura's near-parabolic renormalization.
Moreover, Cheraghi \cite{Che25} proved the trichotomy (1)--(3) in the high-type setting by constructing a conjugacy to a straight toy model that he constructed separately in \cite{Che23}.
In both aforementioned works, the authors applied a series of intricate distortion estimates in perturbed Fatou coordinates previously worked out by Cheraghi \cite{Che19}.
To prove (1)--(5), we will apply such distortion estimates in perturbed Fatou coordinates together with the Key Distortion Lemma from this paper to cover all possible combinatorics.

\subsection{Further discussion}

Pseudo-Siegel bounds and the results of this paper naturally generalize to higher degree unicritical polynomials $z^d + c$. 
Once some form of pseudo-Siegel bounds is developed in the multicritical setting, we expect that our results, in particular the optimality of the Brjuno condition, should hold in greater generality.

Beyond the realm of polynomial dynamics, Biswas \cite{Bi16} demonstrated the existence of neutral germs with positive area hedgehogs. 
It seems unlikely that such pathological examples can occur for polynomials.

For general quadratic polynomials, I propose the following conjecture.

\begin{conj}
\label{question-1}
    The postcritical set of any quadratic polynomial always has zero Lebesgue measure.
\end{conj}

This conjecture would imply Corollaries \ref{cor:non-recurrent} and \ref{cor:no-acip} for all quadratic polynomials.
It is known to hold for quadratic polynomials $f$ satisfying any of the following conditions:
\begin{enumerate}[label=\textnormal{(\roman*)}]
    \item $f$ is hyperbolic, for trivial reasons;
    \item $f$ has a neutral periodic point, by virtue of Theorem \ref{main-thm-01} and the straightening theorem for quadratic-like (ql) maps;
    \item all periodic points of $f$ are repelling and $f$ is at most finitely ql renormalizable, due to Lyubich and Shishikura \cite{Lyu91};
    \item $f$ is infinitely renormalizable and is robust (see \cite[Chapter 9]{McM94}), a condition that holds for bounded combinatorics (see \cite[Proposition 8.1]{McM96} and \cite{K06,DL26a}) but fails in general.
\end{enumerate}
A complete solution to this conjecture is likely within reach once some form of uniform bounds for all types of ql renormalization is developed.

Conjecture \ref{question-1} can be seen as the dynamical analog of the following conjecture, widely regarded as the probabilistic counterpart to the MLC conjecture.

\begin{conj}
\label{question-2}
    The boundary of the Mandelbrot set has zero Lebesgue measure.
\end{conj}

To my knowledge, Conjecture \ref{question-2} first appeared in Shishikura's ICM proceedings \cite{ShiICM}, in which he announced his result that the boundary of the Mandelbrot set has Hausdorff dimension two \cite{Shi98}, and sketched that the set of at most finitely renormalizable parameters on the boundary of the Mandelbrot set has zero area.
A complete proof of the latter can be found in the work of Avila and Moreira \cite{AM05}.
Lyubich's work \cite{L99} on hyperbolicity of ql renormalization with bounded combinatorics, coupled with a priori bounds \cite{K06,DL26a}, implies that the set of infinitely renormalizable parameters with bounded combinatorics also has zero area.
Hence, the remaining case is infinitely renormalizable parameters with unbounded combinatorics.
Similar to Conjecture \ref{question-1}, a complete solution to Conjecture \ref{question-2} is likely within reach once a complete renormalization theory for quadratic polynomials is developed.
The real counterpart to Conjecture \ref{question-2} has been solved by Lyubich \cite{L02}, yielding his \emph{Regular vs. Stochastic} dichotomy, a major culmination of real renormalization theory.

For higher degree multicritical polynomials, an analog of Conjecture \ref{question-2} is expected to hold (in line with Palis's conjectures \cite{Pal00}) but not Conjecture \ref{question-1}.

\subsection{Acknowledgements} 
I would like to thank Dzmitry Dudko and Mikhail Lyubich for many fruitful discussions on neutral renormalization.
I would also like to thank Davoud Cheraghi, Jeremy Kahn, Alex Kapiamba, and Jacob Mazor for insightful discussions.
The main result of this paper was first announced in Summer 2025 during the Topics in Complex Dynamics workshop at the University of Barcelona.


\section{Sector renormalization}
\label{sec:pseudo-Siegel}

This section sets up the preliminary background needed to prove our main theorems. 
It begins in \S\ref{ss:combinatorics} with a discussion on the arithmetic and combinatorial foundations of sector renormalization and the appropriate compactification $\TheCpt$ of the space of irrationals $\Irrat$ following \cite{Lim26}. 
In \S\ref{ss:pseudo-siegel}, we recall the construction of pseudo-Siegel disks \cite{DL22} and the existence of Mother Hedgehogs $H_\theta$ for neutral quadratic polynomials $f_\theta$.
In \S\ref{ss:sector-renormalization}--\ref{ss:renormalization-limits}, we recall the construction of sector renormalizations of $f_\theta$ \cite{DL26b} that are pre-compact and compatible with pseudo-Siegel disks.
In \S\ref{ss:fjord}, we emphasize on the uniform geometric control of fjords of pseudo-Siegel disks, a particular source of porosity for Mother Hedgehogs.
Finally, in \S\ref{ss:renorm-tiling}, we describe the renormalization triangulation of pseudo-Siegel disks.

\subsection{Notation}
\label{ss:notation}

For the reader's convenience, we provide a glossary of notation introduced in this section.
The keen reader may jump straight to the next subsection and come back here whenever needed.

\subsubsection{On combinatorics}
\label{sss:combinatorics}

We denote
\begin{itemize}
    \item $\Irrat = (-\frac{1}{2},\frac{1}{2}) \backslash \Q$;
    \item $\Sigma^{\N} = $ the sequence space $( \{-,+\} \times \N_{\geq 2} )^{\N}$;
    \item $\TheCpt = ( \{-,+\} \times \overline{\N}_{\geq 2} )^{\N}$, the parabolic compactification of $\Irrat$;
    \item $\gauss =$ the Gauss-like map $\Irrat \to \Irrat$ where $\gauss(\theta) \equiv -\frac{1}{\theta}$ (mod $1$);
    \item $\mathfrak{X} = $ the map $\Sigma^{\N} \to \Irrat$ described in (\ref{eqn:symbolic-irrational});
    \item $\bar{\mu} =$ the rotation number map $\TheCpt \to \T$ described in Proposition \ref{prop:rotation-number-map};
    \item $\shift = $ standard shift map on either $\Sigma^{\N}$ or $\TheCpt$.
\end{itemize}
For $\theta \in \Irrat$ and $n \geq 0$, denote
\begin{itemize}
    \item $ b_n(\theta) = \abar_n + \frac{1+\varepsilon_n \varepsilon_{n+1}}{2}$ where $\seq{(\varepsilon_n,\abar_n)}_{n\geq 1} = \mathfrak{X}^{-1}(\theta)$;
    \item $q_{[n]} = q_{[n]}(\theta)$, the sequence described in Proposition \ref{prop:q[n]}.
\end{itemize}
For $\ttheta = \seq{(\varepsilon_n,\abar_n)}_{n\geq 1} \in \TheCpt$, denote
\begin{itemize}
    \item $b_n = b_n(\ttheta) := \abar_n + \frac{1+\varepsilon_n \varepsilon_{n+1}}{2}$ for $n \geq 0$;
    \item $\Kont = \Kont_{\ttheta}$, the continuant group of $\ttheta$ (c.f. Definition \ref{def:continuant-group}) which is an ordered abelian group with generators $\qq_{[n]}=\qq_{[n]}(\ttheta)$, $n \geq 0$ with relations
    \[
        \qq_{[n]} = b_n \qq_{[n-1]} - \varepsilon_{n-1} \varepsilon_n \qq_{[n-2]} \qquad \text{ for } n \geq 2 \text{ with } \abar_n < \infty.
    \]
    \item $\Time = \Time_{\ttheta} := \{ p \in \Kont_{\ttheta} \: : \: p > 0 \}$, the time semigroup of $\ttheta$.
\end{itemize}

\subsubsection{On dynamical objects related to $\overline{\towsec}$}
\label{sss:notation-for-towsec}

The space $\overline{\towsec}$ is introduced in detail in \S\ref{ss:renormalization-limits}.
Every element $\fbold$ of $\overline{\towsec}$ is an infinite forward tower $\fbold = \seq{f_n}_{n\geq 0}$ of sector renormalizations of holomorphic maps with a neutral fixed point at $0$.
It comes with renormalization combinatorics $\ttheta = \ttheta(\fbold)$ which is an element of $\TheCpt$.
The properties of the objects listed below are described in Theorem \ref{thm:renorm-limits}.
For every $n \geq 0$, denote:
\begin{itemize}
    \item $\ttheta_n = \shift^n(\ttheta)$ and $\theta_n = \bar{\mu}(\ttheta_n)$, the rotation number of $f_n$ at $0$;
    \item the continuant group $\Kont^{n} = \Kont_{\ttheta_n}$ of $\ttheta_n$ with generators $\qq^n_{[m]} = \qq_{[m]}(\ttheta_n)$, $m \geq 0$;
    \item the time semigroup $\Time^n = \Time_{\ttheta_n}$ of $\ttheta_n$;
    \item the $n$\textsuperscript{\textnormal{th}} semigroup $\mathcal{F}_n = \{ f_n ^p \}_{p \in \Time^n}$ parametrized by $\Time^n$;
    \item the nest of pseudo-Siegel pinched disks $\hat{Z}_n^{[m]}=\hat{Z}_n^{[m]}(\fbold)$, $m \geq -1$ where each $\hat{Z}_n^{[m]}$ is almost invariant under $f_n^{[m+1]} := f_n^{\qq^n_{[m+1]}} \in \mathcal{F}_n$;
    \item the Mother Hedgehog $H_n=H_n(\fbold)$ completely invariant under $\mathcal{F}_n$;
    \item the unique bi-infinite critical orbit $\{v^n_{p} = v^n_{p}(\fbold)\}_{p \in \Kont^{n}}$ in $H_n$ where $v^n_{0}$ is the unique critical value of $f_n$;
    \item the postcritical set $P_n = P_n(\fbold) := \overline{ \big\{ v^n_{p} \big\}_{p \in \Tbold^n \cup \{0\}} }$ of $\mathcal{F}_n$, which is the boundary of $H_n$;
    \item a nest of renormalization sectors $S_n^j = S_n^j(\fbold)$, $j \geq 1$ where
    \[
        S_n := S_n^1 \supset S_n^2 \supset S_n^3 \supset \ldots;
    \]
    \item a conformal gluing map 
    \[
        \psi_n = \psi_{n,\fbold}: S_n \to \D
    \]
    for the top sector $S_n$ that projects $\hat{Z}_n^{[m-1]}$, $S_n^{m+1}$, and $f_n^{[m]}$ 
    to $\hat{Z}_{n+1}^{[m-2]}$, $S_{n+1}^{m}$, and $f_{n+1}^{[m-1]}$ respectively for every $m \geq 1$.
\end{itemize}

\subsubsection{Asymptotics}

In later sections, we will be working with either a neutral quadratic polynomial $f_\theta$ with a fixed irrational rotation number $\theta \in \Irrat$ or more generally a fixed renormalization tower $\fbold$ in $\overline{\towsec}$.
Constants that are independent of $\theta$ or $\fbold$ will be called \emph{uniform} or \emph{universal}.
We will denote 
\begin{itemize}
    \item $g = O(h)$ if there exists a universal constant $k >0$ such that $g \leq k h$;
    \item $g \asymp h$ if $g=O(h)$ and $h=O(g)$.
\end{itemize}

\subsection{The combinatorics of sector renormalization}
\label{ss:combinatorics}

In this subsection, we summarize the combinatorial and arithmetic foundations of sector renormalization as well as the appropriate compactification.
The details of the proofs are available in a separate note \cite{Lim26}.
The key notations introduced here are listed in \S\ref{sss:combinatorics}.

\subsubsection{Renormalization of irrational rotations}
\label{sss:combinatorial-sector}

Denote
\[
    \Irrat := \left( -\frac{1}{2},\frac{1}{2} \right) \backslash \Q.
\]
For $\theta \in \Irrat$, denote
\[
    \varepsilon(\theta) := \frac{\theta}{|\theta|} \in \{-1,+1\}, \quad \quad 
    \bar{a}(\theta) := \left\lfloor \frac{1}{|\theta|} \right\rfloor \in \N_{\geq 2},
\]
and let
\[
    \rotate_\theta : \bar{\D} \to \bar{\D}, \quad z \mapsto e^{2\pi i \theta} z
\]
be the rigid rotation by angle $\theta$ on the closed unit disk $\bar{\D} \subset \C$.
The sector renormalization of $\rotate_\theta$ is constructed as follows.

Let $S$ be a sector on $\bar{\D}$ bounded by two arcs $\gamma$ and $\rotate_\theta(\gamma)$ where $\gamma$ is a radial arc from $0$ to a point on the unit circle.
The power map $\psi(z) = z^{1/|\theta|}$ glues the two sides of the sector $S$ together, sending $S$ onto $\bar{\D}$.
The first return map of $\rotate_\theta$ back to $S$ is a piecewise continuous map consisting of a pair of iterates $(\rotate_\theta^{\bar{a}}, \rotate_\theta^{\bar{a}+1})$, where $\bar{a} = \bar{a}(\theta)$.
Under $\psi$, this pair projects onto a new rigid rotation $\rotate_{\gauss(\theta)}: \bar{\D} \to \bar{\D}$, called the sector renormalization of $\rotate_{\theta}$.
By elementary computation, the new rotation number $\gauss(\theta)$ is independent of $\gamma$; it is the unique irrational number in $\Theta$ with
\[
    \gauss(\theta) \equiv -\frac{1}{\theta} \quad (\textnormal{mod }1).
\]
This procedure gives us a continuous infinite-to-one surjective self-map $\gauss: \Irrat \to \Irrat$.

Consider the infinite sequence space
\[
    \The := \left\{ \seq{ (\varepsilon_n, \bar{a}_n) }_{n\geq 1} \: : \: \varepsilon_n \in \{-1,+1\}, \bar{a}_n \in \N_{\geq 2} \right\}.
\]
For every $\sigma = \seq{(\varepsilon_n, \bar{a}_n )}_{n\geq 1}$, we denote
\[
    b_n = b_n(\sigma) := \bar{a}_n + \frac{1 + \varepsilon_n \varepsilon_{n+1}}{2} \qquad \text{ for } n \geq 1
\]
and
\begin{align}
\label{eqn:symbolic-irrational}
    \mathfrak{X}( \sigma ) := \cfrac{1}{\varepsilon_1 b_1 - \cfrac{1}{\varepsilon_2 b_2 - \frac{1}{\varepsilon_3 b_3 - \ldots}}}.
\end{align}

\begin{theorem}
    \label{thm:X.theta}
    The function $\mathfrak{X}(\cdot)$ above is a well-defined homeomorphism from $\The$ onto $\Irrat$ with inverse
    \[
        \mathfrak{X}^{-1}(\theta) = \seq{ \big( \varepsilon( \gauss^{n-1}(\theta)),\;\bar{a}(\gauss^{n-1}(\theta) ) \big) }_{n\geq 1}.
    \]
    The map $\mathfrak{X}$ is a conjugacy between the standard shift map $\shift: \The \to \The$ and the map $\gauss: \Irrat \to \Irrat$.
\end{theorem}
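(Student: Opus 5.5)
The plan is to treat $\mathfrak{X}$ as the coding of the Gauss-like map $\gauss$ and run the usual continued-fraction argument. We define the candidate inverse $\Phi(\theta) := \seq{(\varepsilon(\gauss^{n-1}\theta), \bar a(\gauss^{n-1}\theta))}_{n\ge1}$ and show three things: $\Phi \circ \mathfrak{X} = \mathrm{id}$, $\mathfrak{X}\circ\Phi = \mathrm{id}$, and both maps are continuous. Conjugacy then follows at once from the definition of $\Phi$, because $\Phi(\gauss\theta) = \shift\Phi(\theta)$.

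\textbf{Step 1: one-step inversion.} Write $\theta\in\Irrat$ with $\varepsilon=\varepsilon(\theta)$, $\bar a=\bar a(\theta)$, and $\theta'=\gauss(\theta)$. We check that
\[
\theta = \frac{1}{\varepsilon b - \theta'}, \qquad b = \bar a + \tfrac{1+\varepsilon\varepsilon'}{2},\quad \varepsilon'=\varepsilon(\theta').
\]
Indeed, $-1/\theta = -\varepsilon/|\theta|$, and $1/|\theta| \in (\bar a, \bar a+1)$. Reducing $-\varepsilon/|\theta|$ mod $1$ into $(-\tfrac12,\tfrac12)$ gives $\theta' = -1/\theta + \varepsilon b$ with $b\in\{\bar a, \bar a+1\}$. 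Which of the two occurs is decided by the sign of the fractional part, and that sign is exactly $\varepsilon'$. A short case check on $\varepsilon,\varepsilon'$ confirms $b=\bar a+\tfrac{1+\varepsilon\varepsilon'}{2}$.

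Conversely, take any $\varepsilon$, any $\bar a\ge2$, and $\theta'\in\Irrat$ with sign $\varepsilon'$. Set $\theta := 1/(\varepsilon b-\theta')$. Then $\varepsilon b - \theta'$ has sign $\varepsilon$, and its absolute value lies in $(\bar a,\bar a+1)$ because $|\theta'|<\tfrac12$ and the correction term matches the sign of $\theta'$. So $\theta\in\Irrat$, $\varepsilon(\theta)=\varepsilon$, $\bar a(\theta)=\bar a$, and $\gauss\theta=\theta'$. This gives a bijection $\Irrat \leftrightarrow \{\pm\}\times\N_{\ge2}\times\Irrat$.

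\textbf{Step 2: convergence of the infinite expression.} This is the main obstacle. We must show that the infinite expression in \eqref{eqn:symbolic-irrational} converges, and converges to an irrational number. We use the finite truncations $T_N(t) = $ the composition of the Möbius maps $t\mapsto 1/(\varepsilon_n b_n - t)$ for $n\le N$. Each such map sends the disk (or interval) $|t|\le\tfrac12$ into itself, since $|\varepsilon_n b_n - t|\ge b_n-\tfrac12\ge\tfrac32$.

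To get a contraction we cannot simply use the derivative bound $1/(b_n-\tfrac12)^2$, because with $b_n=2$ it equals $4/9$ but only along the worst direction. Instead we work with the continuants $q_{[n]} = b_n q_{[n-1]} - \varepsilon_{n-1}\varepsilon_n q_{[n-2]}$. Since $b_n\ge2$, these satisfy $q_{[n]}\ge q_{[n-1]}+1$. The nested images $T_N([-\tfrac12,\tfrac12])$ then have length $O(1/q_{[N]}^2)\to0$, which gives a well-defined limit.

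If there is a problem, it arises when $b_n=2$ repeats, where the growth is only linear. Linear growth still suffices for convergence. The case $\bar a_n=2$ with $b_n=2$ forced by the signs should be examined exactly there.

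Irrationality and uniqueness then follow. The one-step identity shows $\mathfrak{X}(\sigma) = 1/(\varepsilon_1 b_1 - \mathfrak{X}(\shift\sigma))$ with $\mathfrak{X}(\shift\sigma)\in[-\tfrac12,\tfrac12]$. A rational value would have a terminating $\gauss$-orbit, which contradicts the infinite coding. Endpoint values $\pm\tfrac12$ are excluded by the strict inequalities above.

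\textbf{Step 3: inverse and continuity.} Iterating Step 1 gives $\Phi\circ\mathfrak{X}=\mathrm{id}$ and $\mathfrak{X}\circ\Phi=\mathrm{id}$. The latter holds because $\theta$ lies in every cylinder $T_N([-\tfrac12,\tfrac12])$ determined by $\Phi(\theta)$, and these cylinders shrink to a point.

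Continuity of $\mathfrak{X}$ comes from the same shrinking: agreement of the first $N$ symbols forces $|\mathfrak{X}(\sigma)-\mathfrak{X}(\sigma')|=O(q_{[N]}^{-2})$.

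Continuity of $\Phi$ holds because $\varepsilon$, $\bar a$, and $\gauss$ are locally constant or continuous on $\Irrat$, since we avoid rationals. So each coordinate of $\Phi$ is continuous.
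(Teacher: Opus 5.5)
Your Step 1, the direction $\mathfrak{X}\circ\Phi=\mathrm{id}$, and the continuity arguments are essentially fine. Step 2, however, has a genuine gap. It sits exactly where this coding differs from the classical one: the symbol $\varepsilon_{n+1}$ used to define $b_n$ must agree with the actual sign of the tail.

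First, $g_n(t)=1/(\varepsilon_nb_n-t)$ does not map $\{|t|\le\frac12\}$ into itself. Your bound $|\varepsilon_nb_n-t|\ge\frac32$ only gives $|g_n(t)|\le\frac23$, and this is attained. For $\varepsilon_n=+1$, $b_n=2$ (i.e.\ $\bar a_n=2$, $\varepsilon_{n+1}=-1$), the map $t\mapsto 1/(2-t)$ sends $(0,\frac12]$ onto $(\frac12,\frac23]$. So your sets $T_N([-\frac12,\frac12])$ are not nested, and the exclusion of $\pm\frac12$ is unsupported.

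Second, and more importantly, ``iterating Step 1'' to get $\Phi\circ\mathfrak{X}=\mathrm{id}$ needs $x_2:=\mathfrak{X}(\shift\sigma)$ to have sign $\varepsilon_2$. The converse of Step 1 computes $b$ from the actual sign of $\theta'$, whereas $\sigma$ computes $b_1$ from the symbol $\varepsilon_2$. If these disagreed, e.g.\ $\varepsilon_1=\varepsilon_2=+1$ but $x_2<0$, then $1/\mathfrak{X}(\sigma)=\bar a_1+1+|x_2|>\bar a_1+1$, and the first symbol would not be recovered. A symmetric interval cannot rule this out, and a downward induction along an infinite sequence has no base case.

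The repair is to track signed half-intervals $J_n:=\varepsilon_n[0,\frac12]$. Splitting into the cases $\varepsilon_n\varepsilon_{n+1}=\pm1$, which is exactly how $b_n$ is defined, gives
\[
g_n(J_{n+1})\subset\varepsilon_n\left[\tfrac{1}{\bar a_n+1},\tfrac{1}{\bar a_n}\right]\subset J_n,
\]
where the two endpoints of the middle interval are attained only at $t=0$. Several consequences follow.
\begin{enumerate}
\item The sets $T_N(J_{N+1})$ are nested with length $O(q_{[N]}^{-2})$. Your worry about linear growth is harmless, since $q_{[N]}\ge N+1$.
\item Each tail $x_n:=\mathfrak{X}(\shift^{n-1}\sigma)$ satisfies $x_n=g_n(x_{n+1})$ and $x_n\in\varepsilon_n[\frac{1}{\bar a_n+1},\frac1{\bar a_n}]$.
\item In particular $x_{n+1}\neq0$, so $x_n$ lies in the open interval. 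This gives $\varepsilon(x_n)=\varepsilon_n$, $\bar a(x_n)=\bar a_n$ and $|x_n|<\frac12$.
\item Irrationality follows by descent: if $x_n=p/q$ in lowest terms, then $x_{n+1}=(\varepsilon_nb_np-q)/p$ in lowest terms with $|p|<q$.
\item Finally, $-1/x_n\equiv x_{n+1}\pmod 1$ gives $\gauss(x_n)=x_{n+1}$, which is $\Phi\circ\mathfrak{X}=\mathrm{id}$.
\end{enumerate}
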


Fix $\theta \in \Irrat$ and denote $\theta_n := \gauss^n(\theta)$.
We will now describe a particular choice of the renormalization sector $S$ for $\rotate_\theta$ that is more appropriate for iteration.

For $k \in \Z$, denote 
\[
    v_k = v_k(\theta) := \rotate_\theta^k(1).
\]
For any two distinct integers $k,l \in \Z$, we denote by $\triangle_{\theta}(k,l)$ the closed radial sector bounded by the radial line segment from $0$ to $v_k$, the radial line segment from $0$ to $v_l$, and the shortest circular arc in $\partial \D$ joining $v_k$ and $v_l$.
We will in particular consider the sector
\[
    S_\theta := \triangle_{\theta}(-\bar{a}-1,-\bar{a}),
\]
where $\bar{a}=\bar{a}(\theta)$. 
This sector contains the point $v_0 = 1$. 

The first return map of $\rotate_{\theta}$ back to $S_{\theta}$ is the piecewise linear map
\[
    \textnormal{FRM}_{\theta}(z) = \begin{cases}
        \rotate_{\theta}^{\bar{a}+1}(z) & \text{ if } z \in \triangle_{\theta}(-\bar{a}-1,-2\bar{a}-1) , \\
        \rotate_{\theta}^{\bar{a}}(z) & \text{ if } z \in \triangle_{\theta}(-2\bar{a}-1,-\bar{a}) .
    \end{cases}
\]
See Figure \ref{fig:sector-rotation}.
The power map 
\[
    \psi_{\theta}: S_{\theta} \to \bar{\D}, \quad \psi_\theta(z) = z^{1/|\theta|}
\]
sends the interior of $S_{\theta}$ conformally onto the unit disk minus the radial slit 
\[
\gamma_\theta = \{ \arg z = - 2\pi \, \gauss(\theta) \},
\]
and it glues the two radial edges of $S_{\theta}$ together onto the slit. 
It also projects $\textnormal{FRM}_{\theta}(z)$ to the renormalization $\rotate_{\gauss(\theta)}$.
Since $\gamma_{\theta}$ is disjoint from the sector $S_{\theta_1}$, we obtain a well-defined nest of closed sectors 
\[
S_{\theta}^1 \supset S_{\theta}^2 \supset S_{\theta}^3 \supset S_{\theta}^4 \supset \ldots
\]
where 
    \[
    S_{\theta}^n := \psi_{\theta_0}^{-1} \circ \psi_{\theta_1}^{-1} \circ \ldots \circ \psi_{\theta_{n-2}}^{-1}(S_{\theta_{n-1}})
    \quad \text{ for } n\geq 2.
    \]
By design, the sector $S_{\theta}^n$ always contains the real interval $[0,1] \subset \R$.
Proposition \ref{prop:q[n]} below implies that the nested intersection $\cap_n S_{\theta}^n$ is indeed equal to $[0,1]$.

\begin{figure}    
    \centering
    \begin{tikzpicture}[scale=1]
        \filldraw[white,fill opacity=0.5,fill=green!25!white] (-3.5,0) -- (-1,0) arc (0:17:2.5cm) -- cycle;        
        \filldraw[white,fill opacity=0.5,fill=yellow!30!white] (-3.5,0) -- ({2.5*cos(-32)-3.5},{2.5*sin(-32)}) arc (-32:0:2.5cm) -- cycle;
        \draw [black,domain=0:360,smooth] plot ({2.5*cos(\x)-3.5}, {2.5*sin(\x)});
        \draw [blue,thick,domain=-32:17,-latex] plot ({1.25*cos(\x)-3.5}, {1.25*sin(\x)});
        \draw[black,line width=0.5pt] ({2.5*cos(-32)-3.5},{2.5*sin(-32)}) -- (-3.5,0) -- ({2.5*cos(17)-3.5},{2.5*sin(17)}); 
        \draw[gray!70!white,line width=0.5pt] (-3.5,0) -- (-1,0);

        \filldraw[white,fill opacity=0.5,fill=yellow!30!white] (0.5,0) -- ({2.5*cos(-15)+0.5},{2.5*sin(-15)}) arc (-15:17:2.5cm) -- cycle;   
        \filldraw[white,fill opacity=0.5,fill=green!25!white] (0.5,0) -- ({2.5*cos(-32)+0.5},{2.5*sin(-32)}) arc (-32:-15:2.5cm) -- cycle;
        \draw[black,line width=0.5pt] ({2.5*cos(17)+0.5},{2.5*sin(17)}) -- (0.5,0) -- ({2.5*cos(-32)+0.5},{2.5*sin(-32)});        
        \draw[gray!70!white,line width=0.5pt] ({2.5*cos(-15)+0.5},{2.5*sin(-15)}) -- (0.5,0);
        \draw [black,domain=-32:17] plot ({2.5*cos(\x)+0.5}, {2.5*sin(\x)});

        \filldraw (-1,0) circle (2pt);
        \filldraw ({2.5*cos(-49)-3.5},{2.5*sin(-49)}) circle (2pt);
        \filldraw ({2.5*cos(-98)-3.5},{2.5*sin(-98)}) circle (2pt);
        \filldraw ({2.5*cos(-147)-3.5},{2.5*sin(-147)}) circle (2pt);
        \filldraw ({2.5*cos(-196)-3.5},{2.5*sin(-196)}) circle (2pt);
        \filldraw ({2.5*cos(-245)-3.5},{2.5*sin(-245)}) circle (2pt);
        \filldraw ({2.5*cos(-294)-3.5},{2.5*sin(-294)}) circle (2pt);
        \filldraw ({2.5*cos(-343)-3.5},{2.5*sin(-343)}) circle (2pt);
        \filldraw ({2.5*cos(-392)-3.5},{2.5*sin(-392)}) circle (2pt);

        \node [black, font=\bfseries] at (1.75,2) {$\varepsilon =+1$, $\bar{a}= 7$};
        \node [blue, font=\bfseries] at (-1.95,-0.3) {\small $\rotate_{\theta}$};
        \draw[yellow!40!black,line width=0.5pt,-latex] (1.5,0.5).. controls (0.75,1.1) and (-0.5,-0.7) .. (-0.95,-0.7) ;
        \draw[green!50!black,line width=0.5pt,-latex] (1.5,-0.8) .. controls (0.5,-1) and (0,0.2) .. (-0.9,0.4);
        \node [green!50!black, font=\bfseries] at (0.4,-0.9) {$\rotate^8_{\theta}$};
        \node [yellow!40!black, font=\bfseries] at (0.4,0.8) {$\rotate^7_{\theta}$};
        \node [black, font=\bfseries] at (-0.67,-0.1) {\small $v_{0}$};
        \node [black, font=\bfseries] at (-1.4,-2) {\small $v_{-1}$};
        \node [black, font=\bfseries] at (-0.75,0.85) {\small $v_{-7}$};
        \node [black, font=\bfseries] at (-1,-1.45) {\small $v_{-8}$};

        \filldraw[white,fill opacity=0.5,fill=green!25!white] (-3.5,-5.5) -- (-1,-5.5) arc (0:-17:2.5cm) -- cycle;        
        \filldraw[white,fill opacity=0.5,fill=yellow!30!white] (-3.5,-5.5) -- (-1,-5.5) arc (0:32:2.5cm) -- cycle;
        \draw [black,domain=0:360,smooth] plot ({2.5*cos(\x)-3.5}, {2.5*sin(\x)-5.5});        
        \draw [blue,thick,domain=32:-17,-latex] plot ({1.25*cos(\x)-3.5}, {1.25*sin(\x)-5.5});
        \draw[black,line width=0.5pt] ({2.5*cos(32)-3.5},{2.5*sin(32)-5.5}) -- (-3.5,-5.5) -- ({2.5*cos(-17)-3.5},{2.5*sin(-17)-5.5});        
        \draw[gray!70!white,line width=0.5pt] ({2.5*cos(0)-3.5},{2.5*sin(0)-5.5}) -- (-3.5,-5.5);

        \filldraw[white,fill opacity=0.5,fill=yellow!30!white] (0.5,-5.5) -- ({2.5*cos(-17)+0.5},{2.5*sin(-17)-5.5}) arc (-17:15:2.5cm) -- cycle;        
        \filldraw[white,fill opacity=0.5,fill=green!25!white] (0.5,-5.5) -- ({2.5*cos(15)+0.5},{2.5*sin(15)-5.5}) arc (15:32:2.5cm) -- cycle;
        \draw[black,line width=0.5pt] (0.5,-5.5) -- ({2.5*cos(-17)+0.5},{2.5*sin(-17)-5.5}) arc (-17:32:2.5cm) -- cycle;        
        \draw[gray!70!white,line width=0.5pt] ({2.5*cos(15)+0.5},{2.5*sin(15)-5.5}) -- (0.5,-5.5);
        
        \filldraw (-1,-5.5) circle (2pt);
        \filldraw ({2.5*cos(49)-3.5},{2.5*sin(49)-5.5}) circle (2pt);
        \filldraw ({2.5*cos(98)-3.5},{2.5*sin(98)-5.5}) circle (2pt);
        \filldraw ({2.5*cos(147)-3.5},{2.5*sin(147)-5.5}) circle (2pt);
        \filldraw ({2.5*cos(196)-3.5},{2.5*sin(196)-5.5}) circle (2pt);
        \filldraw ({2.5*cos(245)-3.5},{2.5*sin(245)-5.5}) circle (2pt);
        \filldraw ({2.5*cos(294)-3.5},{2.5*sin(294)-5.5}) circle (2pt);
        \filldraw ({2.5*cos(343)-3.5},{2.5*sin(343)-5.5}) circle (2pt);
        \filldraw ({2.5*cos(392)-3.5},{2.5*sin(392)-5.5}) circle (2pt);

        \node [black, font=\bfseries] at (1.75,-3.5) {$\varepsilon=-1$, $\bar{a} = 7$};
        \node [blue, font=\bfseries] at (-1.95,-5.25) {$\rotate_{\theta}$};
        \draw[green!50!black,line width=0.5pt,-latex] (1.5,-4.7).. controls (0.75,-4.1) and (-0.5,-5.9) .. (-0.95,-5.9) ;
        \draw[yellow!40!black,line width=0.5pt,-latex] (1.5,-6) .. controls (0.2,-6.2) and (-0.2,-5) .. (-0.95,-4.8);
        \node [green!50!black, font=\bfseries] at (0.4,-4.45) {$\rotate^8_{\theta}$};
        \node [yellow!40!black, font=\bfseries] at (0.4,-6.2) {$\rotate^7_{\theta}$};
        \node [black, font=\bfseries] at (-0.68,-5.48) {\small $v_{0}$};
        \node [black, font=\bfseries] at (-1.45,-3.55) {\small $v_{-1}$};
        \node [black, font=\bfseries] at (-0.75,-6.4) {\small $v_{-7}$};
        \node [black, font=\bfseries] at (-1,-4) {\small $v_{-8}$};
\end{tikzpicture}
    \caption{Two examples of the first return map on the shaded sector $S_{\theta}$.}
    \label{fig:sector-rotation}
\end{figure}

\begin{proposition}[First return times]
\label{prop:q[n]}
    Let $\sigma = \seq{ (\varepsilon_n, \bar{a}_n)}_{n\geq 1} \in \The$ and $\theta = \mathfrak{X}(\sigma)$.
    Recall the notation $b_n$, $\theta_n$, and $v_n$ associated to $\sigma$ and $\theta$ introduced above.
    Consider the increasing sequence of integers $\{q_{[n]}=q_{[n]}(\sigma)\}_{n\geq 0}$ where
\[
    q_{[0]} = 1, \qquad q_{[1]} = b_1, 
\]
and recursively for $n\geq 2$,
\[
    q_{[n]} = b_n q_{[n-1]} - \varepsilon_{n-1} \varepsilon_n  q_{[n-2]}.
\]
    Then, for $n \geq 1$,
\begin{enumerate}
    \item $\rotate_{\theta}^{q_{[n]}}$ is the rigid rotation by angle $|\theta_0 \ldots\theta_{n-1}|\theta_n$;
    \item $q_{[n]}$ is the unique smallest positive integer that satisfies 
    \[
    \left| [v_0, v_{q_{[n]}]}] \right| \leq \frac{1}{2} \left| [v_0, v_{q_{[n-1]}}] \right|,
    \]
    where $|\cdot|$ denotes the normalized angular measure on $\partial \D$;
    \item if we denote 
    \[
        \check{q}_{[n]} := q_{[n]} - \varepsilon_n \varepsilon_{n+1} q_{[n-1]},
    \]
    then the sector $S_{\theta}^n$ is equal to 
    $ \triangle_{\theta} (-q_{[n]}, -\check{q}_{[n]})$, and the first return map of $\rotate_\theta$ back to $S_{\theta}^n$ is the pair of iterates
    \[
        \rotate_\theta^{q_{[n]}}: 
        \triangle_{\theta} (-q_{[n]}, -q_{[n]} - \check{q}_{[n]})
        \to  
        \triangle_{\theta}(0,-\check{q}_{[n]})
    \]
    and
    \[ 
        \rotate_\theta^{\check{q}_{[n]}}: \triangle_{\theta}(-q_{[n]} - \check{q}_{[n]}, -\check{q}_{[n]}) 
        \to 
        \triangle_{\theta}(-q_{[n]}, 0). 
    \]
\end{enumerate}
\end{proposition}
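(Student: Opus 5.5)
The plan is to prove all three items together by induction on $n$, uniformly over all $\sigma\in\The$, using the renormalization $\psi_\theta$ to pass from $\theta$ to $\theta_1=\gauss(\theta)$, i.e.\ from $\sigma$ to $\shift(\sigma)$. The purely arithmetic input is a ``left'' recursion for the continuants. Besides the defining ``right'' recursion $q_{[n]}=b_nq_{[n-1]}-\varepsilon_{n-1}\varepsilon_nq_{[n-2]}$, I want to show
\[
q_{[n]}(\sigma)=b_1\,q_{[n-1]}(\shift\sigma)-\varepsilon_1\varepsilon_2\,q_{[n-2]}(\shift^2\sigma),
\]
and similarly for $\check q_{[n]}$. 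I would get this by writing $q_{[n]}$, together with a companion sequence, as an entry of the product of $2\times2$ matrices $\begin{pmatrix} b_k & -\varepsilon_{k-1}\varepsilon_k\\ 1&0\end{pmatrix}$ and expanding the product from the left instead of the right. The initial conditions need checking with $q_{[-1]}:=0$.

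\textbf{Item (1) and the base case.} For $n=1$ everything is explicit. $S_\theta=\triangle_\theta(-\bar a-1,-\bar a)$ and the first return map is the pair $(\rotate_\theta^{\bar a+1},\rotate_\theta^{\bar a})$. One checks directly that $\{q_{[1]},\check q_{[1]}\}=\{\bar a+1,\bar a\}$ in the order dictated by $\varepsilon_1\varepsilon_2$. This is the meaning of $b_1=\bar a_1+\tfrac{1+\varepsilon_1\varepsilon_2}{2}$: the sign of $\theta_1$ decides which of the two iterates lands next to $v_0$, as in Figure~\ref{fig:sector-rotation}. For the inductive step I would use the fact that $\psi_\theta=z^{1/|\theta|}$ multiplies arguments by $1/|\theta|$. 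A return map of $\rotate_\theta$ to a subsector $\psi_\theta^{-1}(S')$ that projects to a rotation by angle $\alpha$ on $S'$ is therefore a rotation by $|\theta|\alpha$ in the sector coordinate. Applying the inductive hypothesis to $\shift\sigma$ gives that $\rotate_{\theta_1}^{q_{[n-1]}(\shift\sigma)}$ is rotation by $|\theta_1\cdots\theta_{n-1}|\theta_n$. Pulling back by $\psi_\theta$ then yields rotation by $|\theta_0\cdots\theta_{n-1}|\theta_n$, provided the corresponding $\rotate_\theta$-time is $q_{[n]}(\sigma)$. This reduces (1) to the time computation in (3).

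\textbf{Item (3), the time computation.} By definition $S^n_\theta=\psi_\theta^{-1}(S^{n-1}_{\theta_1})$. Since $\psi_\theta$ conjugates the first return map on $S_\theta$ to $\rotate_{\theta_1}$, the first return map of $\rotate_\theta$ to $S^n_\theta$ is the lift of the first return map of $\rotate_{\theta_1}$ to $S^{n-1}_{\theta_1}$. By induction the latter is the pair $\rotate_{\theta_1}^{q'},\rotate_{\theta_1}^{\check q'}$ with $q'=q_{[n-1]}(\shift\sigma)$ and $\check q'=\check q_{[n-1]}(\shift\sigma)$. An orbit segment of $m$ steps of $\rotate_{\theta_1}$ lifts to $m$ returns to $S_\theta$, of which some number $j$ have length $\bar a+1$ and $m-j$ have length $\bar a$; so its $\rotate_\theta$-time is $m\bar a+j$. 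Here $j$ counts how many of the $m$ steps cross the slit $\gamma_\theta$, equivalently which of the two pieces of $S_\theta$ the orbit visits. For an orbit of $\rotate_{\theta_1}$ of length $q'$ starting in $S^{n-1}_{\theta_1}$ and returning to it, $j$ is determined by the rotation count. I would compute it from $q'\theta_1\equiv(\text{displacement})\pmod 1$ together with $-1/\theta=\theta_1+\varepsilon_1 b_1$ (up to the sign convention). This should give $j$ as an explicit linear expression in $q'$ and $q_{[n-2]}(\shift^2\sigma)$, so that $m\bar a+j$ becomes exactly the left recursion above. The boundary rays also pull back correctly: $\psi_\theta^{-1}$ sends the rays through $v^{(\theta_1)}_{-q'}$ and $v^{(\theta_1)}_{-\check q'}$ to the rays through $v_{-q_{[n]}}$ and $v_{-\check q_{[n]}}$, because the ray through $v_{-k}$ for $\rotate_\theta$ projects to the ray through $\psi_\theta(v_{-k})$, which is a point on the orbit of $\psi_\theta(v_0)=1$ under $\rotate_{\theta_1}$. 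This identifies $S^n_\theta=\triangle_\theta(-q_{[n]},-\check q_{[n]})$ together with the two domains and images listed in (3).

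\textbf{Item (2) and the main obstacle.} Item (2) follows from (1) and (3). By (1), $|[v_0,v_{q_{[n]}}]|=|\theta_0\cdots\theta_n|$, and this is at most half of $|[v_0,v_{q_{[n-1]}}]|=|\theta_0\cdots\theta_{n-1}|$ because $|\theta_n|<1/2$. For minimality, any $0<k<q_{[n]}$ with $v_k$ that close to $v_0$ would produce a return to $S^n_\theta$, or to an adjacent translate of it, earlier than the first return time given in (3); one compares with the width $|\theta_0\cdots\theta_{n-1}|$ of $S^n_\theta$. I expect the main obstacle to be the sign bookkeeping in the time computation: choosing consistently which of $q,\check q$ belongs to which piece, and verifying that the slit-crossing count $j$ produces precisely $-\varepsilon_1\varepsilon_2 q_{[n-2]}$ in both sign cases $\varepsilon_1\varepsilon_2=\pm1$. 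I would handle this by checking the two cases separately against Figure~\ref{fig:sector-rotation} and the case $n=2$ by hand, then running the general induction.
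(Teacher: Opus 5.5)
The paper does not prove this proposition (it is quoted from \cite{Lim26}), so I am judging your plan on its own terms. The architecture is sound, and your left recursion $q_{[n]}(\sigma)=b_1q_{[n-1]}(\shift\sigma)-\varepsilon_1\varepsilon_2q_{[n-2]}(\shift^2\sigma)$ is correct; it is the reversal symmetry of continuants.

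\textbf{The minimality half of (2) fails as you state it.} When $\varepsilon_n\varepsilon_{n+1}=+1$ you have $\check q_{[n]}=q_{[n]}-q_{[n-1]}<q_{[n]}$. So points of $\triangle_\theta(-q_{[n]}-\check q_{[n]},-\check q_{[n]})$ genuinely return to $S^n_\theta$ before time $q_{[n]}$. For them, a $k$ with $\check q_{[n]}<k<q_{[n]}$ gives no return ``earlier than the first return time''. What is needed is a width comparison with the \emph{short-return piece}, not with $S^n_\theta$:
\begin{itemize}
\item Suppose $\|k\theta\|\le\frac12|\theta_0\cdots\theta_{n-1}|$, where $\|\cdot\|$ is distance to the nearest integer.
\item Then $X=S^n_\theta\cap\rotate_\theta^{-k}(S^n_\theta)$ is an arc of angular length at least $\frac12|\theta_0\cdots\theta_{n-1}|$.
\item Since $k<q_{[n]}$, $X$ misses the interior of the piece with return time $q_{[n]}$. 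Hence $X$ lies in the other piece, which is mapped by $\rotate_\theta^{\check q_{[n]}}$ onto $\triangle_\theta(-q_{[n]},0)$.
\item That piece has width $|\theta_0\cdots\theta_n|<\frac12|\theta_0\cdots\theta_{n-1}|$, a contradiction.
\end{itemize}
This is exactly where $|\theta_n|<\frac12$ and the factor $\frac12$ in (2) enter.

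\textbf{The time computation for (3) needs two corrections.}
\begin{itemize}
\item $j$ equals the number of slit crossings only when $\varepsilon_1\varepsilon_2=-1$. When $\varepsilon_1\varepsilon_2=+1$, the piece of $S_\theta$ with return time $\bar a+1$ is the one whose $\psi_\theta$-image does \emph{not} cross $\gamma_\theta$ under $\rotate_{\theta_1}$. In that case $j=m-(\text{number of crossings})$.
\item The number of crossings is $|N|$, where $N$ is the integer with $q'\theta_1=|\theta_1\cdots\theta_{n-1}|\theta_n+N$. Knowing (1) only modulo $1$ does not determine $N$. You need $N=\varepsilon_2\,q_{[n-2]}(\shift^2\sigma)$. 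This holds because $p_k:=q_{[k]}(\shift\sigma)\theta_1-|\theta_1\cdots\theta_k|\theta_{k+1}$ satisfies the same three-term recursion as $q_{[k]}(\shift\sigma)$, with $p_0=0$ and $p_1=\varepsilon_2$.
\end{itemize}
With both fixes, $\bar a q'+j=b_1q'-\varepsilon_1\varepsilon_2q''$ in either case, and your induction closes. (Also, the correct relation is $-1/\theta=\theta_1-\varepsilon_1b_1$.)

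\textbf{A shorter route.} The same arithmetic proves (1) directly. Set $\beta_n:=|\theta_0\cdots\theta_{n-1}|\theta_n$. From $1/\theta_{n-1}=\varepsilon_nb_n-\theta_n$ one gets $\beta_n=b_n\beta_{n-1}-\varepsilon_{n-1}\varepsilon_n\beta_{n-2}$. Together with $q_{[0]}\theta=\beta_0$ and $q_{[1]}\theta=\beta_1+\varepsilon_1$, this gives $q_{[n]}\theta\equiv\beta_n \pmod 1$ by induction. Then (3) needs no crossing count:
\begin{itemize}
\item $\psi_\theta^{-1}$ multiplies angles near $0$ by $|\theta|$.
\item So the endpoints of $S^n_\theta$ sit at angles $-\beta_n$ and $-\beta_n+\varepsilon_n\varepsilon_{n+1}\beta_{n-1}$.
\item Its return map rotates the two pieces by $\beta_n$ and $\beta_n-\varepsilon_n\varepsilon_{n+1}\beta_{n-1}$.
\item Since $\theta\notin\mathbb{Q}$, a return $\rotate_\theta^t$ with $t\theta\equiv q_{[n]}\theta$ forces $t=q_{[n]}$, and likewise for $\check q_{[n]}$.
\end{itemize}
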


Essentially, item (3) of the proposition above says that $\rotate_\theta^{q_{[n]}}|_{S_{\theta}^n}$ is the $n$\textsuperscript{th} pre-renormalization of $\rotate_\theta$:
\[
    \rotate_{\theta_n} = \rotate_\theta^{q_{[n]}}|_{S_{\theta}^n} \Big/ \rotate_\theta^{q_{[n-1]}}.
\]

\begin{example}
    The golden mean irrationals in $\Irrat$ are $\theta_{\textnormal{gm}} = \frac{3-\sqrt{5}}{2}$ and $-\theta_{\textnormal{gm}} = \frac{\sqrt{5}-3}{2}$. We have
    \begin{align*}
        \mathfrak{X}^{-1}(\theta_{\textnormal{gm}}) &= \seq{ (+,2), (+,2), (+,2), (+,2),\ldots },\\
        \mathfrak{X}^{-1}(-\theta_{\textnormal{gm}}) &= \seq{ (-,2), (-,2), (-,2), (-,2), \ldots }.
    \end{align*}
    The corresponding modified continued fraction expansions are
    \[
        \theta_{\textnormal{gm}} = \cfrac{1}{3 - \frac{1}{3 - \frac{1}{3 - \ldots}}} \quad \text{ and } \quad 
        -\theta_{\textnormal{gm}} = \cfrac{1}{-3 - \frac{1}{-3 - \frac{1}{-3 - \ldots}}}.
    \]
    Both $\theta_{\textnormal{gm}}$ and $-\theta_{\textnormal{gm}}$ have the same first return times, namely
    \[
        q_{[0]} = 1, \quad q_{[1]} = 3, \quad q_{[2]}=8, \quad q_{[3]} = 21, \quad q_{[4]} = 55, \quad q_{[5]} = 144, \quad \ldots.
    \]
\end{example}

\subsubsection{Parabolic compactification of the irrationals}
\label{sss:compactification}

Let $\overline{\N} = \N \cup \{\infty\}$, where the addition of the point $\displaystyle \infty = \lim_{n \to \infty} n$ makes it a compactification of the discrete space $\N$.
Let 
\[
\overline{\Sigma} := \{-1,+1\} \times \overline{\N}_{\geq 2} \qquad \text{ and } \qquad
    \TheCpt := \overline{\Sigma}^{\N}.
\]
The infinite sequence space $\TheCpt$ is a Cantor set. The inclusion map $\iota: \The \xhookrightarrow[]{} \TheCpt$ has a dense image.
Through $\iota \circ \mathfrak{X}^{-1}$, the space $\TheCpt$ together with the shift map $\shift$ is a compactification of the dynamical system $(\Irrat,\gauss)$.

\begin{theorem}[Universal property of $\TheCpt$]
    The embedding 
    \[
    \bar{\iota} :=\iota \circ \mathfrak{X}^{-1}: \Irrat \to \TheCpt
    \]
    is the smallest compactification of $\Irrat$ satisfying the following properties.
    The dynamical system $(\Theta,\gauss)$ extends to a continuous self-map on $\TheCpt$, which is $(\TheCpt, \shift)$, and various natural embeddings of $\Irrat$ into some compact spaces extend continuously.
\end{theorem}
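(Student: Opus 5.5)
We read the statement as two claims. First, the shift on $\The\cong\Irrat$, equivalently $\gauss$, extends continuously to $\TheCpt$ as the shift $\shift$ (which is automatic since $\shift$ acts coordinatewise on a product). Second, $\TheCpt$ is minimal: for any compactification $\bar\jmath:\Irrat\to K$ on which $\gauss$ extends continuously and the relevant natural embeddings extend, there is a continuous surjection $\TheCpt\to K$ compatible with the embeddings. Equivalently, $\TheCpt$ is the closure of the image of $\Irrat$ under the product of the maps we require to extend. We would therefore fix the family of ``natural embeddings'' as the coordinate maps
\[
\theta\mapsto \big(\varepsilon(\gauss^{n-1}\theta),\bar a(\gauss^{n-1}\theta)\big)\in\{-1,+1\}\times\overline{\N}_{\geq 2},\qquad n\geq 1,
\]
together with $\theta\mapsto\theta\in\T$ and, later, the rotation number data.

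The plan has three steps. Step one: the map $\bar\iota=\iota\circ\mathfrak{X}^{-1}$ is a homeomorphism onto a dense subset of $\TheCpt$, by Theorem \ref{thm:X.theta} and density of $\The$ in $\TheCpt$. It conjugates $\gauss$ to $\shift|_{\The}$, and $\shift$ is continuous on $\TheCpt$. Step two: suppose $K$ is a compactification in which $\gauss$ extends to $G:K\to K$ and the map $\theta\mapsto(\varepsilon(\theta),\bar a(\theta))$ extends continuously to $e:K\to\overline\Sigma$. These are exactly the ``natural embeddings'' needed to read off the first symbol. Then $\Phi(x):=\seq{e(G^{n-1}x)}_{n\geq1}$ is a continuous map $K\to\TheCpt$ with $\Phi\circ\bar\jmath=\bar\iota$. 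Its image is compact and contains $\bar\iota(\Irrat)$, so it is all of $\TheCpt$. Hence $\TheCpt$ is a quotient of every such $K$, which is what ``smallest'' means.

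Step three is to check that $\TheCpt$ itself satisfies the required extension properties: the symbol map is the first coordinate, $\shift$ is continuous, and the embedding $\Irrat\hookrightarrow\T$ extends, via the rotation number map $\bar\mu$. Continuity of $\bar\mu$ at sequences with some $\bar a_n=\infty$ is where real work lies. One must show that as $\bar a_n\to\infty$ the continued fraction $\mathfrak{X}$ converges to the rational obtained by truncating at the first infinite entry. This requires controlling tails of (\ref{eqn:symbolic-irrational}) using $b_n\geq 2$ (so the tails lie in $[-\tfrac12,\tfrac12]$) and uniform convergence of the finite-depth Möbius compositions.

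We expect this last continuity verification to be the main obstacle, together with pinning down precisely which embeddings are meant. Minimality in step two is essentially formal once the symbol map is among the extended maps. We would first try to prove continuity of $\bar\mu$ by noting that the truncation at depth $N$ gives a Möbius image of an interval of length $\leq 2^{-N}$, as in Proposition \ref{prop:q[n]}(2). This gives uniform convergence, and we would handle the infinite entries by letting the inner tail degenerate to $0$.
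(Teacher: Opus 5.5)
The paper does not prove this statement; it is an informal summary of \cite[Theorem B]{Lim26}. Your argument therefore has to stand on its own, and as set up it proves a tautology rather than the universal property.

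Your Step two is formal only because you put the itinerary map $\theta\mapsto(\varepsilon(\theta),\bar a(\theta))$ among the maps required to extend. As you note yourself, the smallest compactification on which $\gauss$ and that map extend is the closure of the itineraries in $\overline{\Sigma}^{\N}$, which is $\TheCpt$ by definition. But the itinerary map is not an embedding of $\Irrat$. The content of the theorem is that genuine embeddings force exactly the distinctions that $\TheCpt$ records; these are the rotation number $\mu$ into $\T$ and dynamical embeddings that feel parabolic implosion. Minimality therefore requires proving that the itinerary map is continuous for the topology induced by those embeddings and the iterates of $\gauss$, that is, that they separate the points of $\TheCpt$.

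This separation fails for $\mu$ alone:
\begin{itemize}
\item The points $\big((+,\infty),\sigma\big)$ and $\big((-,\infty),\sigma\big)$ both have $\bar\mu=0$, and all their shifts agree.
\item The points $\big((+,5),(-,\infty),\sigma\big)$ and $\big((+,4),(+,\infty),\sigma\big)$ both have $b_1=5$ and $\bar\mu=\tfrac15$. After one shift they become the previous pair.
\end{itemize}
All of these points have the same image in the closure $K_\mu$ of $\{(\mu(\gauss^n\theta))_{n\geq 0}\}$ in $\T^{\N}$. Now $K_\mu$ is a compactification of $\Irrat$ on which $\gauss$ (as the shift) and $\mu$ (as the first coordinate) extend continuously. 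Since the natural map $\TheCpt\to K_\mu$ is not injective, $\TheCpt$ is not the smallest such compactification.

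The missing ingredient is an embedding that detects the sign $\varepsilon_n$ at each infinite entry, i.e.\ the side from which $\gauss^{n-1}\theta$ tends to $0$. Detecting this sign also determines how $b_{n-1}$ splits as $\bar a_{n-1}+\tfrac{1+\varepsilon_{n-1}\varepsilon_n}{2}$. You would need to prove that such an embedding extends continuously to $\TheCpt$, and that together with $\mu$ and $\gauss$ it separates points.

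Your Step three treats only $\mu$. That sketch is fine, but the extension of $\mu$ is already Proposition \ref{prop:rotation-number-map}. For dynamical embeddings such as $\theta\mapsto H_\theta$ or $\theta\mapsto\hat{Z}^{[m]}_\theta$ (cf.\ Theorem \ref{thm:mother-hedgehog}(5) and Theorem \ref{thm:pseudo-siegel}(3)), continuous extension to $\TheCpt$ is deep input from \cite{DL26b}, not a continued-fraction estimate.

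Finally, your opening paragraph has the arrow reversed. ``Smallest'' means every admissible $K$ surjects onto $\TheCpt$, as in your Step two, not the other way round.
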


Refer to \cite[Theorem B]{Lim26} for the details of these natural embeddings.
Roughly, this formal theorem states that $\TheCpt$ is the most appropriate compactification of the irrationals that takes into account parabolic implosion.
For this reason, $\TheCpt$ is called the \emph{parabolic compactification} of $\Irrat$.

\begin{definition}
    An element $\ttheta = \seq{ (\varepsilon_n, \bar{a}_n) }_{n\geq 1}$ of $\TheCpt$ is said to be \emph{irrational} if $\bar{a}_n < \infty$ for all $n \geq 1$, \emph{enriched rational} otherwise.
\end{definition}

Here is another useful property of $\TheCpt$.

\begin{proposition}
\label{prop:rotation-number-map}
    Under $\bar{\iota}$, the embedding $\mu: \Irrat \to \T$, $ \theta \mapsto \theta \textnormal{ (mod }1)$ into the circle $\T=\R/\Z$ extends to a surjective continuous map
\[
    \bar{\mu} : \TheCpt \to \T.
\]
    For every rational number $\theta$ in $\mathbb{T}$, $\bar{\mu}^{-1}(\theta)$ is homeomorphic to $\TheCpt$ itself.
\end{proposition}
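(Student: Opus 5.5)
The plan is to define $\bar{\mu}$ by truncating the continued fraction (\ref{eqn:symbolic-irrational}) at the first infinite entry. Then prove continuity via a uniform contraction of the Möbius maps involved, get surjectivity from density of $\Irrat$, and identify the rational fibers as Cantor sets using Brouwer's characterization.

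\emph{Definition and continuity.} For $\varepsilon\in\{\pm1\}$, $b\in\N_{\geq 2}$ consider the Möbius map $T_{\varepsilon,b}(x)=1/(\varepsilon b - x)$, and set $T_{\varepsilon,\infty}\equiv 0$. For $|x|\le 1/2$ and $b\ge 2$ we have $|\varepsilon b-x|\ge 3/2$. Hence $T_{\varepsilon,b}$ maps $[-\tfrac12,\tfrac12]$ into $[-\tfrac23,\tfrac23]$ with derivative at most $4/9$ there; more precisely, one checks that it maps $[-\tfrac12,\tfrac12]$ into itself once the sign conventions encoded in $b_n=\bar a_n+\frac{1+\varepsilon_n\varepsilon_{n+1}}{2}$ are taken into account. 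This is exactly the statement that $\mathfrak{X}$ lands in $\Irrat$ (Theorem \ref{thm:X.theta}).

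For $\ttheta=\seq{(\varepsilon_n,\bar a_n)}$ with $b_n=b_n(\ttheta)$ (where $b_n=\infty$ iff $\bar a_n=\infty$), I would define
\[
\bar\mu(\ttheta) := \lim_{n\to\infty} T_{\varepsilon_1,b_1}\circ\cdots\circ T_{\varepsilon_n,b_n}(0) \pmod 1 .
\]
If some $\bar a_N=\infty$, the sequence stabilizes at $n=N$. Otherwise the uniform contraction factor $4/9$ gives convergence, and the limit agrees with $\mathfrak{X}$ on $\iota(\The)$.

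Continuity follows from two observations:
\begin{itemize}
\item Fixing the first $n$ symbols confines $\bar\mu$ to an interval of length $O((4/9)^n)$.
\item A neighborhood of $\bar a_N=\infty$ in $\overline{\N}$ consists of large $b_N$, for which $T_{\varepsilon_N,b_N}$ is uniformly $O(1/b_N)$-close to $0$ on $[-\tfrac12,\tfrac12]$.
\end{itemize}
The only delicate point is that $b_{N-1}$ depends on $\varepsilon_N$ and values near $\pm\tfrac12$ may switch sides. This is harmless because we work modulo $1$, where $\pm\tfrac12$ are identified.

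\emph{Surjectivity.} $\bar\mu(\TheCpt)$ is compact and contains the dense set $\mu(\Irrat)$, so it equals $\T$.

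\emph{Rational fibers.} By Theorem \ref{thm:X.theta}, irrational elements map to irrationals, so for rational $r$ the fiber $\bar\mu^{-1}(r)$ consists of enriched rational sequences. Such a sequence has the form $w\,(\varepsilon_N,\infty)\,\tau$, where $w$ is a finite word with value $r$, $\varepsilon_N$ is a sign, and $\tau\in\TheCpt$ is arbitrary, since $\bar\mu$ ignores everything after the first $\infty$. The fiber has the following properties:
\begin{itemize}
\item it is nonempty by surjectivity;
\item it is closed by continuity, hence compact metrizable;
\item it is totally disconnected as a subset of the Cantor set $\TheCpt$;
\item it is perfect, because the tail $\tau$ can be varied freely.
\end{itemize}
By Brouwer's theorem it is therefore a Cantor set, hence homeomorphic to $\TheCpt$.

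I expect the main obstacle to be the careful verification of the continuity at infinite entries, in particular the sign and endpoint bookkeeping near $\pm\tfrac12$. The rest is soft topology.
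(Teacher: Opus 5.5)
Your argument is correct. The paper does not prove this proposition; it defers the proofs for \S\ref{ss:combinatorics} to \cite{Lim26}, so there is no in-paper argument to compare against. Your truncation formula is in any case forced: $\bar\iota(\Irrat)$ is dense in $\TheCpt$, so a continuous extension is unique. Three points need tightening.

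\emph{The invariance claim.} $T_{\varepsilon,b}$ does not map $[-\tfrac12,\tfrac12]$ into itself; for example $T_{+,2}(\tfrac12)=\tfrac23$. The contraction argument does need the intermediate values to stay where the bound $4/9$ holds. The correct bookkeeping is as follows:
\begin{itemize}
\item $b_k=2$ forces $\varepsilon_{k+1}=-\varepsilon_k$, hence $|\varepsilon_k b_k-x|\ge 2$ for $x\in\varepsilon_{k+1}[0,\tfrac12]$.
\item So $T_{\varepsilon_k,b_k}$ maps $\varepsilon_{k+1}[0,\tfrac12]$ into $\varepsilon_k[0,\tfrac12]$, with derivative at most $4/9$ there.
\item Consequently every partial composition evaluated at $0$ lies in the appropriate signed half-interval.
\item The cylinder fixed by the first $n$ finite symbols maps into $T_{\varepsilon_1,b_1}\circ\cdots\circ T_{\varepsilon_{n-1},b_{n-1}}(\varepsilon_n[0,\tfrac12])$, an interval of length at most $\tfrac12(4/9)^{n-1}$.
\end{itemize}
You must stop at $n-1$ because $b_n$ depends on $\varepsilon_{n+1}$, which the first $n$ symbols do not fix.

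\emph{The ``delicate point'' is not delicate.} In the product topology the sign coordinate is discrete. Along any sequence converging to a point whose first infinite entry is at position $N$, the first $N-1$ symbols and $\varepsilon_N$ are therefore eventually constant, and so is $b_{N-1}$. The values then converge inside $T_{\varepsilon_1,b_1}\circ\cdots\circ T_{\varepsilon_{N-1},b_{N-1}}(\varepsilon_N[0,\tfrac12])$. No jump between $+\tfrac12$ and $-\tfrac12$ can occur: the points $\seq{(\pm,2),(\mp,\infty),\ldots}$ realizing $\pm\tfrac12$ already differ in the first coordinate. So $\bar\mu$ is continuous even as a map to $[-\tfrac12,\tfrac12]$, before reducing mod $1$.

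\emph{The fiber description.} The value depends on $\varepsilon_N$ through $b_{N-1}$, so the relevant datum is the pair $(w,\varepsilon_N)$, not $w$ alone. For instance, $\tfrac13$ arises from both $\seq{(+,3),(-,\infty),\ldots}$ and $\seq{(+,2),(+,\infty),\ldots}$. This does not affect your perfectness argument, which only varies coordinates after position $N$.

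Alternatively, each rational admits only finitely many admissible pairs $(w,\varepsilon_N)$, since denominators strictly drop at each step and there are at most two choices per step. The fiber is then a finite disjoint union of the closed sets $\{w\}\times\{(\varepsilon_N,\infty)\}\times\TheCpt$, each homeomorphic to $\TheCpt$. Your appeal to Brouwer's characterization avoids this count.
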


\begin{definition}
\label{def:continuant-group}
For every $\ttheta = \seq{ (\varepsilon_n, \bar{a}_n) }_{n \geq 1} \in \TheCpt$, denote
\[
    b_n = b_n(\ttheta) = 
    \begin{cases}
        \bar{a}_n + \cfrac{1+ \varepsilon_n \varepsilon_{n+1}}{2} & \text{ if } \bar{a}_n < \infty, \\
        \infty & \text{ if } \bar{a}_n = \infty.
    \end{cases}
\]
Define an additive abelian group $\Kont_{\ttheta}$, called the \emph{continuant group of $\ttheta$}, to be the group generated by the infinite sequence $\{\qq_{[n]} = \qq_{[n]}(\ttheta)\}_{n \geq 0}$ subject to the relations
\[
    \qq_{[n]} = b_n \qq_{[n-1]} - \varepsilon_{n-1} \varepsilon_n  \qq_{[n-2]} \qquad \text{ for all } n \geq 1 \text{ with } b_n < \infty.
\]
In the above, we set $\qq_{[-1]} = 0$ as the identity element.
Each $\qq_{[n]}$ will be referred to as the \emph{$n$\textsuperscript{th} return time} of $\ttheta$.
\end{definition}

In the special case where $\ttheta$ is irrational, $\qq_{[0]} \mapsto 1$ induces an isomorphism between $\Kont_{\ttheta}$ and $\Z$ that sends each $\qq_{[n]}$ to the number $q_{[n]}$ coming from Proposition \ref{prop:q[n]}.

\begin{proposition}[Time order]
\label{prop:chronological-order-01}
    For every $\ttheta = \seq{ (\varepsilon_n, \bar{a}_n) }_{n \geq 1} \in \TheCpt$, the continuant group $\Kont_{\ttheta}$ admits a unique translation-invariant total order $<$, called the chronological order of $\Time_{\ttheta}$, with the property that for all $n \geq 0$,
    \begin{enumerate}
        \item $0 < \qq_{[n]} < \qq_{[n+1]}$,
        \item if $\bar{a}_{n+1} = \infty$, then $k \qq_{[n]} < \qq_{[n+1]}$ for all $k \geq 1$.
    \end{enumerate}
\end{proposition}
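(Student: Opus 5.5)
We prove Proposition~\ref{prop:chronological-order-01} by giving an explicit model of $\Kont_{\ttheta}$ as an ordered group. The key observation is that the relations only involve indices $n$ with $b_n<\infty$. We therefore cut the sequence at the indices $n_1<n_2<\dots$ where $\bar a_{n_j}=\infty$ (possibly none, finitely many, or infinitely many).

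\textbf{Block decomposition.} Within each maximal block of consecutive indices $n_{j}\le n<n_{j+1}$, the relations let us write every $\qq_{[n]}$ with $n_j< n<n_{j+1}$ as an integer combination of $\qq_{[n_j-1]}$ and $\qq_{[n_j]}$. Here $\qq_{[n_j-1]}$ is itself expressed through the previous block, except that $\qq_{[-1]}=0$ and $\qq_{[0]}$ start the first block. So $\Kont_{\ttheta}$ is free abelian on the generators $\qq_{[0]},\qq_{[n_1]},\qq_{[n_2]},\dots$, one per block.

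Inside a block, the same recursion as in Proposition~\ref{prop:q[n]} applies. Writing
\[
\qq_{[n]}=\alpha_n\,\qq_{[n_j]}+\beta_n\,\qq_{[n_j-1]}, \qquad n_j\le n<n_{j+1},
\]
we get integer coefficients $\alpha_n$ that are exactly the first-return times of the shifted finite combinatorics. In particular $\alpha_n$ is a strictly increasing sequence of positive integers, since $b_n\ge 2$ and the continuant inequalities of Proposition~\ref{prop:q[n]} hold.

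\textbf{Definition of the order.} We order $\Kont_{\ttheta}\cong\bigoplus_j\Z\,\qq_{[n_j]}$ lexicographically, with the later generators dominant. An element $\sum_j c_j\qq_{[n_j]}$ is positive iff its coefficient of highest index $j$ is positive. This is clearly a translation-invariant total order. We then check the two conditions.
\begin{itemize}
\item Condition (2) holds because $\qq_{[n_{j}]}$ dominates every element of earlier blocks; in particular it dominates $\qq_{[n_j-1]}$, so $k\qq_{[n_j-1]}<\qq_{[n_j]}$ for all $k\ge1$.
\item Condition (1) across a block boundary, $\qq_{[n_j-1]}<\qq_{[n_j]}$, follows the same way.
\item Condition (1) inside a block reduces to $\alpha_n<\alpha_{n+1}$, or, when the $\alpha$'s are equal, to a comparison of the $\beta$'s in the lower block. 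Since $\alpha_{n_j}=1$ and $\alpha$ strictly increases, positivity and monotonicity follow from the leading coefficient.
\end{itemize}

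\textbf{Uniqueness.} Let $<'$ be any translation-invariant total order satisfying (1) and (2). By (2) and (1), $\qq_{[n_j]}$ exceeds every multiple $k\qq_{[n_j-1]}$, and hence every element of the subgroup generated by earlier blocks. To see this, note that by induction each earlier-block element lies between $\pm k\qq_{[n_j-1]}$ for some $k$, since the positive earlier generators are all bounded by multiples of $\qq_{[n_j-1]}$.

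It follows that the sign of $\sum c_i\qq_{[n_i]}$ under $<'$ is the sign of its top coefficient. That is, $<'$ is the lexicographic order above, which proves uniqueness.

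\textbf{Main obstacle.} The delicate step is the claim used in uniqueness: inside one block, every element is bounded by multiples of the last generator of the block. This needs a version of Proposition~\ref{prop:q[n]} for finite combinatorics, giving $0<\check q$ and $\qq_{[n-1]}<\qq_{[n]}$ as integers. We would prove it by induction from the recursion with $b_n\ge 2$, handling the case $\varepsilon_{n-1}\varepsilon_n=+1$ carefully, where $b_n\ge3$ is guaranteed by the definition of $b_n$.
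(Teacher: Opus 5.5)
The paper does not prove this proposition in the text; it imports it from the combinatorial note \cite{Lim26}. So there is no in-paper argument to compare with, and on its own merits your proof is correct.

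Eliminating each $\qq_{[n]}$ with $b_n<\infty$ via its relation shows $\Kont_{\ttheta}$ is free abelian on $\qq_{[0]},\qq_{[n_1]},\qq_{[n_2]},\dots$, because each relation involves only lower indices and so no relations survive. The lexicographic order with later generators dominant is a translation-invariant total order. Conditions (1) and (2) hold for it because $\qq_{[n_j-1]}$ lies in the span of earlier generators, while inside a block the leading coefficient $\alpha_n$ starts at $1$ and strictly increases. Your uniqueness argument is also right.

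Two small corrections to your closing remarks.

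First, the ``delicate step'' is misidentified. Uniqueness uses no integer inequalities at all. For any admissible order $<'$, condition (1) alone gives $0<'\qq_{[m]}\le'\qq_{[n_j-1]}$ for all $m\le n_j-1$. Hence $-K\qq_{[n_j-1]}\le' y\le' K\qq_{[n_j-1]}$ for every $y$ in the span of earlier blocks, with $K$ the sum of the absolute values of $y$'s coefficients. Then (2) gives $c_j\qq_{[n_j]}+y\ge'\qq_{[n_j]}-K\qq_{[n_j-1]}>'0$ whenever $c_j\ge 1$. It is existence, i.e.\ verifying (1) for the lexicographic order, that needs $\alpha_n<\alpha_{n+1}$.

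Second, $\varepsilon_{n-1}\varepsilon_n=+1$ forces $b_{n-1}\ge 3$, not $b_n\ge 3$. No such case analysis is needed anyway. From $b_n\ge 2$ and $\alpha_{n-1},\alpha_{n-2}\ge 0$ you get $\alpha_n-\alpha_{n-1}=(b_n-1)\alpha_{n-1}-\varepsilon_{n-1}\varepsilon_n\alpha_{n-2}\ge \alpha_{n-1}-\alpha_{n-2}$ in both sign cases. Strict monotonicity then follows by induction from $\alpha_{n_j-1}=0<1=\alpha_{n_j}$.

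Positivity of $\check q$ plays no role here. Your clause about comparing the $\beta$'s when the $\alpha$'s coincide is vacuous.
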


\begin{definition}
    We define the \emph{time semigroup} of $\ttheta$ to be the commutative semigroup equal to the positive cone of $<$, i.e.
    \[
    \Time_{\ttheta} := \{P \in \Kont_{\ttheta} \: : \: P > 0 \},
    \]
\end{definition}

Unlike $\Kont_{\ttheta}$, the set of generators of $\Time_{\ttheta}$ is more complicated, namely
    \[
        \{\qq_{[0]}\} \cup \{ \qq_{[n+1]} - \qq_{[n]} \}_{n \geq 0} \cup \bigcup_{\bar{a}_{n+1} = \infty} \{ \qq_{[n+1]} - k \qq_{[n]} \}_{k \geq 2}.
    \]
In \S\ref{ss:renormalization-limits}, this time semigroup will be used to parametrize the semigroup of pre-renormalizations of neutral quadratic polynomials.

\subsection{Mother Hedgehogs and pseudo-Siegel disks} 
\label{ss:pseudo-siegel}

It is natural to ask whether or not sector renormalization can be applied to neutral quadratic polynomials 
\[
f_\theta(z) = e^{2\pi i \theta} z + z^2, \qquad \theta \in \Irrat
\]
in way that retains the critical orbit well.
Before we discuss renormalization of $f_\theta$, we will first summarize the theory of Mother Hedgehogs and pseudo-Siegel disks from \cite{DL22,DL26b}.
We will denote by 
\[
v_{\theta,-1} := -\frac{e^{2\pi i \theta}}{2}
\qquad \text{and} \qquad
v_{\theta,0} := f_{\theta}(v_{\theta,-1}) = -\frac{e^{4\pi i \theta}}{4}
\]
the critical point and the critical value of $f_\theta$ respectively.

\begin{definition}
    For $\theta \in \Irrat$, a subset $H$ of $\C$ is called a \emph{Mother Hedgehog} of $f_\theta$ if it is a full compact connected subset of $\C$ containing $0$ and the critical point such that $f_\theta: H \to H$ is a homeomorphism.
\end{definition}

Denote
\[
    \Irrat_{\text{bdd}} := \left\{ \theta \in \Theta \: : \: \sup_n \bar{a}(\gauss^n(\theta)) < \infty \right\}.
\]
Elements of $\Irrat_{\text{bdd}}$ are called bounded-type irrationals. 
One special subset is the set $\Irrat_{\text{EGM}}$ of eventually golden mean (EGM) irrationals, that is, the set of irrationals $\theta$ such that  $|\gauss^k(\theta)| = \theta_{\text{gm}}$ for some $k \geq 0$.

For $\theta \in \Irrat_{\text{bdd}}$, Douady-Ghys surgery \cite{D87,G84} implies that $f_\theta$ admits a Siegel disk $Z_\theta$ with quasiconformal boundary passing through the critical point. In this case, $f_\theta$ admits a unique Mother Hedgehog which is simply the closure of $Z_\theta$.

\begin{theorem}[\cite{DL22,DL26b}]
\label{thm:mother-hedgehog}
    For every $\theta \in \Irrat$, $f_\theta$ admits a unique Mother Hedgehog $H_\theta$. It has the following properties.
    \begin{enumerate}
        \item The boundary of $H_\theta$ is the postcritical set of $f_\theta$. 
        \item The critical value $v_0(f_\theta)$ is an accessible point of $H_\theta$ from infinity and $H_\theta \backslash \{v_{\theta,0}\}$ is connected.
        \item $H_\theta$ is star-like, i.e. a bouquet of quasiarcs called internal rays centered at the fixed point $0$. It can be written as $H_\theta = \bigcup_{\phi \in Q_{\theta}} I_{\phi}$ where $Q_\theta$ is a subset of $\R/ \Z$ and $f_{\theta}$ sends each internal ray $I_{\phi}$ onto $I_{\phi+\theta}$.
        \item The internal ray $I$ of $H_\theta$ landing at $v_0(f_\theta)$ is a uniform quasiarc with uniformly bounded spiraling number about $0$, that is, there is a continuous branch of the argument on $I$ such that
    \[
        \max_{z_1, z_2 \in I} |\arg(z_1)-\arg(z_2)| = O(1).
    \]
        \item $H_{\theta}$ depends uniformly continuously on $\theta \in \Irrat$ (as a subspace of $\overline{\Theta}$) in the Hausdorff topology and the normalized Riemann mapping
        \[
            X_{\theta} : \C \backslash \overline{\D} \to \C \backslash H_{\theta}, \quad 
            \quad X(1) = v_{\theta,0}
        \]
        also depends continuously on $\theta$ in the compact-open topology.
    \end{enumerate}
\end{theorem}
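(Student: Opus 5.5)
This theorem is quoted from \cite{DL22,DL26b}, so the plan is to reconstruct it from pseudo-Siegel disks rather than from earlier results of this paper. The approach is approximation by bounded-type rotation numbers followed by a limiting argument. The key input is the Dudko--Lyubich pseudo-Siegel bounds: for every $\theta\in\Irrat$ there is a nest of pseudo-Siegel disks $\hat Z_\theta^{[m]}$, $m\ge -1$. These are uniform quasidisks containing $0$, the critical point and the critical value. Each $\hat Z_\theta^{[m]}$ is almost invariant under $f_\theta^{q_{[m+1]}}$, $f_\theta$ is injective on it, and the nest is nested up to controlled fjords.

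\textbf{Construction.} I would define
\[
H_\theta := \bigcap_{m} \hat Z_\theta^{[m]} ,
\]
after suitably saturating each $\hat Z_\theta^{[m]}$ by its first $q_{[m+1]}$ iterates so that the intersection is $f_\theta$-invariant. For bounded-type $\theta$ this intersection equals $\overline{Z_\theta}$. It is full, compact and connected as a decreasing intersection of closed disks. Since $f_\theta$ is injective on each term, $f_\theta:H_\theta\to H_\theta$ is a homeomorphism.

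\textbf{Continuity and uniqueness.} The uniform bounds make $\hat Z_\theta^{[m]}$ depend continuously on $\theta$ at each fixed depth $m$, even across the enriched-rational points of $\TheCpt$. Together with a shrinking estimate for the difference $\hat Z^{[m]}\setminus\hat Z^{[m+1]}$ near $P(f_\theta)$, this gives item (5): $H_\theta$ is Hausdorff-continuous and the normalized Riemann map $X_\theta$ is continuous. Bounded type is dense, so items (1)--(4) can be proved for $\Irrat_{\text{bdd}}$ and passed to limits.
\begin{enumerate}
\item[(1)] For bounded type, $\partial\overline{Z_\theta}=P(f_\theta)$ by Douady--Ghys. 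In general, $\partial H_\theta\subset P(f_\theta)$ follows because every boundary point is approximated by fjord tips, which are critical orbit points. Conversely $P(f_\theta)\subset H_\theta$ by invariance, and $P(f_\theta)$ avoids $\operatorname{int} H_\theta$ because $f_\theta$ acts on the interior as a Siegel disk rotation.
\item[(2)] Accessibility of $v_{\theta,0}$ comes from the uniform bound on the spiraling of external access curves through the fjords.
\item[(3)--(4)] The star-like structure comes from uniform quasiarcs, namely the internal rays of $\hat Z^{[m]}$ through the critical orbit points. These quasiarcs persist in the limit, and the ray through $v_{\theta,0}$ has spiraling $O(1)$.
\end{enumerate}
For uniqueness, take any Mother Hedgehog $H'$. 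Since $f_\theta$ is injective on $H'$, $H'$ cannot intersect the backward orbit of the critical point except along the critical orbit itself. Combined with pseudo-Siegel injectivity, this forces $H'\subset\hat Z^{[m]}$ for all $m$. Conversely, fullness forces $H'\supset P(f_\theta)$, and its hull is $H_\theta$.

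\textbf{Main obstacle.} The hardest step is the limiting argument as $\bar a_n\to\infty$, the parabolic degeneration. There, continuity of $H_\theta$ in $\TheCpt$, rather than in $\T$, requires uniform control of the parabolic fjords. This is exactly the content of the uniform pseudo-Siegel bounds of \cite{DL26b}, and I would invoke them as a black box.
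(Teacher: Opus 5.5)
The paper does not prove this theorem; it quotes it from \cite{DL22,DL26b}. Its only indication of proof, given after Theorem~\ref{thm:pseudo-siegel}, runs as follows: (geodesic) pseudo-Siegel disks are built for EGM irrationals, where $H_\theta=\overline{Z_\theta}$ comes from Douady--Ghys; they are shown to be almost invariant and uniformly quasiconformal; for arbitrary $\theta$ the Mother Hedgehog is the non-escaping set of $f_\theta$ on the limiting $\hat Z^{[-1]}_\theta$; and \cite{DL26b} supplies the uniform continuity. Your outline has the same overall shape (bounded type plus limits), but the steps you fill in yourself have genuine gaps.

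First, the construction is circular as written. In this paper $\hat Z^{[m]}_\theta$ for general $\theta$ is \emph{defined} from $H_\theta$: dams are geodesics of $\RS\setminus H_\theta$ joining $x_n',y_n'\in\partial H_\theta$, and fjords are closures in $\C\setminus H_\theta$. So you must first produce the nest as a limit of bounded-type nests. Even then, injectivity of $f_\theta$ on each $\hat Z^{[m]}_\theta$ gives injectivity on the intersection, not $f_\theta(H_\theta)=H_\theta$. The $\hat Z^{[m]}_\theta$ are almost invariant under $f_\theta^{[m+1]}$, not under $f_\theta$; for instance $f_\theta(V^{[n]}_{\theta,0})$ is not one of the fjords. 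Saturating by iterates destroys the injectivity you then invoke. Invariance of the intersection needs a genuine shrinking statement for deep fjords, which is part of the a priori bounds.

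Second, item (1) is where the argument actually fails. Approximating every point of $\partial H_\theta$ by fjord tips just restates the claim. The distinguished points at the fjords (the dam endpoints $x_n',y_n'$ and their pullbacks) are only \emph{combinatorially} dense along $\partial^cH_\theta$. Upgrading this to \emph{geometric} density in $\partial H_\theta$, which for non-Brjuno $\theta$ is the whole hairy set $H_\theta$, is the hard part. Moreover these points lie on the \emph{backward} critical orbit $\{v_{\theta,-k}\}_{k>0}$, and their membership in $P(f_\theta)$ (the forward-orbit closure) is not automatic.

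If instead you pass (1) to the limit from bounded type, note that $\theta\mapsto P(f_\theta)$ is in general only lower semicontinuous. So $\partial H_\theta\subset\liminf_k P(f_{\theta_k})$ does not yield $\partial H_\theta\subset P(f_\theta)$ without a uniform statement, for example density of finite critical orbit segments in $P(f_{\theta_k})$ uniformly in $k$. Similarly, accessibility of $v_{\theta,0}$ and connectivity of $H_\theta\setminus\{v_{\theta,0}\}$ are not preserved under Hausdorff limits; you do not address the latter at all. So (2) needs the uniform estimates explicitly.

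Your reason that $P(f_\theta)$ misses $\mathrm{int}\,H_\theta$ (that $f_\theta$ acts on the interior as a Siegel rotation) presupposes that every interior component is the Siegel disk. That needs Sullivan's theorem here, and for towers it is obtained only at the end of this paper. It is simpler to use $P(f_\theta)\subset J(f_\theta)$ together with $\mathrm{int}\,H_\theta\subset\mathrm{int}\,K(f_\theta)$.

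Finally, the uniqueness step ``injectivity forces $H'\subset\hat Z^{[m]}_\theta$'' has no mechanism. Nothing in the pseudo-Siegel bounds confines an arbitrary invariant full continuum a priori. You need an actual argument. In bounded type, for instance, push a piece of $H'$ lying outside $\overline{Z_\theta}$ forward to the critical point. Since $f_\theta^{-1}(\overline{Z_\theta})=\overline{Z_\theta}\cup\overline{Z'}$ with $\overline{Z_\theta}\cap\overline{Z'}$ equal to the critical point, this produces two points of $H'$ with the same image, contradicting injectivity.
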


Observe that the Mother Hedgehog $H_\theta$ admits a unique bi-infinite critical orbit which we will denote from now on by
\[
    \ldots \; \xrightarrow[\quad]{f_\theta} \; v_{\theta,-2} \; \xrightarrow[\quad]{f_\theta} \; v_{\theta,-1} \; \xrightarrow[\quad]{f_\theta} \;
    v_{\theta,0} \; \xrightarrow[\quad]{f_\theta} \; v_{\theta,1} \; \xrightarrow[\quad]{f_\theta} \; v_{\theta,2} \; \xrightarrow[\quad]{f_\theta} \; \ldots.
\]

The proof of Theorem \ref{thm:mother-hedgehog} relies on pseudo-Siegel bounds. 
For $\theta \in \Irrat_{\text{bdd}}$, the quasiconformal dilatation of the Siegel disk $Z_{\theta}$ generally worsens as $\inf_n |\gauss^n(\theta)|$ gets closer to $0$. 
To rectify this issue, one can enlarge the Siegel disk to a \emph{pseudo-Siegel disk}, which is formally defined as follows.

\begin{definition}
    For $\theta \in \Theta$, a \emph{pseudo-Siegel disk} of $f_\theta$ is a closed quasidisk containing the postcritical set of $f_\theta$ on which $f_\theta$ is injective.
\end{definition}

Now, we will fix $\theta \in \Theta$ and outline the construction of a particular nested sequence of pseudo-Siegel disks
\[
    \ldots \subset \hat{Z}^{[2]}_\theta \subset \hat{Z}^{[1]}_\theta \subset \hat{Z}^{[0]}_\theta \subset \hat{Z}^{[-1]}_\theta = \hat{Z}_\theta
\]
associated to $f_\theta$.

Consider the sequence $\mathfrak{X}^{-1}(\theta) = \seq{ ( \varepsilon_n, \bar{a}_n ) }_{n \geq 1}$ as well as the corresponding sequence of first return times $q_{[n]} = q_{[n]}(\theta)$, $n \geq 0$ described in the previous subsection. 
We will again use the notation
    \[
        b_n := \bar{a}_n + \frac{1+ \varepsilon_n \varepsilon_{n+1}}{2} \qquad
        \text{ and } \qquad
        \check{q}_{[n]} := q_{[n]} - \varepsilon_n \varepsilon_{n+1} q_{[n-1]}.
    \]
Also, denote $f_\theta^{[n]} \equiv f_\theta^{q_{[n]}}$.

Let us fix a sufficiently large constant $\threshold \in \N$.
This will be called the \emph{combinatorial threshold} for the pseudo-Siegel disks and it is independent of $\theta$.
For every integer $n \geq 0$, there will be two cases:
\begin{itemize}
    \item $n$ is bounded: $\bar{a}_{n+1}< \threshold$,
    \item $n$ is near-parabolic: $\bar{a}_{n+1} \geq \threshold$.
\end{itemize}
We will also consider the number
\[
\threshold' := \lfloor e^{\sqrt{\log \threshold}}\rfloor,
\]
which is much smaller than $\threshold$. 
The construction of the pseudo-Siegel disks $\hat{Z}_{\theta}^{[m]}$ will technically depend on $\threshold$.

For every $n \geq 0$, set
\begin{equation}
\label{eqn:xn-yn}
    x_n = v_{\theta,-q_{[n]}}, \qquad 
    y_n = v_{\theta,-\check{q}_{[n]}}.
\end{equation}
For $n = 0$, both $x_0$ and $y_0$ are equal to the critical point $v_{\theta,-1}$ of $f_\theta$. 
For $n \geq 1$, the critical value $v_{\theta,0}$ is between $x_n$ and $y_n$ along the Carath\'eodory boundary $\partial^c H_\theta$ of $H_\theta$ and that $x_n$ and $y_n$ are identified under the iterate $f_{\theta}^{[n-1]}$.
When $\bar{a}_{n+1} \geq \threshold$, we are also interested in the pre-critical points 
\[
x'_n = v_{\theta,-q_{[n+1]} + \threshold' q_{[n]}},\qquad
y'_n = v_{\theta,-\check{q}_{[n]} - \threshold' q_{[n]}}.
\]
on $\partial H_\theta$. 
It has the property that 
\[
(f_\theta^{[n]})^{b_{n+1}-2\threshold'-1}(x'_n) = y'_n \quad \text{ and } \quad (f_\theta^{[n]})^{\threshold'}(y'_n) = y_n,
\]
and along $\partial^c H_\theta$, these points are ordered as follows: 
\[
x_n < v_{\theta,0} < x'_n < y'_n < y_n.
\]
The sub-``interval'' $\left[x'_n, y'_n\right]$ of $\partial^c H_\theta$ is almost invariant under $f_\theta^{[n]}$.

Whenever $n$ is a near-parabolic level, we define the \emph{principal level $n$ dam} $d^{[n]}_{\theta,0}$ to be the unique hyperbolic geodesic of $\RS \backslash H_{\theta}$ with endpoints $x'_n$ and $y'_n$ that does not separate $v_{\theta,0}$ from infinity.
In general, for $j \in \{0, 1,2,\ldots,q_{[n]}\}$, we define the $j$\textnormal{th} \emph{level $n$ dam} $d_{\theta,j}^{[n]}$ to be the unique connected component of $f_\theta^{-j}(d^{[n]}_{\theta,0})$ that has endpoints on $\partial H_{\theta}$.
Observe that $d^{[n]}_{\theta,q_{[n]}}$ is combinatorially very close to $d^{[n]}_{\theta,0}$.

For every near-parabolic level $n$ and every $j \in \{0,1,\ldots,q_{[n]}-1\}$, the closure in $\C \backslash H_{\theta}$ of the disk enclosed by $d^{[n]}_{\theta,j}$ is called a \emph{parabolic fjord} of level $n$, which we will denote by $V^{[n]}_{\theta,j}$.
For every $m \geq 0$, we then define
    \[
        \hat{Z}_\theta^{[m-1]} := H_\theta \cup \bigcup_{\substack{ n\geq m, \\ n \textnormal{ is near-parabolic}}} \bigcup_{j=0}^{q_{[n]}-1} V^{[n]}_{\theta,j}.
    \]
From the construction, we have that for all $m \geq 0$,
\[
    \hat{Z}_\theta^{[m-1]} \supseteq \hat{Z}_\theta^{[m]}
\]
and equality holds if and only if $n$ is a bounded level.

\begin{figure}
    \centering
\begin{tikzpicture}
    \node[anchor=south west,inner sep=0] (image) at (0,0) {\includegraphics[width=1\linewidth]{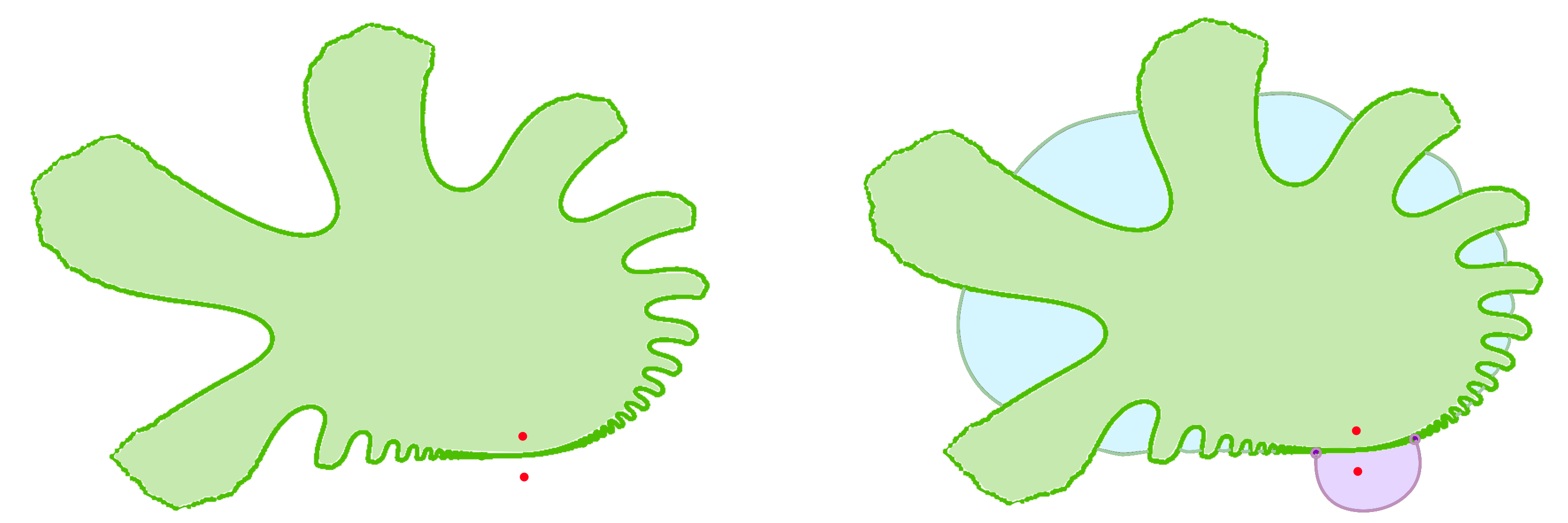}};
    \begin{scope}[
        x={(image.south east)},
        y={(image.north west)}
    ]
    
    \node [green!20!black, font=\bfseries] at (0.3,0.4) {$Z_\theta$};
    \node [green!20!black, font=\bfseries] at (0.83,0.4) {$\hat{Z}_\theta$};
    \end{scope}
\end{tikzpicture}
    \caption{The Siegel disk and a pseudo-Siegel disk of $f_\theta$ when $\mathfrak{X}^{-1}(\theta) = \seq{ (+,100),(-,100),(+,2),(+,2), \ldots }$. 
    The level $0$ dam bounds the parabolic fjord in purple and level $1$ dams bound the fjords in blue. 
    The alpha and beta fixed points are marked in red.}
    \label{fig:PS-disk}
\end{figure}

\begin{theorem}[Pseudo-Siegel bounds \cite{DL22, DL26b}]
\label{thm:pseudo-siegel}
    For sufficiently high $\threshold \in \N$ and for all $\theta \in \Theta$ and $m \geq -1$, the set $\hat{Z}_{\theta}^{[m]}$, $m \geq -1$ constructed above is a pseudo-Siegel disk of $f_\theta$. 
    They also satisfy the following properties.
    \begin{enumerate}
        \item Almost invariance: There exists a universal constant $C>0$ (independent of $\threshold$ and $\theta$) such that for all $m \geq 0$ and $j \in \{1,\ldots, q_{[m]}(\theta)\}$, the hyperbolic distance between every point on $d_j^{[m]}$ and the unique geodesic with the same endpoints as $d_j^{[m]}$ is bounded above by $C$.
        \item Uniform quasiconformality: There exists a constant $ K(\threshold)>0$ (independent of $\theta$) such that $K(\threshold) \to \infty$ as $\threshold \to \infty$ and $\hat{Z}_{\theta} = \hat{Z}_{\theta}^{[-1]}$ is $K(\threshold)$-quasiconformal.
        \item Uniform continuity: For every $m \geq -1$, the normalized Riemann mapping 
        \[
            X_{m,\theta}: \C \backslash \overline{\D} \to \RS \backslash \hat{Z}_{\theta}^{[m]}, \quad X_{m,\theta}(1) = v_{\theta,0}
        \]
        depends uniformly continuously on $\theta \in \Theta$ (as a subspace of $\TheCpt$) in the $C^0$ topology. 
    \end{enumerate}
\end{theorem}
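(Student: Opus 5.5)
Since this theorem is imported from \cite{DL22,DL26b}, my plan is to reconstruct its structure rather than re-derive every estimate. First I prove (1)--(3) for bounded-type $\theta \in \Irrat_{\text{bdd}}$ (or eventually golden mean $\theta \in \Irrat_{\text{EGM}}$), with constants independent of $\sup_n \bar a_n$. Then I pass to all of $\Irrat$ by density and compactness in $\TheCpt$.

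\textbf{Bounded-type case.} For such $\theta$, Douady--Ghys surgery gives the Siegel disk $Z_\theta$. Its boundary is a quasicircle through the critical point, and $H_\theta=\overline{Z_\theta}$. The fjords $V^{[n]}_{\theta,j}$ are then well defined, as is each $\hat Z^{[m]}_\theta$. Injectivity of $f_\theta$ on $\hat Z^{[m]}_\theta$ is combinatorial. The dams $d^{[n]}_{\theta,j}$ are pullbacks of a single geodesic under the univalent branch of $f_\theta^{-j}$. So I will check that the fjords of a given level $n$ are pairwise disjoint and disjoint from their own $f_\theta$-images, except for the gluing $x_n\sim y_n$ by $f_\theta^{[n-1]}$. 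This follows from the cyclic order $x_n<v_{\theta,0}<x'_n<y'_n<y_n$ on $\partial^c H_\theta$ and Proposition \ref{prop:q[n]}. The real content is the geometry, which I would obtain by downward induction on the level $n$.

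\textbf{Almost invariance (1) and the main obstacle.} The near-parabolic iterate $f_\theta^{[n]}$ acts on the arc $[x'_n,y'_n]$ almost as a translation over $\asymp \bar a_{n+1}$ steps. I would show that $f_\theta^{[n]}$ moves the dam $d^{[n]}_{\theta,0}$ by a bounded hyperbolic amount in $\RS\setminus H_\theta$. Harmonic measure on the two sides of the dam is then comparable to that of the pulled-back dams, giving the uniform constant $C$ in (1).

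I expect the main obstacle to be the a priori bound excluding degeneration, namely the absence of wide annuli (thick laminations) separating pieces of $\partial H_\theta$ near the fjords. My first approach would follow Dudko--Lyubich. Suppose a family of degenerating rectangles exists at level $n$. Spread it around by the almost-rotation dynamics $f_\theta^{q_{[n]}}$. Apply the Kahn--Lyubich covering lemma to the degree-two critical pullback, which transfers the degeneration to a deeper level $n+1$ with an amplification factor. Because the fjords already absorb the parabolic channels, the amplification beats the loss, and iterating contradicts the finite total extremal width of $\C\setminus H_\theta$. The threshold $\threshold$ and the choice $\threshold'=\lfloor e^{\sqrt{\log\threshold}}\rfloor$ enter here: $\threshold'$ must be large enough that the dams sit deep in the parabolic channels, yet small relative to $\threshold$ so the surrounding "snake" regions remain controlled.

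\textbf{Quasiconformality (2) and passage to all $\theta$ (3).} Given (1), the boundary of $\hat Z_\theta$ consists of quasiarcs: the dams are geodesic up to bounded error, while the bounded-level pieces of $\partial H_\theta$ have bounded geometry since $\bar a_{n+1}<\threshold$. Ahlfors' three-point condition at every scale, checked level by level, then yields a dilatation $K(\threshold)$ depending only on $\threshold$. For (3), the uniform quasicircle bounds give equicontinuity of the normalized Riemann maps $X_{m,\theta}$ on bounded types. I extend to all $\theta\in\Irrat$ by approximating in $\TheCpt$ by bounded-type $\theta_k$ and extracting limits. Each limit is a pseudo-Siegel disk for $f_\theta$: injectivity passes to the limit because the bounds are uniform, and the postcritical set is captured via Theorem \ref{thm:mother-hedgehog}. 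Uniqueness of the limit, and hence uniform continuity on the compact space $\TheCpt$, follows from the uniqueness of $H_\theta$ together with the canonical definition of the dams as geodesics.
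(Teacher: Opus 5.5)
Your treatment of the bounded-type case matches the paper's account of \cite{DL22}: a priori bounds proved in the near-degenerate regime, with constants independent of the combinatorics. As a roadmap that part is acceptable.

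The genuine gap is the passage to all $\theta$, i.e.\ item (3). Since $\Theta$ is dense in the compact space $\TheCpt$, uniform continuity on $\Theta$ is equivalent to $\theta\mapsto X_{m,\theta}$ extending continuously to all of $\TheCpt$. That includes the enriched-rational points. There $f_\theta$ degenerates to a parabolic map, the higher pre-renormalizations become Lavaurs maps whose phases depend on the tail of the limiting combinatorics, and the limits of the $\hat Z^{[m]}_\theta$ are pinched disks (Theorem~\ref{thm:renorm-limits}). Equicontinuity of the $X_{m,\theta}$ in the variable $z$ gives only subsequential limits. The real content of (3) is that these limits do not depend on the approximating sequence, and this is exactly what \cite{DL26b} establishes across bounded types.

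Your substitute, uniqueness of $H_\theta$, gives nothing at enriched-rational points. At irrational $\theta$ it is circular. In the paper, Theorem~\ref{thm:mother-hedgehog} is a consequence of the pseudo-Siegel bounds: $H_\theta$ for general $\theta$ is produced as the non-escaping set of $f_\theta$ on a limit of bounded-type $\hat Z^{[-1]}_{\theta_k}$. Its continuity statement (5), which you would need to get Carath\'eodory convergence of $\C\setminus H_{\theta_k}$ and hence convergence of the dams, is derived from (3). Moreover, for non-bounded-type $\theta$ the dams are geodesics of $\RS\setminus H_\theta$. So the set $\hat Z^{[m]}_\theta$ with which you want to identify the limit is not even defined until $H_\theta$ has been built this way.

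Two further steps fail as stated.

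First, a uniform quasicircle bound is available only for $m=-1$ (and $m=0$). For $m\geq 1$ the unfilled fjords of levels $1,\dots,m$ become arbitrarily deep relative to their width as the corresponding partial quotients grow, and they pinch in the limit. So equicontinuity of $X_{m,\theta}$ for $m\geq 1$ requires explicit control of fjord geometry, of the kind in Theorem~\ref{thm:uniform-fjords}, not quasicircle bounds.

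Second, (2) does not follow from (1) by checking Ahlfors' condition level by level. Lying within bounded hyperbolic distance of a geodesic does not make a dam a uniform quasiarc. Likewise, $\bar a_{n+1}<\threshold$ at a given level does not by itself give bounded geometry of $\partial H_\theta$ there, since that geometry depends on all deeper levels. The uniform constant $K(\threshold)$ comes from the same a priori bounds that produce (1). Even for purely bounded type, this is the real a priori bounds input to Douady--Ghys surgery.
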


The proof of pseudo-Siegel bounds is achieved in the near-degenerate regime. In \cite{DL22}, geodesic pseudo-Siegel disks are constructed (a slight variation of $\hat{Z}_{\theta}$ where all dams $d^{[m]}_{\theta,j}$ are instead hyperbolic geodesics) for EGM irrationals and are shown to be almost invariant and uniformly quasiconformal. 
As a corollary, the existence of the Mother Hedgehog $H_{\theta}$ across all $\theta$ was shown as it is equal to the non-escaping set of $f_{\theta}: \hat{Z}_{\theta}^{[-1]} \to \C$. 
The uniform continuity of pseudo-Siegel disks across all bounded-type rotation numbers was proven in the follow up work \cite{DL26b} and this results in Theorem \ref{thm:mother-hedgehog}. 
In the next subsection, we will describe in detail the uniform bounds on sector renormalization obtained in \cite{DL26b}.

Throughout this paper, the combinatorial threshold $\threshold$ will be assumed to be a fixed sufficiently high number and the dependence on $\threshold$ will be suppressed unless otherwise stated.

\subsection{Sector renormalization}
\label{ss:sector-renormalization}

\begin{definition}
\label{def:sector}
    We define a \emph{sector} $S$ to be an open Jordan domain in $\RS$ such that its boundary $\partial S$ is a concatenation of three marked arcs $\alpha, \alpha', \omega$, where $\alpha$ and $\alpha'$ are called the \emph{sides} of $S$ and $\omega$ is called the \emph{summit} of $S$. The \emph{vertex} of $S$ is the common endpoint of $\alpha$ and $\alpha'$.
    A \emph{conformal gluing map} for a sector $S$ is a univalent map $\rho: S \to \D$ with the following properties.
    \begin{itemize}
        \item There is a ray $\gamma$ in $\D$, called the \emph{slit} of $\rho$, that emanates from $0$ to a point on the boundary of $\D$ such that the image of $\rho$ is $\D \backslash \gamma$;
        \item The map $\rho$ extends continuously to a map from $\partial S$ onto $\partial \D \cup \gamma$. It sends the vertex of $S$ to $0$, maps each of the two sides of $S$ homeomorphically onto $\overline{\gamma}$, and maps the summit of $S$ onto $\partial \D$.
    \end{itemize}
\end{definition} 

Now, we are ready to describe sector renormalizations of $f_\theta$, $\theta \in \Irrat$.
The theorem below states that they can be constructed to still admit uniform pseudo-Siegel disks and Mother Hedgehogs.

\begin{theorem}[{Sectorial bounds \cite{DL26b}}]
\label{thm:sectorial-bounds}
    Pick any $\theta \in \Irrat$ and denote $f=f_{\theta}$, $H=H_{\theta}$, $\hat{Z}^{[m]} = \hat{Z}^{[m]}_\theta$ for all $m \geq -1$, and $\theta_n = \gauss^n(\theta)$ for all $n \geq 0$.
    There exists a nested sequence of sectors 
    \[
    S_1 \supset S_2 \supset S_3 \supset \ldots
    \]
    in the dynamical plane of $f$ such that the following properties hold for every $n \geq 1$.
\begin{enumerate}
    \item For any tip $z$ of $H$, denote by $I_z$ the internal ray in $H$ with endpoints $0$ and $z$.
    Consider the pre-critical points $x_n, y_n$ coming from (\ref{eqn:xn-yn})
    and let $z_n \in H$ be such that $\{z_n, f^{[n-1]}(z_n)\} = \{x_n,y_n\}$.
    The first side of $S_n$ is the concatenation of $I_{z_n}$ with an arc $\Gamma_n$ starting at $z_n$ which has diameter 
    \[
        \diam(\Gamma_n) \asymp |x_n - y_n|.
    \] 
    The second side of $S_n$ is the image of the first side under $f^{[n-1]}$, and $f^{[n-1]}(\Gamma_n)$ is contained in $(f^{[n]})^{-1}(H) \backslash H$.
    The summit of $S_n$ connects the endpoints of $\Gamma_n$ and $f^{[n-1]}(\Gamma_n)$.
    \item Gluing $I_{z_n} \cup \Gamma_n$ with its image under $f^{[n-1]}$ gives us a conformal gluing map $\psi_n: (S_n,0) \to (\D,0)$ of $S_n$. 
    Under $\psi_n$, the action of $f^{[n]}$ on $S_n$ (modulo $f^{[n-1]}$) projects to a holomorphic map 
    \[
        \Rsec^n f := f_n: \Dom(f_n) \to \D,
    \]
    on some open domain $\Dom(f_n) \subset \D$ which has a neutral fixed point at $0$ with rotation number $\theta_n$.
    \item For $m \geq -1$, the map $\psi_n$ projects $\hat{Z}^{[n+m]} \cap \overline{S_n}$ to a pseudo-Siegel disk $\hat{Z}^{[m]}_n$ of $f_n$ which satisfies the following properties:
    \begin{enumerate}[label=\textnormal{(\alph*)}]
        \item $\hat{Z}^{[m]}_n$ is almost invariant under the iterate $f^{[m+1]}_n := f_n^{q_{[m+1]}(\theta_n)}$;
        \item $\hat{Z}^{[-1]}_n$ and $\hat{Z}^{[0]}_n$ are $K$-quasidisks where $K\geq 1$ depends only on the combinatorial threshold $\threshold$;
        \item there exist universal constants $R$ and $R'$ (independent of $\theta$, $n$, $m$) such that $0<R<R'<1$ and
            \[
            \D(0,R) \subset \hat{Z}^{[-1]}_n \subset \D(0,R').
            \]
    \end{enumerate}
    \item The map $\psi_n$ also projects $H \cap \overline{S_n}$ to a set $H_n$, called the Mother Hedgehog of $f_n$, which satisfies \textnormal{(1)--(4)} in Theorem \ref{thm:mother-hedgehog}. 
    The intersection of $H_n$ and $\psi_n(\partial S_n)$ is the internal ray of $H_n$ ending at the critical point of $f_n$.
    \item The arc $I_{z_n} \cup \Gamma_n$ can be chosen to depend uniformly continuously on $\theta \in \Irrat$ (as a subspace of $\TheCpt$). 
    Consequently, the gluing map $\psi_n$, the resulting map $f_n$, the pseudo-Siegel disks $\hat{Z}^{[m]}_n$, and the Mother Hedgehog $H_n$ of $f_n$ also depend uniformly continuously on $\theta$.
\end{enumerate}
\end{theorem}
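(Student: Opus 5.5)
The plan is to build the sectors $S_n$ inductively on $n$. At each level we work in the renormalized plane of $f_{n-1}$ and only then pull back to the plane of $f$. This reduces the statement to a single ``first-level'' construction that is uniform over all $\theta\in\Irrat$ and over all enriched rational limits in $\TheCpt$.

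\emph{The first level.} Let $g$ be any map arising at some level; for $n=1$ this is $f$ itself, with pseudo-Siegel disks $\hat Z^{[m]}$, Mother Hedgehog $H$, and combinatorics $\theta_0$.
\begin{enumerate}
\item Take the internal ray $I_{z_1}$ of $H$ landing at the precritical point $z_1\in\{x_1,y_1\}$. By Theorem \ref{thm:mother-hedgehog}(4), applied after pulling back under the univalent map $f^{-1}$ on $\hat Z$, this ray is a uniform quasiarc with bounded spiraling.
\item Extend $I_{z_1}$ outside $\hat Z^{[0]}$ by an arc $\Gamma_1$ of diameter $\asymp|x_1-y_1|$.
\item Choose $\Gamma_1$ so that $f(\Gamma_1)$ lies in $f^{-q_{[1]}}(H)\setminus H$. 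The natural choice is the external ray (or a hyperbolic geodesic in $\C\setminus\hat Z^{[0]}$) through $z_1$, truncated at a level where the first return $f^{[1]}$ is still univalent.
\end{enumerate}

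\emph{Existence of the arc.} Almost invariance and uniform quasiconformality of $\hat Z^{[0]}$ (Theorem \ref{thm:pseudo-siegel}(1),(2)) give a collar of definite modulus around $\hat Z^{[0]}$ on which $f$ and $f^{[1]}$ are univalent and move points only a bounded hyperbolic distance. This collar is what lets $\Gamma_1$ and $f(\Gamma_1)$ be disjoint, with comparable diameters, and the two sides together with a summit arc then bound a Jordan domain $S_1$.

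\emph{The gluing map.} The quotient of $S_1$ by $f^{[0]}=f$ along its sides is a Riemann surface homeomorphic to a disk. We uniformize it by $\psi_1$ normalized at $0$; this produces $f_1$ as the projection of $(f^{[1]},f^{[1]}\circ f^{-1})$, a holomorphic map with rotation number $\theta_1$ by the combinatorics of Proposition \ref{prop:q[n]}(3). Properties (3) and (4) come from checking that the gluing identifies the dams and fjords of $\hat Z^{[1+m]}\cap\overline{S_1}$ compatibly. The bounds $R,R'$ in (3c) follow from a Koebe/modulus argument using the definite collar.

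\emph{Induction, continuity, and the main obstacle.} For $n\ge2$ we perform the level-one construction for $f_{n-1}$ and define $S_n:=\psi_{n-1}^{-1}(S^{\rm new})$. Nestedness is automatic, and $\diam\Gamma_n\asymp|x_n-y_n|$ follows from uniform bounds on $\psi_{n-1}$ near $H_{n-1}$. Continuity in $\TheCpt$ (item (5)) is obtained by making every choice canonical in terms of the Riemann maps $X_{m,\theta}$, which are uniformly continuous by Theorem \ref{thm:pseudo-siegel}(3). The step I expect to be the main obstacle is keeping the constants uniform through the induction. The renormalized maps $f_n$ are no longer polynomials, so we must show that they again satisfy pseudo-Siegel bounds with the same constants. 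The approach is to use precompactness: the projected disks $\hat Z^{[m]}_n$ are almost invariant with the universal constant $C$ of Theorem \ref{thm:pseudo-siegel}(1), which is inherited through the conformal gluing since hyperbolic distances in the complement are essentially preserved. The near-parabolic levels are handled by this dam control, with the fjords absorbing the degeneration.
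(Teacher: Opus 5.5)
The paper does not prove this statement. It is imported from Dudko--Lyubich \cite{DL26b}, so your sketch has to stand on its own, and it has two genuine gaps.

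\textbf{The choice of $\Gamma_1$ contradicts item (1).} Item (1) requires $f^{[n-1]}(\Gamma_n)\subset (f^{[n]})^{-1}(H)\setminus H$. Every preimage of $H$ lies in the filled Julia set of $f$, whereas an external ray of $f$ lies in the basin of infinity. So an external ray through $z_1$, truncated or not, fails this requirement by construction. A conformal ray or hyperbolic geodesic of $\C\setminus\hat Z^{[0]}$ has no reason to lie in the thin set $(f^{[1]})^{-1}(H)$ either. The tail has to run inside the component of $(f^{[n]})^{-1}(H)$ attached to $H$ at the precritical point $f^{[n-1]}(z_n)$, the local ``twin'' of $H$ there. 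One choice is a lift under $f^{[n]}$, into that component, of an end-piece of the internal ray of $H$ landing at $f^{[n]}\bigl(f^{[n-1]}(z_n)\bigr)$, cut off at diameter $\asymp|x_n-y_n|$. This makes $f^{[n]}$ fold the tail of the second side back into $H$. After gluing, the slit then continues past the critical point of $f_n$ inside $f_n^{-1}(H_n)\setminus H_n$.

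\textbf{The uniformity is not established.} You flag this step yourself, but your proposed resolution does not work.
\begin{itemize}
\item The uniformity in (3)(b)--(c) and (5), over all $n$ and all $\theta$, is the real content of \cite{DL26b}. Precompactness of $\towsec$ (Theorem \ref{thm:renorm-limits}) is presented in the paper as a consequence of these sectorial bounds, so invoking it here is circular.
\item Almost invariance does not transfer through $\psi_n$ just because ``hyperbolic distances in the complement are essentially preserved.'' The map $\psi_n$ is defined only on $S_n$, the gluing changes the complementary domain, and the hyperbolic metric of the complement of $\hat Z^{[m]}_n$ is not a pushforward of that of $\C\setminus H$.
\item Your induction loses depth at every step. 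The level-one construction uses bounded geometry of both $\hat Z^{[-1]}$ and $\hat Z^{[0]}$ of the input map; note that Theorem \ref{thm:pseudo-siegel}(2) gives this only for $\hat Z^{[-1]}$. But the output disk $\hat Z^{[0]}_n$ is the projection of $\hat Z^{[1]}_{n-1}$, for which no uniform quasiconformality is among your inputs. Any loss in constants then compounds with $n$.
\item Likewise, $\operatorname{diam}\Gamma_n\asymp|x_n-y_n|$ and the ``definite collar on which $f^{[1]}$ is univalent'' are unproved quantitative claims of exactly the kind at issue.
\end{itemize}
What is actually needed is a scale-invariant estimate in the plane of $f$ itself. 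Concretely: bounded geometry of $\hat Z^{[n-1]}$ and $\hat Z^{[n]}$ relative to the level-$n$ combinatorial interval between $x_n$ and $y_n$, with constants independent of $n$ and $\theta$. Together with control of the welding along $I_{z_n}\cup\Gamma_n$, this would give (3)(b)--(c) after gluing.
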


For $n \geq 1$, define $S_n^1$ to be the sector $\psi_n(S_{n+1})$ in the dynamical plane of $f_n$. Gluing the two sides of $S_n^1$ via $f_n$ projects the first return map of $f_n$ onto $S_n^1$ to $f_{n+1}: \Dom(f_{n+1}) \to \D$. See Figure \ref{fig:sec-renorm}.

\begin{figure}
    \centering
    
    \begin{tikzpicture}
    \node[anchor=south west, inner sep=0] (image) at (0,0) {\includegraphics[width=1\linewidth]{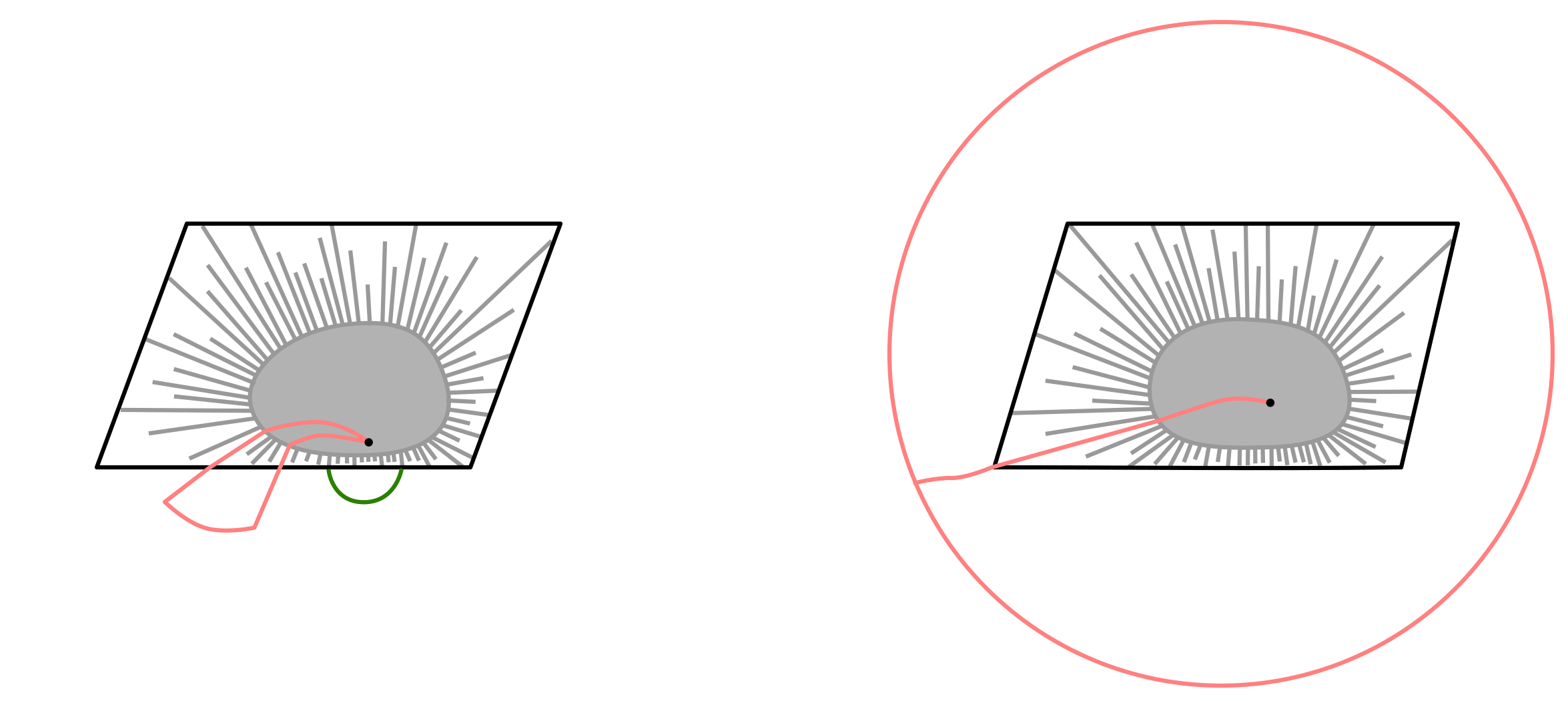}};
    \begin{scope}[
        x={(image.south east)},
        y={(image.north west)}
    ]
        \node [gray!30!black] at (0.24,0.47) {\scalebox{0.85}{$H_n$}};
        \node at (0.28,0.225) {\textcolor{green!50!black}{$\hat{Z}^{[-1]}_{n+1}$} $\supset\hat{Z}^{[0]}_n$ };
        \node [gray!30!black] at (0.8,0.49) {\scalebox{0.85}{$H_{n+1}$}};
        \node [black] at (0.8,0.25) {$\hat{Z}^{[-1]}_{n+1}$};
        
        \node [red] at (0.1,0.22) {$S_n^1$};
        \node [black] at (0.225,0.89) {$f_n$};
        \draw[-latex] (0.2,0.72) .. controls (0.18,0.87) and (0.27,0.87) .. (0.25,0.72);
        \node [black] at (0.775,0.89) {$f_{n+1}$};
        \draw[-latex] (0.75,0.72) .. controls (0.73,0.87) and (0.82,0.87) .. (0.80,0.72);
        \draw[red,-latex] (0.39,0.51) -- (0.54,0.51);
        \node [red] at (0.465,0.59) {\small $\psi_{n+1} \circ \psi_n^{-1}$};
    \end{scope}
\end{tikzpicture}
    
    \caption{Sector renormalization preserves pseudo-Siegel disks and Mother Hedgehogs.}
    \label{fig:sec-renorm}
\end{figure}

\subsection{Renormalization towers}
\label{ss:renormalization-limits}

Denote the collection of sector renormalizations of neutral quadratic polynomials by
\begin{align*}
    \orbsec := \{ (\Rsec^n f_\theta \: : \: \theta \in \Theta, n \geq 0 \}.
\end{align*}
It is forward invariant under sector renormalization.
Equip $\orbsec$ with the topology of uniform convergence on compact subsets. 
Define the infinite sequence space
\[
    \towsec = \left\{ \seq{\Rsec^n f  }_{n\geq 0} \: : \: f \in \orbsec \right\},
\]
equipped with the topology where for every $k \geq 0$, the projection map $\towsec  \to \orbsec, \seq{ \Rsec^n f  }_{n\geq 0} \mapsto \Rsec^k f$ is continuous.

It was demonstrated in \cite{DL26b} that renormalization sectors have uniformly bounded geometry.
This leads to an extension of Theorem \ref{thm:sectorial-bounds} below.

\begin{theorem}[\cite{DL26b}]
\label{thm:renorm-limits}
    The spaces $\orbsec$ and $\towsec$ are pre-compact in the following sense. 
    For any infinite sequence of maps $g_1$, $g_2$, $ g_3$, $\ldots$ in $\orbsec$, there is a subsequence $N_k \to \infty$ such that for every $n \geq 0$, the following properties hold.
    \begin{enumerate}[label = \textnormal{(\arabic*)}]
        \item In $\TheCpt$, the rotation number of $g_{N_k}$ converges to an element $\ttheta = \seq{ (\varepsilon_m, \bar{a}_m) }_{m \geq 1}$ of $\overline{\Theta}$ as $k \to \infty$.
        \item The sequence of maps $f_{n,k} := \Rsec^n g_{N_k}: \Dom(f_{n,k}) \to \D$ converges to a holomorphic map $f_n : \Dom(f_n) \to \D$ uniformly on compact subsets of some open domain $\Dom(f_n) \subset \D$.
        \item The map $f_n$ admits a simple critical point $v_{-1}(g_n)$ and a neutral fixed point at $0$ with rotation number $\bar{\mu}(\ttheta_n)$ where $\ttheta_n = \shift^n(\ttheta)$. 
        If $f_n'(0) = 1$ or equivalently $\bar{a}_{n+1} = \infty$, then $0$ is a simple parabolic fixed point of $f_n$.
        \item The domain $\Dom(f_n)$ contains a nested sequence of full connected compact sets 
        \[
            \ldots \subset \hat{Z}^{[1]}_n \subset \hat{Z}^{[0]}_n \subset \hat{Z}^{[-1]}_n = \hat{Z}_n
        \]
        called the \emph{pseudo-Siegel pinched disks} of $f_n$. 
        These have the following properties.
        \begin{enumerate}[label = \textnormal{(\alph*)}]
            \item Each $\hat{Z}^{[m]}_n$ is the Hausdorff limit of the level $[m]$ pseudo-Siegel disk of $f_{n,k}$ as $k \to \infty$.
            \item $\hat{Z}^{[-1]}_n$ and $\hat{Z}^{[0]}_n$ are closed $K(\threshold)$-quasidisks.
            \item There exist universal constants $R$ and $R'$ with $0<R<R'<1$ such that
            \[
            \D(0,R) \subset \hat{Z}^{[-1]}_n \subset \D(0,R').
            \]
        \end{enumerate} 

        \item Denote $f_n^{[0]} = f_n$. Consider the time semigroup $\Time^n= \Time_{\ttheta_n}$ of $\ttheta_n$ and the corresponding return times $\qq_{n,[m]} = \qq_{[m]}(\ttheta_n) \in \Time^n$, $m \geq 0$. 
        The semigroup generated by the iterates of $f_{n,k}$ converges to a semigroup 
        \[
            \mathcal{F}_n = \{f_n^p \}_{p \in \Time^n}
        \]
        of commuting holomorphic maps parametrized by $\Time^n$ in the following sense.
        \begin{enumerate}[label = \textnormal{(\alph*)}]
            \item For every $m \geq 0$, the pre-renormalizations $f_{n,k}^{[m]}: \Dom(f_{n,k}^{[m]}) \to \D$ converge to a holomorphic map $f_n^{[m]} = f_n^{\qq_{n,[m]}}: \Dom(f_n^{[m]}) \to \D$ uniformly on compact subsets as $k \to \infty$. 
            In $\D \backslash \{0\}$, $\Dom(f_n^{[m]}) \backslash \{0\}$ is an open neighborhood of $\hat{Z}^{[m-1]}_n \backslash \{0\}$.
            Moreover, the map $f_n^{[m]}$ is injective on $\hat{Z}^{[m-1]}_n$, and $\hat{Z}^{[m-1]}_n$ is almost invariant under $f_n^{[m]}$. 
            \item If $\bar{a}_{n+m+1} < \infty$, then $f_n^{[m+1]}$ is an iterate of $f_n^{[m]}$. 
            Else, $f_n^{[m+1]}$ is a ``Lavaurs map'' of $f_n^{[m]}$.
        \end{enumerate}
        
        \item The compact set
        \[
            H_n := \bigcap_{ m \geq -1} \hat{Z}^{[m]}_n
        \]
        satisfies the following properties.
        \begin{enumerate}[label = \textnormal{(\alph*)}]
            \item $H_n$ contains $0$ and the critical point $v_{-1}(g_n)$.
            \item $H_n$ is the Hausdorff limit of the Mother Hedgehog of $f_{n,k}$, and the corresponding Riemann mapping $\C \backslash \overline{\D} \to \C \backslash H(f_{n,k})$ sending $1$ to the critical value $v_0(f_{n,k})$ converges uniformly on compact subsets to the Riemann mapping of the complement of $H_n$.
            \item Every map in $\mathcal{F}_n$ restricts to a self-homeomorphism of $H_n$.
            \item $H_n$ satisfies the properties stated in Theorem \ref{thm:mother-hedgehog} (2)--(4) for $f_n$.
            \item The boundary of $H_n$ is equal to the \emph{postcritical set} $P_n$, that is, the closure of the orbit of the critical point $v_{-1}(f_n)$ of $f_n$ under the semigroup $\mathcal{F}_n$.
        \end{enumerate}

        \item 
        As $k \to \infty$, the nest of renormalization sectors $S^m(f_{n,k})$, $m \geq 1$ of $f_{n,k}$ described in Theorem \ref{thm:sectorial-bounds} converges to a nest of sectors 
        \[
        S_n^1 \supset S_n^2 \supset S_n^3 \supset \ldots
        \]
        in the Carath\'eodory topology of open disks with basepoint $0$.
        These sectors satisfy an analog of Theorem \ref{thm:sectorial-bounds} (1)--(4) for $f_n$. 
        In particular, for any $m \geq -1$ and $j \geq 1$, the conformal gluing map for the sector $S_n^j$ projects $f_n^{[m+j+1]}$ and $\hat{Z}^{[j+m]}_n$ into $f_{n+j}^{[m+1]}$ and $\hat{Z}^{[m]}_{n+j}$.
    \end{enumerate}
\end{theorem}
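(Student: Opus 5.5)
The plan is a diagonal compactness argument. All the uniform geometric input comes from Theorem \ref{thm:pseudo-siegel} and Theorem \ref{thm:sectorial-bounds}, so the limit objects inherit their properties by semicontinuity.

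\textbf{Step 1: choose the subsequence.}
\begin{itemize}
\item Since $\TheCpt$ is a Cantor set, hence compact metrizable, we first pass to a subsequence along which the combinatorics of $g_{N_k}$ converge to some $\ttheta$. This is item (1).
\item For each fixed $n$ and $m$, the following quantities have uniform bounds:
  \begin{itemize}
  \item the maps $f_{n,k}=\Rsec^n g_{N_k}$ and their pre-renormalizations $f_{n,k}^{[m]}$ take values in $\D$;
  \item the pseudo-Siegel disks $\hat Z^{[m]}(f_{n,k})$ are $K(\threshold)$-quasidisks squeezed between $\D(0,R)$ and $\D(0,R')$ (Theorem \ref{thm:sectorial-bounds}(3));
  \item the Mother Hedgehogs and the sectors $S^j(f_{n,k})$ have uniformly bounded geometry.
  \end{itemize}
\item Montel's theorem, compactness of the Hausdorff topology on compact subsets of $\overline{\D}$, and compactness of normalized $K$-quasiconformal Riemann maps then let us extract, for each $(n,m,j)$, a further subsequence along which all these objects converge.
\item A Cantor diagonal extraction over the countable index set $(n,m,j)$ produces one subsequence that works simultaneously for every $n$.
\end{itemize}

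\textbf{Step 2: identify the limits.}
\begin{itemize}
\item \emph{Domains.} Define $\Dom(f_n)$ and $\Dom(f_n^{[m]})$ as the kernels of the domains. Almost invariance of $\hat Z^{[m-1]}$ under $f^{[m]}$ (Theorem \ref{thm:pseudo-siegel}(1)), with uniform hyperbolic constant $C$, gives a definite neighborhood of $\hat Z^{[m-1]}(f_{n,k})\setminus\{0\}$ inside the domains. This neighborhood persists in the limit, giving items (2) and (5a).
\item \emph{Pseudo-Siegel disks.} Items (4b) and (4c) pass to Hausdorff limits because $K$-quasidisks containing a fixed round disk form a closed family.
\item \emph{Mother Hedgehogs.} Set $H_n=\bigcap_m \hat Z^{[m]}_n$. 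Theorem \ref{thm:pseudo-siegel}(3) and Theorem \ref{thm:mother-hedgehog}(5) give convergence of the normalized Riemann maps of $\C\setminus H(f_{n,k})$, which yields (6b).
\item \emph{Injectivity.} Injectivity of $f_n^{[m]}$ on $\hat Z^{[m-1]}_n$, and the homeomorphism property on $H_n$, follow from Hurwitz's theorem.
\item \emph{Critical point.} A simple critical point persists for the same reason: $f_{n,k}$ are uniformly quadratic-like near the critical point.
\item \emph{Rotation number.} Continuity of $\bar\mu$ from Proposition \ref{prop:rotation-number-map} gives the rotation number in (3). When $\bar a_{n+1}=\infty$ the multiplier tends to $1$. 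The fixed point is simple parabolic because the limit is non-linear and has a single critical orbit attracted to $0$.
\end{itemize}

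\textbf{Step 3: the semigroup structure (5).}
\begin{itemize}
\item If $\bar a_{n+m+1}<\infty$, then $b_{n+m+1}$ is eventually constant along the subsequence, so the relation $\qq_{[m+1]}=b\,\qq_{[m]}-\varepsilon\varepsilon'\qq_{[m-1]}$ holds identically. The limit $f_n^{[m+1]}$ is then the corresponding composition of limits.
\item If $\bar a_{n+m+1}=\infty$, the number of iterates of $f_{n,k}^{[m]}$ composing $f_{n,k}^{[m+1]}$ tends to infinity. The limit is then a Lavaurs map of the parabolic map $f_n^{[m]}$; here I would invoke standard parabolic implosion (continuity of perturbed Fatou coordinates).
\item The general elements $f_n^p$, $p\in\Time^n$, arise as limits of compositions indexed by the generators listed after Proposition \ref{prop:chronological-order-01}. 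The chronological order matches the ordering of return times along the subsequence, which is why the limit semigroup is parametrized by $\Time^n$.
\end{itemize}

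\textbf{Step 4: sectors and the postcritical set ((6e) and (7)).}
\begin{itemize}
\item Sectors converge in the Carathéodory topology because their sides, the arcs $I_{z}\cup\Gamma$ of Theorem \ref{thm:sectorial-bounds}(1), are uniform quasiarcs of comparable diameter. Theorem \ref{thm:sectorial-bounds}(5) provides the needed continuity.
\item Gluing maps converge by Carathéodory's kernel theorem, and the projection relations hold in the limit.
\item Finally, $\partial H_n=P_n$. The inclusion $\supseteq$ is immediate from invariance. For $\subseteq$, I would use that $\partial H(f_{n,k})$ equals the postcritical set of $f_{n,k}$, together with density: every boundary point is a limit of critical orbit points $v_{p_k}$. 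The return times $p_k$ then converge in the ordered-group sense to elements of $\Time^n\cup\{0\}$.

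I expect this last density statement to be the main obstacle. It can fail if orbit points escape to ``time $\infty$'' inside parabolic fjords, and controlling that requires Theorem \ref{thm:pseudo-siegel}(1) to keep critical orbits of $f_{n,k}$ uniformly away from the collapsing fjords.
\end{itemize}
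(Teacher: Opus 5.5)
\textbf{Main gap: identifying the limit objects.} The paper gives no proof of this theorem. It is imported from \cite{DL26b}, with only the remark that renormalization sectors have uniformly bounded geometry at every depth, which upgrades Theorem~\ref{thm:sectorial-bounds} to precompactness. Your diagonal extraction is in that spirit. However, the steps that \emph{identify} the limit objects do not follow from what you cite.
\begin{itemize}
\item \emph{Depth.} Elements of $\orbsec$ are $\Rsec^{M}f_\theta$ with $M$ unbounded. Theorem~\ref{thm:pseudo-siegel}(3), Theorem~\ref{thm:mother-hedgehog}(5) and Theorem~\ref{thm:sectorial-bounds}(5) are only continuity statements in $\theta$ at a fixed depth.
\item \emph{Item (6b) is an interchange of limits.} Any Hausdorff limit of $H(f_{n,k})$ lies in $\bigcap_m\hat Z^{[m]}_n$ for free. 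Equality, however, needs $\hat Z^{[m]}(f_{n,k})$ to lie in a $\delta_m$-neighbourhood of $H(f_{n,k})$ with $\delta_m\to0$ \emph{uniformly in $k$}. That is, level-$m$ fjords (equivalently level-$m$ triangles of \S\ref{ss:renorm-tiling}) must shrink uniformly. Even at fixed depth, near an enriched rational point this does not follow from continuity in $\theta$. It is exactly what the bounded geometry of sectors has to be used for (uniform contraction of inverse gluing maps), and your argument never establishes it.
\item \emph{Item (6e).} The same estimate, plus the near-parabolic fjord geometry, is what (6e) needs, and your mechanism there fails. The times $p_k$ need not converge to elements of $\Time^n\cup\{0\}$. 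If $\bar a_{n+1}=\infty$, nothing in $\Time^n$ lies between all $j\in\N$ and all $\qq^n_{[1]}-j$. So times with $p_k\to\infty$ and $\qq^n_{[1]}-p_k\to\infty$ have no limit, and digits at depth tending to infinity give another escape.
\item \emph{This escape cannot be prevented.} Almost invariance of dams says nothing of the kind. In the uniformization of Theorem~\ref{thm:uniform-fjords}, the middle intervals $E_j$ of a long near-parabolic cycle accumulate at $0\in[-1,1]$, right next to the images $\pm i\theta$ of the two fixed points. So the critical orbit does enter the degenerating region. What must be shown is that such limits still lie in $P_n$. Here they converge to the parabolic point $0$, which is in $P_n$ because $f_n^j(v^n_0)\to 0$. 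Deep digits are absorbed by the uniform shrinking above.
\end{itemize}

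\textbf{Two smaller points.}
\begin{itemize}
\item \emph{Domains in (5a).} Almost invariance (Theorem~\ref{thm:pseudo-siegel}(1)) says that dams are quasi-geodesics. It says nothing about where $f^{[m]}_{n,k}$ is defined. The neighbourhood in (5a) must therefore come from the sector construction. It also cannot be uniform at $0$ once Lavaurs maps appear, which is why $0$ is removed in the statement.
\item \emph{Simple parabolicity.} Your argument is Fatou's petal count, which needs a proper (e.g.\ polynomial-like) map. Here $f_n$ is only defined on a subdomain of $\D$, so extra petals could a priori be fed by $\partial\Dom(f_n)$. Count fixed points instead. For near-parabolic $f_{n,k}$, the only fixed points in $\hat Z^{[-1]}(f_{n,k})\supset\D(0,R)$ are $0$ and $\beta$: the hedgehog is rotated, fjords of level $\geq 1$ are moved off themselves, and the top fjord contains only $\beta$. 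Hurwitz then bounds the multiplicity of $0$ for $f_n$ by $2$, hence it is exactly $2$. This non-degeneracy is also what your appeal to standard parabolic implosion in (5b) silently requires.
\end{itemize}
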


Most of the notations introduced in this theorem are recorded in \S\ref{sss:notation-for-towsec}.

Note that the structure of the pseudo-Siegel pinched disks $\hat{Z}_n^{[m]}$ are similar to those for quadratic polynomials as described previously.
Namely, each $\hat{Z}_n^{[m]}$ is the union of the Mother Hedgehog $H_n$ together with the level $s$ principal parabolic fjord and its preimages up to time $\qq^n_{[s]}$ for all $s \geq m$ with $\bar{a}_{n+s+1} \geq \threshold$.

As we include the limiting maps $f_0$ and the limiting renormalization orbits $\seq{f_n}_{n\geq 0}$, we obtain compactifications $\overline{\orbsec}$ and $\overline{\towsec}$ of $\orbsec$ and $\towsec$ respectively. 
The space $\overline{\orbsec}$ is equipped with the topology of uniform convergence on compact subsets.
The space $\overline{\towsec}$ is equipped with the topology such that
\begin{itemize}
    \item the projection map
    \[
        \overline{\towsec} \to \overline{\orbsec}, \qquad \seq{f_n}_{n\geq 0} \mapsto f_0
    \]
    is continuous;
    \item the first pre-renormalization $\seq{f_n}_{n\geq 0} \mapsto f_0^{[1]}$ is continuous with respect to the topology of uniform convergence on compact subsets;
    \item the shift map (the renormalization operator)
    \[
        \Rsec: \overline{\towsec} \to \overline{\towsec}, \quad \seq{f_0,f_1,f_2,\ldots} \mapsto \seq{f_1,f_2,f_3,\ldots}
    \]
    is continuous.
\end{itemize} 

Every forward tower $\fbold = \seq{f_n}_{n\geq 0}$ in $\overline{\towsec}$ comes with an associated rotation number 
\[
    \ttheta = \ttheta(\fbold) = \seq{ (\varepsilon_n, \bar{a}_n)}_{n\geq 1} \in \overline{\Theta}.
\]
The rotation number as a map 
$\ttheta: \overline{\towsec} \to \TheCpt$
is continuous\footnote{We will not need this fact. The proof will be provided in \cite{DLL}.} and satisfies the equation
\[
    \shift \circ \ttheta = \ttheta \circ \Rsec.
\]

\subsection{The top regularized fjord}
\label{ss:fjord}

The next theorem states that parabolic fjords have uniform size. 
Consider the combinatorial threshold $\threshold \in \N$ in the construction of pseudo-Siegel disks in the previous subsection. 

\begin{definition}
    We say that a map $f \in \overline{\orbsec}$ is \emph{near-parabolic} if the rotation number $\theta$ of $f$ at the fixed point $0$ satisfies $|\theta| < 1/\threshold$, (\emph{simply}) \emph{parabolic} if $\theta = 0$.
\end{definition}

\begin{definition}
    For any orbit $\fbold = \seq{f_n}_{n\geq 0} \in \overline{\towsec}$ with $f_n$ being near-parabolic for some $n \geq 0$, the set
\[
    \hat{V}_n = \hat{V}_n(\fbold) := \overline{ \hat{Z}_n^{[-1]}(\fbold) \backslash \hat{Z}_n^{[0]}(\fbold) },
\]
    which is non-empty, will be called the $n$\textsuperscript{th} \emph{top regularized fjord} of $\fbold$.
\end{definition}

\begin{theorem}[Uniformization of fjords]
\label{thm:uniform-fjords}
    There exist uniform constants $K>1$ and $C>0$ such that the following holds. 
    Consider $\fbold=\seq{ f_n  }_{n\geq 0} \in \overline{\towsec}$ with combinatorics $\thetabold = \seq{ (\varepsilon_n, \bar{a}_n)  }_{n\geq 1}$.
    Suppose $f_0$ is near-parabolic with rotation number $\theta = \bar{\mu}(\thetabold)$.
    \begin{enumerate}
        \item The top regularized fjord $\hat{V}_0(\fbold)$ contains a unique fixed point $\beta$ of $f_0$ satisfying
        \[
            C^{-1} |\theta| \leq |\beta| \leq C |\theta|.
        \]
        The fixed point $\beta$ is repelling if $\theta \neq 0$, parabolic if $\theta =0$.
        \item There exists a $K$-quasiconformal map $\chi_{\fbold} : \C \to \C$ such that
        \begin{align*}
            \chi_{\fbold} \big( \hat{Z}_0^{[-1]}(\fbold) \big) &= \overline{\D}, \qquad
            & \chi_{\fbold}(0) &= i\theta, \\
            \chi_{\fbold}(
            \hat{V}_0 \big( \fbold) \cap \hat{Z}_0^{[0]}(\fbold) \big) &= [-1,1] \subset \R,\qquad& \chi_{\fbold} (\beta) &= -i\theta.
        \end{align*}
    \end{enumerate}
\end{theorem}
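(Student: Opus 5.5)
The plan is to prove Theorem~\ref{thm:uniform-fjords} by compactness. I would reduce to the limiting parabolic case $\theta=0$ and then transport its structure to nearby near-parabolic maps using the uniform continuity in Theorem~\ref{thm:sectorial-bounds}(5) and Theorem~\ref{thm:renorm-limits}. The key point is that all the objects $\hat{Z}^{[-1]}_0$, $\hat{Z}^{[0]}_0$, $H_0$ and $f_0$ depend continuously on $\fbold$ in $\overline{\towsec}$. The near-parabolic towers, i.e.\ those with $\bar a_1\ge \threshold$ including $\bar a_1=\infty$, form a compact subset. So every uniform constant $C$, $K$ will come from a contradiction argument: if the constants blew up along a sequence $\fbold_k$, we would extract a convergent subsequence and contradict the structure of the limit.

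\textbf{Step 1: the fixed point $\beta$.} For each $\fbold$ with $\theta\neq 0$, $f_0$ is a perturbation of a parabolic map, and $0$ together with a second fixed point $\beta$ forms the pair of fixed points that merge as $\theta\to 0$. Write $f_0(z)=e^{2\pi i\theta}z+a_2z^2+\dots$, where $|a_2|\asymp 1$ uniformly on the compact family because the parabolic limits have simple parabolic points (Theorem~\ref{thm:renorm-limits}(3)). Applying Rouch\'e to $f_0(z)-z$ on a disk of radius $C|\theta|$ gives exactly two fixed points, $0$ and $\beta$, with $\beta\approx (1-e^{2\pi i\theta})/a_2$. This yields $|\beta|\asymp|\theta|$, and the multiplier $f_0'(\beta)\approx 2-e^{2\pi i\theta}$ is repelling since its modulus exceeds $1$. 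To locate $\beta$ inside $\hat V_0$, I would argue by the Lefschetz/Snail-lemma principle. The principal fjord $V^{[0]}_{0}$ is bounded by the dam $d^{[0]}_0$, which is almost invariant by Theorem~\ref{thm:pseudo-siegel}(1), and $f_0$ maps the fjord across itself with combinatorial rotation zero. Hence a fixed point lies in its closure, and it cannot be $0$ or lie in $H_0$. Uniqueness follows from the Rouch\'e count. In the parabolic limit the fjord degenerates to a petal-like region at $0$, and $\beta=0$.

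\textbf{Step 2: the uniformization $\chi_{\fbold}$.} Theorem~\ref{thm:pseudo-siegel}(2) and Theorem~\ref{thm:renorm-limits}(4b) already give that $\hat Z^{[-1]}_0$ and $\hat Z^{[0]}_0$ are uniform quasidisks. The set $\hat V_0\cap\hat Z^{[0]}_0$ is the arc of $\partial\hat Z^{[0]}_0$ along the fjord, running from $x'_0$ to $y'_0$ through the fjord interior, that is, the dam side. I would construct $\chi$ in three stages:
\begin{enumerate}
\item uniformize $\hat Z^{[-1]}_0$ onto $\overline{\D}$ by a quasiconformal extension of the Riemann map;
\item show that the image of the arc $\hat V_0\cap\hat Z^{[0]}_0$ is a uniform quasiarc that cuts $\overline{\D}$ into two pieces of bounded geometry;
\item straighten that arc to $[-1,1]$ and normalize the images of $0$ and $\beta$ to $i\theta$ and $-i\theta$ by a further quasiconformal self-map of $\overline{\D}$ fixing $[-1,1]$.
\end{enumerate}
Stage (3) needs $0$ and $\beta$ to lie on opposite sides of the arc at distance comparable to $|\theta|$ relative to the scale of the fjord. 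I would take the fjord neck width to be $\asymp|\theta|$ in the same normalization. This fits Step 1 and the quasi-invariance of the dam, which is a hyperbolic geodesic up to bounded error. The adjustment moving $0\mapsto i\theta$ and $\beta\mapsto -i\theta$ is then a bounded-dilatation map supported near the arc, with dilatation controlled by the ratio of the distances.

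\textbf{Main obstacle.} The hardest step is making the constant $K$ uniform as $\theta\to 0$, when the fjord pinches and the points $0$ and $\beta$ collide. Compactness alone fails there, because the normalization $\chi(0)=i\theta$, $\chi(\beta)=-i\theta$ degenerates. I would handle it by rescaling by $1/|\theta|$ near $0$. In the rescaled coordinate, Fatou or perturbed-Fatou coordinates show that the neck geometry converges to a model in which $0$ and $\beta$ sit symmetrically across the dam. Combining this model with the compactness argument far from the neck gives the uniform $K$.
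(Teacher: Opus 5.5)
\paragraph{The decisive step is assumed, and the sketch for it targets the wrong curve.}
Your plan handles the soft parts by compactness, but the one non-soft ingredient of part (2) is never proved. Beyond compactness, the theorem says that the arc $J=\hat V_0\cap\hat Z^{[0]}_0$ separates $0$ from $\beta$ non-degenerately at scale $|\theta|$, roughly $\operatorname{dist}(0,J)\asymp\operatorname{dist}(\beta,J)\asymp|\theta|$. Quasisymmetry then places $0$ and $\beta$ ``opposite'' each other across $J$. You simply posit this (``I would take the fjord neck width to be $\asymp|\theta|$'').

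Moreover, $J$ is not the dam side. It is the side of the fjord along $\partial\hat Z^{[0]}_0$, the hedgehog side, and it contains the pre-critical points $v^0_{-j}$. The dam $d^{[0]}_0$ lies on $\partial\hat Z^{[-1]}_0$ and is sent to the lower semicircle, far from $\pm i\theta$. So $0$ and $\beta$ are separated by $J$, not by the dam, and the almost-geodesic property of the dam does not bear on the question.

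Rescaling by $1/|\theta|$ identifies only the limiting dynamics, an elliptic M\"obius flow fixing $0$ and $\beta$ (essentially the paper's $\rho_\theta$). It does not say where $J$ sits in that model. A priori $J$ could pass at distance $o(|\theta|)$ from $0$ or from $\beta$, and excluding this is the content of the theorem.

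To complete your route, you would have to use that $J=\bigcup_j J_j$ with $f_0(J_j)=J_{j+1}$, where $J_1$ lies at definite distance from $0$ inside the petal by compactness. You would then show that such an orbit crosses the gate at bounded height in perturbed Fatou coordinates, which forces distance $\asymp|\theta|$ from both fixed points. That needs uniform perturbed Fatou coordinates on the compact family, which is machinery the paper deliberately avoids.

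\paragraph{How the paper does it, and smaller issues.}
The paper instead imports from \cite{DL22,DL26b} a harmonic-measure statement. Cut $J$ at $v^0_{-j}$, $\threshold'+1\le j\le b_1-\threshold'$, into arcs $J_j$. Seen from $0$ in $\hat Z^{[0]}_0$, each $J_j$ has harmonic measure comparable to that of $\partial\hat Z^{[0]}_0\setminus J$. Seen from $\beta$ in $\hat V_0$, each $J_j$ has harmonic measure comparable to that of the dam. The elliptic model $\rho_\theta$, with fixed point $i\lambda_\theta$ and $\lambda_\theta\sim\pi\theta(\threshold'+\tfrac12)$, has the same property for the arcs $E_j$ cut from $[-1,1]$ by the orbit of $-1$. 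Welding $J_j\mapsto E_j$ then yields $\chi$ directly.

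Several smaller points on your side also need fixing:
\begin{enumerate}
\item Stage (3) as stated fails even granting Euclidean comparability. A $K$-quasiconformal uniformization sends scale $|\theta|$ only to some scale between roughly $|\theta|^{K}$ and $|\theta|^{1/K}$. A local stretch ``controlled by the ratio of the distances'' therefore has unbounded dilatation. This can be repaired with a radial power map $z\mapsto z|z|^{c-1}$ with bounded $c$, which preserves $\overline{\D}$, $[-1,1]$ and the imaginary axis.
\item In Step 1, $|2-e^{2\pi i\theta}|-1\approx 4\pi^2\theta^2$ is of the same order as the neglected term $a_3\beta^2$. The sign of $|f_0'(\beta)|-1$ is governed by the real part of the iterative residue $1-a_3/a_2^2$ of the parabolic limits, so your estimate does not show that $\beta$ is repelling.
\item Rouch\'e gives uniqueness only in $\D(0,C|\theta|)$, whereas $\hat V_0$ has diameter $\asymp 1$.
\end{enumerate}
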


See Figure \ref{fig:fjord} for an illustration.

\begin{figure}
    \centering
    \begin{tikzpicture}[scale=1.2]
        \draw[line width=0.5pt,-latex] (-0.75,0) -- (0.75,0);
        \node [black, font=\bfseries] at (0,0.25) {$\chi_{\fbold}$};

        \draw[color=black, thick] (-5,-1) -- (-1.75,-1) .. controls (-1,-1) .. (-1,0) .. controls (-1,2) .. (-2.5,2) -- (-3.5,2) .. controls (-5,2) .. (-5,0.5) -- (-5,-1);
        \draw[color=black, thick] (-3.25,-1) .. controls (-3.25,-2) and (-1.75,-2) .. (-1.75,-1);
        \node [color=black, font=\bfseries] at (-4,1.5) {\small $\hat{Z}^{[0]}(\fbold)$};
        \node [color=black, font=\bfseries] at (-1.2,-1.3) {\small $\hat{Z}^{[-1]}(\fbold)$};
        
        \filldraw[color=black] (-2.5,-0.8) circle (0.03);
        \node [black, font=\bfseries] at (-2.5,-0.55) {\footnotesize $0$};
        \filldraw[color=black] (-2.5,-1.2) circle (0.03);
        \node [black, font=\bfseries] at (-2.5,-1.45) {\footnotesize $\beta$};
        
        \draw[color=black, fill=white, thick](3,0) circle (2);
        
        \draw[color=black, thick] (1,0) -- (5,0);
        
        \filldraw[color=black] (3,0.301) circle (0.03);
        \node [black, font=\bfseries] at (3,0.6) {\footnotesize $i\lambda_\theta$};
        \filldraw[color=black] (3,-0.301) circle (0.03);
        \node [black, font=\bfseries] at (3,-0.6) {\footnotesize $-i\lambda_\theta$};

        \node [black, font=\bfseries] at (-4.5,-1.5) {$f_0$};
        \draw[line width=0.5pt,-latex] (-4.3,-1.2) .. controls (-4.5,-1.8) and (-3.5,-1.8) .. (-3.7,-1.15);
        
        \node [black, font=\bfseries] at (1.8,0.75) {\footnotesize $\rho_\theta$};
        \draw[line width=0.5pt,-latex] (1.5,0.2) .. controls (1.5,0.7) and (2,0.7) .. (2,0.2);
\end{tikzpicture}
    \caption{Uniformization of the top level fjord}
    \label{fig:fjord}
\end{figure}

\begin{proof}
    Assume $\theta \neq 0$ and denote $b_1 = b_1(\theta)$.
    The existence of the other fixed point $\beta$ was proven in \cite{DL26b}. 
    By Theorem \ref{thm:renorm-limits}, there exists a universal constant $C>0$ such that whenever $f_0$ is near-parabolic, then $|(f_0)''(0)| > C$. 
    (Otherwise, there would be a degenerate parabolic bifurcation.) 
    By solving for the fixed point $\beta$ near $0$, we obtain the estimate in (1).

    For any closed Jordan disk $D$, any point $x$ in the interior of $D$, and any interval $E \subset \partial D$, we denote by $\omega(D,x,E)$ the harmonic measure of $E$ relative to the pointed domain $(D,x)$, that is, the probability of a Brownian motion in the interior of $D$ starting at $x$ to land at $E$.
    
    Consider the quasiarc
    \[
    J := \partial \hat{V}_0 \cap \partial \hat{Z}^{[0]}_0.
    \]
    The set of pre-critical points 
    \[
    \{v^0_{-j} \: : \: \threshold' + 1 \leq j \leq b_1 - \threshold' \}
    \]
    splits $J$ 
    into consecutive intervals $J_1, J_2, \ldots, J_{b_1 - 2\threshold'-1}$ where $f_0 (J_j) = J_{j+1}$.
    According to \cite{DL22, DL26b}, both $\hat{Z}_0^{[0]}$ and $\hat{Z}_0^{[-1]}$ are uniformly quasiconformal, and for all $j$,
    \begin{equation}
        \label{eqn:welding-1}
        \omega \big(\hat{Z}_0^{[0]}, 0, J_j \big) \asymp \omega \big( \hat{Z}_0^{[0]},0, \partial \hat{Z}_0^{[0]} \backslash J \big), \qquad
        \omega \big( V_0, \beta, J_j \big) \asymp \omega \big( V_0 ,0, \partial V_0 \backslash J \big) 
    \end{equation}

    Let $\rho_\theta$ be the unique real M\"obius transformation such that 
    \begin{enumerate}
    \item[(i)] $\rho_\theta^{b_1-2\threshold'-1}(-1)=1$,
    \item[(ii)] $\rho_\theta$ is elliptic having a fixed point $i \lambda_\theta$ in $\UHP$ with multiplier $e^{2\pi i/b_1}$.
    \end{enumerate}
    By elementary calculations, we have 
    \begin{equation}
    \label{eqn:lambda-fixed-point}
        \lambda_\theta 
        = \tan \frac{\pi( 2\threshold'+1)}{2b_1} \sim \pi \theta \left( \threshold'+\frac{1}{2} \right).
    \end{equation}
    The orbit of $-1$ under $\rho_\theta$ splits the interval $[-1,1]$ into consecutive intervals $E_1$, $E_2$, $\ldots$, $E_{b_1-2\threshold'-1}$ where $\rho(E_j) = E_{j+1}$.
    Since $\rho_\theta$ is elliptic, for any $j,k \in \{1,\ldots,b_1-2\threshold'-1\}$,
    \begin{equation}
        \label{eqn:welding-2}
        \omega ( \D \cap \UHP, i \lambda_n, E_j) \asymp \omega ( \D \cap \UHP, i \lambda_n, E_k) \asymp \omega ( \D \cap \UHP, i \lambda_n, \partial \D \cap \UHP).
    \end{equation}
    Then, (\ref{eqn:welding-1}) and (\ref{eqn:welding-2}) allow us to construct a simultaneous uniformization of $\hat{Z}_0^{[0]}$ and $\hat{Z}_0^{[-1]}$ in the following sense.
    There exists a uniformly quasiconformal map $\chi_{\fbold}: \C \to \C$ such that 
    \begin{align*}
    \chi_{\fbold}(\hat{Z}_0^{[-1]}) &= \overline{\D}, \qquad
            & \chi_{\fbold}(0) &= i\lambda_\theta, \\
            \chi_{\fbold}(\hat{Z}_0^{[-1]} \cap \hat{Z}_0^{[0]}) &= [-1,1] \subset \R,\qquad& \chi_{\fbold}(\beta) &= -i\lambda_\theta,
    \end{align*}
    and
    \[
        \chi_{\fbold}(J_j) = E_j \qquad \text{for all } j \in \{1,2,\ldots, b_1-2\threshold'-1\}.
    \]
    Up to post-composition with some other uniformly quasiconformal map, we can replace $\lambda_\theta$ with $\theta$ by virtue of (\ref{eqn:lambda-fixed-point}).

    The limiting parabolic case when $\theta = 0$ follows from the observation that
    as $\theta \to 0$, then $\rho_\theta$ converges to the real M\"obius transformation
    \[
        \rho_0(z) = \frac{z}{1- z/(\threshold'+\frac{1}{2})}
    \]
    which admits a simple parabolic fixed point at $\displaystyle \lim_{\theta \to 0} i\lambda_\theta = 0$.
\end{proof}

\subsection{Renormalization triangulation}
\label{ss:renorm-tiling}

Let us fix $\fbold = \seq{f_n}_{n\geq 0}$ in $\overline{\towsec}$ with combinatorics $\ttheta = \ttheta(\fbold) = \seq{ (\varepsilon_n, \bar{a}_n)}_{n\geq 1}$. 
We will adapt the notation associated to $\fbold$ summarized in \S\ref{sss:notation-for-towsec}.
Below, we will describe a dynamical triangulation 
$\hat{\Delta}^{[m]}_n = \hat{\Delta}^{[m]}_n(\fbold)$ 
of the pseudo-Siegel pinched disks $\hat{Z}^{[m]}_n$ for every $n \geq 0$ and $m \geq -1$.
 
For every $m \geq 1$, denote 
\[
    \check{\qq}^n_{[m]} := \qq^n_{[m]} - \varepsilon_{n+m} \varepsilon_{n+m+1} \qq^n_{[m-1]}.
\]
For $p \in \Kont^{n}$, we will denote by $I^n_p$ the internal ray of the $n$\textsuperscript{th} Mother Hedgehog $H_n$ starting at $0$ and ending at $v^n_p$.
Recall that for every $m \geq 1$, the boundary of the renormalization sector $S_n^m$ contains two internal rays $I^n_{-\qq^n_{[m]}}$ and $I^n_{-\check{\qq}^n_{[m]}}$.
Denote by
\[
\psi_{n,m} : S_n^m \to \D
\]
the conformal gluing map for $S_n^m$ projecting $f_n^{[m]}$ to $f_{n+m}$.
For $m=1$, we simply denote $\psi_n=\psi_{n,1}: S_n \to \D$.

Firstly, the triangulation $\hat{\Delta}^{[-1]}_n$ consists of two triangles, which are the closure of the connected components of $\hat{Z}^{[-1]}_n$ with the internal rays $I^n_{-1}$ and $I^n_0$ removed.
In general, for $m \geq 1$, we define $\hat{\Delta}^{[m-1]}_n$ out of $\hat{\Delta}^{[-1]}_{n+m}$ by pulling back and spreading around as follows.

The lift of $\hat{\Delta}^{[-1]}_{n+m}$ under $\psi_{n,m}$ is a triangulation of $\hat{Z}_n^{[m-1]} \cap S_n^m$ consisting of two elements $A_n^m$ and $\check{A}_n^m$.
The two triangles are separated by $I^n_0$.
Without loss of generality, we will assume that $A_n^m$ is bounded by $I^n_0$ and $I^n_{-\qq^n_{[m]}}$, and $\check{A}_n^m$ is bounded by $I^n_0$ and $I^n_{-\check{\qq}_{[m]}}$.
For $p \in \Time^n$, denote by $A_n^m(-p)$ (resp. $\check{A}_n^m(-p)$) the closure of the lift of the interior of $A_n^m$ (resp. $\check{A}_n^m$) under $f_n^p$ that contains the fixed point $0$ on its boundary.

\begin{lemma}
\label{lem:triangulation-first-return}
    The first return map of the semigroup $\mathcal{F}_n$ back to $\hat{Z}_n^{[m-1]} \cap S_n^m$ is the pair of maps
    \[
        f_n^{\qq^n_{[m]}}: \check{A}_n^m (-\qq^n_{[m]}) \to \check{A}_n^m (0), \quad 
        f_n^{\check{\qq}^n_{[m]}}: A_n^m(-\check{\qq}^n_{[m]}) \to A_n^m(0).
    \]
    In particular, for any point $z$ in $\hat{Z}_n^{[m-1]}$, there exists some time $p \in \Time^n \cup \{0\}$ such that $f_n^p(z)$ is in $\hat{Z}_n^{[m-1]} \cap S_n^m$; the smallest of such $p$ satisfies either $p< \qq^n_{[m]}$ or $p < \check{\qq}^n_{[m]}$.
\end{lemma}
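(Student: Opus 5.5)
The plan is to transfer the combinatorial statement of Proposition \ref{prop:q[n]}(3) for rigid rotations to the pseudo-Siegel pinched disk $\hat{Z}_n^{[m-1]}$ by means of a topological conjugacy on the Mother Hedgehog, and then to extend it from $H_n$ to the fjords using almost invariance.

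\textbf{Step 1: reduction to the hedgehog.} I would first treat the irrational case. By Theorem \ref{thm:mother-hedgehog}(3) and Theorem \ref{thm:renorm-limits}(6), $H_n$ is a bouquet of internal rays $I^n_p$. Each $f_n^p$, $p\in\Time^n$, acts as a homeomorphism sending $I^n_q$ to $I^n_{q+p}$. The cyclic order of the rays $I^n_p$ around $0$ agrees with the cyclic order of the points $v_p=\rotate_{\theta_n}^p(1)$ on $\partial\D$, since both are determined by the rotation number. Consequently the radial sector $\triangle_{\theta_n}(k,l)$ corresponds to the closed region of $\hat{Z}_n^{[m-1]}$ cut out by $I^n_k$ and $I^n_l$. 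Under this dictionary:
\begin{itemize}
\item the sector $S_n^m$ corresponds to $\triangle(-q_{[m]},-\check q_{[m]})$;
\item the triangle $A_n^m$ corresponds to $\triangle(0,-q_{[m]})$, and $\check A_n^m$ corresponds to $\triangle(0,-\check q_{[m]})$;
\item the pulled-back triangles $A_n^m(-p)$ and $\check A_n^m(-p)$ correspond to the rotated sectors.
\end{itemize}
Proposition \ref{prop:q[n]}(3) then reads off the two branches $f_n^{\qq^n_{[m]}}:\check A_n^m(-\qq^n_{[m]})\to\check A_n^m(0)$ and $f_n^{\check\qq^n_{[m]}}:A_n^m(-\check\qq^n_{[m]})\to A_n^m(0)$.

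\textbf{Step 2: the fjords.} For the part of $\hat{Z}_n^{[m-1]}$ lying in fjords of level $s\ge m-1$, I would use two facts. First, each fjord $V^{[s]}_j$ is bounded by a dam whose endpoints are precritical points on $\partial H_n$. Second, by Theorem \ref{thm:renorm-limits}(5a), $f_n^{[m]}$ is injective on $\hat{Z}_n^{[m-1]}$ and that set is almost invariant. Together these imply that the pullbacks of $A_n^m$ and $\check A_n^m$ under $f_n^p$ are well-defined closed triangles whose hedgehog parts are exactly those from Step 1. The fjord pieces attached between two rays are carried along by the dynamics, since fjords are defined as preimages of the principal fjord.

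\textbf{Step 3: enriched rational levels.} Here $\Kont^n$ is not $\Z$, so I would instead argue by approximation. Using Theorem \ref{thm:renorm-limits}, I would take $f_{n,k}\to f_n$ with irrational combinatorics, apply Steps 1--2 to each $f_{n,k}$, and pass to the limit. The return times $\qq_{[m]},\check\qq_{[m]}$ for $f_{n,k}$ converge in $\Time^n$, the chronological order being preserved by Proposition \ref{prop:chronological-order-01}. The sectors converge by Theorem \ref{thm:renorm-limits}(7), and the iterates converge to the corresponding elements of $\mathcal F_n$.

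\textbf{Step 4: the final claim.} The triangles $A_n^m(-p)$ for $0\le p<\check\qq^n_{[m]}$ and $\check A_n^m(-p)$ for $0\le p<\qq^n_{[m]}$ tile $\hat{Z}_n^{[m-1]}$; this is the image under the dictionary of the corresponding tiling of $\overline{\D}$. Hence every $z$ enters $S_n^m$ at a time $p$ bounded by the relevant return time.

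The main obstacle is the limiting Step 3 together with the fjord bookkeeping. One must check that the combinatorial tiling survives in the limit: when a level is parabolic, infinitely many tiles accumulate, namely the Lavaurs-type returns with $k\qq_{[s]}<\qq_{[s+1]}$. I would handle this by controlling the tiles uniformly through the quasiconformality of Theorem \ref{thm:renorm-limits}(4b), which keeps them from degenerating.
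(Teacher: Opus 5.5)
Your argument is sound in outline, but it takes a different route from the paper. The paper proves only the case $m=1$ directly, reading it off from the way $\hat{Z}^{[0]}_n$ is built from principal dams of level $\ge 1$ and their pullbacks. It then obtains $m\ge 2$ by induction. The natural inductive step is to lift through $\psi_n$, which carries $S^m_n$, $\hat{Z}^{[m-1]}_n\cap S^1_n$ and the first return of $\mathcal{F}_n$ to $S^1_n$ onto $S^{m-1}_{n+1}$, $\hat{Z}^{[m-2]}_{n+1}$ and $f_{n+1}$. So the statement at $(n,m)$ is the lift of the statement at $(n+1,m-1)$.

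You instead get every $m$ at once by transporting Proposition \ref{prop:q[n]}(3) through the cyclic-order identification of internal rays with the rotation orbit. This makes the combinatorics explicit. However, the identification exists only when $\ttheta_n$ is irrational, and that is what forces your approximation in Step 3. The induction needs no limiting argument: at a parabolic level, the case $m=1$ is again read off from the structure of $\hat{Z}^{[0]}_n$, namely principal dams of level $\ge 1$ and their pullbacks, now indexed by $\Time^n$, with Lavaurs-type maps such as $f^{[1]}_n$ as the branches.

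If you keep Step 3, note that the mechanism you name is not the one that does the work. Approximating tiles whose times converge to no element of $\Time^n$ must \emph{collapse} in the limit. For example, for parabolic $f_n$ and $m=1$, these are the tiles with both $p$ and $q_{[1]}-p$ large, and they shrink to the parabolic point just as the middle intervals $E_j$ do in the proof of Theorem \ref{thm:uniform-fjords}. If they did not collapse, the limit tiles need not cover $\hat{Z}^{[m-1]}_n$, and the final clause of the lemma would be lost. Moreover, Theorem \ref{thm:renorm-limits}(4)(b) controls only $\hat{Z}^{[-1]}_n$ and $\hat{Z}^{[0]}_n$. For deeper tiles you would have to transport control through the gluing maps anyway, which brings you back to the renormalization structure the paper's induction uses.

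Similarly, in Step 2 it is not injectivity or almost invariance of $f^{[m]}_n$ that keeps pulled-back fjords inside $\hat{Z}^{[m-1]}_n$. What does it is the index bookkeeping: the level-$s$ fjord with index $j<\qq^n_{[s]}$ lies in the level-$s$ tile $\check{A}^s_n(-j)$, so pulling back within the tile range never exceeds the allowed index. This is exactly what the paper's phrase ``defined by principal dams and their pullbacks'' encodes.
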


\begin{proof}
    The case when $m=1$ follows from the way the pseudo-Siegel disk $\hat{Z}^{[0]}_n$ is defined by principal dams and their pullbacks. 
    The case when $m \geq 2$ follows from induction in a similar way.
\end{proof}

This lemma implies that we have a well-defined triangulation $\hat{\Delta}^{[m-1]}_n$ of $\hat{Z}_n^{[m-1]}$ given by
\[
\hat{\Delta}^{[m-1]}_n = \left\{ A_n^m(-p) \right\}_{0 \leq p < \check{\qq}^n_{[m]}} \cup 
\left\{ \check{A}_n^m(-p) \right\}_{0 \leq p < \qq^n_{[m]}}.
\]
See Figure \ref{fig:triangulation} for an example when $m=1$.
By construction, we have the following property.

\begin{figure}
        \centering
       \begin{tikzpicture}
    \node[anchor=south west,inner sep=0] (image) at (0,0) {\includegraphics[width=0.98\linewidth]{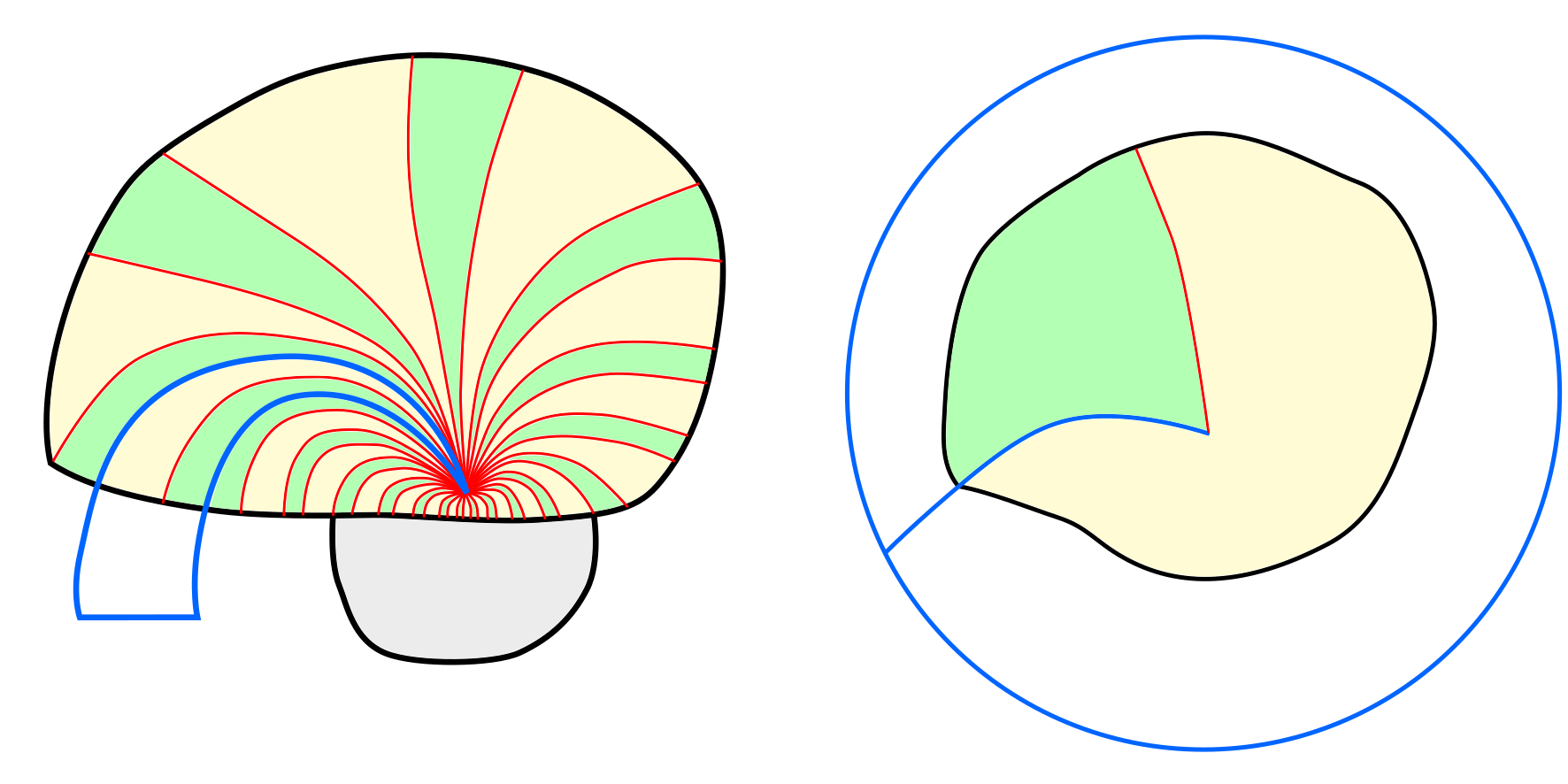}};
    \begin{scope}[
        x={(image.south east)},
        y={(image.north west)}
    ]
        \node [black] at (0.295,0.375) {\small $\bullet$};
        \draw[-latex] (0.32,0.965) .. controls (0.26,1.04) and (0.15,1) .. (0.095,0.865);
        \draw[-latex] (0.075,0.82) .. controls (-0.01,0.77) and (-0.03,0.55) .. (0.01,0.45);
        
        \node [black] at (0.035,0.4) {\small $\bullet$};
        \node [black] at (0.028,0.36) {\small $v_{n,-1}$};
        \node [black] at (0.105,0.355) {\small $\bullet$};
        \node [black] at (0.095,0.31) {\small $v_{n,0}$};
        \node [black] at (0.767,0.448) {$\bullet$};
        \node [black] at (0.77,0.41) {$0$};
        \node [black] at (0.45, 0.88) {\small $\hat{Z}_n^{[0]}$};
        \node [black] at (0.42, 0.24) {\small $\hat{Z}_n^{[-1]}$};
        \node [black] at (0.78, 0.2) {\small $\hat{Z}_{n+1}^{[-1]}$};
        \node [black] at (0.075,0.055) {\small $f_n$};
        \draw[-latex] (0.05,0.16) .. controls (0.055,0.09) and (0.105,0.09) .. (0.12,0.17);
        \node [black] at (0.69,0.64) {\small $f_n$};
        \draw[-latex] (0.69,0.475) .. controls (0.70,0.57) and (0.72,0.6) .. (0.755,0.62);
        \node [blue] at (0.35,0.03) {\small $\psi_n$};
        \draw[blue,-latex] (0.145,0.21) .. controls (0.25,0.03) and (0.45,0.04) .. (0.58,0.17);
        \node [blue] at (0.02,0.24) {\small $S_n$};
    \end{scope}
\end{tikzpicture}
    \caption{The triangulation $\hat{\Delta}_n^{[0]}$ of $\hat{Z}^{[0]}_n$ on the left is obtained by pulling back the triangulation $\hat{\Delta}_{n+1}^{[-1]}$ of $\hat{Z}^{[-1]}_{n+1}$ on the right and spreading it around.}
    \label{fig:triangulation}
\end{figure}

\begin{lemma}
\label{lem:triangulation-invariance-01}
    The triangulations $\hat{\Delta}_n^{[m]}$ of $\hat{Z}_n^{[m]}$ are invariant under sector renormalization, that is, for $m, n \geq 0$,
    \[
        \psi_{n,1}(\hat{\Delta}_n^{[m]} \cap S_n^1) = \hat{\Delta}_{n+1}^{[m-1]}.
    \]
\end{lemma}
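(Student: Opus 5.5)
The plan is to unwind the definitions and use the compatibility of the gluing maps $\psi_{n,m}$ under composition. The first step is to record the factorization
\[
\psi_{n,m+1} = \psi_{n+1,m} \circ \psi_{n,1} \quad \text{on } S_n^{m+1},
\]
which holds because $S_n^{m+1} = \psi_{n,1}^{-1}(S_{n+1}^m)$. This follows from the construction of the nest of sectors; it is the tower analogue of $S_\theta^n = \psi_{\theta_0}^{-1}\circ\cdots(S_{\theta_{n-1}})$. Second, since $\psi_{n,1}$ sends $f_n^{[j+1]}$ to $f_{n+1}^{[j]}$ (Theorem \ref{thm:renorm-limits}(7)), it carries the internal rays $I^n_{-\qq^n_{[m+1]}}$ and $I^n_{-\check{\qq}^n_{[m+1]}}$ onto $I^{n+1}_{-\qq^{n+1}_{[m]}}$ and $I^{n+1}_{-\check{\qq}^{n+1}_{[m]}}$. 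For the second ray this uses the index shift $\varepsilon_{n+m+1}\varepsilon_{n+m+2}$, which is the same on both sides. The map $\psi_{n,1}$ also sends $I^n_0$ to $I^{n+1}_0$. Combining these, $\psi_{n,1}(A_n^{m+1}) = A_{n+1}^m$ and $\psi_{n,1}(\check{A}_n^{m+1}) = \check{A}_{n+1}^m$, since both pairs are lifts of the two triangles of $\hat{\Delta}^{[-1]}_{n+m+1}$.

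The third step handles the spread-around tiles $A_n^{m+1}(-p)$ and $\check{A}_n^{m+1}(-p)$ that lie in $S_n^1$. Take a tile $T$ of $\hat{\Delta}_n^{[m]}$ contained in $S_n^1$, say $T = \check{A}_n^{m+1}(-p)$ with $0 \le p < \qq^n_{[m+1]}$. The orbit $f_n^{p'}(T)$ for $0 \le p' \le p$ eventually lands on $\check{A}_n^{m+1}(0)\subset S_n^{m+1}\subset S_n^1$. By Lemma \ref{lem:triangulation-first-return} with $m=1$, the successive visits of this orbit to $S_n^1$ are given by the first return map $(f_n^{\qq^n_{[1]}}, f_n^{\check{\qq}^n_{[1]}})$. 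Under $\psi_{n,1}$, this first return map projects to $f_{n+1}$. Hence the number of visits after $T$ equals some $p'' \in \Time^{n+1}\cup\{0\}$, and $\psi_{n,1}(T)$ is the lift of $\check{A}^m_{n+1}$ under $f_{n+1}^{p''}$ containing $0$, that is $\check{A}^m_{n+1}(-p'')$. One then checks that $p''$ ranges exactly over $[0,\qq^{n+1}_{[m]})$. This comes from the identity that the total time $p$ decomposes into return times, namely $\qq^n_{[m+1]}$ is the $\Kont$-image of $\qq^{n+1}_{[m]}$ under the substitution $\qq^{n+1}_{[k]}\mapsto \qq^n_{[k+1]}$. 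The same argument applies to the $A$ tiles. Tiles meeting the boundary of $S_n^1$ are glued along the sides, which matches the slit identification.

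Finally, both sides are triangulations of the same set $\psi_{n,1}(\hat{Z}_n^{[m]}\cap S_n^1) = \hat{Z}_{n+1}^{[m-1]}$ (Theorem \ref{thm:renorm-limits}(7)). So the injection of tiles obtained above is a bijection once coverage is checked. The main obstacle is the bookkeeping in the third step: in the enriched rational case the time semigroups are non-archimedean, and Lavaurs maps replace iterates. I would handle this by proving the statement first for $\fbold\in\towsec$, where everything is integer combinatorics as in Proposition \ref{prop:q[n]}(3), and then passing to limits via the Hausdorff and Carathéodory convergence in Theorem \ref{thm:renorm-limits}.
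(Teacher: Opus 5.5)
Your route is the natural unpacking of what the paper asserts ``by construction''; the paper gives no separate argument. You factor $\psi_{n,m+1}=\psi_{n+1,m}\circ\psi_{n,1}$, match the two base triangles lifted from $\hat{\Delta}^{[-1]}_{n+m+1}$, and use that $\psi_{n,1}$ turns the first return of $f_n$ to $S_n^1$ into $f_{n+1}$. The step that fails as written is your justification that a tile $T=\check{A}^{m+1}_n(-p)\subset S^1_n$ maps to $\check{A}^m_{n+1}(-p'')$ with $p''<\qq^{n+1}_{[m]}$.

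\begin{itemize}
\item The substitution $\qq^{n+1}_{[k]}\mapsto\qq^n_{[k+1]}$ is not a homomorphism $\Kont^{n+1}\to\Kont^n$. When $b_{n+2}<\infty$, the relation $\qq^{n+1}_{[1]}=b_{n+2}\qq^{n+1}_{[0]}$ would have to become $\qq^n_{[2]}=b_{n+2}\qq^n_{[1]}$, whereas in fact $\qq^n_{[2]}=b_{n+2}\qq^n_{[1]}-\varepsilon_{n+1}\varepsilon_{n+2}\qq^n_{[0]}$.
\item More importantly, the level-$n$ time $p$ spent during $p''$ returns is not a function of $p''$. Each return costs $\qq^n_{[1]}$ or $\check{\qq}^n_{[1]}$ depending on the branch, that is, on the itinerary of the tile.
\item For example, take the golden mean with $n=0$, $m=1$. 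The tile $\check{A}^2_0(-5)$ projects to $\check{A}^1_1(-2)$, although $2\qq^0_{[1]}=6\neq 5$.
\end{itemize}
So knowing $p<\qq^n_{[m+1]}$ gives you no bound on $p''$ through that identity.

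What is missing is a minimality argument, and it is also what makes $\psi_{n,1}(T)$ a tile in the first place. Since $S^{m+1}_n=\psi_{n,1}^{-1}(S^m_{n+1})$, and the visits of the orbit of $T$ to $S^1_n$ project to the successive $f_{n+1}$-images of $\psi_{n,1}(T)$, the orbit of $T$ meets $S^{m+1}_n$ before time $p$ if and only if the orbit of $\psi_{n,1}(T)$ meets $S^m_{n+1}$ before step $p''$. For $0<p<\qq^n_{[m+1]}$, $T$ is an intermediate image of the return domain $\check{A}^{m+1}_n(-\qq^n_{[m+1]})$ of Lemma \ref{lem:triangulation-first-return}, so $p$ is its first landing time. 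Hence $p''$ is the first landing time of $\psi_{n,1}(T)$ in $S^m_{n+1}$. The same lemma at level $n+1$ then gives $\psi_{n,1}(T)=\check{A}^m_{n+1}(-p'')$ with $p''<\qq^{n+1}_{[m]}$, and likewise for the $A$-tiles; this is for $m\ge 1$, while $m=0$ is just your second step. Your coverage step then gives surjectivity.

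You should also say explicitly that the tiles of $\hat{\Delta}^{[m]}_n$ refine those of $\hat{\Delta}^{[0]}_n$, so no tile straddles $\partial S^1_n$ or the two branches of the first return map. Both your visit count and your coverage step use this. Finally, your passage to limits for enriched rational $\ttheta$ is plausible, but it needs convergence of the individual tiles. That is not among the convergence statements of Theorem \ref{thm:renorm-limits} that you cite.
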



\section{Estimates in logarithmic coordinates}
\label{sec:logarithmic}

\begin{figure}
    \centering
    
    \begin{tikzpicture}
    \node[inner sep=0] (image) at (6,15) {\includegraphics[width=0.98\linewidth]{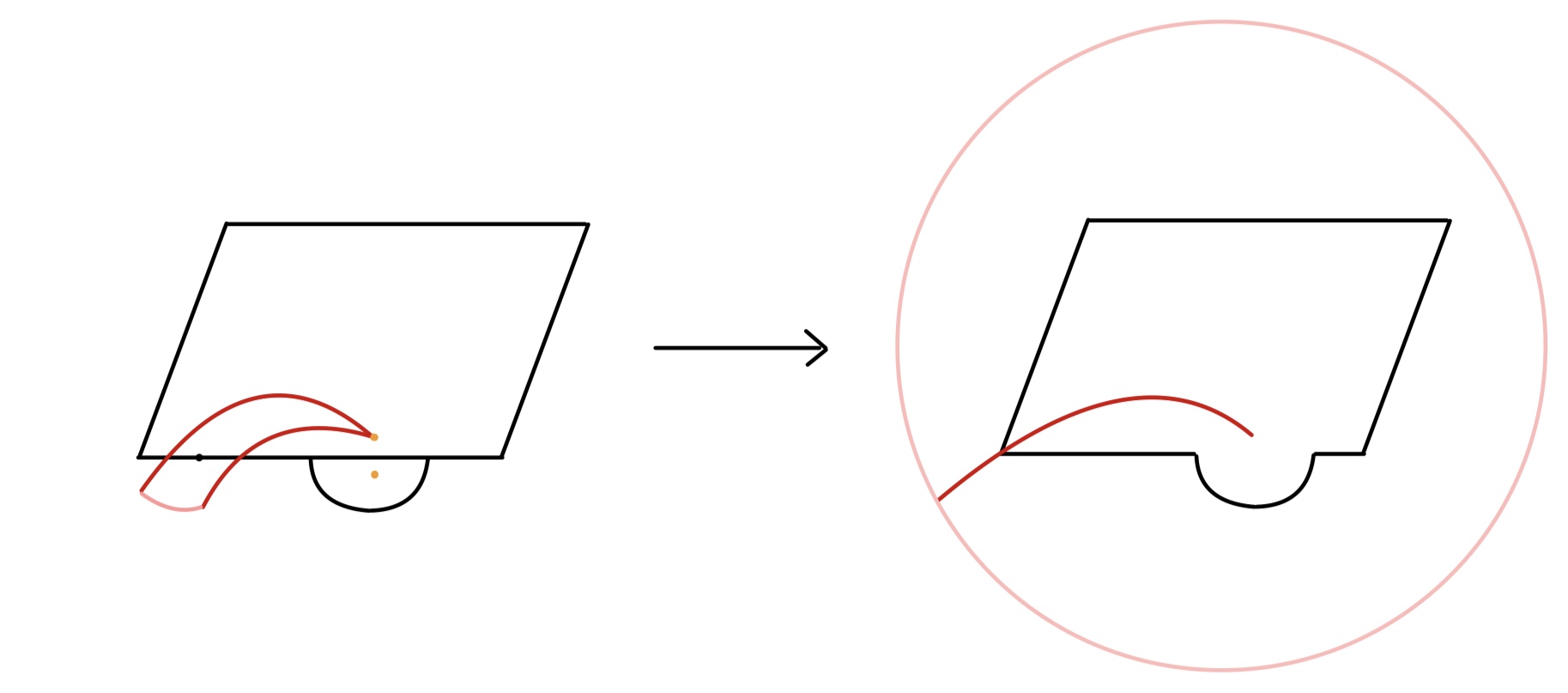}};
    \node[anchor=south west, inner sep=0] (image) at (0,0) {\includegraphics[width=0.95\linewidth]{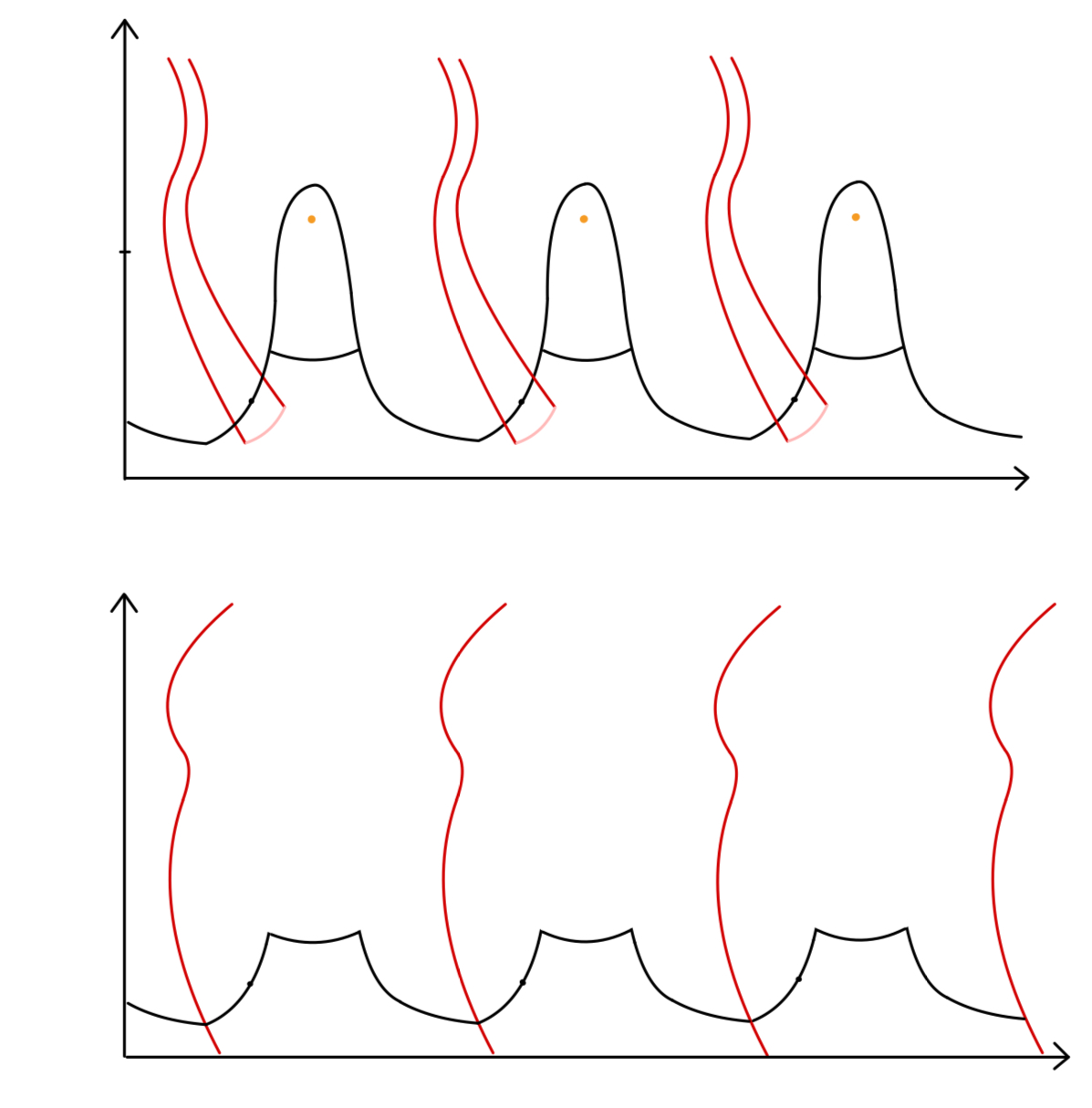}};
    \begin{scope}[
        x={(image.south east)},
        y={(image.north west)}
    ]
        \node [black, font=\bfseries] at (0.32,1.27) {$\hat{Z}^{[0]}_n$};
        \node [black, font=\bfseries] at (0.3,1.12) {$\hat{Z}^{[-1]}_n$};
        \node [black, font=\bfseries] at (0.8,1.25) {$\hat{Z}^{[-1]}_{n+1}$};
        \node [black, font=\bfseries] at (0.04,0.77) {$\frac{1}{2\pi}\log\frac{1}{|\theta_n|}$};
        
        \node [red, font=\bfseries] at (0.19,1.22) {$S_n$};
        \node [red, font=\bfseries] at (0.22,0.87) {$\mathcal{S}_n$};
        \node [red, font=\bfseries] at (0.485,0.87) {$\mathcal{S}_n + 1$};
        \node [red, font=\bfseries] at (0.735,0.87) {$\mathcal{S}_n + 2$};
        \node [red, font=\bfseries] at (0.3,0.3) {$\Sigma_{n+1}$};
        \node [red, font=\bfseries] at (0.545,0.3) {$\Sigma_{n+1} + 1$};
        \node [red, font=\bfseries] at (0.8,0.3) {$\Sigma_{n+1} + 2$};
        
        \node [black, font=\bfseries] at (0.25,0.515) {$\phi_n$};
        \draw[black,line width=0.5pt,-latex] (0.245,0.59) .. controls (0.29,0.51) .. (0.28,0.45);        
        \node [black, font=\bfseries] at (0.37,0.515) {$\phi_n$};
        \draw[black,line width=0.5pt,-latex] (0.49,0.59) .. controls (0.43,0.52) .. (0.315,0.45);
        \node [black, font=\bfseries] at (0.615,0.515) {$\phi_n$};
        \draw[black,line width=0.5pt,-latex] (0.73,0.59) .. controls (0.58,0.52) .. (0.37,0.45);
        
        \node [black, font=\bfseries] at (0.415,1.005) {$\ftilde_n$};
        \draw[line width=0.5pt,-latex] (0.405,0.92) .. controls (0.355,1) and (0.45,1) .. (0.435,0.92);
        \node [black, font=\bfseries] at (0.565,0.455) {$\ftilde_{n+1}$};
        \draw[line width=0.5pt,-latex] (0.555,0.37) .. controls (0.505,0.45) and (0.60,0.45) .. (0.585,0.37);
        \node [black, font=\bfseries] at (0.225,1.39) {$f_n$};
        \draw[line width=0.5pt,-latex] (0.2,1.32) .. controls (0.18,1.38) and (0.27,1.38) .. (0.25,1.32);
        \node [black, font=\bfseries] at (0.775,1.39) {$f_{n+1}$};
        \draw[line width=0.5pt,-latex] (0.75,1.32) .. controls (0.73,1.38) and (0.82,1.38) .. (0.80,1.32);

        \node [black, font=\bfseries] at (0.115,1.01) {\small $\imag(\zeta)$};
        \node [black, font=\bfseries] at (0.115,0.49) {\small $\imag(\zeta)$};
        \node [black, font=\bfseries] at (0.89,0.535) {\small $\real(\zeta)$};
        \node [black, font=\bfseries] at (0.91,0.01) {\small $\real(\zeta)$};

        \filldraw[color=white] (0.45,1.17) circle (0.1);
        \draw[line width=0.5pt,-latex] (0.39,1.22) -- (0.54,1.22);
    \end{scope}
\end{tikzpicture}
    
    \caption{Sector renormalization in logarithmic coordinates}
    \label{fig:euclidean-vs-logarithmic}
\end{figure}

Let us fix $\fbold = \seq{ f_n  }_{n\geq 0} \in \overline{\towsec}$.
For $n \geq 0$, let $\theta_n \in \left[ -\frac{1}{2},\frac{1}{2} \right]$ be the rotation number of the neutral fixed point $0$ of $f_n$. 
In the dynamical plane of $f_n$, consider the nest of pseudo-Siegel disks $\hat{Z}_n^{[m]}$, the nest of sectors $S_n^m$, and the sequence of conformal gluing maps 
\[
\psi_{n,m}: S_n^m \to \D
\]
projecting $f_n^{[m]}$ to $f_{n+m}$.

In this section, we will lift $\fbold$ under the exponential map 
\[
\exp(z) := e^{2\pi i z}
\]
and obtain estimates of the renormalization change of variables in logarithmic coordinates.

\subsection{Passing to logarithmic coordinates}

\label{ss:log-coordinates}

Let us lift $f_n: \Dom(f_n)\to \D$ via the exponential map $\exp$ to the unique map $\ftilde_n: \Dom(\ftilde_n)\to \UHP$ on an open unbounded connected subset of the upper half plane $\UHP$ that satisfies
\begin{equation}
\label{eqn:limit_fbold}
    \lim_{\imag(z) \to +\infty} \ftilde_n(z) - z = \theta_n.
\end{equation}
Let $T(z):=z+1$ be the unit translation. We have 
\[
    \exp \circ \ftilde_n = f_n \circ \exp \qquad \text{ and } \qquad \ftilde_n \circ T = T \circ \ftilde_n.
\]
For every $n\geq 0$ and $m \geq -1$, denote the lift of the pseudo-Siegel disk $\hat{Z}^{[m]}_n$ by
\[
    \hat{\mathcal{Z}}^{[m]}_n := \exp^{-1}\left(\hat{Z}^{[m]}_n\right).
\]
Each $\hat{\mathcal{Z}}^{[m]}_n$ is invariant under $T$ and $\hat{\mathcal{Z}}^{[-1]}_n$ is a uniformly quasiconformally equivalent to the upper half plane. 

Let us pick the unique lift $\vtilde_0^n$ under $\exp$ of the critical value $v_0^n$ of $f_n$ normalized such that
\[
    0 \leq \text{Re}(\vtilde_0^n) \leq 1.
\]
In the dynamical plane of $f_n$, for every $m \geq 1$, the sector $S_n^m$ lifts via $\exp$ to a pairwise disjoint union of open half-strips $T^j(\mathcal{S}_n^m), j \in \Z$, where $\mathcal{S}_n^m$ is the unique connected component containing $\vtilde_0^n$.
For brevity, we will denote 
\[
    S_n := S_n^1 \qquad \text{and} \qquad \mathcal{S}_n 
    := \mathcal{S}_n^1.
\]
For $n\geq 0$, the sides of $S^{n}$ with endpoint $0$ are identified and sent by the conformal gluing map $\psi_{n,1}$ to an arc $\alpha_{n+1}$ in the dynamical plane of $f_{n+1}$ connecting $0$ and $\partial \D$. 
Then, the open disk $\D \backslash \alpha_{n+1}$ lifts via $\exp$ to a pairwise disjoint union of open half-strips $T^j(\Sigma_{n+1}), j \in \Z$ whose closure is $\overline{\UHP}$. 
We will again choose $\Sigma_{n+1}$ to be the unique sector containing $\vtilde_0^{n+1}$. 
The conformal gluing map $\psi_{n,1}$ lifts to an infinite degree covering map 
\[
    \phi_n : \bigcup_{j \in \Z} T^j(\mathcal{S}_n) \to \Sigma_{n+1}
\]
with deck group generated by $T$. See Figure \ref{fig:euclidean-vs-logarithmic} for an illustration.

Given $\zeta \in \C$, if $\theta_n \neq 0$, denote
\[
    \imag(\zeta;\theta_n) := \imag (\zeta) - \frac{1}{2\pi} \log \frac{1}{|\theta_n|}.
\]
In this section, we prove the uniform fundamental estimates independent of $\fbold$. 

\begin{proposition}
\label{prop:almost-translation}
    There exist uniform constants $C, C'>0$ such that for all $n \geq 0$ and $\zeta \in \UHP$ with $\imag(\zeta) > C'$, we have
    \[
        \lvert \ftilde_n(\zeta) - \zeta - \theta_n \rvert \leq C e^{-2\pi \imag(\zeta)} 
        \quad \text{ and } \quad 
        \lvert \ftilde_n'(\zeta) - 1 \rvert \leq C e^{-2\pi \imag(\zeta)}.
    \]
\end{proposition}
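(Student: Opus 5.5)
The plan is to reduce everything to a uniform bound on the holomorphic germ $f_n$ near $0$, and then transfer it to logarithmic coordinates. Write
\[
f_n(w) = e^{2\pi i \theta_n} w \,(1 + h_n(w)),
\]
where $h_n$ is holomorphic on a neighborhood of $0$ and $h_n(0)=0$. The key input is compactness. By Theorem \ref{thm:renorm-limits}, every $f_n$ lies in the closure $\overline{\orbsec}$, and by item (4)(c) its domain contains $\hat{Z}^{[-1]}_n \supset \D(0,R)$, with $f_n$ mapping into $\D$. Hence each $f_n$ is holomorphic on the fixed disk $\D(0,R)$ and bounded by $1$ there.

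From this we get $|f_n(w)/w| \leq 1/R$ on $\D(0,R)$, by the maximum principle applied to $f_n(w)/w$; note $f_n(0)=0$. This gives $|h_n(w)| \leq 1 + 1/R$ there. By Schwarz's lemma applied to $h_n$, which vanishes at $0$, we obtain $|h_n(w)| \leq C_0 |w|$ on $\D(0,R)$, with $C_0 = (1+1/R)/R$ uniform. Cauchy estimates on $\D(0,R/2)$ then give $|h_n'(w)| \leq C_1$.

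Next we pass to logarithmic coordinates. For $\zeta\in\UHP$ put $w = \exp(\zeta)$, so $|w| = e^{-2\pi \imag \zeta}$. Choose $C'$ so large that $e^{-2\pi C'} < R/2$ and $C_0 e^{-2\pi C'} < 1/2$. Then $1+h_n(w)$ stays in the disk $\D(1,1/2)$, and
\[
\ftilde_n(\zeta) = \zeta + \theta_n + \frac{1}{2\pi i}\operatorname{Log}(1+h_n(e^{2\pi i \zeta}))
\]
with the principal branch. This lifts $f_n$ and satisfies the normalization (\ref{eqn:limit_fbold}), so by uniqueness of the lift it equals $\ftilde_n$ on $\{\imag \zeta > C'\}$. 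Since $|\operatorname{Log}(1+u)| \leq 2|u|$ for $|u|\le 1/2$, the first estimate follows with $C = C_0/\pi$.

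For the derivative, differentiate:
\[
\ftilde_n'(\zeta) - 1 = \frac{h_n'(w)\, w}{1+h_n(w)}.
\]
This is bounded by $2C_1|w| = 2C_1 e^{-2\pi\imag\zeta}$. Alternatively, one can apply a Cauchy estimate to the first bound on a disk of radius $1/2$ around $\zeta$, after enlarging $C'$ by $1/2$.

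The only step needing care is the uniformity of the domain. We need $\D(0,R)$ to lie in $\Dom(f_n)$ with $f_n(\D(0,R)) \subset \D$ for every $n$ and every tower, including the limiting towers where $\theta_n = 0$ and $f_n$ is parabolic. This is exactly what Theorem \ref{thm:renorm-limits}(4)(c) and (5)(a) provide: $\hat{Z}^{[-1]}_n$ lies in the domain of $f_n^{[0]} = f_n$, and $\D(0,R) \subset \hat{Z}^{[-1]}_n$. If one is worried about boundary behavior on $\hat{Z}^{[-1]}_n$ itself, it suffices to use the slightly smaller disk $\D(0,R/2)$, which sits compactly inside the open domain. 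Beyond this, the argument is routine.
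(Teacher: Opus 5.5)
Your argument is correct, but it takes a different route from the paper.

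The paper's proof uses that $f_n$ is \emph{univalent} on $\D(0,R)$, which follows from its injectivity on $\hat{Z}^{[-1]}_n$. It combines this with a Koebe-type distortion bound $|zg'(z)/g(z)-1|\le 2|z|/(1-|z|)$ for univalent $g$ fixing $0$, proved via Bieberbach's bound $|b_0|\le 2$ and the Area Theorem. Since $\ftilde_n'(\zeta)=zf_n'(z)/f_n(z)$ with $z=\exp(\zeta)$, this gives the derivative estimate first. The displacement estimate is then obtained by integrating $\ftilde_n'-1$ along the vertical ray to $i\infty$ and using the normalization (\ref{eqn:limit_fbold}).

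You use only that $f_n$ is holomorphic on $\D(0,R)$ with values in $\D$. The maximum principle and the Schwarz lemma give $|h_n(w)|\le C_0|w|$. You then write the lift explicitly with the principal logarithm, identify it with $\ftilde_n$ through the normalization, and read off the displacement bound directly. The derivative bound comes from differentiating that formula together with a Cauchy estimate on $h_n'$.

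Your version is more elementary, needing no coefficient estimates for schlicht functions, and it does not use injectivity. The paper's version does not need $f_n(\D(0,R))\subset\D$, because the distortion bound for univalent maps is insensitive to the size of the image. Theorem \ref{thm:renorm-limits} supplies both inputs uniformly, so either proof gives constants depending only on $R$.
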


In other words, $\ftilde_n$ is close to the translation by $\theta_n$, and the error decays exponentially in the imaginary value of $\zeta$.

\begin{proposition}
\label{prop:renormalization-change-of-variables}
    There exist uniform constants $C, C'>0$ such that for all $n \geq 0$ with $\theta_n \neq 0$ and for every point $\zeta \in \mathcal{S}_n$ with $\imag(\zeta ; \theta_n) > C'$, we have
    \[
        \Big| |\theta_n| \phi_n'(\zeta) - 1 \Big| \leq \frac{C}{\imag(\zeta;\theta_n)^2}
        \quad \text{ and } \quad 
        \Big| |\theta_n| \phi_n(\zeta) - \zeta + \kappa_n \Big| \leq \frac{C}{\imag(\zeta;\theta_n)},
    \]
    where $\kappa_n=\kappa_n(\tilde{f}_n)$ is some complex number with 
    \[
        \kappa_n = \frac{i}{2\pi} \log\frac{1}{|\theta_n|} + O(1).
    \]
\end{proposition}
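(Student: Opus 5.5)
The plan is to realize $\phi_n$, up to an additive constant, as a Fatou coordinate of $\ftilde_n$ on the strip $\mathcal{S}_n$, and to compare that Fatou coordinate with the linear map $\zeta\mapsto\zeta/|\theta_n|$.

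\textbf{Step 1: the conjugacy equation.} Since $\psi_{n,1}$ glues the two sides of $S_n$ via $f_n$, the lift $\phi_n$ satisfies
\[
\phi_n(\ftilde_n(\zeta)) = \phi_n(\zeta) \pm 1
\]
whenever both points lie in the closure of $\bigcup_j T^j(\mathcal{S}_n)$, with the sign given by $\varepsilon(\theta_n)$. It also commutes with $T$ in the deck sense, $\phi_n\circ T=\phi_n$ modulo the covering structure.

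Consider the region $\imag(\zeta;\theta_n)>C'$. There Proposition \ref{prop:almost-translation} gives
\[
\ftilde_n(\zeta)=\zeta+\theta_n+O(e^{-2\pi\imag\zeta}).
\]
The error relative to the translation length $|\theta_n|$ is then
\[
O\big(e^{-2\pi \imag(\zeta)}/|\theta_n|\big)=O\big(e^{-2\pi \imag(\zeta;\theta_n)}\big),
\]
which is small. So on this region $\ftilde_n$ is a tiny perturbation of the translation by $\theta_n$.

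\textbf{Step 2: change of variables.} Write $w=\zeta/\theta_n$. Then $F(w)=\theta_n^{-1}\ftilde_n(\theta_n w)$ satisfies $F(w)=w+1+\delta(w)$, with $\delta$ exponentially small in $\imag(\zeta;\theta_n)$. By the standard construction, the Fatou coordinate of a near-translation with exponentially small error differs from the identity by a holomorphic function with small derivative. Concretely, I would build it as the limit of the averaged maps $w\mapsto F^k(w)-k$, or solve the Abel equation by a quasiconformal straightening of a fundamental strip.

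The subtlety is that $\mathcal{S}_n$ is not a fundamental domain of $F$ in the $\zeta$-scale. Its width in the $\zeta$-plane is about $|\theta_n|$, so in the $w$-plane it is a strip of width about $1$. Moreover the gluing is defined by $\psi_{n,1}$, not by a canonical Fatou coordinate. Uniqueness handles this: two solutions of the Abel equation on a half-strip that are univalent and respect the $T$-periodicity differ by a function that is periodic under $F$. Passing to the quotient cylinder, that difference becomes a bounded holomorphic function on a punctured neighborhood of an end of the cylinder. Its nonconstant part therefore decays exponentially in the height of the cylinder, and the height is proportional to $\imag(\zeta;\theta_n)$.

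That alone would give exponential rather than $y^{-2}$ errors. I expect the polynomial rate $y^{-2}$ to come from the lower boundary of the region: the sector boundary and the pseudo-Siegel geometry are controlled only up to bounded distortion near the height $\frac1{2\pi}\log\frac1{|\theta_n|}$. For this I would use Theorem \ref{thm:uniform-fjords} and the bounded geometry of the sectors from Theorem \ref{thm:renorm-limits}.

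\textbf{Step 3: derivative estimate.} The main obstacle is controlling the correction $h=|\theta_n|\phi_n-\mathrm{id}$. The vertical direction in $\mathcal{S}_n$ corresponds to the radial direction, along which the domain is not periodic. I would first try the following.
\begin{itemize}
\item Apply Koebe distortion to $\phi_n$ on disks of radius comparable to $\imag(\zeta;\theta_n)$ measured in $w$-units. These disks fit inside the univalence domain, since $\phi_n$ extends univalently across neighboring translates of the strip via the dynamics.
\item This gives $|\phi_n''/\phi_n'|=O(1/\imag)$ in $w$-units.
\item Combine this with the normalization $\phi_n'\to|\theta_n|^{-1}$ as $\imag\to\infty$, which holds because near $0$ the gluing map is asymptotic to the power map $z^{1/|\theta_n|}$ as in the rotation model.
\end{itemize}
Integrating $\phi_n''/\phi_n'$ from infinity only yields $O(1/\imag)$. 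To get the sharper $O(\imag^{-2})$ I would use the decay of the Fourier-type expansion on the cylinder: the deviation $\log(|\theta_n|\phi_n')$ is a bounded harmonic function on a half-cylinder of height $\imag$, so its gradient is $O(\imag^{-2})$ when measured against the boundary data.

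\textbf{Step 4: integration and the constant $\kappa_n$.} Integrating the derivative bound vertically gives the bound $C/\imag(\zeta;\theta_n)$ for $|\theta_n|\phi_n-\zeta+\kappa_n$, where $\kappa_n$ is the limiting translation constant. To see that $\kappa_n=\frac{i}{2\pi}\log\frac1{|\theta_n|}+O(1)$, I would evaluate at the lower boundary height. There the image $\phi_n$ must lie at bounded height in $\Sigma_{n+1}$, by Theorem \ref{thm:renorm-limits}(4c) applied to $\hat Z^{[-1]}_{n+1}$, so this comparison pins the imaginary part of $\kappa_n$. The real part is fixed by the normalization of $\vtilde^n_0$ and $\vtilde^{n+1}_0$, which both lie in the strip $0\le \real\le 1$.
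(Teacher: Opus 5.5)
Your outline has two genuine gaps, one in each half of the statement. \emph{The constant $\kappa_n$.} In Step 4 you say that at the lower edge of the region of validity the image $\phi_n(\zeta)$ lies at bounded height in $\Sigma_{n+1}$, by Theorem \ref{thm:renorm-limits}(4)(c). That is false. The estimate you are proving forces $|\theta_n|\,\imag\phi_n(\zeta)=\imag(\zeta;\theta_n)+O(1)$, so points with $\imag(\zeta;\theta_n)\approx C'$ land at height $\asymp 1/|\theta_n|$. What (4)(c) controls is the image of $\partial\hat{\mathcal Z}^{[0]}_n$. That boundary lies at $\imag(\cdot;\theta_n)\le O(1)$ (Theorem \ref{thm:uniform-fjords}), so possibly below $C'$. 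This is the level of the repelling fixed point $\beta$, with $|\beta|\asymp|\theta_n|$, where $\ftilde_n$ is no longer a near-translation. Neither your Fatou-coordinate comparison nor the asymptotic expansion reaches it.

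Crossing that layer is the real content. You need a point $\zeta_*$ with $\imag(\zeta_*;\theta_n)=O(1)$ above the threshold and $\imag\phi_n(\zeta_*)=O(1/|\theta_n|)$; positivity of $\imag\phi_n$ gives the other inequality. The paper gets this in Lemma \ref{lem:new-estimate}:
\begin{itemize}
\item $\phi$ extends univalently from a full $T$-period $\mathcal U$ of $\hat{\mathcal Z}^{[0]}$ onto $\mathcal Y$, which spans about $b_1\asymp 1/\theta$ unit translates and has its summit at bounded height.
\item Theorem \ref{thm:uniform-fjords} places the top of the summit of $\mathcal U$ at height $\frac{1}{2\pi}\log\frac1\theta+O(1)$.
\item Hence the region between that summit and a horizontal line a bounded distance above is a conformal rectangle of modulus $\asymp 1$. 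Its image has width $\asymp 1/\theta$, so it must have height $\asymp 1/\theta$.
\end{itemize}

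\emph{The derivative.} Step 3 does not produce an integrable rate. Integrating a Koebe bound $|\phi_n''/\phi_n'|=O(1/\imag)$ from infinity diverges logarithmically; it does not give $O(1/\imag)$. Treating $\log(|\theta_n|\phi_n')$ as a bounded harmonic function on a half-cylinder assumes what must be proved. It is also not $\ftilde_n$-invariant, only invariant up to the cocycle $\log\ftilde_n'$.

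Step 2 does not fill this either. The limit of $F^k(w)-k$ does not exist here: $F$-orbits travel horizontally at essentially constant height, so the increments $\delta(F^j(w))$ do not tend to $0$ in general. You also never establish quantitative closeness of a Fatou coordinate to $\zeta\mapsto\zeta/\theta_n$. An exponential rate, if you could prove it (as the remark after the proposition anticipates), would simply imply the $O(y^{-2})$ bound. So there is nothing to recover from the lower boundary, and Theorem \ref{thm:uniform-fjords} is needed for $\kappa_n$, not for $\phi_n'$.

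The missing idea is elementary:
\begin{itemize}
\item The extension of $\phi$ via $\phi\circ\ftilde=\phi+1$ (Lemma \ref{lem:univalent-extension}), which you essentially have, makes $\phi$ univalent on $\D(\zeta,r)$ with $r\asymp y=\imag(\zeta;\theta)$.
\item Choose $\zeta'$ with $\zeta=\frac12(\zeta'+\ftilde(\zeta'))$. Then $1=\phi(\ftilde(\zeta'))-\phi(\zeta')=\phi(\zeta+h)-\phi(\zeta-h)$, where $h=\zeta-\zeta'=\theta/2+O(\theta e^{-2\pi y})$.
\item This is an odd difference. De Branges' coefficient bound (Lemma \ref{lem:de-branges}), or just a Cauchy estimate on $\phi'''$, shows it equals $2h\phi'(\zeta)$ up to a relative error $O(|h|^2/r^2)$.
\end{itemize}
This gives $|\theta\phi'(\zeta)-1|=O(y^{-2})$. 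That rate is integrable, and integrating it yields both the existence of the limit $\kappa$ and the $O(1/y)$ bound on $\theta\phi(\zeta)-\zeta+\kappa$.
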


In other words, the renormalization change of variables $\phi_n$ is close to the affine map $\zeta \mapsto \cfrac{\zeta - \kappa_n}{|\theta_n|}$ and the error decays at least linearly in $\imag(\zeta;\theta_n)$. 

Proposition \ref{prop:almost-translation} and the estimate for $\phi'_n$ in Proposition \ref{prop:renormalization-change-of-variables} essentially follow from controlling the radius of univalence of $\ftilde_n$ and $\phi_n$ respectively.
The estimate for $\phi_n$ and $\kappa$ requires one additional ingredient, namely Theorem \ref{thm:uniform-fjords}.

\begin{remark}
    With more refined analysis, we believe it should be possible to show that the error decays exponentially (see \cite[\S5]{Che13} for example), but we will not need it here.
\end{remark}

Without loss of generality, the proof of the two propositions above will be carried out only for the base level $n=0$.
We will simplify the notation by dropping the index $n=0$ and writing $f = f_0$, $\ftilde = \ftilde_0$, $\theta = \theta_0$, $\phi = \phi_0$,
 and $\mathcal{S} = \mathcal{S}_0$. 
We will also assume without loss of generality that $0 \leq \theta \leq \frac{1}{2}$; otherwise, we can conjugate $\ftilde$ with the reflection about the imaginary axis $x+iy \mapsto -x+iy$. 

\subsection{Distortion estimates for $\ftilde$}

By Theorem \ref{thm:renorm-limits} (4)(c), there exists a uniform constant $R \in (0,1)$ such that $f$ is univalent on the round disk $\D(0,R)$. 

\begin{lemma}
    For every univalent function $g: \D \to \C$ fixing $0$,
    \[
        \left| z \frac{g'(z)}{g(z)} - 1 \right| \leq \frac{2 |z|}{1-|z|} 
        \qquad \text{ for all } z \in \D.
    \]
\end{lemma}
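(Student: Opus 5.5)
The plan is to reduce the statement to a classical sharp estimate from univalent function theory and then do one elementary computation. First I normalize. The quantity $zg'(z)/g(z)$ is unchanged when $g$ is replaced by $cg$ for $c \neq 0$, so I may assume $g \in \mathcal{S}$, that is, $g(0)=0$ and $g'(0)=1$. The function
\[
P(z) := z\frac{g'(z)}{g(z)}
\]
is holomorphic on $\D$. This holds because $g$ has a simple zero at $0$ and no other zeros. Moreover $P(0)=1$, and $P$ never vanishes on $\D$, since $g' \neq 0$ by univalence. Hence $P$ has a holomorphic logarithm $\log P$ with $\log P(0)=0$.

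The key input will be the classical Goluzin-type bound for normalized univalent functions: for all $z \in \D$ with $|z|=r$,
\[
\left| \log \frac{z g'(z)}{g(z)} \right| \le \log \frac{1+r}{1-r}.
\]
This is a standard consequence of the Grunsky inequalities; see Duren's \emph{Univalent Functions}, the rotation/distortion theorems of Chapter 3. I would cite it rather than reprove it. If a self-contained argument were preferred, I would write
\[
\log\frac{g(z)-g(\zeta)}{z-\zeta} = \sum_{j,k \ge 0} c_{jk} z^j \zeta^k,
\]
note that $\log P(z) = \sum_{k \ge 1} k\, c_{0k}\, z^k$ after differentiating $\log(g(z)/z)$, and bound these coefficients using the (weak) Grunsky inequality tested against the vector $(\bar z^k)_k$. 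This derivation is the only real obstacle, so I expect to rely on the citation.

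With that bound, the conclusion is immediate. Write $w=\log P(z)$. Then
\[
|P(z)-1| = |e^{w}-1| \le \sum_{k\ge1}\frac{|w|^k}{k!} = e^{|w|}-1 \le \frac{1+r}{1-r}-1 = \frac{2r}{1-r},
\]
which is exactly the claimed inequality with $r=|z|$.

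As a sanity check, for the Koebe function $k(z)=z/(1-z)^2$ we get $zk'/k = (1+z)/(1-z)$. At $z=r$ this equals $(1+r)/(1-r)$, so the bound is attained there, and the constant $2$ cannot be improved.
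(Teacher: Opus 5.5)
Your proof is correct, but it takes a different route from the paper. The paper does not use Goluzin's inequality. It fixes $z$ and forms the Koebe transform $G(w)=\frac{g(h(w))-g(z)}{g'(z)h'(0)}$ with $h(w)=\frac{z-w}{1-\bar z w}$. It then uses the identity $\frac{z}{G(z)}=(1-|z|^2)\frac{zg'(z)}{g(z)}$ and bounds $\frac{1}{G(z)}-\frac{1}{z}$ using only $|b_0|\le 2$ and the area theorem for $1/G(1/w)=w+b_0+\sum_{n\ge 1}b_nw^{-n}$.

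So the paper stays within the most elementary facts about the class $\mathcal{S}$, while you import a sharp theorem from Grunsky theory as a black box. What you gain is exactly the stated constant in one line, with equality for the Koebe function at $z=r$, as you note.

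The elementary route is more delicate on constants. The paper states the area theorem as $\sum n|b_n|\le 1$, but it actually reads $\sum n|b_n|^2\le 1$. The correct version only yields $|G(z)^{-1}-z^{-1}|\le 2+\frac{|z|}{1-|z|}$, and hence a bound slightly weaker than $\frac{2|z|}{1-|z|}$. This is harmless for the paper: Proposition~\ref{prop:almost-translation} only applies the lemma (after rescaling) at points with $|z|\le\frac12$, and there it only needs an $O(|z|)$ bound.

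The two routes are closely related. By the identity above, $\log\frac{zg'(z)}{g(z)}=\log\frac{z}{G(z)}-\log(1-|z|^2)$. So the inequality you cite is the Koebe-transformed form of Grunsky's theorem that $\log\frac{G(z)}{z}$ lies in the closed disk with center $\log\frac{1}{1-|z|^2}$ and radius $\log\frac{1+|z|}{1-|z|}$.

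Your optional self-contained sketch needs a correction. Since $z\frac{d}{dz}\log\frac{g(z)}{z}=\frac{zg'(z)}{g(z)}-1$, the series $\sum_k k\,c_{0k}z^k$ equals $P(z)-1$, not $\log P(z)$. Also, the $c_{0k}$ (twice the logarithmic coefficients of $g$) are not what the Grunsky inequalities control. A correct derivation applies the strong Grunsky inequality to the coefficients $c_{jk}$ with $j,k\ge 1$, e.g.\ for the odd function $\sqrt{g(z^2)}$. Alternatively, go through Grunsky's value-region theorem via the Koebe transform as above. Since your argument rests on the citation, this does not affect its validity.
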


\begin{proof}
    By pre-composition with the linear map $z \mapsto Rz$, we can assume $R=1$. Let us fix $z \in \D$. Consider the disk automorphism $h(w) = \frac{z-w}{1-\bar{z}w}$ and the univalent map
    \[
        G(w) = \frac{ g\circ h(z) - g(z) }{g'(z) h'(0)}, \quad z \in \D.
    \]
    It has the property that
    \[
        G(0) = 0, \quad G'(0) = 1, \quad G(z) = \frac{g(z)}{(1-|z|^2)g'(z)}.
    \]
    Conjugating with the inversion gives us $G(z^{-1})^{-1} = z + b_0 + \sum_{n=1} b_n z^{-n}$. A theorem of Bieberbach asserts that $|b_0| \leq 2$, and the Area Theorem asserts that $\sum_{n=1}^\infty n |b_n| \leq 1$. Therefore, $|G(z)^{-1} - z^{-1}| \leq 2 + |z|$ and so
    \[
        \left| \frac{z}{G(z)} -1 \right| \leq 2|z| + |z|^2.
    \]
    By writing $G(z)$ in terms of $g$, we obtain the desired inequality:
    \[
        \left|  z \frac{g'(z)}{g(z)} - 1 \right| 
        = \frac{1}{1-|z|^2} \left| \frac{z}{G(z)}-1 + |z|^2 \right|
        \leq \frac{2 |z|}{1-|z|}. \qedhere
    \]
\end{proof}

Let us now prove the first estimate.

\begin{proof}[Proof of Proposition \ref{prop:almost-translation}]
    Set $C' := \frac{1}{2\pi} \log\frac{2}{R}$.
    Pick a point $\zeta$ and let $z=\exp(\zeta)$.
    Assume $\imag(\zeta) \geq C'$ or equivalently $|z|\leq R/2$.
    Since $\exp \circ \ftilde = f \circ \exp$, we have 
    \[
    \ftilde'(\zeta) = \frac{z \,f'(z)}{f(z)}.
    \]
    Then, by the previous lemma,
    \[
        \left| \ftilde'(\zeta) - 1 \right| \leq \frac{2 |z|}{R - |z|} \leq \frac{4}{R} |z| = \frac{4}{R} e^{- 2 \pi \imag(\zeta)}.
    \]
    By taking the line integral of $\ftilde'-1$ along the vertical path $\{ \zeta + it \: : \: t \geq 0 \}$ and applying (\ref{eqn:limit_fbold}), we obtain 
    \[
        \lvert \ftilde(\zeta) - \zeta - \theta \rvert
        \leq \int_{\imag(\zeta)}^\infty \frac{4}{R} e^{-2\pi y} dy 
        \leq \frac{2}{\pi R} e^{- 2 \pi \imag(\zeta)}.
    \]
    This completes the proof with $C=\frac{4}{R}$.
\end{proof}

\subsection{Univalent extension of $\phi$}

Throughout the rest of the section, we will assume that 
\[
\theta \neq 0.
\]
We will introduce two uniform constants $C_0>0$ and $K_0 >0$, where
\begin{itemize}
    \item whenever $\imag(\zeta; \theta) \geq C_0$, we have
    \begin{equation}
    \label{eqn:f-in-log-coord}
        \lvert \ftilde(\zeta) - \zeta - \theta \rvert \leq  \frac{\theta}{11} 
        \quad \text{ and } \quad 
        \lvert \ftilde'(\zeta) - 1 \rvert \leq   \frac{\theta}{11}.
    \end{equation}
    \item $\mathcal{S}$ and $\Sigma_1$ are contained in the strip $\{\zeta \in \C \: : \: |\real (\zeta)| \leq K_0 \}$.
\end{itemize}
The first item follows from Proposition \ref{prop:almost-translation}, and the second is because internal rays have uniformly bounded spiraling number about the fixed point (by Theorem \ref{thm:renorm-limits} (6)(d)).

The next lemma is on a natural univalent extension of $\phi$.

\begin{lemma}
\label{lem:univalent-extension}
    There exists a uniform constant $C_1 \in (C_0,\infty)$ such that
    \begin{enumerate}
        \item the change of coordinates $\phi: \mathcal{S} \to \C$ extends to a univalent map on $\mathcal{S} \cup \mathcal{V}$ where 
    \[
        \mathcal{V} := \left\{ \imag(\zeta;\theta) \geq 0.1 \; | \real ( \zeta) | + C_1 \right\};
    \]
        \item whenever $\zeta$ and $\ftilde(\zeta)$ are in $\mathcal{S} \cup \mathcal{V}$, we have
    \[
        \phi \circ \ftilde(\zeta) = \phi(\zeta) + 1.
    \]
    \end{enumerate}
\end{lemma}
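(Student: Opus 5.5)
The idea is to extend $\phi$ dynamically, using the fact that $\ftilde$ is an almost-translation by $\theta$ high up. Above height $C_0$ (in the sense of $\imag(\cdot;\theta)$), estimate (\ref{eqn:f-in-log-coord}) lets us build a fundamental domain for $\ftilde$ that is almost vertical. Since $\phi$ conjugates the first return map of $\ftilde$ to $\mathcal{S}$ (modulo $T$) to $\ftilde_1$, we define $\phi$ on the $\ftilde$-orbit of $\mathcal{S}$ by
\[
\phi(\zeta) := \phi(\ftilde^{-k}(\zeta)) + k.
\]
The plan has three steps: show this definition is consistent, show it covers the cone $\mathcal{V}$, and show it is univalent.

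\textbf{Step 1: the orbit of $\mathcal{S}$ sweeps $\mathcal{V}$ without self-overlap.*}
\begin{itemize}
\item By the choice of $K_0$, $\mathcal{S}$ lies in the strip $|\real\zeta|\le K_0$. Its two vertical sides are identified by the lift of $f^{[0]}=f$, i.e. by $\ftilde$ or $T^{-1}\circ\ftilde$ up to the normalization of $\mathcal{S}$. So high in $\mathcal{S}$, the width of $\mathcal{S}$ is about $\theta$.
\item By (\ref{eqn:f-in-log-coord}), each application of $\ftilde$ moves a point horizontally by between $\frac{10}{11}\theta$ and $\frac{12}{11}\theta$. It moves it vertically by at most $\theta/11$ per step.
\item Hence, starting from $\zeta\in\mathcal{V}$, the backward orbit $\ftilde^{-k}(\zeta)$, $k\ge 0$, reaches the strip containing the part of $\mathcal{S}$ above height $C_0$ within about $|\real\zeta|/\theta$ steps.
\item The accumulated vertical drift is at most $\frac{1}{11}\cdot\frac{11}{10}|\real\zeta| + O(1)$. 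This is less than $0.1|\real\zeta|+C_1-C_0$ once $C_1$ is large, so the orbit never drops below height $C_0$, where the estimates remain valid.
\end{itemize}
This is where the slope $0.1$ and the constant $C_1$ come from. Choose $C_1$ so that the drift, plus the $O(1)$ needed to enter $\mathcal{S}$, is absorbed.

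\textbf{Step 2: consistency and property (2).*}
\begin{itemize}
\item Well-definedness on the overlap with $\mathcal{S}$ and along the glued sides follows from $\phi$ conjugating the side identification to $T$. On the sides, $\phi(\ftilde(\zeta))=\phi(\zeta)+1$ holds by construction of the gluing.
\item For property (2): if $\zeta$ and $\ftilde(\zeta)$ both lie in $\mathcal{S}\cup\mathcal{V}$, the defining formula gives the relation directly when both lie in the extended region.
\item The remaining case is that $\zeta$ and $\ftilde(\zeta)$ lie in $\mathcal{S}$ but low down. There, $\ftilde(\zeta)\in\mathcal{S}$ forces $\zeta$ into the part of the domain where $f$ itself is the first return map. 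The relation then holds because $\psi_0$ projects $f$ to a map commuting appropriately, namely $\phi\circ\ftilde=\phi+1$ is the lift of the statement that the gluing identifies $f$ with the deck transformation.
\end{itemize}

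\textbf{Step 3: univalence (the main obstacle).*}
Suppose $\phi(\zeta_1)=\phi(\zeta_2)$. Pull both points back into $\mathcal{S}\cup T(\mathcal{S})$ by $\ftilde^{-k_1}$ and $\ftilde^{-k_2}$. Since $\phi$ is injective on $\mathcal{S}$ modulo $T$ and sends $T$ to the identity, the values $\phi$ differ by the integer $k_1-k_2$. This forces the two pulled-back points to be related by a power of $\ftilde$, hence $\zeta_1=\zeta_2$, provided $\ftilde$ has no periodic behaviour in $\mathcal{V}$.

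That is guaranteed by the monotone real-part increase from (\ref{eqn:f-in-log-coord}): $\ftilde$ is injective there, and its orbits are strictly increasing in $\real\zeta$. The delicate point is to check that the region traversed lies in $\Dom(\ftilde)$ and stays above height $C_0$. This is exactly the drift estimate of Step 1, so I expect the geometric bookkeeping of that cone to be the main work.
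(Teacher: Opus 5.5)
Your construction is the paper's. You extend $\phi$ by $\phi(\zeta):=\phi(\ftilde^{n}(\zeta))-n$, where $n$ brings $\zeta$ into a high part of $\mathcal S$. Your drift count is the paper's cone invariance: on $\mathcal V$, under $\ftilde$, $\imag(\zeta)+0.1\,\real(\zeta)$ does not decrease and $\imag(\zeta)-0.1\,\real(\zeta)$ does not increase. That is where the slope $0.1$ and $C_1$ come from. (Points of $\mathcal V$ to the left of $\mathcal S$ need forward iterates, not the backward orbit with $k\ge 0$ you describe. The paper splits $\mathcal V$ minus a central half-strip $\Pi$ into $\mathcal V^{\pm}$ for this reason.) However, two steps are argued in a way that does not work.

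In Step 2, the \emph{low down} case of (2) cannot be verified the way you propose. If $\zeta$ and $\ftilde(\zeta)$ were both interior points of $\mathcal S$, then $\phi(\zeta)$ and $\phi(\ftilde(\zeta))$ would both lie in $\Sigma_1$. Since $\Sigma_1\cap(\Sigma_1+1)=\emptyset$, the relation $\phi\circ\ftilde=\phi+1$ would then be false. Your mechanism also points the wrong way. Where $f$ is the first return map to $S$, the gluing $\psi_0$ projects it to a branch of $f_1$, so $\phi\circ\ftilde$ would be a lift of $f_1\circ\psi_0$, not of $\psi_0$. The gluing identifies $f$ with the deck transformation only along the side of $\mathcal S$, as boundary values. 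So inside $\mathcal S$ the relation holds only for side points glued by $\ftilde$, and the interior configuration must be excluded, not checked. The paper builds (2) into the definition $\phi(\zeta):=\phi(\ftilde^{n_\zeta}(\zeta))-n_\zeta$. It chooses the landing sector $\mathcal U_0$, cut off by an arc $\alpha_0$ lying below height $C_1$, so that $\mathcal V\cap\mathcal S\subset\mathcal U_0$. It then uses (2) only high up, in Lemma~\ref{lem:yoc-3}.

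In Step 3, set $a=\ftilde^{-k_1}(\zeta_1)$ and $b=\ftilde^{-k_2}(\zeta_2)$. The passage from $\phi(a)-\phi(b)=k_2-k_1\in\Z$ to $\zeta_1=\zeta_2$ is not explained by $\phi$ sending $T$ to the identity, nor by the absence of periodic orbits of $\ftilde$. The actual reason is that $a$ and $b$ lie in the single landing sector, which $\phi$ maps into $\Sigma_1$. $\Sigma_1$ is a fundamental domain of $T$, and its sides are glued by $T$ exactly as the sides of $\mathcal S$ are glued by $\ftilde$. There are then two cases:
\begin{itemize}
\item $k_1=k_2$: then $a=b$ by injectivity of $\phi$ on $\mathcal S$, and $\zeta_1=\zeta_2$ by injectivity of $\ftilde$.
\item $|k_1-k_2|=1$: then $a$ and $b$ are side points with $b=\ftilde^{\pm1}(a)$, which again gives $\zeta_1=\zeta_2$.
\end{itemize}
This is the paper's argument. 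Pulling back into $\mathcal S\cup T(\mathcal S)$ and using $T$-periodicity is in fact inconsistent, because the extension is not $T$-periodic. A point of $T(\mathcal S)\cap\mathcal V$ needs about $1/\theta$ backward iterates to reach $\mathcal S$. So the extension takes values near $\Sigma_1+1/\theta$ there, while the covering lift $\phi\circ T^{-1}$ takes values in $\Sigma_1$.
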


\begin{proof}
    Let $\mathbf{k} := \lfloor 1.1 K_0 /\theta \rfloor$, then
    \[
        0.1 K_0 \leq \frac{\mathbf{k}\theta}{11} < 0.1 K_0 + \frac{1}{22}.
    \]
    For any point $\zeta$, if $\imag(\zeta;\theta) \geq C_0 + 0.1 K_0 + 1/22$, then
    \begin{equation}
    \label{eqn:shifting-many}
        |\ftilde^k(\zeta)-\zeta-k\theta| \leq \frac{k\theta}{11} \quad \text{ for all } k \in \{1,2,\ldots,\mathbf{k} \}.
    \end{equation}

    Pick a point $\zeta_0^-$ on the left side of $\mathcal{S}$ and a proper arc $\alpha_0$ in $\mathcal{S}$ such that
    \begin{itemize}
        \item $\alpha_0$ starts from $\zeta_0^-$ and ends at $\zeta_0^+ = \ftilde(\zeta_0^-)$;
        \item $\alpha_0$ is contained in the strip
        \[
            \left\{
            \zeta \in \C \: : \: 0.1 K_0 + \frac{1}{22}\leq \imag (\zeta;\theta) - C_0 \leq C' 
            \right\}
        \]
        for some uniform constant $C'>0$;
        \item let $\gamma_0^-$ be the sub-ray of the left side of $\mathcal{S}$ emanating from $\zeta_0^-$ and let $\gamma_0^+=\ftilde(\gamma_0^-)$, then $\gamma_0^- \cup \alpha \cup \gamma_0^+$ bounds an infinite sector $\mathcal{U}_0 \subset \mathcal{S}$ contained in the half-strip 
        \[
            \left\{ 
            \zeta \in \C \: : \: |\real(\zeta)| \leq K_0 , \; \imag (\zeta;\theta) \geq C_0 + 0.1 K_0 + \frac{1}{22}
            \right\}.
        \]
    \end{itemize}
    Such objects exist because the sides of $\mathcal{S}$ are uniformly quasiconformal and have uniformly bounded spiralling numbers.

    For $k \in \N$, denote $\gamma_k^- = \ftilde^{-k}(\gamma_0^-)$, $\gamma_k^+ = \ftilde^k(\gamma_0^+)$ and $\alpha_k = \ftilde^k(\alpha_0)$.
    By (\ref{eqn:shifting-many}), for $k \in \{1,\ldots, \mathbf{k}\}$, 
    there exist a pair of infinite sectors $\mathcal{U}_{-k}$ and $\mathcal{U}_{k}$ such that 
    \begin{itemize}
        \item $\mathcal{U}_{-k}$ has summit $\alpha_{-k}$ and sides $\gamma_k^-$ and $\gamma_{k+1}^-$, 
        \item $\mathcal{U}_{k}$ has summit $\alpha_{k}$ and sides $\gamma_k^+$ and $\gamma_{k+1}^+$,
        \item the maps $\ftilde^k: \mathcal{U}_{-k} \to \mathcal{U}_0$ and $\ftilde^k :\mathcal{U}_0 \to \mathcal{U}_k$ are conformal isomorphisms.
    \end{itemize}
    Denote
    \[
        C_1 := C' + C_0 + 0.1K_0 + \frac{1}{22} 
        \qquad \text{and} \qquad
        \mathcal{U} := 
        \bigcup_{j=-\mathbf{k}}^{\mathbf{k}} \mathcal{U}_j.
    \]
    The choice of $\mathbf{k}$ ensures that
    \begin{align*}
        \bigcup_{k=-\mathbf{k}}^{\mathbf{k}} \alpha_k 
        &\subset 
        \left\{ \zeta \in \C \: : \: C_0 \leq \imag (\zeta;\theta) \leq C_1 \right\}, \\    \gamma_{\mathbf{k}}^- 
        &\subset \left\{ \zeta \in \C \: : \: 
        \real(\zeta) \leq -\frac{6\theta}{11}, \;
        \imag (\zeta;\theta) \geq C_0 \right\}, \\
        \gamma_{\mathbf{k}}^+
        &\subset \left\{ \zeta \in \C \: : \: 
        \real(\zeta) \geq \frac{6\theta}{11}, \;
        \imag (\zeta;\theta) \geq C_0 \right\}.
    \end{align*}
    In particular, $\mathcal{U}$ contains the closed half-strip
    \[
        \Pi := \left\{ \zeta \in \C \: : \: 
        |\real(\zeta)| \leq \frac{6\theta}{11}, \, \imag (\zeta;\theta) \geq C_1 \right\}.
    \]

    Consider the domain $\mathcal{V}$. 
    The removal of $\Pi$ splits $\mathcal{V}$ into two components, $\mathcal{V}^-$ on the left and $\mathcal{V}^+$ on the right.
    From (\ref{eqn:f-in-log-coord}), we have for all $\zeta \in \mathcal{V}$,
    \begin{align*}
        \real(\ftilde(\zeta)-\zeta) &\geq \frac{10}{11}\theta, \\
        \imag (\ftilde(\zeta)) + 0.1 \, \real (\ftilde(\zeta)) &\geq \imag (\zeta) + 0.1 \, \real (\zeta), \\
        \imag (\ftilde(\zeta)) - 0.1 \, \real (\ftilde(\zeta)) &\leq \imag (\zeta) - 0.1 \, \real(\zeta).
    \end{align*}
    These inequalities ensure $\ftilde$-invariance in the following sense:
    \[
        \ftilde (\mathcal{V}_-) \subset \mathcal{V}_- \cup \Pi \qquad \text{and} \qquad \mathcal{V}_+ \subset \ftilde(\mathcal{V}_+ \cup \Pi).
    \]
    
    Since $\Pi$ is contained in $\mathcal{U}$, every point $\zeta$ in $\mathcal{V} \backslash \mathcal{U}_0$ admits a unique non-zero integer $n_\zeta$ with the smallest $|n_\zeta|$ such that $\ftilde^{n_\zeta}(\zeta)$ is contained in $\mathcal{U}_0 \subset \mathcal{S}$. 
    If $\zeta$ is in $\mathcal{S}$, then we will just set $n_\zeta = 0$.
    So for $\zeta \in \mathcal{V}$, we set
    \[
        \phi(\zeta) := \phi(\ftilde^{n_{\zeta}}(\zeta)) - n_{\zeta}.
    \]
    The extension defined above is clearly holomorphic and continuous in $\mathcal{V} \cup \mathcal{S}$. 
    
    Let us show that $\phi$ is univalent on $\mathcal{V} \cup \mathcal{S}$. 
    Suppose $\phi(\zeta) = \phi(\zeta')$ for some distinct points $\zeta$ and $\zeta'$ in $\mathcal{V} \cup \mathcal{S}$. 
    Then,
    \[
        \phi(\ftilde^{n_{\zeta}}(\zeta))-\phi(\ftilde^{n_{\zeta'}}(\zeta')) = n_{\zeta}-n_{\zeta'}.
    \]
    Since $\phi(\ftilde^{n_{\zeta}}(\zeta))-\phi(\ftilde^{n_{\zeta'}}(\zeta'))$ is an integer and $\ftilde^t(\zeta) \neq \ftilde^{t'} (\zeta')$, then $\zeta$ and $\zeta'$ can only be on the boundary of the sector $\mathcal{S}$, $|n_{\zeta}-n_{\zeta'}|=1$, and either $\ftilde(\zeta)=\zeta'$ or $\zeta = \ftilde(\zeta')$.
\end{proof}

Let us very briefly reintroduce the index $n$; what truly matters in the next few pages is the base level $n=0$ and the next level $n=1$.
For $n \geq 0$, consider the Mother Hedgehog 
\[
\mathcal{H}_n := \exp^{-1}(H_n)
\]
of $\ftilde_n$.
Recall that $\mathcal{S}_n$ contains the critical value $\vtilde_0^n = \vtilde_0(\ftilde_n)$ of $\ftilde_n$.
For $j,k\in \Z$, denote
\[
    \vtilde_{j,k}^n := T^k \circ (\ftilde_n|_{\mathcal{H}_n})^j(\vtilde_0^n)
\]
and let $\mathcal{I}^n_{j,k}$ be the internal ray of $\mathcal{H}_n$ landing at $\vtilde_{j,k}^n$.

In the dynamical plane of $\ftilde$,
we denote by $\mathcal{U}$ the infinite sector whose sides are the internal rays $\mathcal{I}^0_{-1,0}$ and $\mathcal{I}^0_{-1,1}$ and whose summit is the arc along $\partial \hat{\mathcal{Z}}^{[0]}$ joining the critical points $\vtilde^0_{-1,0}$ and $\vtilde^0_{-1,1}$.
In the dynamical plane of $\ftilde_1$, let us consider the infinite sector 
\[
\mathcal{E} = \Sigma_1 \cap \hat{\mathcal{Z}}^{[-1]}_1.
\]
The internal ray $\mathcal{I}_{0,0}^1$ splits $\mathcal{E}$ into two infinite sectors, namely $E^-$ on the left and $E^+$ on the right.
Define 
\[
    \tilde{\ftilde}_1(z) := \begin{cases}
        \ftilde_1(z) \quad & \text{ if } \varepsilon_2 = -1, \\
        \ftilde_1\circ T^{-1}(z) \quad & \text{ if } \varepsilon_2 = +1.
    \end{cases}
\]

\begin{lemma}
\label{lem:phi-PS-extension}
    The map $\phi$ uniquely extends to a univalent map from $\mathcal{U}$ onto the infinite sector
\[
    \mathcal{Y} = T^{-1}(E^+) \cup E^- \cup E^+ \cup \bigcup_{j=1}^{\bar{a}_{1}-1} \Big( T^{j-1} \tilde{\ftilde}_{1}^{-1}(E^+) \cup T^j \tilde{\ftilde}_{1}^{-1}(E^-) \Big)
\]
    with sides $\mathcal{I}^1_{0,-1}$ and $\mathcal{I}^1_{-1,b_1}$.
\end{lemma}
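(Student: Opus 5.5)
The plan is to build the extension out of the renormalization triangulation and the dynamics, in the same way Lemma \ref{lem:univalent-extension} extended $\phi$ near $i\infty$. Extend $\phi$ triangle by triangle across $\mathcal{U}$, using the relation $\phi\circ\ftilde=\phi+1$. The combinatorial input is Lemma \ref{lem:triangulation-first-return} with $n=0$, $m=1$: the triangulation $\hat{\Delta}^{[0]}_0$ lifts under $\exp$ to a $T$-invariant triangulation of $\hat{\mathcal{Z}}^{[0]}_0$. Each of its triangles $\tilde{A}(-p)$, $\tilde{\check A}(-p)$ is mapped by an iterate $\ftilde^{p}$ (with $0\le p<\qq_{[1]}$, resp.\ $p<\check{\qq}_{[1]}$, and $q_{[1]}=b_1$) conformally onto one of the two lifted triangles inside $\mathcal{S}\cap\hat{\mathcal{Z}}^{[0]}$. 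On those two triangles $\phi$ is already defined, and their images are $E^-$ and $E^+$ (up to $T$). Since $\phi$ is already defined and univalent on $\mathcal{S}$, on $\mathcal{S}\cup\mathcal{V}$ by Lemma \ref{lem:univalent-extension}, this fixes the extension uniquely as
\[
\phi(\zeta):=\phi(\ftilde^{p}(\zeta))-p
\]
on each triangle, composed with the appropriate deck translation.

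First I will pin down which triangles lie in $\mathcal{U}$. The sector $\mathcal{U}$ is bounded by $\mathcal{I}^0_{-1,0}$ and $\mathcal{I}^0_{-1,1}$, i.e.\ the rays through two consecutive lifts of the critical point. So $\mathcal{U}$ is a fundamental domain for $T$ on $\hat{\mathcal{Z}}^{[0]}_0$, cut along the critical ray. Hence $\mathcal{U}$ is exactly the union of one lift of each triangle of $\hat{\Delta}^{[0]}_0$: one $A$-type triangle of each backward time $p<\check{\qq}_{[1]}$ and one $\check A$-type triangle of each $p<\qq_{[1]}$. Counting gives $2\bar a_1+O(1)$ pieces. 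These match the $2\bar a_1+1$ pieces listed in $\mathcal{Y}$, namely $T^{-1}(E^+)$, $E^-$, $E^+$, and the pairs $T^{j-1}\tilde{\ftilde}_1^{-1}(E^+)$, $T^{j}\tilde{\ftilde}_1^{-1}(E^-)$.

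Next I will identify the image of each triangle, using the model rotation (Proposition \ref{prop:q[n]} (3)) to read off the combinatorics. A triangle pulled back $p$ steps from $\mathcal{S}$ is sent by $\phi$ to $T^{-p}$ of the image of the corresponding triangle of $\mathcal{S}$. The gluing identifies $\ftilde^{q_{[0]}}=\ftilde$ with $T$ in the new coordinates, while $\ftilde^{q_{[1]}}$ on $\mathcal{S}$ becomes $\ftilde_1$ (or $\ftilde_1\circ T^{-1}$, according to $\varepsilon_2$, which is why $\tilde{\ftilde}_1$ appears). So triangles that lie more than one step away from $\mathcal{S}$ land in $T^{j}\tilde{\ftilde}_1^{-1}(E^\pm)$. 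The sign $\varepsilon_1$ has been normalized to $+$ (we assumed $\theta>0$), which fixes the left/right bookkeeping. The endpoint rays then go to $\mathcal{I}^1_{0,-1}$ and $\mathcal{I}^1_{-1,b_1}$, since $\vtilde^0_{-1}$ projects to the critical value of $f_1$ shifted appropriately.

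Continuity across shared internal rays holds because adjacent triangles differ by one application of $\ftilde$, and $\phi\circ\ftilde=\phi+1$ is compatible with the side identification of $\psi_0$. Univalence then follows as in Lemma \ref{lem:univalent-extension}. The images are the interiors of distinct tiles of the lifted triangulation $\hat{\Delta}^{[-1]}_1$, which are pairwise disjoint by Lemma \ref{lem:triangulation-invariance-01}, and the extension agrees with the earlier one on the overlap with $\mathcal{S}\cup\mathcal{V}$.

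I expect the main obstacle to be the exact index bookkeeping: matching each triangle to its image tile $T^{j}\tilde{\ftilde}_1^{-1}(E^\pm)$, handling the dependence on $\varepsilon_2$ through $b_1=\bar a_1+\tfrac{1+\varepsilon_1\varepsilon_2}{2}$, and checking the endpoint ray labels. I would settle this first in the rigid rotation model, where everything is explicit, and then transport it via Lemma \ref{lem:triangulation-invariance-01}.
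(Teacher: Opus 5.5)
Your architecture matches the paper's. You cut $\mathcal{U}$ along the lift of $\hat{\Delta}^{[0]}_0$, which gives exactly $\qq_{[1]}+\check{\qq}_{[1]}=2\bar{a}_1+1$ pieces, extend $\phi$ piece by piece through the dynamics, and glue along shared internal rays. \textbf{However, the formula you actually write down does not give the lemma.}

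Let $D^-,D^+$ be the two halves of $\mathcal{S}\cap\hat{\mathcal{Z}}^{[0]}$ cut by $\mathcal{I}^0_{0,0}$. Every piece of $\mathcal{U}$ other than $\ftilde^{-1}(D^+)$, $D^-$, $D^+$ lies to the right of $\mathcal{S}$. Its forward iterate $\ftilde^{p}$, $p\ge 0$, lands in $T(\mathcal{S})$, not in $\mathcal{S}$. If you undo this with the deck translation $T^{-1}$, apply $\phi$ and subtract $p$, you get integer translates of $E^\pm$. The resulting map is discontinuous across the right side $\mathcal{I}^0_{-\bar{a}_1,1}$ of $\mathcal{S}$:
\begin{itemize}
\item $\phi|_{D^+}$ sends that side to a side of $\Sigma_1$, which is a lift of the critical-point ray;
\item the neighbouring piece would be sent onto a translate of $E^+$, whose corresponding side is a critical-value ray $\mathcal{I}^1_{0,\cdot}$.
\end{itemize}
The underlying reason is that the continuation of $\phi$ past $\mathcal{S}$ is not $T$-equivariant. $\phi$ conjugates the first return map to $\mathcal{S}$ ($T^{-1}$ composed with $\ftilde^{\qq_{[1]}}$ or $\ftilde^{\check{\qq}_{[1]}}$) to $\tilde{\ftilde}_1$ up to integer translations. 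So a deck translation in the domain corresponds to $\tilde{\ftilde}_1^{-1}$, up to an integer shift, in the range. This is exactly where the pieces $T^j\tilde{\ftilde}_1^{-1}(E^\pm)$ come from. Your third paragraph names these images correctly, but they do not follow from your formula.

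The paper handles this as follows. On such a piece it first moves into $D^\pm$ by $T^{-1}$ and an iterate of $\ftilde$, then applies $\phi$, then applies $T^j\tilde{\ftilde}_1^{-1}$. Continuity across $\mathcal{I}^0_{-\bar{a}_1,1}$ comes from the fact that $\phi$ projects the first return map to $\ftilde_1$.

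A repair closer to your own formula is to allow negative exponents. Each piece to the right of $\mathcal{S}$ equals $\ftilde^{p}(P)$ with $1\le p\le\bar{a}_1-1$, where $P$ is one of the two first-return pieces of $\mathcal{S}\cap\hat{\mathcal{Z}}^{[0]}$ from Lemma \ref{lem:triangulation-first-return}. Set $\phi(\zeta):=\phi(\ftilde^{-p}(\zeta))+p$ with no deck translation; this is literally the continuation used in Lemma \ref{lem:univalent-extension}. Continuity then needs only two facts: $\phi$ is continuous on $\mathcal{S}$, and $\phi\circ\ftilde=\phi+1$ on its glued sides. The conjugacy to $\tilde{\ftilde}_1$ is used only to identify the two sets $\phi(P)$ with translates of $\tilde{\ftilde}_1^{-1}(E^{+})$ and $\tilde{\ftilde}_1^{-1}(E^{-})$.

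\textbf{Your univalence step also misidentifies the images.} Apart from $T^{-1}(E^+)$, $E^-$, $E^+$, the sectors making up $\mathcal{Y}$ are bounded by rays of the form $\mathcal{I}^1_{-1,\cdot}$ and $\mathcal{I}^1_{-2,\cdot}$. So they are not tiles of the lift of $\hat{\Delta}^{[-1]}_1$, whose edges are lifts of the rays landing at $v^1_0$ and $v^1_{-1}$. Lemma \ref{lem:triangulation-invariance-01} only describes $\hat{\Delta}^{[0]}_0\cap S_0$ and says nothing about their disjointness.

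With the repaired formula, injectivity genuinely works as in Lemma \ref{lem:univalent-extension}:
\begin{itemize}
\item $\phi(\ftilde^{p}(P))=\phi(P)+p\subset\Sigma_1+p$ with $p\ge 1$;
\item the three remaining pieces go into $\Sigma_1-1$ and $\Sigma_1$;
\item distinct integer translates of $\Sigma_1$ are disjoint;
\item for equal $p$, the two images are translates of $\phi$-images of disjoint subsets of $\mathcal{S}$.
\end{itemize}
The paper itself simply asserts that the listed sectors of $\mathcal{Y}$ meet only along common sides.
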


See Figure \ref{fig:siegel-extension}.
Note that the summit of $\mathcal{Y}$ is very close to the interval on the boundary of $\hat{\mathcal{Z}}^{[-1]}_1$ with endpoints $\vtilde^1_{0,-1}$ and $\vtilde^1_{-1,b_1}$.

\begin{figure}
    \centering
    
    \begin{tikzpicture}
    \node[anchor=south west, inner sep=0] (image) at (0,0) {\includegraphics[width=0.99\linewidth]{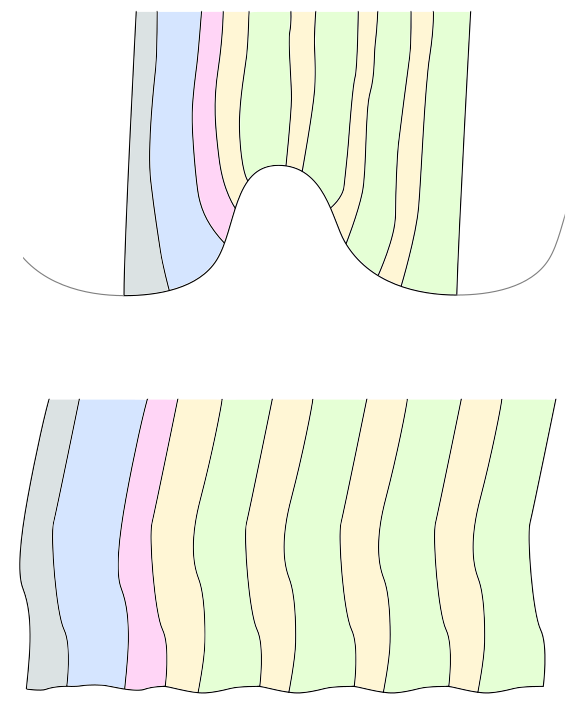}};
    \begin{scope}[
        x={(image.south east)},
        y={(image.north west)}
    ]   
        \node [black, font=\bfseries] at (0.395,0.663) {\small $\bullet$};
        \node [black, font=\bfseries] at (0.423,0.663) {\small $\vtilde^0_{0}$};
        \node [black, font=\bfseries] at (0.22,0.587) {\small $\bullet$};
        \node [black, font=\bfseries] at (0.22,0.565) {\small $\vtilde^0_{-1,0}$};
        \node [black, font=\bfseries] at (0.804,0.587) {\small $\bullet$};
        \node [black, font=\bfseries] at (0.804,0.565) {\small $\vtilde^0_{-1,1}$};
        
        \node [black, font=\bfseries] at (0.22,0.04) {\small $\bullet$};
        \node [black, font=\bfseries] at (0.1975,0.065) {\small $\vtilde^1_{0}$};
        \node [black, font=\bfseries] at (0.048,0.04) {\small $\bullet$};
        \node [black, font=\bfseries] at (0.048,0.015) {\small $\vtilde^1_{0,-1}$};
        \node [black, font=\bfseries] at (0.955,0.042) {\small $\bullet$};
        \node [black, font=\bfseries] at (0.96,0.017) {\small $\vtilde^1_{-1,b_1}$};
        
        \node [black, font=\bfseries] at (0.52,0.53) {$\phi$};
        \draw[black,line width=0.75pt,-latex] (0.5,0.6) -- (0.5,0.46);

        \node [black!30!blue, font=\bfseries] at (0.3,0.85) {\small $D^-$};
        \node [purple, font=\bfseries] at (0.36,0.85) {\small $D^+$};
        \node [black, font=\bfseries] at (0.2,0.92) {$\mathcal{U}$};
        \node [black, font=\bfseries] at (0.045,0.4) {$\mathcal{Y}$};
        
        \node [black, font=\bfseries] at (0.32,1.025) {\small $\ftilde$};
        \draw[black,line width=0.5pt,-latex] (0.26,0.99) .. controls (0.28,1.01) and (0.345,1.01) .. (0.365,0.99);
        \node [red!50!black, font=\bfseries] at (0.402,1.035) {\small $T^{-1}\ftilde^{\bar{a}_1}$};
        \draw[red!50!black,line width=0.5pt,-latex] (0.42,0.99) .. controls (0.41,1.02) and (0.39,1.02) .. (0.38,0.99);
        \node [black, font=\bfseries] at (0.485,1.025) {\small $\ftilde$};
        \draw[black,line width=0.5pt,-latex] (0.435,0.99) .. controls (0.455,1.01) and (0.525,1.01) .. (0.545,0.99);
        \node [black, font=\bfseries] at (0.595,1.025) {\small $\ftilde$};
        \draw[black,line width=0.5pt,-latex] (0.555,0.99) .. controls (0.575,1.01) and (0.635,1.01) .. (0.655,0.99);
        \node [black, font=\bfseries] at (0.715,1.025) {\small $\ftilde$};
        \draw[black,line width=0.5pt,-latex] (0.665,0.99) .. controls (0.685,1.01) and (0.745,1.01) .. (0.765,0.99);

        \node [gray!50!black, font=\bfseries] at (0.95,0.59) {\small $\partial \hat{\mathcal{Z}}^{[0]}$};
        \node [black!30!blue, font=\bfseries] at (0.15,0.25) {\small $E^-$};
        \node [purple, font=\bfseries] at (0.24,0.25) {\small $E^+$};

        \node [black, font=\bfseries] at (0.16,-0.015) {\small $T$};
        \draw[black,line width=0.5pt,-latex] (0.085,0.02) .. controls (0.115,0) and (0.205,0) .. (0.235,0.02);
        \node [red!50!black, font=\bfseries] at (0.29,-0.015) {\small $\tilde{\ftilde}_1$};
        \draw[red!50!black,line width=0.5pt,-latex] (0.32,0.02) .. controls (0.305,0) and (0.27,0) .. (0.26,0.02);
        \node [black, font=\bfseries] at (0.435,-0.015) {\small $T$};
        \draw[black,line width=0.5pt,-latex] (0.36,0.02) .. controls (0.39,0) and (0.48,0) .. (0.51,0.02);
        \node [black, font=\bfseries] at (0.595,-0.015) {\small $T$};
        \draw[black,line width=0.5pt,-latex] (0.52,0.02) .. controls (0.55,0) and (0.64,0) .. (0.67,0.02);
        \node [black, font=\bfseries] at (0.765,-0.015) {\small $T$};
        \draw[black,line width=0.5pt,-latex] (0.69,0.02) .. controls (0.72,0) and (0.81,0) .. (0.84,0.02);
    \end{scope}
\end{tikzpicture}

    \caption{The renormalization change of variables $\phi: \mathcal{S} \to \Sigma_1$ extends to a univalent map $\phi: \mathcal{U} \to \mathcal{Y}$. In the dynamical plane of $\ftilde$, part of $\mathcal{S}$ in $\hat{\mathcal{Z}}^{[0]}$ is split into two sub-sectors $D^-$ and $D^+$ and is spread around to form $\mathcal{U}$. In the dynamical plane of $\ftilde_1$, part of $\Sigma_1$ in $\hat{\mathcal{Z}}^{[-1]}_1$ is split into two sub-sectors $E^-$ and $E^+$ and is spread around to form $\mathcal{Y}$.}
    \label{fig:siegel-extension}
\end{figure}

\begin{proof}
    Let $D^-$ be the infinite sector with sides $\mathcal{I}^0_{-\bar{a}_1-1,1}$ and $\mathcal{I}^0_{0,0}$ and summit being a subarc of $\partial E$, and let $D^+$ be the infinite sector with sides $\mathcal{I}^0_{0,0}$ and $\mathcal{I}^0_{-\bar{a}_1,1}$ and summit being a subarc of $\partial E$.
    Then, $\mathcal{U}$ can be partitioned into
    \[
        \ftilde^{-1}(D^+), \; D^-, \; D^+, \; T\ftilde^{-\bar{a}_1}(D^+), \;  T\ftilde^{-\bar{a}_1+1}(D^-), \ldots, \;
        \; T\ftilde^{-2}(D^+),
        \; T\ftilde^{-1}(D^-),
    \]
    written from left to right.
    Any two elements in the partition above are either disjoint or intersect along a single common side.
    This partition is essentially a particular lift under $\exp$ of the triangulation $\hat{\Delta}_0^{[0]}$ described in \S\ref{ss:renorm-tiling}.

    Similarly, observe that 
    \[
        T^{-1}(E^+), \; E^-, \; E^+, \;
        \tilde{\ftilde}_1^{-1}(E^+), \; T \tilde{\ftilde}_1^{-1}(E_-), \;
        \ldots, \;
        T^{\bar{a}_1-2} \tilde{\ftilde}_1^{-1}(E^+), \; 
        T^{\bar{a}_1-1} \tilde{\ftilde}_1^{-1}(E_-)
    \]
    form a collection of infinite sectors in the dynamical plane of $\ftilde_1$ with the property that
    the sides of every sector is an internal ray of $\ftilde_1$, and that
    any two sectors are either disjoint or intersect along a common side.
    The union of these sectors is $\mathcal{Y}$.

    The map $\phi$ is well-defined on $D^-$ and $D^+$ because they are contained in $\mathcal{S}$.
    Since $\phi$ sends each $\hat{\mathcal{Z}}^{[m]} \cap \mathcal{S}$, and $\mathcal{I}_{0,0}^0$ onto $\hat{\mathcal{Z}}^{[m-1]} \cap \Sigma$ and $\mathcal{I}_{0,0}^1$ respectively, then $\phi(D^-) = E^-$ and $\phi(D^+) = E^+$.

    On $\ftilde^{-1}(D^+)$, we define $\phi$ by
    \[
        \phi(z) := T^{-1}\phi(\ftilde(z)),
    \]
    By equivariance of $\ftilde$ along the sides of $\mathcal{S}$, $\phi$ must be continuous on the common side $\mathcal{I}^0_{-\bar{a}_1-1,1} = \ftilde^{-1}(D^+) \cap D^-$.
    The image of $\ftilde^{-1}(D^+)$ under $\phi$ is equal to $T^{-1}(E^+)$.
    
    On $T\ftilde^{-\bar{a}_1}(D^+)$, we define $\phi$ by
    \[
        \phi(z) = \tilde{\ftilde}_1^{-1} \circ \phi \circ T^{-1}\ftilde^{\bar{a}_1}(z).
    \]
    This is again well-defined and continuous on the common side $\mathcal{I}^0_{-\bar{a}_1,1} = D^+ \cap T \ftilde^{-\bar{a}_1}(D^+)$ because $\phi$ projects $T^{-1} \ftilde^{b_1}$ to the map $\ftilde_1$.
    More generally, for $j \in \{1,\ldots,\bar{a}_1-1\}$, let us set
    \[
        \phi(z) = 
            T^j \tilde{\ftilde}_1^{-1} \circ \phi \circ T^{-1}\ftilde^{\bar{a}_1+j}(z), \qquad \text{ for } z \in T\ftilde^{-\bar{a}_1+j}(D^- \cup D^+).
    \]
    For the same equivariance reasons, $\phi$ is a well-defined univalent map map on $T\ftilde^{-\bar{a}_1+j}(D^\pm)$, the image is equal to $T^j \tilde{\ftilde}_1^{-1}(E^\pm)$, and altogether $\phi$ glues to a continuous univalent map from $\mathcal{U}$ to $\mathcal{Y}$.
\end{proof}

\subsection{The estimate on $\phi'$} 

In order to estimate the renormalization change of variables $\phi$, we will adapt an approach inspired by \cite[Chapter 1 \S3.4]{Yoc95}.

\begin{lemma}
\label{lem:de-branges}
    For any univalent function $g: (\D,0) \to (\C,0)$ and $z \in \D^*$, we have
    \[
        \left| g(z) - g(-z) - 2z g'(0) \right| \leq 2 |g'(0)| \frac{|z|^3(3-|z|^2)}{(1-|z|^2)^2}.
    \]
\end{lemma}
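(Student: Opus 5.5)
Normalize by $g'(0)=1$; the general case follows by replacing $g$ with $g/g'(0)$, which scales both sides by $|g'(0)|$. Write $g(z)=z+\sum_{n\ge2}a_nz^n$. Then
\[
g(z)-g(-z)-2z = 2\sum_{k\ge1} a_{2k+1}z^{2k+1},
\]
so only the odd coefficients contribute. The task is to bound this odd part. The target bound has the form
\[
\frac{|z|^3(3-|z|^2)}{(1-|z|^2)^2} = \sum_{k\ge1} c_k|z|^{2k+1}
\]
for explicit coefficients $c_k$, and I would compute these first. The identity
\[
\frac{r^3(3-r^2)}{(1-r^2)^2} = \frac{r}{(1-r^2)^2}-r+\frac{2r^3}{(1-r^2)^2}-\dots
\]
is suggestive. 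More simply, $\frac{3r^3-r^5}{(1-r^2)^2}=\sum_k (2k+1)r^{2k+1}$, since $\sum_{k\ge1}(2k+1)r^{2k+1}=\frac{r(1+r^2)}{(1-r^2)^2}-r=\frac{3r^3-r^5}{(1-r^2)^2}$. So the target is exactly $2\sum_{k\ge1}(2k+1)|z|^{2k+1}$.

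The claim therefore reduces to the coefficient bounds $|a_{2k+1}|\le 2k+1$, which is the Bieberbach conjecture, i.e.\ de Branges' theorem (hence the lemma's label). The proof then reads:
\[
|g(z)-g(-z)-2z|\le 2\sum_{k\ge1}|a_{2k+1}||z|^{2k+1}\le 2\sum_{k\ge1}(2k+1)|z|^{2k+1},
\]
and the right-hand side sums to the stated rational function.

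The only step requiring care is confirming the series identity. Differentiating $\sum r^{2k+1}=\frac{r}{1-r^2}$ and multiplying by $r$ gives $\sum(2k+1)r^{2k+1}=\frac{r(1+r^2)}{(1-r^2)^2}$. Subtracting the $k=0$ term $r$ gives $\frac{r+r^3-r+2r^3-r^5}{(1-r^2)^2}=\frac{3r^3-r^5}{(1-r^2)^2}$, which is the required expression. The main obstacle is thus purely the invocation of de Branges' theorem. If one wanted to avoid it, an elementary route would be to use the odd univalent function $h(z)=\sqrt{g(z^2)}$, whose coefficients are bounded by Robertson-type estimates, but this only gives weaker constants, so I would cite de Branges directly.
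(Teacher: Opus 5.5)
Your proof is correct and essentially identical to the paper's. Like the paper, you normalize $g'(0)=1$, isolate the odd part $2\sum_{k\ge1}a_{2k+1}z^{2k+1}$, apply de Branges' bound $|a_n|\le n$, and sum $2\sum_{k\ge1}(2k+1)|z|^{2k+1}=2|z|^3(3-|z|^2)/(1-|z|^2)^2$ (your verification of this identity is right, and the stray ``suggestive'' identity and the closing aside about $\sqrt{g(z^2)}$ can simply be dropped).
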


\begin{proof}
    By postcomposing with the linear map $z \mapsto g'(0) z$, assume without loss of generality that $g'(0)=1$. By de Branges' Theorem, $g$ has power series expansion $g(z) = z + \sum_{n\geq 2} a_n z^n$ where $|a_n| \leq n$ for all $n \geq 2$. Therefore, for all $z \in \D^*$,
    \begin{align*}
        |g(z)-g(-z)-2z| &\leq 2\left| \sum_{n \geq 1} a_{2n+1} z^{2n+1} \right| \\
        &\leq 2 \sum_{n\geq 1} (2n+1) |z|^{2n+1} = 2 |z|^3 \frac{3-|z|^2}{(1-|z|^2)^2}. \qedhere
    \end{align*}
\end{proof}

\begin{lemma}
\label{lem:yoc-3}
    There exist uniform constants $C_2, C_3 >0$ such that for every point $\zeta$ with $|\real(\zeta)| \leq K_0$ and $\imag(\zeta;\theta) \geq C_2$, we have
    \[
        | \theta \phi'(\zeta) - 1 | \leq 
        \frac{C_3}{\imag(\zeta;\theta)^2}.
    \]
\end{lemma}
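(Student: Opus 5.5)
We will follow Yoccoz's approach: control $\phi'$ by comparing $\phi$ on a large disk of univalence centered at $\zeta$, using the functional equation $\phi\circ\ftilde=\phi+1$ from Lemma \ref{lem:univalent-extension} together with the near-translation estimate of Proposition \ref{prop:almost-translation}.

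Fix $\zeta$ with $|\real(\zeta)|\le K_0$ and set $y:=\imag(\zeta;\theta)$. The first step is to note that the domain $\mathcal{V}$ of Lemma \ref{lem:univalent-extension} contains the round disk $D:=\D(\zeta,r)$ with $r\asymp y$; concretely, $r=c\,y$ for a small uniform $c$, once $y\ge C_2$ is large compared to $C_1$ and $K_0$. So $\phi$ is univalent on $D$. Next, take $k$ with $k\theta\approx r/2$, say $k=\lfloor r/(2\theta)\rfloor$. For points $w$ with $\imag(w;\theta)\ge y/2$, Proposition \ref{prop:almost-translation} gives $|\ftilde'(w)-1|\le C\theta e^{-\pi y}$, so $\ftilde^{\pm j}(w)$ stays close to $w\pm j\theta$ for $j\le k$. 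Summing, the error is $O(k\theta e^{-\pi y})=O(ye^{-\pi y})$, which is negligible. It follows that $\zeta_\pm:=\ftilde^{\pm k}(\zeta)$ lie in $\D(\zeta,r/2+o(1))$ and satisfy $\zeta_\pm=\zeta\pm k\theta+\delta_\pm$ with $|\delta_\pm|$ exponentially small in $y$. The functional equation then gives $\phi(\zeta_+)-\phi(\zeta_-)=2k$ exactly, provided all intermediate iterates stay in $\mathcal{S}\cup\mathcal{V}$, which they do because they remain in $D$.

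The second step applies Lemma \ref{lem:de-branges} to $g(u):=\phi(\zeta+ru)-\phi(\zeta)$ at $u=k\theta/r\approx 1/2$. This yields
\[
\bigl|\phi(\zeta+k\theta)-\phi(\zeta-k\theta)-2k\theta\,\phi'(\zeta)\bigr|\le O\bigl(k\theta|\phi'(\zeta)|\,|u|^2\bigr).
\]
This is only an $O(1)$ relative error, not $O(y^{-2})$. To fix it, we will instead take $k\theta\asymp 1$, so that $|u|\asymp 1/y$ and the error term becomes $O(k\theta|\phi'(\zeta)|y^{-2})$. We then replace $\zeta\pm k\theta$ by $\zeta_\pm$. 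By Koebe distortion, $|\phi'|\asymp|\phi'(\zeta)|$ on $\D(\zeta,r/2)$, so this replacement costs $O(|\phi'(\zeta)|\,|\delta_\pm|)$, which is exponentially small relative to $|\phi'(\zeta)|$. Dividing by $2k\theta$ gives
\[
\Bigl|\tfrac{1}{\theta}-\phi'(\zeta)\Bigr|\le|\phi'(\zeta)|\bigl(O(y^{-2})+O(e^{-cy})\bigr).
\]
Rearranging yields $|\theta\phi'(\zeta)-1|\le C_3y^{-2}$ once $y\ge C_2$ is large enough to absorb the relative error. The integer rounding of $k$ is harmless: choose $k=\lfloor 1/\theta\rfloor$, so that $k\theta\in[1/2,1]$ when $\theta\le 1/2$, and use the exact value $k\theta$ in place of $1$ throughout.

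The main obstacle is guaranteeing that the univalence disk really has radius $\asymp y$ and that the orbit segment $\ftilde^{j}(\zeta)$, $|j|\le k$, stays inside $\mathcal{S}\cup\mathcal{V}$ so that the functional equation holds along it. The cone $\mathcal{V}$ has slope $0.1$, which gives room of order $y$ horizontally at height $y$, and the total horizontal displacement is at most $k\theta\le 1\ll y$. So I expect this to follow from the shape of $\mathcal{V}$, with $C_2$ chosen large relative to $C_1$, $K_0$, and the constant of Proposition \ref{prop:almost-translation}.
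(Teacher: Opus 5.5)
Your proof is correct and is essentially the paper's argument: both apply the de Branges odd-part estimate (Lemma \ref{lem:de-branges}) to $\phi$ on a disk of univalence of radius $\asymp \imag(\zeta;\theta)$ inside $\mathcal{V}$, and combine it with the functional equation $\phi\circ\ftilde=\phi+1$ and Proposition \ref{prop:almost-translation}. The only difference is cosmetic. The paper takes a single step, choosing $\zeta'$ with $\zeta=\frac{1}{2}\bigl(\zeta'+\ftilde(\zeta')\bigr)$ so that the two points are exactly symmetric about $\zeta$. You instead take $k=\lfloor 1/\theta\rfloor$ steps in each direction and absorb the exponentially small off-centering through Koebe distortion; a single step would also have worked for you, with an even smaller relative error of order $(\theta/\imag(\zeta;\theta))^2$.
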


\begin{proof}
    Let $C_1 >0$ be the constant from Lemma \ref{lem:univalent-extension}.
    Fix a point $\zeta$ in $\mathcal{V} \cap \mathcal{S}$ such that the quantity
    \[
        y := \imag (\zeta;\theta) - C_1 - 0.1 K_0 \geq 1
    \]
    is sufficiently large. 
    Since $|\real (\zeta)| \leq K_0$, then by straightforward computation, the distance $r$ between $\zeta$ and the boundary of $\mathcal{V}$ satisfies
    \begin{equation}
    \label{eqn:r-and-y}
        r \geq \cos(\tan^{-1} 0.1) \, y 
        \geq 0.9 \, y.
    \end{equation}
    By Lemma \ref{lem:univalent-extension}, $\phi$ is univalent on the round disk $\D(\zeta,r)$.

    To estimate the derivative $\phi'(\zeta)$, we will be applying Lemma \ref{lem:de-branges}. But to do so, we need a point $\zeta'$ such that 
    \begin{equation}
    \label{eqn:average}
        \zeta = \frac{\zeta' + \ftilde(\zeta')}{2}
    \end{equation}
    and that the distance between $\zeta$ and $\zeta'$ is close to $\theta/2$. We claim that such a point exists and is unique. Indeed, consider the function $h(w) = \frac{w + \ftilde(w)}{2}$ where $w$ is any point on the half plane 
    \[
        \left\{ w \in \C \: : \: \imag (w;\theta) \geq  C_0 \right\}.
    \]
    For such a point $w$, we have
    \[
        | h'(w) - 1 | \leq \frac{\theta}{22}
        \quad \text{ and } \quad
        \imag \left(h(w)-w\right) \leq \frac{\theta}{22}.
    \]
    Therefore, $h$ is injective and its image contains the half-plane
    \[
        \left\{ w \in \C \: : \: \imag (w;\theta) \geq C_1 + \frac{\theta}{22} \right\}
    \]
    and in particular the point $\zeta$. Therefore, the desired point $\zeta'$ exists and is unique. 
    
    By Proposition \ref{prop:almost-translation}, there exists some uniform constant $C_4>0$ such that
    \begin{equation}
    \label{eqn:zeta-and-zeta-and-theta}
        \left|\zeta' - \zeta - \frac{\theta}{2} \right| \leq C_4 e^{-2\pi y} \theta.
    \end{equation}
    Assume $y$ is sufficiently high (e.g. $y \geq \frac{1}{2\pi}\log(10 C_4)$) so that the right hand side of the inequality above is bounded above by $\theta/10$. Thus,
    \begin{equation}
    \label{eqn:zeta-and-zeta}
        \left|\zeta' - \zeta\right| \leq \frac{3\theta}{5}.
    \end{equation}
    Also, by (\ref{eqn:r-and-y}) and the inequality $0<\theta\leq\frac{1}{2}$, the point $w_*:= \frac{\zeta' - \zeta}{r}$ satisfies
    \begin{equation}
    \label{eqn:w-star}
        \left| w_* \right| \leq \frac{1}{3y} \leq \frac{1}{3}.
    \end{equation}

    Consider the function $g(w) = \phi(\zeta + w r)$, which is univalent on $\D$.  By applying Lemma \ref{lem:de-branges} to the point $w=w_*$, we have
    \[
        \big| \phi(\ftilde(\zeta')) - \phi(\zeta') - 2 r w_* \phi'(\zeta) \big|
        \leq 2 |r w_*| \cdot |w_*|^2 \frac{ 3-|w_*|^2 }{(1-|w_*|^2)^2} \cdot|\phi'(\zeta)|.
    \]
    Recall that $rw_* = \zeta'-\zeta$. By the functional equation for $\phi$ and inequalities (\ref{eqn:zeta-and-zeta}) and (\ref{eqn:w-star}),
    \begin{align*}
        \big| 1- 2 (\zeta'-\zeta) \phi'(\zeta) \big|
        &\leq 2\cdot \frac{3\theta}{5} \cdot \frac{1}{9y^2} \cdot \frac{117}{32} \cdot |\phi'(\zeta)| \\
        &\leq \frac{1}{2y^2} |\theta \phi'(\zeta)|.
    \end{align*}
    Then, by (\ref{eqn:zeta-and-zeta-and-theta}) and the general inequality $e^x \geq x^2$, we have
    \begin{align*}
        \big| 1-\theta \phi'(\zeta) \big| 
        &\leq 2 C_4 e^{-2\pi y} |\theta \phi'(\zeta)| + \frac{1}{2y^2} |\theta \phi'(\zeta)| \\
        &\leq \frac{C_5}{y^2} |\theta \phi'(\zeta)|,
    \end{align*}
    where $C_5 = \frac{C_4}{2 \pi^2} + \frac{1}{2}$. By the triangle inequality, we have
    \[
        \big| 1-\theta \phi'(\zeta) \big| \leq \frac{C_5}{y^2 - C_5}.
    \]
    Then, assuming $y$ is sufficiently high, we have 
    \[
        |1-\theta \phi'(\zeta)| \leq \frac{2 C_5}{ \imag(\zeta;\theta)^2}. \qedhere
    \]
\end{proof}

\subsection{The estimate on $\phi$}

Consider the constants $C_2, C_3>0$ from Lemma \ref{lem:yoc-3}.
We will need the following lemma to prove Proposition \ref{prop:renormalization-change-of-variables}.

\begin{lemma}
\label{lem:new-estimate}
    There exists a point $\zeta_* \in \mathcal{S}$ such that
    \[
        0 \leq \imag (\zeta_*;\theta) - C_2 = O(1) \quad \text{ and } \quad
        \imag \,\phi(\zeta_*) \asymp \frac{1}{\theta}.
    \]
\end{lemma}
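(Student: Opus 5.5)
We have to find a point $\zeta_*\in\mathcal{S}$ at height $\imag(\zeta_*;\theta)=C_2+O(1)$ whose image under $\phi$ has imaginary part comparable to $1/\theta$. Since $\phi$ is a lift of the gluing map $\psi_0$, we have $\imag\phi(\zeta)=\frac{1}{2\pi}\log\frac{1}{|\psi_0(\exp\zeta)|}$. The claim is therefore equivalent to finding a point $z_*=\exp(\zeta_*)\in S_0$ with $|z_*|\asymp\theta$ (with a definite, uniformly controlled constant) such that $|\psi_0(z_*)|\le e^{-c/\theta}$ and $|\psi_0(z_*)|\ge e^{-C/\theta}$. Put differently: a point at Euclidean distance $\asymp\theta$ from $0$ must be mapped by the gluing map to distance $e^{-\Theta(1/\theta)}$ from $0$ in the plane of $f_1$. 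The natural tool is Theorem \ref{thm:uniform-fjords}, which uniformizes the top fjord and the pair $\hat Z_0^{[-1]}\supset\hat Z^{[0]}_0$ by a $K$-qc map $\chi_{\fbold}$ with $\chi(0)=i\theta$.

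\textbf{Step 1 (choosing $\zeta_*$).} Work in the $\chi_{\fbold}$-coordinate. I would take $z_*=\chi_{\fbold}^{-1}(i\lambda)$, with $\lambda$ a fixed multiple of $\theta$ chosen so that $z_*$ lies in $S_0$. Possibly I would replace this by a point on the internal ray $I_0$ at the corresponding height. Because $\chi_{\fbold}$ is uniformly quasiconformal and fixes the normalizations $0\mapsto i\theta$ and $\beta\mapsto -i\theta$, quasisymmetry around $0$ at scale $\theta$ gives $|z_*|\asymp|\beta|\asymp\theta$ by Theorem \ref{thm:uniform-fjords}(1). Hence $\imag(\zeta_*;\theta)=O(1)$. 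Adjusting the multiple, or moving $\zeta_*$ up by a bounded amount, ensures $\imag(\zeta_*;\theta)\ge C_2$.

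\textbf{Step 2 (lower bound $\imag\phi(\zeta_*)\succ 1/\theta$).} I would estimate the extremal distance in $\hat Z_0^{[0]}\cap \overline{S_0}$ between a small neighborhood of $0$ around $z_*$ and the summit of the sector, that is, $\partial\hat Z^{[0]}_0$. In the model, $\hat Z^{[0]}$ is the upper half-disk and $S_0$ corresponds to a sector of angle $\asymp\theta$ near $i\theta$, since the $E_j$ have comparable harmonic measure (\ref{eqn:welding-2}) and there are $\asymp1/\theta$ of them. Under $\psi_0$, the part of $\hat Z_0^{[0]}$ in $S_0$ goes to $\hat Z_1^{[-1]}$, which contains $\D(0,R)$ and lies in $\D(0,R')$. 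The annulus in $S_0$ separating $z_*$ from the summit therefore maps to an annulus around $\psi_0(z_*)$ inside $\D(0,R')$.

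Its modulus is computed via the gluing, i.e.\ in the quotient by $f_0$. I would use the extension $\phi:\mathcal{U}\to\mathcal{Y}$ from Lemma \ref{lem:phi-PS-extension}. In $\mathcal{U}$, the fundamental domains $T\ftilde^{-j}(D^\pm)$ stack horizontally $\asymp b_1\asymp 1/\theta$ times across a width-one strip. In $\mathcal{Y}$, the corresponding pieces are unit translates. By the same harmonic measure comparison, the Euclidean horizontal scale in the image is multiplied by $\asymp1/\theta$, while the heights above the summit are comparable after scaling. This gives $\imag\phi(\zeta_*)\asymp 1/\theta$. Concretely, I would take the half-strip of width $1$ in $\mathcal{U}$ between height $\imag\zeta_*$ and the summit of $\partial\hat{\mathcal Z}^{[0]}$; its height is $O(1)$ after the $\chi$ normalization (in the model this is the $\log$ of a ratio of scales $\theta$ to $1$, which is exactly the offset $\frac{1}{2\pi}\log\frac1\theta$). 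I would then compare its extremal length, relative to the $T$-quotient, with that of the image under $\phi$ modulo $T$, which is a half-strip of width $1$ and height $\imag\phi(\zeta_*)-O(1)$. The issue is that $\phi$ conjugates $\ftilde$ (translation by $\approx\theta$) to $T$, not $T$ to $T$. The correct quotient on the domain side is by $\ftilde$, giving a cylinder of circumference $\approx\theta$ and height $O(1)$, hence modulus $\asymp 1/\theta$.

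\textbf{Main obstacle.} The hard part is justifying that the $\ftilde$-quotient region between $\zeta_*$ and $\partial\hat{\mathcal Z}^{[0]}$ really has modulus $\asymp1/\theta$ in both directions. Near height $C_2$, $\ftilde$ is not yet a near-translation with small error relative to $\theta$ (we only have (\ref{eqn:f-in-log-coord}) above $C_0$), and below that the dynamics is that of the fjord. I would first try to handle this entirely in the $\chi_{\fbold}$-model: there the fundamental crescents of the elliptic $\rho_\theta$ between $i\lambda_\theta$ and $[-1,1]$ have uniformly bounded geometry, so the quotient cylinder has modulus comparable to $1/\theta$ up to the quasiconformal constant $K$. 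I would then transport this to the actual dynamics using that $\chi_{\fbold}(J_j)=E_j$ and that both dynamics are comparable on boundary arcs.
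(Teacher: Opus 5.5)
Your reformulation via $|\psi_0(z_*)|$ is fine, but Step 2 never proves the comparability it asserts. The picture of the region between $\zeta_*$ and the summit, taken modulo $\ftilde$, as a cylinder of ``circumference $\approx\theta$ and height $O(1)$'' holds only in the linearizing coordinate of the model elliptic map $\rho_\theta$. In the actual logarithmic plane it fails for two reasons. First, the summit of $\mathcal{U}$ runs down to the critical points at bounded height. Second, below height $\frac{1}{2\pi}\log\frac{1}{\theta}$ the error $Ce^{-2\pi\imag\zeta}$ in Proposition \ref{prop:almost-translation} is no longer small compared with $\theta$, so the fundamental domains of $\ftilde$ there are not thin strips of width $\theta$.

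Your transfer from the model via $\chi_{\fbold}$ does not repair this. $\chi_{\fbold}$ is only a quasiconformal homeomorphism sending $\hat Z^{[-1]}_0$ onto $\overline{\D}$, the arc $J$ onto $[-1,1]$, and each $J_j$ onto $E_j$. It does not conjugate $f_0$ to $\rho_\theta$, and it says nothing about how $f_0$ glues the two sides (internal rays) of a fundamental domain in the interior. The modulus of a quotient cylinder depends exactly on that gluing, so ``the dynamics are comparable on boundary arcs'' gives no bound on it. Building a uniformly quasiconformal map of fundamental domains that respects the side identifications is essentially as hard as the lemma.

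Note also that the direction you label as the goal of Step 2, $\imag\phi(\zeta_*)\succ 1/\theta$, is cheap. By Lemma \ref{lem:yoc-3} and $\imag\phi\ge 0$, raising a point at height $C_2+O(1)$ by a bounded amount increases $\imag\phi$ by $\asymp 1/\theta$. The real content is the upper bound, and that is what your argument leaves unproved.

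The missing idea is one you passed over in your ``Concretely'' paragraph: no quotient is needed. It is not true that the width-one piece maps under $\phi$ to a half-strip of width $1$. By Lemma \ref{lem:phi-PS-extension}, $\phi$ maps the width-one sector $\mathcal{U}$ univalently onto $\mathcal{Y}$. The sides $\mathcal{I}^1_{0,-1}$ and $\mathcal{I}^1_{-1,b_1}$ of $\mathcal{Y}$ lie within $K_0$ of $\real\zeta=-1$ and $\real\zeta=b_1-1$, so $\mathcal{Y}$ has width $\asymp b_1\asymp 1/\theta$. This discrepancy in widths is the whole proof.

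The paper cuts $\mathcal{U}$ along the horizontal cross-cut $\gamma$ at $\imag(\zeta;\theta)=K_2$, with $K_2$ above the bound $K_1$ from Theorem \ref{thm:uniform-fjords} on the top of $\partial\hat{\mathcal Z}^{[0]}$. The resulting conformal rectangle $\mathbf{R}$, with horizontal sides $\gamma$ and the summit, has modulus $\asymp 1$ by static geometry alone:
\begin{itemize}
\item its width is $\asymp 1$;
\item its sides are uniform quasiarcs with bounded spiraling;
\item its summit reaches to within $O(1)$ of $\gamma$.
\end{itemize}
Its image $\phi(\mathbf{R})$ has vertical sides at distance $\asymp 1/\theta$ and bottom (the summit of $\mathcal{Y}$, near $\partial\hat{\mathcal Z}^{[-1]}_1$) at bounded height. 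Conformal invariance of the modulus then forces $\imag\phi\asymp 1/\theta$ at some point of $\gamma$. You should pick $\zeta_*$ there by continuity, instead of fixing it in advance as $\chi_{\fbold}^{-1}(i\lambda)$, which need not even lie in the thin sector $S_0$.
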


\begin{proof}
    By compactness considerations, it suffices to prove this lemma in the near-parabolic situation when $\theta \ll 1$.
    Recall the infinite sectors $\mathcal{U}$ and $\mathcal{Y}$ from Lemma \ref{lem:phi-PS-extension}.

    Recall that the summit of $\mathcal{U}$ is a fundamental interval of $\hat{\mathcal{Z}}^{[0]}$.
    By Theorem \ref{thm:uniform-fjords}, there exists a uniform constant $K_1>0$ such that
    \[
        \left| \max_{\zeta \in \partial \hat{\mathcal{Z}}^{[0]}} \imag(\zeta;\theta) \right| \leq K_1.
    \]
    Fix a constant $K_2 \geq \max\{C_1,K_1\} + 1$.
    Let $\gamma$ be the unique connected component of $\{\imag(\zeta; \theta) = K_2\} \backslash \partial \mathcal{U}$ that connects the two sides of $\mathcal{S}$.
    Let $\mathbf{R}$ be the bounded connected component of $\text{int}\mathcal{U} \backslash \gamma$, equipped with the structure of a conformal rectangle whose horizontal sides are $\gamma$ and the summit of $\mathcal{U}$.
    By the uniform quasiconformality of the critical internal ray, we have $\text{mod}(\mathbf{R}) \asymp 1$. 

    We will now estimate the height of $\phi(\gamma)$.
    Consider the image $\phi(\mathbf{R})$, which is a conformal rectangle in the dynamical plane of $\ftilde_1$ with top horizontal side $\phi(\gamma)$. 
    The left side $\mathcal{I}_{0,-1}^1$ of $\mathcal{Y}$ is contained in the strip $\{-K_0-1 \leq \real(\zeta) \leq K_0-1\}$ whereas the right side $\mathcal{I}_{-1,b_1}^1$ of $\mathcal{Y}$ is contained in the strip $\{b_1-1-K_0 \leq \real(\zeta) \leq b_1-1+K_0\}$.
    Since $\partial \hat{\mathcal{Z}}^{[-1]}$ is contained in the strip $\{0 \leq \imag(\zeta) \leq C_0\}$, then so is the summit of $\mathcal{Y}$.
    Therefore, since the distance between the opposite vertical sides of $\mathbf{R}$ is $\asymp \frac{1}{\theta}$ and since $\text{mod}(\mathbf{R}) \asymp 1$, then the distance between the opposite horizontal sides of $\mathbf{R}$ is also $\asymp \frac{1}{\theta}$.
    In particular, there exists a point $\zeta_* \in \gamma$ such that $\imag(\phi(\zeta_*)) \asymp \frac{1}{\theta}$.
\end{proof}

\begin{proof}[Proof of Proposition \ref{prop:renormalization-change-of-variables}]
    The derivative estimate comes from Lemma \ref{lem:yoc-3}. Pick two points $\zeta$ and $\zeta'$ in $\mathcal{S}$ with $\imag(\zeta';\theta) \geq \imag(\zeta;\theta) \geq C_2$. 
    Consider a straight vertical path $\gamma_1$ from $\zeta$ to $\real(\zeta) + i\,\imag(\zeta')$ and a straight horizontal path $\gamma_2$ from $\real(\zeta) + i \,\imag(\zeta')$ to $\zeta'$. 
    Observe that $\gamma_2$ has Euclidean length at most $2 K_0$. By Lemma \ref{lem:yoc-3},
    \begin{align}
    \label{eqn:zeta1-zeta2}
        \big| (\zeta - \theta \phi(\zeta)) - (\zeta' - \theta \phi(\zeta')) \big| 
        & \leq \int_{\gamma_1 \cup \gamma_2} \frac{C_3}{\imag(w;\theta)^2} \, |dw| \nonumber \\
        & \leq \frac{C_3}{\imag(\zeta;\theta)} - \frac{C_3}{\imag(\zeta';\theta)} + \frac{2 C_3 K_0}{\imag(\zeta' ; \theta)^2}.
    \end{align}
    Let us apply this inequality to the point $\zeta = \zeta_*$ from Lemma \ref{lem:new-estimate}. We then have
    \begin{align*}
        \imag\left( \zeta' - \theta \phi(\zeta') \right) = \frac{1}{2\pi}\log\frac{1}{\theta} + O(1). 
    \end{align*}
    We always have
    \[
        \big| \real \left(\zeta' - \theta \phi(\zeta') \right)\big| \leq \frac{3}{2}K_0. 
    \]
    Hence, the value of $\zeta' - \theta \phi(\zeta')$ always lies in a compact subset of the plane. As we take $\imag(\zeta') \to \infty$ and freeze the real value of $\zeta'$, then $\zeta' - \theta \phi(\zeta')$ converges in subsequence to some limit $\kappa \in \C$ with
    \[
        \real(\kappa) = O(1)
        \qquad \text{ and } \qquad
        \imag(\kappa;\theta) = O(1).
    \]
    Then, let us unfreeze $\zeta$. By (\ref{eqn:zeta1-zeta2}), we have
    \begin{align*}
        \big| \zeta - \theta \phi(\zeta) - \kappa \big| 
        &\leq \frac{C_3}{\imag(\zeta;\theta)}.
    \end{align*}
    This last estimate implies that $\zeta - \theta \phi(\zeta)$ always converges to $\kappa$ as $\imag(\zeta) \to \infty$. In particular, the limit $\kappa$ is independent of the choice of $\zeta'$.
\end{proof}

\section{Zero area}
\label{sec:zero-area}

Let us fix $\fbold = \seq{ f_n  }_{n\geq 0} \in \overline{\towsec}$ and let $\thetabold = \thetabold(\fbold) \in \overline{\Theta}$ be the associated combinatorics.
For $n\geq 0$, denote $\thetabold_n := \shift^n(\thetabold)$ the $n$\textsuperscript{th} shift of $\thetabold$ and $\theta_n = \boldsymbol{\mathfrak{X}}(\thetabold_n) \in (-\frac{1}{2},\frac{1}{2}]$ the rotation number of $f_n$. 

In this section, we will apply the estimates from Section \ref{sec:logarithmic} to prove Theorems \ref{main-thm-02} (and thus \ref{main-thm-01}) and \ref{side-theorem}.

\subsection{Going down the renormalization tower}
\label{ss:setup}
We will again adapt the notation $\ftilde_n$, $\phi_n: \mathcal{S}_n \to \Sigma_{n+1}$, etc introduced in the previous section.
For $n \geq 0$, denote the lift of the postcritical set $P_n = P_n(\fbold)$, the Mother Hedgehog $H_n=H_n(\fbold)$, and the pseudo-Siegel pinched disks $\hat{Z}^{[m]}_n = \hat{Z}^{[m]}_n(\fbold)$, $m \geq -1$ by 
\[
\mathcal{P}_n := \exp^{-1}(P_n), \qquad
\mathcal{H}_n := \exp^{-1}(H_n), \qquad
\hat{\mathcal{Z}}^{[m]}_n := \exp^{-1} \big( \hat{Z}^{[m]}_n \big)
\]
respectively. 
Again, we have
\[
    \hat{\mathcal{Z}}^{[-1]}_n \supset \hat{\mathcal{Z}}^{[0]}_n \supset
    \hat{\mathcal{Z}}^{[1]}_n \supset \ldots \supset \bigcap_{m=-1}^\infty \hat{\mathcal{Z}}^{[m]}_n = \mathcal{H}_n 
    \qquad \text{ and } \qquad
    \partial \mathcal{H}_n = \mathcal{P}_n.
\]

Consider the time semigroup $\Time^n = \Time_{\thetabold_n}$ associated to $\thetabold_n$.
Of particular importance are the smallest element $\qq^n_{[0]} \in \Time^n$, which we will just identify with the integer $1$, and the first return time $\qq_{[1]}^n \in \Time^n$.
If $\theta_n$ is irrational, then $\qq^n_{[1]}$ is some finite integer; otherwise, there are infinitely many elements of $\Time^n$ that are less than $\qq^n_{[1]}$, namely
\[
    1<2<3<\ldots < \ldots <\qq^n_{[1]}-3 < \qq^n_{[1]}-2 < \qq^n_{[1]}-1.
\]

To include parabolic dynamics in our discussion, let us make the convention that $\log\frac{1}{0}=+\infty$ and so for any point $\zeta \in \C$, $\imag(\zeta;0) = -\infty$ is less than every real number. 

Let us fix $\boldsymbol{\epsilon}>0$ to be a sufficiently small constant. 
To be concrete, we can take it to be the solution to the equation
\begin{equation}
    \label{eqn:epsilon}
    e^{2\boldsymbol{\epsilon}} = 1.2 (1-\boldsymbol{\epsilon})^2.
\end{equation}
This choice will be used later in the proof of Proposition \ref{prop:brjuno}. 

From Propositions \ref{prop:almost-translation} and \ref{prop:renormalization-change-of-variables}, there exist absolute constants $C_{\boldsymbol{\epsilon}}>0$ and $\Kbold_0 > 0$ such that for all $n \in \N$ and $\zeta \in \C$ satisfying $\imag(\zeta;\theta_n) \geq C_{\boldsymbol{\epsilon}}$, we have
\begin{enumerate}[label=\textnormal{(\Roman*)}]
    \item \label{item-1} $\lvert \ftilde_n'(\zeta) - 1 \rvert \leq {\boldsymbol{\epsilon}} |\theta_n|$,
    \item \label{item-2} $\lvert \ftilde_n(\zeta) - \zeta - \theta_n \rvert \leq {\boldsymbol{\epsilon}} |\theta_n|$,
\end{enumerate}
and if $\zeta$ is in $\mathcal{S}_n$ too,
\begin{enumerate}[label=\textnormal{(\Roman*)}]
  \setcounter{enumi}{2}
    \item \label{item-3} $\Big| |\theta_n| \phi_n'(\zeta) - 1 \Big| \leq {\boldsymbol{\epsilon}}$,
    \item \label{item-4} $\Big| |\theta_n| \phi_n(\zeta) - \zeta + \kappa_n \Big| \leq {\boldsymbol{\epsilon}}$ where $\kappa_n \in \C$ satisfies $|\kappa_n - \frac{i}{2\pi} \log\frac{1}{|\theta_n|}| \leq \Kbold_0$.
\end{enumerate}
Set
\[
    \boldsymbol{C} := C_{\boldsymbol{\epsilon}} + {\boldsymbol{\epsilon}}.
\]

\begin{lemma}[Landing points and landing times]
\label{ref:landing-points-and-times}
    For every pair of natural numbers $m, n \geq 0$ and every point $\zeta$ in $\hat{\mathcal{Z}}^{[m]}_n$, 
    there exist a sequence of points $\{ \zeta_{k} \}_{0 \leq k \leq m+1}$ in $\UHP$ and times $\{t_k \in \Time^{n+k}  \cup \{0\} \}_{0 \leq k \leq m}$ with the following properties:
    \begin{enumerate}
        \item $\zeta_0 = \zeta$;
        \item for $k \in \{0,1,\ldots, m\}$, $t=t_k$ is the smallest element of $\Time^{n+k} \cup \{0\}$ such that $\ftilde_{n+k}^t(\zeta_k)$ is contained in $\mathcal{S}_{n+k} + \Z$;
        \item for $k \in \{0,1,\ldots, m\}$, $\zeta_{k+1} := \phi_{k} \big( \ftilde_{n+k}^{t_k}(\zeta_k) \big)$. 
    \end{enumerate}
\end{lemma}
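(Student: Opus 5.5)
We argue by induction on $m$, proving the slightly stronger statement that $\zeta_{k}$ lies in $\hat{\mathcal{Z}}^{[m-k]}_{n+k}$ for each $0\le k\le m+1$ (with the convention $\hat{\mathcal{Z}}^{[-1]}$ at $k=m+1$). The construction is a single step repeated: given a point $\xi\in\hat{\mathcal{Z}}^{[j]}_{n'}$ with $j\ge 0$, we produce a landing time $t\in\Time^{n'}\cup\{0\}$ and a point $\phi_{n'}(\ftilde^{t}_{n'}(\xi))\in\hat{\mathcal{Z}}^{[j-1]}_{n'+1}$. Iterating this $m+1$ times from $\xi=\zeta\in\hat{\mathcal{Z}}^{[m]}_n$ gives the whole sequence, and properties (1)--(3) hold by construction.

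For the single step, project to the dynamical plane of $f_{n'}$: $z=\exp(\xi)\in\hat{Z}^{[j]}_{n'}\subset\hat{Z}^{[0]}_{n'}$. By Lemma \ref{lem:triangulation-first-return} with $m=1$, there is some $p\in\Time^{n'}\cup\{0\}$ with $f^p_{n'}(z)\in\hat{Z}^{[0]}_{n'}\cap S_{n'}$, and the admissible times are bounded by $\qq^{n'}_{[1]}$ or $\check{\qq}^{n'}_{[1]}$. Since $\Time^{n'}$ is totally ordered (Proposition \ref{prop:chronological-order-01}), we want the \emph{smallest} such $p$. When $\theta_{n'}$ is irrational this is a finite set of integers, so a minimum exists. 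In the enriched rational case there are infinitely many times below $\qq^{n'}_{[1]}$, so we instead read off the minimum directly from the triangulation $\hat{\Delta}^{[0]}_{n'}$. The point $z$ lies in some tile $A^1_{n'}(-p)$ or $\check A^1_{n'}(-p)$, and by Lemma \ref{lem:triangulation-first-return} the time $p$ indexing that tile is the first landing time. Points on common sides of two tiles are handled by taking the smaller index.

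Lifting to logarithmic coordinates, $\ftilde^{p}_{n'}$ commutes with $T$ and lifts $f^p_{n'}$. Since $\exp^{-1}(S_{n'})=\mathcal{S}_{n'}+\Z$, the condition $f^p_{n'}(z)\in S_{n'}$ is equivalent to $\ftilde^p_{n'}(\xi)\in\mathcal{S}_{n'}+\Z$, so $t=p$ is the smallest element as required in (2). We then apply $\phi_{n'}$, which is defined on all of $\bigcup_j T^j(\mathcal{S}_{n'})$. By Theorem \ref{thm:renorm-limits}(7), $\psi_{n'}$ maps $\hat{Z}^{[j]}_{n'}\cap S_{n'}$ into $\hat{Z}^{[j-1]}_{n'+1}$. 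Since $\phi_{n'}$ lifts $\psi_{n'}$, the new point lies in $\hat{\mathcal{Z}}^{[j-1]}_{n'+1}\cap\UHP$, which closes the induction.

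The main obstacle is the well-definedness of the landing time in the parabolic or enriched rational case, and on the boundaries of sectors. For lower levels we must also check that $f^p_{n'}$, a Lavaurs-type element of $\mathcal{F}_{n'}$, is actually defined along the orbit. This is where the pseudo-Siegel invariance from Theorem \ref{thm:renorm-limits}(5)(a) and the tile description of Lemma \ref{lem:triangulation-first-return} are used essentially. Points of $\hat{\mathcal{Z}}$ on internal rays through the vertex are excluded since $\zeta\in\UHP$.
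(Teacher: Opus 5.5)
Your proposal follows the same route as the paper: apply Lemma \ref{lem:triangulation-first-return} level by level in the plane of $f_{n+k}$, push the landing point forward by the gluing map, and lift through $\exp$. Your extra care about the existence of a smallest landing time in the enriched rational case is a welcome addition.

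One step needs an explicit justification, which the paper also only asserts. To invoke Theorem \ref{thm:renorm-limits}(7) you need $f^p_{n'}(z)\in\hat Z^{[j]}_{n'}$, not merely $f^p_{n'}(z)\in\hat Z^{[0]}_{n'}$. This follows from Lemma \ref{lem:triangulation-first-return} with index $j+1$: the orbit of $z$ moves through tiles of $\hat\Delta^{[j]}_{n'}$, all contained in $\hat Z^{[j]}_{n'}$, until it reaches $S^{j+1}_{n'}\subset S_{n'}$. Hence the first entry into $S_{n'}$ happens no later, and it lands in $\hat Z^{[j]}_{n'}$.
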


These $\zeta_{k}$'s will be called the \emph{landing points} of $\zeta$, and the $t_k$'s will be called the \emph{landing times} of $\zeta$.
From the first return dynamics, if $\theta_n \neq 0$ or equivalently $\bar{a}_{n+1} < \infty$, then the landing time $t_n$ is always bounded above by $\bar{a}_{n+1}$.

\begin{proof}
    Let $z_0 = \exp(\zeta)$, a point in $\hat{Z}_n^{[m]}$.
    According to Lemma \ref{lem:triangulation-first-return}, for every $k \in \{0,\ldots,m\}$, there exists some first time $t_k \in \Time^{n+k} \cup \{0\}$ such that $t=t_k$ is the first time $f_{n+k}^t(z_k)$ is in $S_{n+k,1} \cap \hat{Z}_{n+k}^{[m-k]}$, then $z_{k+1} = \psi_{n+k,1}(f_{n+k}^t(z_k))$ is set.
    Then, we lift $z_1,\ldots, z_{m+1}$ under $\exp$ to obtain the desired corresponding points $\zeta_1, \ldots, \zeta_{m+1}$.
\end{proof}

For $m,n \geq 0$, define the set
\[
    A^m_n := \left\{ \zeta \in \hat{\mathcal{Z}}^{[m]}_n 
    \: : \: 
    \begin{array}{c}
         \text{ the landing points of } \zeta \text{ satisfy}\\
         \imag(\zeta_k;\theta_{n+k}) \geq \boldsymbol{C} \text{ for all } k \in \{0,\ldots,m\}
    \end{array}
    \right\}.
\]
By Proposition \ref{prop:almost-translation}, $\zeta$ being in $A^m_n$ implies that
\[
    \imag( \ftilde_{n+k}^t(\zeta_k); \theta_{n+k}) \geq C_{\boldsymbol{\epsilon}} \quad \text{ for all } k= 0,1,\ldots,m \text{ and } t = 0,1,\ldots, \qq^{n+k}_{[1]},
\]
and inequalities \ref{item-1}--\ref{item-4} become applicable to each $\ftilde_{n+k}^t(\zeta_k)$.

By design, we have the following elementary property.

\begin{lemma}
\label{lem:empty-An}
    For all $m,n\geq 0$, the set $A^m_n$ is empty if and only if $\theta_{n+k} = 0$ for some integer $k$ with $0 \leq k \leq m$. 
\end{lemma}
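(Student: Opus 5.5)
The proof has two directions, handled separately.

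\emph{Parabolic level forces $A^m_n=\emptyset$.} Suppose $\theta_{n+k}=0$ for some $0\le k\le m$. By the convention $\log\frac{1}{0}=+\infty$, we have $\imag(\zeta_k;\theta_{n+k})=-\infty<\boldsymbol{C}$ for every point $\zeta_k$. Hence no $\zeta\in\hat{\mathcal{Z}}^{[m]}_n$ can satisfy the defining condition of $A^m_n$ at index $k$. The landing points up to index $k$ exist by Lemma \ref{ref:landing-points-and-times}, so the condition is meaningful and simply fails. Thus $A^m_n=\emptyset$.

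\emph{No parabolic level gives $A^m_n\neq\emptyset$.} Suppose $\theta_{n+k}\neq0$ for all $0\le k\le m$. I will exhibit a point of $A^m_n$ by going up the tower. Start in level $n+m$ with a point $w_m\in\mathcal{S}_{n+m}\cap\hat{\mathcal{Z}}^{[0]}_{n+m}$ whose imaginary part is as large as needed. Such a point exists because $\mathcal{S}_{n+m}$ is an infinite half-strip of bounded width containing the lift of the internal ray $\mathcal{I}^{n+m}_{0,0}$, and that ray goes to $i\infty$ inside $\mathcal{H}_{n+m}\subset\hat{\mathcal{Z}}^{[0]}_{n+m}$. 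The simplest concrete choice is to take every $w_k$ on the lift of $I^{n+k}_0$ high up.

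Then pull back inductively. Given a high point $w_{k}$ in level $n+k$, its preimage $\zeta$ under $\phi_{n+k-1}$ lies in $\mathcal{S}_{n+k-1}$. Proposition \ref{prop:renormalization-change-of-variables}, in the form of item \ref{item-4}, gives
\[
\imag\zeta\approx|\theta_{n+k-1}|\imag w_k+\imag\kappa_{n+k-1}.
\]
So if $\imag w_k$ is huge, then $\imag(\zeta;\theta_{n+k-1})$ is also huge. Since $\phi$ is the lift of the gluing map, this point lands in $\mathcal{S}$ at time $0$, and it lies in $\hat{\mathcal{Z}}^{[m-k+1]}$ because $\psi$ projects $\hat Z^{[j]}$ onto $\hat Z^{[j-1]}$. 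Landing at time zero means $t_k=0$ and $\zeta_k$ equals the chosen point at every level.

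The only care needed is to choose the initial height large enough, depending on the finitely many nonzero $\theta_{n+k}$, so that every intermediate level exceeds $\boldsymbol{C}$ in $\imag(\cdot;\theta)$. This is possible since $m$ is finite and each pullback is roughly affine with positive scaling factor $|\theta|$.

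Alternatively, and perhaps more simply, I can use the internal ray $I^n_0$ through the critical value. Its points near $0$ are in $H_n\subset\hat Z^{[m]}_n$. Theorem \ref{thm:renorm-limits}(7) shows it is mapped by the gluing maps onto the corresponding critical rays $I^{n+k}_0$, and near $0$ these correspond to near $0$ at each level. Every landing time is zero, and the landing points go to $i\infty$ uniformly as the initial point goes to $i\infty$.

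I expect the main obstacle to be only the verification that the landing time along this ray is genuinely $0$, i.e.\ that the ray lies in the interior side of $\mathcal{S}_{n+k}+\Z$. This holds because $\mathcal{S}$ contains $\vtilde_0$ and its ray, as used in Lemma \ref{lem:phi-PS-extension}.
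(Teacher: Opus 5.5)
Your proposal is correct. The paper gives no argument beyond ``by design.'' The parabolic direction is exactly your observation that the convention $\imag(\zeta;0)=-\infty$ makes the defining inequality fail at index $k$. The non-emptiness direction is left implicit; it also follows from the first inclusion of Lemma~\ref{lem:main-estimate}, which uses the quantitative estimates \ref{item-2} and \ref{item-4} to place a whole upper half-plane inside $A^m_n$.

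Your ray construction is more elementary than that route and than your own first version. It uses only two facts:
\begin{itemize}
\item $\phi_{n+k}$ maps the lifted critical-value ray $\mathcal{I}^{n+k}_{0,0}\subset\mathcal{S}_{n+k}\cap\mathcal{H}_{n+k}$ onto $\mathcal{I}^{n+k+1}_{0,0}$, as used in the proof of Lemma~\ref{lem:phi-PS-extension};
\item each gluing map $\psi_{n+k}$ extends continuously to the vertex with $\psi_{n+k}(0)=0$.
\end{itemize}
Consequently all landing times vanish, and the finitely many landing heights tend to $+\infty$ together.

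That is the version to keep. In the first version, applying \ref{item-4} at $\phi_{n+k-1}^{-1}(w_k)$ presupposes that this preimage already satisfies $\imag(\cdot;\theta_{n+k-1})\ge C_{\boldsymbol{\epsilon}}$. Closing that circularity needs a separate continuity or cross-cut argument; the paper leaves the same step implicit in (\ref{eqn:upper-bound-main}). You would also need $\mathcal{S}_{n+m}\subset\Sigma_{n+m}$ so that $w_m$ can be pulled back at all.
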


In the next subsection, we will study the nested intersection
\[
    A^\infty_n := \bigcap_{n\geq 0} A^m_n \subset \mathcal{H}_n.
\]

\subsection{The Brjuno condition}

For $m,n \geq 0$, define the finite sum
\[
    \brjuno_m(\theta_n) := \log\frac{1}{|\theta_n|} + \sum_{j=1}^m |\theta_{n}\theta_{n+1} \ldots \theta_{n+j-1}| \log\frac{1}{|\theta_{n+j}|}.
\]
Each $\brjuno_m(\theta_n)$ is the $m$\textsuperscript{th} partial sum of the (standard) Brjuno function
    \[
           \brjuno(\theta_n) := \log\frac{1}{|\theta_{n}|} 
           + |\theta_{n}|\log\frac{1}{|\theta_{n+1}|} + |\theta_{n} \theta_{n+1}| \log\frac{1}{|\theta_{n+2}|} + \ldots.
    \]
It satisfies the equation
\begin{align}
\label{eqn:yoccoz-relation}
    \brjuno_{m+1}(\theta_n) = \log \frac{1}{|\theta_n|} + |\theta_n| \, \brjuno_{m} (\theta_{n+1}) \qquad \text{for } m, n \geq 0.
\end{align} 

\begin{definition}
    We say that $\fbold$ is \emph{eventually Brjuno} if there exists some $N \geq 0$ such that $\theta_N$ is a Brjuno irrational, that is, $\brjuno(\theta_N) < \infty$. If $N=0$, we simply say that $\fbold$ is \emph{Brjuno.}
\end{definition}

If $\fbold$ is eventually Brjuno but not Brjuno, then there exists a finite number of indices $n \geq 0$ such that $\theta_n$ is rational. 
The Brjuno function $\brjuno(\cdot)$ is comparable to the original Brjuno sum $\Brjuno(\cdot)$ from (\ref{eqn:brjuno-original}):
\[
|\brjuno(\cdot) - \Brjuno(\cdot)| = O(1).
\]
This follows from elementary properties of continued fractions; 
see \cite[Chapter 1 \S1.5]{Yoc95} for more details.

The next lemma states that we can use $\brjuno_{m}(\theta_n)$ to estimate the set $A^m_n$. 
Set
\[
    \Kbold_1 := 2( \Kbold_0 +{\boldsymbol{\epsilon}}) \qquad \text{and} \qquad \Kbold_2 := \max\{ \boldsymbol{C} ,\Kbold_1\}.
\]

\begin{lemma}
\label{lem:main-estimate}
    For all integers $m,n\geq 0$, if $\theta_k \neq 0$ for every integer $k$ with $n \leq k \leq n+m$, we have
    \begin{align*}
        \left\{ \zeta \: : \: 
        \imag(\zeta) \geq \frac{1}{2\pi}\brjuno_m(\theta_n) + \Kbold_2 \right\}
        \subset A^m_n \subset 
        \left\{ \zeta \: : \: 
        \imag(\zeta) \geq \frac{1}{2\pi}\brjuno_m(\theta_n) - \Kbold_1 \right\}.
    \end{align*}
\end{lemma}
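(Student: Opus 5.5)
Both inclusions will be proved together by induction on $m$, with $n$ arbitrary, using the recursion (\ref{eqn:yoccoz-relation}) $\brjuno_{m+1}(\theta_n) = \log\frac{1}{|\theta_n|} + |\theta_n|\brjuno_m(\theta_{n+1})$. The mechanism is this: by \ref{item-2}, the landing step moves a point by a bounded horizontal amount with essentially no change in height. By \ref{item-4}, the gluing step sends height $y$ in level $n$ to height $\approx (y - \imag\kappa_n)/|\theta_n|$ in level $n+1$, where $\imag \kappa_n = \frac{1}{2\pi}\log\frac{1}{|\theta_n|} + O(\Kbold_0)$. Inverting, the condition $\imag(\zeta_1) \geq \frac{1}{2\pi}\brjuno_m(\theta_{n+1}) + c$ corresponds to
\[
\imag(\zeta_0) \geq \frac{1}{2\pi}\log\tfrac{1}{|\theta_n|} + |\theta_n|\Big(\frac{1}{2\pi}\brjuno_m(\theta_{n+1}) + c\Big) \pm (\Kbold_0+\boldsymbol{\epsilon}),
\]
which is exactly $\frac{1}{2\pi}\brjuno_{m+1}(\theta_n) + |\theta_n| c \pm (\Kbold_0 + \boldsymbol{\epsilon})$.

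Base case $m=0$: $A^0_n$ is the set of $\zeta \in \hat{\mathcal{Z}}^{[0]}_n$ with $\imag(\zeta;\theta_n) \geq \boldsymbol{C}$. Since $\brjuno_0(\theta_n) = \log\frac{1}{|\theta_n|}$, the right inclusion holds trivially because $\boldsymbol{C} \geq -\Kbold_1$. For the left inclusion, take $\imag(\zeta;\theta_n) \geq \Kbold_2 \geq \boldsymbol{C}$. It remains to check that such a point lies in $\hat{\mathcal{Z}}^{[0]}_n$. By Theorem~\ref{thm:uniform-fjords} (as used in Lemma~\ref{lem:new-estimate}), $\partial\hat{\mathcal{Z}}^{[0]}_n$ lies below height $\frac{1}{2\pi}\log\frac{1}{|\theta_n|} + O(1)$ when $\theta_n$ is small, and below height $O(1)$ by Theorem~\ref{thm:renorm-limits}(4)(c) otherwise. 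I will assume $\boldsymbol{C}$ (equivalently $C_{\boldsymbol{\epsilon}}$) was chosen large enough to exceed this bound; if not, I would enlarge the constants, which is harmless.

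Inductive step: assume the claim for $m$ at every level, and let $\zeta$ lie in the left set for $m+1$ at level $n$. Then $\imag(\zeta;\theta_n) \geq |\theta_n|\cdot\frac{1}{2\pi}\brjuno_m(\theta_{n+1}) + \Kbold_2 \geq \boldsymbol{C}$, so $\zeta \in A^0_n$. Its landing point $w = \ftilde_n^{t_0}(\zeta)$ has $|\imag(w) - \imag(\zeta)| \leq t_0\boldsymbol{\epsilon}|\theta_n| \leq \boldsymbol{\epsilon}$, using $t_0 \leq \bar a_{n+1} \leq 1/|\theta_n|$ and \ref{item-2}. Then \ref{item-4} gives
\[
\imag(\zeta_1) \geq \frac{\imag(w) - \imag\kappa_n - \boldsymbol{\epsilon}}{|\theta_n|} \geq \frac{1}{2\pi}\brjuno_m(\theta_{n+1}) + \frac{\Kbold_2 - \Kbold_0 - 2\boldsymbol{\epsilon}}{|\theta_n|}.
\]
Since $|\theta_n| \leq \frac12$ and $\Kbold_2 \geq \Kbold_1 = 2(\Kbold_0 + \boldsymbol{\epsilon})$, the last term is at least $\Kbold_2$ up to the $\boldsymbol{\epsilon}$ slack. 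I expect to absorb that slack by noting that $|\theta_n| \leq \frac12$ gives a factor of $2$, possibly after slightly enlarging $\Kbold_2$ if the arithmetic demands it. The inductive hypothesis then yields $\zeta_1 \in A^m_{n+1}$, so $\zeta \in A^{m+1}_n$.

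For the right inclusion, take $\zeta \in A^{m+1}_n$, so $\zeta_1 \in A^m_{n+1}$ and by induction $\imag(\zeta_1) \geq \frac{1}{2\pi}\brjuno_m(\theta_{n+1}) - \Kbold_1$. Running \ref{item-4} and \ref{item-2} in reverse gives $\imag(\zeta) \geq \imag\kappa_n + |\theta_n|\imag(\zeta_1) - 2\boldsymbol{\epsilon} \geq \frac{1}{2\pi}\brjuno_{m+1}(\theta_n) - \Kbold_0 - 2\boldsymbol{\epsilon} - |\theta_n|\Kbold_1$. Since $|\theta_n|\Kbold_1 \leq \frac12\Kbold_1$, this is $\geq \frac{1}{2\pi}\brjuno_{m+1}(\theta_n) - \Kbold_1$, because $\Kbold_0 + 2\boldsymbol{\epsilon} \leq \frac12\Kbold_1 + \boldsymbol{\epsilon}$, again up to an $\boldsymbol{\epsilon}$ that I would absorb by adjusting the constant.

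Main obstacle: the bookkeeping of the $\boldsymbol{\epsilon}$-errors against the contraction factor $|\theta_n| \leq \frac12$, so that the constants $\Kbold_1$ and $\Kbold_2$ reproduce themselves exactly at each step rather than accumulating. The fallback is to note that each step contracts the inherited error by at least $\frac12$, so a geometric series bounds the total error, and to redefine the constants accordingly. A secondary point is checking that the landing iterates stay in the region where \ref{item-1}--\ref{item-4} apply, which the remark after the definition of $A^m_n$ already provides.
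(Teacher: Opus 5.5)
Your proposal is correct and is essentially the paper's argument: a one-step height comparison from \ref{item-2} and \ref{item-4}, fed into the recursion (\ref{eqn:yoccoz-relation}) and closed using $|\theta_n|\le\frac12$. Your induction on $m$, peeling off the first level, just repackages the paper's forward induction along the landing points (first inclusion) and backward induction from $k=m$ (second inclusion). The $\boldsymbol{\epsilon}$-slack you found is genuine: the landing step adds $\boldsymbol{\epsilon}$, so the one-step error is $\Kbold_0+2\boldsymbol{\epsilon}$ rather than the $\Kbold_1/2$ quoted in (\ref{eqn:zeta-induction}). Enlarging $\Kbold_1,\Kbold_2$ accordingly is the harmless adjustment the paper tacitly needs, as is requiring $\boldsymbol{C}$ to dominate the height of $\partial\hat{\mathcal{Z}}^{[0]}_n$.
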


\begin{proof}
    We will prove this for $n=0$; the general case is analogous.
    Assume that $\theta_k \neq 0$ for $k \in \{0,1,\ldots,m\}$.

    Let us prove the first inclusion. 
    Pick a point $\zeta$ such that $\imag(\zeta) \geq \frac{1}{2 \pi} \brjuno_m(\theta_0) + \Kbold_2$. 
    We claim that for all $k \in \{0,1,\ldots,m\}$, 
    \begin{enumerate}
        \item[(a:$k$)] the $k$\textsuperscript{th} landing point $\zeta_k$ of $\zeta$ is a well-defined point in $\hat{\mathcal{Z}}_k^{[0]}$;
        \item[(b:$k$)] $\imag(\zeta_k) \geq \frac{1}{2\pi} \brjuno_{m-k}(\theta_k) + \Kbold_2$.
    \end{enumerate}
    The case $k=0$ comes from the assumption on $\zeta$.
    Suppose (a:$k$) and (b:$k$) hold for some $k$.
    The inequality (b:$k$) implies $\imag(\zeta_k; \theta_k) \geq C$.
    In particular, $\zeta_k$ has to lie in the set $\hat{\mathcal{Z}}_k^{[0]}$ and so the first landing point $\zeta_{k+1}$ associated to $\zeta_k$ is well-defined.
    By \ref{item-2} and \ref{item-4}, we have
    \begin{equation}
    \label{eqn:zeta-induction}
        \left| \imag ( \zeta_k ) - |\theta_k| \, \imag(\zeta_{k+1}) - \frac{1}{2\pi} \log \frac{1}{|\theta_k|} \right| \leq \frac{\Kbold_1}{2}.
    \end{equation}
    Together with (b:$k$) and (\ref{eqn:yoccoz-relation}), we have
    \begin{align*}
        \imag(\zeta_{k+1}) 
        & \geq \frac{1}{|\theta_k|} \imag (\zeta_k) - \frac{1}{2\pi |\theta_k|} \log\frac{1}{|\theta_k|} - \frac{\Kbold_1}{2 |\theta_k|} \\
        & \geq \frac{1}{2\pi} \left(
        \frac{1}{|\theta_k|} \brjuno_{m-k}(\theta_k) - \frac{1}{|\theta_k|} \log\frac{1}{|\theta_k|} 
        \right) + \frac{\Kbold_2}{|\theta_k|} - \frac{\Kbold_1}{2 |\theta_k|} \\
        & \geq \frac{1}{2\pi} \brjuno_{m-k-1}(\theta_{k+1}) + \Kbold_2.
    \end{align*}
    This implies (a:$k+1$) and (b:$k+1$) and so our claim follows from induction.
    The statements (a:$k$) for $k \in \{0,1,\ldots,m\}$ imply that $\zeta \in \hat{\mathcal{Z}}^{[m]}_0$, whereas (b:$k$) for $k \in \{0,1,\ldots,m\}$ imply that $\imag(\zeta_k ; \theta_k) \geq C$.
    Hence, $\zeta$ is indeed contained in $A^m_0$.
    
    Next, let us assume $\zeta$ is in $A^m_0$, which means that the landing points $\zeta_0,\ldots,\zeta_{m}$ are well-defined and $\imag(\zeta_k; \theta_k) \geq C$ for all $k \in \{0,\ldots,m\}$. 
    In particular, the inequality (\ref{eqn:zeta-induction}) holds.
    Consider the inequality
    \begin{equation}
        \imag(\zeta_k) \geq \frac{1}{2\pi} \brjuno_{m-k}(\theta_k) - \Kbold_1 \tag{$\spadesuit_k$}
    \end{equation}
    for some $k \in \{0,\ldots,m\}$. If ($\spadesuit_k$) holds for some $1\leq k \leq m$, then by (\ref{eqn:zeta-induction}) and (\ref{eqn:yoccoz-relation}),
    \begin{align*}
        \imag(\zeta_{k-1}) 
        & \geq |\theta_{k-1}| \, \imag(\zeta_{k}) + \frac{1}{2\pi} \log \frac{1}{|\theta_{k-1}|} - \frac{\Kbold_1}{2}\\
        & \geq \frac{1}{2\pi} \left( |\theta_{k-1}| \brjuno_{m-k}(\theta_k) + \log \frac{1}{|\theta_{k-1}|} \right) - \Kbold_1 |\theta_{k-1}| - \frac{\Kbold_1}{2} \\
        & \geq \frac{1}{2\pi} \brjuno_{m-k+1}(\theta_{k-1}) - \Kbold_1,
    \end{align*}
    which implies ($\spadesuit_{k-1}$). 
    Observe that ($\spadesuit_m$) is trivially true, so by induction, ($\spadesuit_0$) holds. This gives us the second desired inclusion.
\end{proof}

\begin{corollary}
\label{cor:non-brjuno}
    If $\fbold$ is not eventually Brjuno, then for every $\zeta \in \mathcal{H}_0$, there exist infinitely many $n \in \N$ such that the $n$\textsuperscript{th} landing point $\zeta_n$ of $\zeta$ satisfies
    \[
        \imag(\zeta_n; \theta_n) < \boldsymbol{C}.
    \]
\end{corollary}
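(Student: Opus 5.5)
The plan is to argue by contradiction, using the second inclusion of Lemma \ref{lem:main-estimate} together with the divergence of the Brjuno sums. Suppose some $\zeta \in \mathcal{H}_0$ admits only finitely many $n$ with $\imag(\zeta_n;\theta_n) < \boldsymbol{C}$.

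We first record what the hypothesis on $\fbold$ gives. Since $\fbold$ is not eventually Brjuno, every $\theta_N$ with $\theta_N$ irrational satisfies $\brjuno(\theta_N)=\infty$. It may happen that $\theta_k = 0$ for infinitely many $k$. At any such $k$ we have $\imag(\zeta_k;0)=-\infty<\boldsymbol{C}$ by convention, provided the landing point $\zeta_k$ is defined. Landing points are defined for all $k$ because $\zeta\in\mathcal{H}_0\subset\hat{\mathcal{Z}}^{[m]}_0$ for all $m$, and Lemma \ref{ref:landing-points-and-times} applies. So in that case there are infinitely many good $n$ and we are done.

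Hence we may assume there is $N$ with $\theta_k\neq 0$ for all $k\ge N$, and also $\imag(\zeta_k;\theta_k)\ge\boldsymbol{C}$ for all $k\ge N$; we simply enlarge $N$ past the finitely many bad indices. Then $\theta_N$ is irrational, so $\brjuno(\theta_N)=\infty$.

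Now consider the point $\zeta_N$. By the compatibility of landing points under shifting the tower, its own landing points with respect to $\seq{f_{N+k}}_{k\ge0}$ are exactly $\zeta_{N+k}$. This holds because the landing times are defined as first entries at each level, so the construction of Lemma \ref{ref:landing-points-and-times} is shift-equivariant. Together with $\zeta_N\in\hat{\mathcal{Z}}^{[m]}_N$ (as $\zeta_N$ lies in the lifted Mother Hedgehog $\mathcal{H}_N$), this gives $\zeta_N\in A^m_N$ for every $m$.

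The second inclusion of Lemma \ref{lem:main-estimate} then yields
\[
\imag(\zeta_N)\ \ge\ \frac{1}{2\pi}\brjuno_m(\theta_N)-\Kbold_1 \qquad\text{for all } m\ge 0.
\]
Letting $m\to\infty$, the partial sums $\brjuno_m(\theta_N)$ increase to $\brjuno(\theta_N)=\infty$. This contradicts $\imag(\zeta_N)<\infty$.

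The only delicate points are bookkeeping. One must check that $\zeta_N\in\mathcal{H}_N$, so that membership in every $\hat{\mathcal{Z}}^{[m]}_N$ holds; this follows because the gluing maps and the first-return dynamics preserve Mother Hedgehogs (Theorem \ref{thm:renorm-limits}). One must also check the shift-equivariance of landing points. I expect the former to be the main thing to justify carefully, but it follows from the invariance statements already recorded.
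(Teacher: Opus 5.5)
Your proof is correct and is essentially the paper's argument. The paper also disposes of the case of infinitely many parabolic levels trivially, using the convention $\log\frac{1}{0}=+\infty$. Otherwise it invokes the second inclusion of Lemma~\ref{lem:main-estimate} to conclude that $A^\infty_n=\emptyset$ for every $n$ beyond some $N$ with $\theta_N$ a non-Brjuno irrational, which is exactly the contradiction you derive at $\zeta_N$. Your extra bookkeeping (shift-equivariance of landing points and $\zeta_N\in\mathcal{H}_N\subset\hat{\mathcal{Z}}^{[m]}_N$ for all $m$) just makes explicit what the paper leaves implicit.
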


\begin{proof}
    If there are infinitely many $n$'s such that $\theta_n = 0$, then for each of such $n$, we have $\imag(\zeta_n; \theta_n) < \boldsymbol{C}$ trivially.
    Else, there exists some $N \geq 0$ such that $\theta_N$ is a non-Brjuno irrational.
    In this case, for every $n\geq N$, Lemma \ref{lem:main-estimate} implies that $A^\infty_n$ is always empty. 
    This implies the desired claim.
\end{proof}

\subsection{The size of the Siegel disk}

\begin{theorem}
\label{thm:size-siegel}
    If $\theta_0$ is a Brjuno irrational, then $f_0$ admits a Siegel disk $Z(f_0)$ with conformal radius $\asymp e^{-\brjuno(\theta_0)}$ about the fixed point $0$.
\end{theorem}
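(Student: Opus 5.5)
The plan is to show that the set $A^\infty_0$ is, up to bounded error, a half-plane in logarithmic coordinates, and then to project it to the dynamical plane of $f_0$.

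\textbf{Step 1: the half-plane.} Since $\theta_0$ is Brjuno, every $\theta_k$ is irrational and $\brjuno_m(\theta_0)\nearrow\brjuno(\theta_0)<\infty$. By Lemma~\ref{lem:main-estimate}, each $A^m_0$ contains the half-plane $\{\imag\zeta\ge \frac{1}{2\pi}\brjuno_m(\theta_0)+\Kbold_2\}$, and this half-plane in turn contains
\[
\mathcal{W}:=\Big\{\imag\zeta \geq \tfrac{1}{2\pi}\brjuno(\theta_0)+\Kbold_2\Big\}.
\]
Hence $\mathcal{W}\subset A^\infty_0\subset\mathcal{H}_0$. The same lemma gives the reverse inclusion
\[
A^\infty_0\subset\Big\{\imag\zeta\ge \tfrac{1}{2\pi}\brjuno(\theta_0)-\Kbold_1\Big\}.
\]

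\textbf{Step 2: existence of the Siegel disk and the lower bound.} The set $\mathcal{W}$ is $T$-invariant and lies in $\mathcal{H}_0$. Its projection $\exp(\mathcal{W})$ is the round disk $\D(0,r)$ with $r=e^{-\brjuno(\theta_0)-2\pi\Kbold_2}$, and this disk is contained in $H_0$. Because $f_0$ restricts to a homeomorphism of $H_0$, the interior $\mathrm{int}\,H_0$ is an invariant open set containing $\D(0,r)$ on which the iterates of $f_0$ stay in the bounded set $H_0$. The iterates therefore form a normal family there. It follows that $0$ is linearizable and that the component $Z(f_0)$ of $\mathrm{int}\,H_0$ containing $0$ is a Siegel disk (or is contained in one). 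Since $\D(0,r)\subset Z(f_0)$, the Koebe theorem gives conformal radius $\ge r\asymp e^{-\brjuno(\theta_0)}$.

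\textbf{Step 3: the upper bound.} This is the main obstacle. I need to show that the Siegel disk cannot extend much beyond height $\frac{1}{2\pi}\brjuno(\theta_0)$.

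First, the Siegel disk $Z$ contains no postcritical points, so $Z\subset\mathrm{int}\,H_0$. Its lift $\mathcal{Z}=\exp^{-1}(Z)$ is a $T$-invariant simply connected region on which $\ftilde_0$ acts as a translation in the linearizing coordinate. Under the first return and $\phi_0$, the intersection $\mathcal{Z}\cap\mathcal{S}_0$ projects into the Siegel disk of $f_1$, because the gluing conjugates the linearization.

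I would then argue that every point $\zeta$ with $\exp(\zeta)$ in $Z$ deep enough in the disk has all its landing points satisfying $\imag(\zeta_k;\theta_k)\ge \boldsymbol{C}$, that is, $\zeta\in A^\infty_0$. The cleanest route is to use the $T$-invariant band picture: $\mathcal{Z}$ is bounded below by an invariant curve, and I need a uniform lower bound on the height of the Siegel disks $Z(f_k)$ themselves. This is where uniform geometry enters. The inclusion $\hat Z^{[-1]}_k\subset\D(0,R')$ from Theorem~\ref{thm:renorm-limits}, together with Theorem~\ref{thm:uniform-fjords}, should show that the invariant curves of the Siegel disk of $f_k$ cannot go beyond the fjord level $\imag(\cdot;\theta_k)\approx O(1)$ in a near-parabolic situation. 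Otherwise the invariant curve would have to pass through the fjord, where $\beta$ and the critical orbit sit at height $\frac{1}{2\pi}\log\frac1{|\theta_k|}+O(1)$.

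Iterating this down the tower with estimates \ref{item-2}--\ref{item-4}, exactly as in the proof of the second inclusion of Lemma~\ref{lem:main-estimate}, gives
\[
\inf\{\imag\zeta:\zeta\in\mathcal{Z}\}\ge \tfrac{1}{2\pi}\brjuno_m(\theta_0)-\Kbold_1-O(1)
\]
for every $m$. Letting $m\to\infty$ confines $Z$ to the disk $\D\big(0,Ce^{-\brjuno(\theta_0)}\big)$, and the conformal radius is then $\le Ce^{-\brjuno(\theta_0)}$.

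The delicate point is making the statement ``the Siegel disk of $f_k$ lies within height $O(1)$ of the fjord'' uniform in $k$. If that fails, my fallback is a Yoccoz-style argument: use the derivative estimate \ref{item-3} to compare the linearizing coordinate with $\phi_k$ level by level, directly bounding the conformal radius of $Z(f_k)$ by $e^{-\log(1/|\theta_k|)}$ times the conformal radius of $Z(f_{k+1})$ raised to the power $|\theta_k|$, up to bounded multiplicative error. Since each conformal radius is at most $R'$, this recursion telescopes to the Brjuno sum.
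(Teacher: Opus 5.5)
\textbf{Steps 1--2.} These match the paper. You use normality of the iterates on $\mathrm{int}\,H_0$ where the paper invokes Denjoy--Wolff. One small correction: $\D(0,r)\subset Z(f_0)$ gives conformal radius $\ge r$ by the Schwarz lemma, not Koebe.

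\textbf{Step 3, main route: the target is false.} You try to prove that the whole lift $\mathcal Z$ lies above height $\frac{1}{2\pi}\brjuno(\theta_0)-O(1)$, i.e.\ $Z(f_0)\subset\D(0,Ce^{-\brjuno(\theta_0)})$. But the outer radius of a Siegel disk is not comparable to its conformal radius. Take $f_0=f_\theta$ with $\theta$ of bounded type and first entry $\bar a_1=N$ large. Then $\partial Z(f_\theta)$ passes through the critical point, at distance $1/2$ from $0$, whereas $e^{-\brjuno(\theta)}\le 1/N$.

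Theorem~\ref{thm:uniform-fjords} shows the mechanism. If $f_k$ is near-parabolic and all deeper levels are bounded, then $H_k=\hat Z^{[0]}_k$. Its interior, the Siegel disk, is, up to a uniformly quasiconformal map, the upper half of $\D$ with the fixed point at $i\theta_k$. Its invariant curves never enter the fjord: they squeeze between $0$ and $[-1,1]$ and reach distance of order $1$ elsewhere. So in logarithmic coordinates $\mathcal Z$ comes down to height $O(1)$ between consecutive lifts of the fjord. The ``delicate point'' you isolate is therefore not a uniformity issue; it is simply false. The induction modeled on the second inclusion of Lemma~\ref{lem:main-estimate} could not be run anyway, since \ref{item-4} is unavailable at the low points of $\mathcal Z$.

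\textbf{What is actually needed.} You only need an upper bound on the inner radius. By the Koebe $1/4$ theorem it suffices to exhibit one point of $\C\setminus Z(f_0)$ within $O(e^{-\brjuno(\theta_0)})$ of $0$. The paper does exactly this:
\begin{itemize}
\item it starts from the highest point $\xi_N$ of $\partial\hat{\mathcal Z}^{[0]}_N$, which lies at height $\frac{1}{2\pi}\log\frac{1}{|\theta_N|}+O(1)$ by Theorem~\ref{thm:uniform-fjords};
\item it pulls back by $\phi_{n-1}^{-1}$ using \ref{item-4} when the current point is high enough, and otherwise restarts at $\xi_{n-1}$;
\item it ends at a point of $\partial\hat{\mathcal Z}^{[n_*]}_0$, hence outside $Z(f_0)$, at height $\frac{1}{2\pi}\brjuno_N(\theta_0)+O(1)$.
\end{itemize}

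\textbf{Your fallback.} The recursion $r_k\le C|\theta_k|\,r_{k+1}^{|\theta_k|}$ is actually true and would give a genuinely different, rather clean proof. However, it does not come from \ref{item-3}. The derivative bound only produces the exponent $|\theta_k|$. The factor $|\theta_k|$ comes from $\imag\kappa_k=\frac{1}{2\pi}\log\frac{1}{|\theta_k|}+O(1)$ in \ref{item-4}, i.e.\ from the fjord geometry via Lemma~\ref{lem:new-estimate}. To make it rigorous you must show two things:
\begin{itemize}
\item $\psi_k$ carries $Z(f_k)\cap S_k$ exactly onto $Z(f_{k+1})$ modulo the slit (each side yields an invariant open set with bounded orbits);
\item with $\tilde h_k$ the lifted linearization normalized by $\tilde h_k(w)-w\to 0$, the function $\tilde h_{k+1}^{-1}\circ\phi_k\circ\tilde h_k(w)-w/|\theta_k|$ has the rationally independent real periods $|\theta_k|$ and $1$. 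Hence it is constant, equal to $-\kappa_k/|\theta_k|$.
\end{itemize}
Comparing the half-planes of definition then gives $\log\frac{1}{r_k}=\log\frac{1}{|\theta_k|}+|\theta_k|\log\frac{1}{r_{k+1}}+O(1)$. This telescopes, using $r_{m+1}\le R'<1$, to the upper bound. As written, though, the fallback is a one-sentence sketch resting on the wrong estimate.
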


The constants encoded in ``$\asymp$'' are uniform, i.e. independent of $f_0 \in \overline{\orbsec}$.
This theorem settles part (3) of Theorem \ref{side-theorem}.
Note that at this stage, we have yet determined that the interior of the Mother Hedgehog is equal to the Siegel disk.
This will be resolved at the end of the paper.

\begin{proof}
    By Lemma \ref{lem:main-estimate}, the interior of the set $A^\infty_0$ contains the half plane
    \[
    \left\{ \zeta \: : \: \imag(\zeta) > \frac{1}{2\pi} \brjuno(\theta_0) + \Kbold_2 \right\}.
    \]
    By taking the image under $\exp$, we deduce that the Mother Hedgehog $H_0(\fbold)$ has an interior component $Z$ that contains the round disk centered at $0$ with radius $e^{-\brjuno(\theta_0)-2\pi \Kbold_2}$.
    Since $f_0: (Z,0) \to (Z,0)$ is a self-homeomorphism, then by Denjoy-Wolff Theorem, $Z$ must be a Siegel disk of $f_0$ centered at $0$.
    
    Next, let us prove the upper bound.
    The idea is to find a point outside of $Z$ that has distance $\asymp e^{-\brjuno(\theta_0)}$ from the origin.
    
    By \ref{item-4}, for any $n \in \N$ and any point $\zeta$ in $\Sigma_{n+1}$,
    \begin{equation}
    \label{eqn:upper-bound-main}
        \imag(\zeta) \geq \frac{ \boldsymbol{C} + \Kbold_0}{|\theta_n|} 
        \quad \xRightarrow{\qquad} \quad
        \left| 
        \imag (\phi_n^{-1}(\zeta)) - |\theta_n| \imag(\zeta) - \frac{1}{2\pi} \log\frac{1}{|\theta_n|} 
        \right| \leq \boldsymbol{\epsilon} + \Kbold_0. 
    \end{equation}
    For every integer $n \in \N$, 
    we will pick a point $\xi_n$ on the boundary of $\hat{\mathcal{Z}}^{[0]}_n$ with the highest imaginary value. 
    By Theorem \ref{thm:uniform-fjords}, we have
    \begin{equation}
    \label{eqn:height-estimate-start}
        \left| \imag(\xi_n ; \theta_n) \right| \leq \Kbold_3,
    \end{equation}
    where $\Kbold_3>0$ is some universal constant.
    For $n \geq 1$, we will also assume without loss of generality that each of $\xi_n$ is contained in $\Sigma_n$.
    Set
    \[
        \Kbold_4 = \max\{ \boldsymbol{C} + \Kbold_0 , \Kbold_3\}.
    \]

    Pick a large enough integer $N \geq 1$ such that 
    \begin{equation}
        \label{eqn:Y-close}
        \brjuno(\theta_0) - \brjuno_N(\theta_0) \leq 1.
    \end{equation}
    Below, we will construct a sequence of points $\omega_N, \omega_{N-1}, \ldots, \omega_0$ such that
    \begin{equation}
        \left| \imag(\omega_n) - \frac{1}{2\pi} \brjuno_{N-n}(\theta_n) \right| \leq 4 \Kbold_4
    \tag{$\clubsuit_n$}
    \end{equation}
    for all $n \in \{0,\ldots,N\}$.
    
    To start off, we set $\omega_N := \xi_N$.
    The estimate ($\clubsuit_N$) follows from (\ref{eqn:height-estimate-start}).
    Inductively, for $1 \leq n \leq N$, we define $\omega_{n-1}$ out of $\omega_n$ and verify ($\clubsuit_{n-1}$) as follows. 
    \vspace{0.1in}

    \noindent \underline{Case 1:} $\imag(\omega_n) \geq \Kbold_4/|\theta_{n-1}|$.\\
    Set $\omega_{n-1} = \phi_{n-1}^{-1}(\omega_n)$. 
    Then, by (\ref{eqn:upper-bound-main}) and ($\clubsuit_n$),
        \begin{align*}
            &\left| 
            \imag(\omega_{n-1}) - \frac{1}{2\pi} \brjuno_{N-n+1}(\theta_{n-1})
            \right| \\
            =&
            \left| 
            \imag(\omega_{n-1}) - \frac{1}{2\pi} \log\frac{1}{|\theta_{n-1}|} - \frac{|\theta_{n-1}|}{2\pi} \brjuno_{N-n}(\theta_{n})
            \right| \\
            \leq & \left|
            \imag(\omega_{n-1}) - |\theta_{n-1}| \imag(\omega_n) - \frac{1}{2\pi} \log \frac{1}{|\theta_{n-1}|} \right| +
            |\theta_{n-1}| \left|
                \imag(\omega_n) - \frac{1}{2\pi} \brjuno_{N-n}(\theta_{n})
            \right| \\
            \leq & \Kbold_4 + 4 |\theta_{n-1}| \Kbold_4 \\
            <& 4 \Kbold_4.
        \end{align*}
    On the last line, we have used the inequality $|\theta_{n-1}| \leq 1/2$.
    \vspace{0.1in}

    \noindent \underline{Case 2:} $\imag(\omega_n) < \Kbold_4/|\theta_{n-1}|$. \\
    We will instead pick $\omega_{n-1}$ to be the point $\xi_{n-1}$.
    Then, by (\ref{eqn:height-estimate-start}) and ($\clubsuit_n$),
        \begin{align*}
            &\left| 
            \imag(\omega_{n-1}) - \frac{1}{2\pi} \brjuno_{N-n+1}(\theta_{n-1})
            \right| \\
            \leq &
            \left| 
            \imag(\xi_{n-1});\theta_{n-1})
            \right|
            + \frac{1}{2\pi} \left| \log\frac{1}{|\theta_{n-1}|} - \brjuno_{N-n+1}(\theta_{n-1})
            \right|  \\
            \leq & 
            \Kbold_4 + \frac{|\theta_{n-1}|}{2\pi} \brjuno_{N-n} (\theta_n) \\
            \leq &
            \Kbold_4 + |\theta_{n-1}| \left( 
            \imag(\omega_n) + 4 \Kbold_4
            \right) \\
            \leq &
            2 \Kbold_4 + 4 |\theta_{n-1}| \Kbold_4 \\
            \leq & 4 \Kbold_4.
        \end{align*}

    Now, let $n_* \in \{0,1,\ldots,N\}$ be the smallest integer such that $\omega_{n_*} = \xi_{n_*}$.
    Since $\xi_{n_*}$ is on the boundary of $\hat{ \mathcal{Z} }^{[0]}_{n_*}$, 
    the lift $w_0$ is on the boundary of $\hat{\mathcal{Z}}^{[n_*]}_0$.
    Consider the point $w= \exp(\omega'_0)$; since it is on the boundary of $\hat{Z}^{[n_*]}_0$, it lies outside of the Siegel disk $Z(f_0)$ of $f_0$.
    By ($\clubsuit_0$) and (\ref{eqn:Y-close}), we have 
    \[
    |w| \leq e^{8\pi \Kbold_4 +1} e^{-\brjuno(\theta_0)}.
    \]
    This inequality implies the desired upper bound for the conformal radius of $Z(f_0)$ at $0$.
\end{proof}

\subsection{Typical orbits outside of the Siegel set}

From now on,  when $\theta_n$ is a Brjuno irrational for some $n \geq 0$, we will denote by $\mathcal{Z}_n(\fbold)$ the lift of the Siegel disk $Z(f_n)$ under $\exp$.
Below, we introduce a generalization of Siegel disks.

\begin{definition}
    Suppose $\fbold$ is eventually Brjuno. 
    Let $N \geq 0$ is the smallest integer such that $\theta_N$ is a Brjuno irrational. 
    For all $n \geq 0$ with $n < N$, we define the $n$\textsuperscript{th} \emph{Siegel set} of $\fbold$ \emph{in logarithmic coordinates} to be
    \[
        \mathcal{Z}_n = \mathcal{Z}_n(\fbold) := \{ \zeta \in \mathcal{H}_n \: : \: \zeta_{N-n} \in \mathcal{Z}_N(\fbold) \},  
    \]
    where $\zeta_{N-n}$ refers to the $(N-n)$\textsuperscript{th} landing point of $\zeta$.
    If $\fbold$ is not eventually Brjuno, we simply define $\mathcal{Z}_n$ to be the empty set for all $n \geq 0$.
\end{definition}

The set $\mathcal{Z}_n$ consists of points that project into the Siegel disk of a sufficiently high renormalization. 
In the usual coordinates, the \emph{Siegel set} of $\fbold$ is
\[
    Z_n = Z_n(\fbold) := \exp( \mathcal{Z}_n ).
\]

The next proposition will be the key towards proving zero area.

\begin{proposition}
\label{prop:brjuno}
    For almost every point $\zeta$ in $\mathcal{H}_0 \backslash \mathcal{Z}_0$, there exist infinitely many $n \in \N$ such that
    \[
        \imag(\zeta_n; \theta_n) <  C.
    \]
\end{proposition}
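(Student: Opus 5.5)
The plan is to reduce to the case where every $\theta_n$ is a Brjuno irrational, and then to show that the points which stay in the good region at every level form a null set, by a vertical expansion argument built on Lemma \ref{lem:main-estimate} and the estimates \ref{item-1}--\ref{item-4}.

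\emph{Reduction.} If $\fbold$ is not eventually Brjuno, the conclusion holds for every $\zeta\in\mathcal{H}_0$ by Corollary \ref{cor:non-brjuno}. So assume $\fbold$ is eventually Brjuno, and let $N$ be minimal with $\theta_N$ Brjuno. Then $\theta_n\neq 0$ and $\brjuno(\theta_n)<\infty$ for all $n\ge N$.

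Let $B$ be the set of $\zeta\in\mathcal{H}_0\setminus\mathcal{Z}_0$ for which the conclusion fails. For $\zeta\in B$ there is $M\ge N$ with $\imag(\zeta_k;\theta_k)\ge \boldsymbol{C}$ for all $k\ge M$, which means $\zeta_M\in A^\infty_M$. Moreover $\zeta_M\notin\mathcal{Z}_M$, by the definition of the Siegel set via landing points.

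The map $\zeta\mapsto\zeta_M$ is piecewise a composition of the lifts $\ftilde_k^{t}$ and the univalent maps $\phi_k$. It therefore has countably many univalent branches, and preimages of null sets under it are null. Hence it suffices to prove that
\[
\mathrm{Leb}\big(A^\infty_M\setminus\mathcal{Z}_M\big)=0 \qquad\text{for every } M\ge N .
\]
Shifting the tower, we may take $M=0$ and assume all $\theta_n$ are Brjuno irrationals.

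\emph{Height control.} Set $d_k(\zeta):=\imag(\zeta_k)-\frac{1}{2\pi}\brjuno(\theta_k)$. Letting $m\to\infty$ in Lemma \ref{lem:main-estimate} gives two facts:
\begin{itemize}
\item every $\zeta\in A^\infty_k$ has $d_k\ge -\Kbold_1$;
\item the half-plane $\{d_k>\Kbold_2\}$ lies in $A^\infty_k$, and its image under $\exp$ lies in the Siegel disk $Z(f_k)$, by the proof of Theorem \ref{thm:size-siegel}.
\end{itemize}
Consequently, a point $\zeta\in A^\infty_0\setminus\mathcal{Z}_0$ satisfies
\[
-\Kbold_1\le d_k(\zeta_k)\le \Kbold_2 \qquad\text{for every } k\ge 0 .
\]
So all its landing points lie in horizontal strips of uniformly bounded height $a:=\Kbold_1+\Kbold_2$.

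By the inequality (\ref{eqn:zeta-induction}) from the proof of Lemma \ref{lem:main-estimate}, together with (\ref{eqn:yoccoz-relation}), we have
\[
d_{k+1}=\frac{d_k}{|\theta_k|}+O\Big(\frac{1}{|\theta_k|}\Big).
\]
Thus the vertical coordinate is expanded by a factor $1/|\theta_k|\ge 2$ at each step, up to a bounded additive error.

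\emph{Area estimate.} Let $\Phi_m$ denote the level-$m$ landing map $\zeta\mapsto\zeta_m$ on a fundamental domain $\{0\le\real\zeta\le 1\}$. On $A^m_0$ it is built from:
\begin{itemize}
\item iterates $\ftilde_k^{t}$, which are near-translations by \ref{item-1}--\ref{item-2};
\item the maps $\phi_k$, with $\big||\theta_k|\phi_k'-1\big|\le\boldsymbol{\epsilon}$ by \ref{item-3}.
\end{itemize}
So on each univalent branch, $|\Phi_m'|$ lies between $(1-\boldsymbol{\epsilon})^m\Lambda_m$ and $(1+\boldsymbol{\epsilon})^m\Lambda_m$, up to factors coming from the \ref{item-1}-errors along the orbit, where $\Lambda_m:=\prod_{k<m}|\theta_k|^{-1}$.

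The set $A^\infty_0\setminus\mathcal{Z}_0$ is contained in the $\Phi_m$-preimage of the strip $\{|d_m|\le a\}$ in level $m$. The image of the fundamental domain under $\Phi_m$ has horizontal extent $\asymp\Lambda_m$, up to the distortion factors. Hence the strip portion it meets has area $O(a\Lambda_m)$, and pulling back by $|\Phi_m'|^{-2}$ gives
\[
\mathrm{Leb}\big(A^\infty_0\setminus\mathcal{Z}_0\cap\{0\le\real\zeta\le1\}\big)\;\lesssim\; a\,\Lambda_m^{-1}\,\Big(\frac{1+\boldsymbol{\epsilon}}{(1-\boldsymbol{\epsilon})^{2}}\Big)^{m}.
\]
Since $\Lambda_m^{-1}\le 2^{-m}$, this tends to $0$ as $m\to\infty$ provided the per-step distortion factor is below $2$. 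The defining equation (\ref{eqn:epsilon}) of $\boldsymbol{\epsilon}$, $e^{2\boldsymbol{\epsilon}}=1.2(1-\boldsymbol{\epsilon})^2$, is presumably tuned to guarantee exactly this, with the $\ftilde$-iterates contributing the $e^{2\boldsymbol{\epsilon}}$ factor. The vertical expansion identity for $d_k$ is what makes the preimages of the strips exponentially thin.

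\emph{Main obstacle.} The delicate step is the distortion bookkeeping. The landing times can be as large as $\bar{a}_{k+1}\approx 1/|\theta_k|$, so the product of the $\ftilde_k'$ factors over one landing block must be controlled using the per-step bound $|\ftilde_k'-1|\le\boldsymbol{\epsilon}|\theta_k|$ from \ref{item-1}. This yields a bounded multiplicative factor of about $e^{\boldsymbol{\epsilon}}$ per level, which is where the constants in (\ref{eqn:epsilon}) enter.

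A second point to check is that the different branches of $\Phi_m$, indexed by the landing-time combinatorics, have images that overlap only boundedly many times. This should follow from the triangulation structure in Lemma \ref{lem:triangulation-first-return}, and it is what makes the area count above legitimate.
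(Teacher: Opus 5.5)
Your proposal is essentially the paper's argument. Both confine $A^\infty_n\setminus\mathcal{Z}_n$ to a strip of height $\Kbold_1+\Kbold_2$ at every level and compare areas across levels using \ref{item-1} and \ref{item-3}. The paper just runs it one level at a time: it sets $U_n:=(A^\infty_n\setminus\mathcal{Z}_n)/\Z$ and $U'_n:=\phi_n^{-1}(U_{n+1})\cap\mathcal{S}_n$, proves $\textnormal{Area}(U_n)\le 0.9\,\textnormal{Area}(U_{n+1})$ via the union bound $U_n\subset\bigcup_{j=0}^{\bar a_{n+1}}\ftilde_n^{-j}(U'_n)$, and iterates this against $\textnormal{Area}(U_{n+1})\le\Kbold_1+\Kbold_2$.

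That per-level union bound is the clean replacement for your ``horizontal extent''/``bounded overlap'' bookkeeping. What the argument needs is a count of landing branches, at most $\bar a_{n+1}+1$ per level, not bounded overlap of their images, which all lie in a single fundamental domain. With that count, (\ref{eqn:epsilon}) is tuned exactly so that $(\bar a_{n+1}+1)|\theta_n|^2e^{2\boldsymbol{\epsilon}}(1-\boldsymbol{\epsilon})^{-2}\le\frac34\cdot 1.2=0.9$.
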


\begin{proof}
    Based on Corollary \ref{cor:non-brjuno}, we can assume that we are in the eventually Brjuno situation.
    Let $N \geq 0$ be the smallest integer such that $\theta_N$ is a Brjuno irrational.
    Consider the cylinder $\C/\Z$ equipped with the Lebesgue measure $\lambda$. 
    For $n\geq N$, consider the quotient 
    \[
        U_n := \left (A^\infty_n \backslash \mathcal{Z}(\ftilde_n) \right)/\Z,
    \]
    which lives in $\C /\Z$.
    To prove the proposition, it suffices to show that $U_n$ has zero area (with respect to $\lambda$) for all $n \geq N$.
    
    We gather from Lemma \ref{lem:main-estimate} and Theorem \ref{thm:size-siegel} that
    \[
        - \Kbold_1 \leq\imag(\zeta) - \frac{1}{2\pi}\brjuno(\theta_n) \leq \Kbold_2 \qquad \text{ for all } n \geq N \text{ and } \zeta \in U_n.
    \]
    Hence,
    \begin{equation}
        \label{eqn:bdd-height}
        \textnormal{Area}(U_n) \leq \Kbold_1 + \Kbold_2 \qquad \text{ for all } n \geq N.
    \end{equation}
    
    Let us fix $n \geq N$. 
    Pull back $U_{n+1}$ via $\phi_{n}$ and obtain a subset $U'_n$ of $U_n \cap \mathcal{S}_n$. 
    By \ref{item-3}, we have
    \[
        \textnormal{Area} \left( U_{n+1} \right) 
        = \int_{U'_{n}} \left| (\phi_{n})'(\zeta)\right|^2 d\lambda(\zeta) 
        \geq \left( \frac{1-{\boldsymbol{\epsilon}}}{|\theta_n|} \right)^2 \textnormal{Area} \left( U'_n \right). 
    \]
    Observe that from the definition of the set $A^\infty_n$, we have
    \[
        U_n \subset \bigcup_{j=0}^{\bar{a}_{n+1}} \ftilde_n^{-j} (U'_n).
    \]
    Then, by \ref{item-1}, we have
    \begin{align*}
        \textnormal{Area} \left(U_n \right) 
        &\leq \sum_{j=0}^{\bar{a}_{n+1}} \textnormal{Area} \left( \ftilde_n^{-j} (U'_n) \right) \\
        &\leq \sum_{j=0}^{\bar{a}_{n+1}} \int_{U'_n} |(\ftilde_n^{-j})'(w)|^2 d\lambda(w) 
        \\
        &\leq (\bar{a}_{n+1}+1) (1+{\boldsymbol{\epsilon}} |\theta_n|)^{2\bar{a}_{n+1}}  \textnormal{Area}(U'_n) \\
        &\leq (\bar{a}_{n+1}+1) e^{2{\boldsymbol{\epsilon}}} \textnormal{Area}(U'_n).
    \end{align*}
    Note that the last inequality above follows from the inequality $\bar{a}_{n+1} |\theta_n| \leq 1$.
    Hence,
    \[
        \textnormal{Area} \left( U_n \right) 
        \leq (\bar{a}_{n+1}+1) e^{2{\boldsymbol{\epsilon}}} \left( \frac{|\theta_n|}{1-{\boldsymbol{\epsilon}}}\right)^2 
        \cdot \textnormal{Area} \left( U_{n+1} \right). 
    \]
    The inequality $\bar{a}_{n+1} |\theta_n| \leq 1$ implies $(\bar{a}_{n+1}+1) |\theta_n|^2 \leq 3/4$. 
    Then, by our choice of ${\boldsymbol{\epsilon}}$ in (\ref{eqn:epsilon}), we end up with
    \[
        \textnormal{Area} \left( U_n \right) \leq 0.9 \cdot \textnormal{Area} \left( U_{n+1} \right) \quad \text{ for all } n\geq N.
    \]
    Together with (\ref{eqn:bdd-height}), we conclude that $U_n$ must have zero area for all $n \geq N$.
\end{proof}

\subsection{Definite holes}

Consider the pseudo-Siegel disks $\hat{\mathcal{Z}}^{[m]}_n$ of $\ftilde_n$ in logarithmic coordinates, and the constant $\boldsymbol{C}>0$ used in the previous subsection. 
Again, we denote the unit translation by $T(\zeta)=\zeta+1$.

\begin{lemma}
\label{lem:porosity-lemma}
    There exists a universal constant $\tau > 1$ such that for every $n \geq 1$ and every point $\zeta$ in $\mathcal{H}_n$ satisfying 
    \[
        \imag(\zeta;\theta_n) < \boldsymbol{C},
    \]
    there exists a pair of open disks $B_\zeta$ and $D_\zeta$ in $\UHP$ with the following properties.
    \begin{enumerate}
        \item $B_\zeta$ is a round disk $B_\zeta = \D(b_\zeta, r_\zeta)$ in the complement of $\hat{\mathcal{Z}}^{[0]}_n$ and it satisfies
        \[
            \tau^{-1} r_\zeta \leq |b_\zeta - \zeta| \leq \tau r_\zeta.
        \]
        \item $B_\zeta \cup \{\zeta\}$ is compactly contained in $D_\zeta$ and, with respect to the hyperbolic metric of $D_\zeta$, the diameter of $B_\zeta \cup \{\zeta\}$ is bounded above by $\tau$.
        \item There exists an open disk $D'_\zeta \subset \Dom(\ftilde_n)$ such that the map 
        \begin{equation}
        \label{eqn:branched-covering-map}
            \ftilde_n : (D'_\zeta, \zeta') \to (D_\zeta, \zeta)
        \end{equation}
        is a branched covering map sending some point $\zeta'$ in $\mathcal{H}_n \cap D'_\zeta$ to $\zeta$. 
        \item If $\zeta$ is a critical value of $\ftilde_n$, i.e. in $\mathbf{c}_1(\ftilde_n)+ \Z$, then the critical point $\zeta'$ is the unique branched point of the map \textnormal{(\ref{eqn:branched-covering-map})} and $D_\zeta$ is contained in $T^j(\Sigma_n)$ for some $n \in \Z$.
        Otherwise, $D_\zeta$ avoids $\mathbf{c}_1(\ftilde_n)+ \Z$, the map \textnormal{(\ref{eqn:branched-covering-map})} has degree one, and there is some $j \in \Z$ such that $D_\zeta$ is contained in the interior of the closure of $T^j(\Sigma_n) \cup T^{j+1}(\Sigma_n)$.
    \end{enumerate} 
\end{lemma}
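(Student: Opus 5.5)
The plan is a compactness argument over the precompact family $\overline{\towsec}$, after normalizing away the translation and the unbounded depth of the fjord. Fix $n\geq 1$ and $\zeta\in\mathcal{H}_n$ with $\imag(\zeta;\theta_n)<\boldsymbol{C}$. By $T$-invariance we may translate $\zeta$ so that it lies in $\overline{\Sigma_n\cup T(\Sigma_n)}$. Since $\hat{\mathcal{Z}}^{[-1]}_n$ is uniformly quasiconformally equivalent to $\UHP$ and contains the half-plane image of $\D(0,R)$, the point $\zeta$ lies in a region whose geometry is controlled by the top regularized fjord. There are two regimes.

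\emph{Bounded regime.} Suppose $f_n$ is not near-parabolic, or $\imag(\zeta)$ is bounded by a uniform constant. Then $\imag(\zeta)\in[0,M]$ for a uniform $M$. The uniform quasidisk bounds of Theorem \ref{thm:renorm-limits}(4) for $\hat{\mathcal{Z}}^{[0]}_n$, together with the accessibility of the critical value (Theorem \ref{thm:mother-hedgehog}(2)), give a round disk $B_\zeta$ in the complement of $\hat{\mathcal{Z}}^{[0]}_n$ whose radius is comparable to its distance from $\zeta$. One also needs a controlled branched preimage under $\ftilde_n$.

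\emph{Near-parabolic regime.} Suppose $|\theta_n|<1/\threshold$ and $\zeta$ lies deep, with $\imag(\zeta)$ between $O(1)$ and $\frac{1}{2\pi}\log\frac1{|\theta_n|}+\boldsymbol{C}$. Here we use the uniformizing map $\chi_{\fbold}$ of Theorem \ref{thm:uniform-fjords}, applied to the shifted tower $\Rsec^n\fbold$. In the $\chi$-coordinate the fjord becomes the lower half of $\D$, and $\hat{Z}^{[0]}_n$ lies above the segment $[-1,1]$. A point $z=\exp(\zeta)$ at scale $|z|\succ|\theta_n|$ sits at a definite ratio above the real segment, relative to its distance from the fixed points $\pm i\theta_n$. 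So the lower half-disk supplies a hole of comparable size: for instance, take the image of the disk centered at $\overline{\chi(z)}$ with radius a fixed fraction of $|\chi(z)\mp i\theta_n|$. This disk lies in $\chi(\hat{V}_n)$ minus $[-1,1]$, hence outside $\hat{Z}^{[0]}_n$. A $K$-qc map distorts ratios of such disks only boundedly (quasisymmetry), so $B$ can be taken round in $z$ with radius comparable to $|z-b|$. Passing to logarithmic coordinates is harmless, because $\exp$ has bounded distortion on disks of radius $\asymp |z|$ around $z$ that avoid $0$. Such disks avoid $0$ because the hole stays at distance $\succ|\theta_n|$ from $0$ and $\beta$. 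This gives item (1).

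For items (2)--(4), take $D_\zeta$ to be a disk of radius a large fixed multiple of $|b_\zeta-\zeta|$, which gives the hyperbolic diameter bound. Pull $D_\zeta$ back by $\ftilde_n$ along the branch through $\zeta'$, the preimage of $\zeta$ in $\mathcal{H}_n$; this preimage exists because $\ftilde_n$ acts as a homeomorphism of $\mathcal{H}_n$. The pullback must be a disk on which $\ftilde_n$ has degree one, or degree two with a unique critical point when $\zeta$ is a critical value. This requires that $D_\zeta$ contain no critical value other than possibly $\zeta$ itself, and lie in $\Dom$ of the inverse branch. When $\zeta$ is not a critical value we shrink $D_\zeta$ by a uniform factor so that it avoids $\mathbf{c}_1+\Z$ and stays inside $\overline{T^j\Sigma_n\cup T^{j+1}\Sigma_n}$. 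Keeping $\tau$ uniform here needs a dichotomy: either $\zeta$ is at distance $\succ|b_\zeta-\zeta|$ from the critical values and the sector sides, or it is near a critical value $c$. In the latter case we replace the hole by one adapted to $c$, which is allowed since the fjord geometry near $c$ is also controlled.

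\emph{Main obstacle.} The hardest step is uniformity of $\tau$ across both regimes, and especially near the parabolic limit, where $\theta_n\to0$ and the fixed points $0$ and $\beta$ merge. I would handle it by contradiction and compactness. Suppose there are sequences of towers and points with $\tau\to\infty$. Rescale around $\zeta$ using $\chi_{\fbold}$ and pass to limits in $\overline{\towsec}$ via Theorem \ref{thm:renorm-limits}. The limits include enriched-rational ones, where $\rho_0$ is parabolic, and Lavaurs-type limits. In each limit the fjord is a genuine nonempty open set adjacent to the limit point, with uniform boundary geometry, and this contradicts degeneration of the hole. Checking that the covering structure in (3)--(4) survives these limits, with no extra critical points accumulating, is the delicate part.
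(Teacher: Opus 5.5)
Your two-regime split (bounded height versus deep inside the top fjord of a near-parabolic $f_n$, using Theorem \ref{thm:uniform-fjords}) matches the paper's. However, the deep-regime construction fails at two concrete points.

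\emph{The hole.} You assert that $\chi(z)$ lies a definite ratio above $[-1,1]$ relative to $|\chi(z)-i\theta_n|$, and you take a disk centered at $\overline{\chi(z)}$. This is false for many $\zeta\in\mathcal{H}_n$. The quasiarc $J=\partial\hat{V}_n\cap\partial\hat{Z}^{[0]}_n$, which $\chi$ maps onto $[-1,1]$, passes within $\asymp|\theta_n|$ of $0$. It carries points of $\partial H_n$ at every scale between $|\theta_n|$ and $1$, for instance the precritical points $v^n_{-j}$ that cut it into fundamental intervals. For those points $\overline{\chi(z)}=\chi(z)$, so your disk contains $\chi(z)$ and meets $\chi(\hat{Z}^{[0]}_n)$. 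Your scale is right, but the hole must sit at depth comparable to that scale below the segment. The paper does this in logarithmic coordinates: $\tilde{\chi}(\textnormal{V})$ contains a straight rectangle $R$ of width $\asymp 1$ and height $\frac{1}{2\pi}\log\frac{1}{|\theta_n|}-O(1)$. One then picks $y$ at height $O(1)$ below $\zeta$ with $\dist(\tilde{\chi}(y),\partial R)\asymp 1$.

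\emph{The domain $D_\zeta$.} In this regime $|b_\zeta-\zeta|\asymp 1$ in logarithmic coordinates, which is the width of the strips $T^j(\Sigma_n)$. A round disk of radius a large multiple of this cannot lie in the interior of $\overline{T^j(\Sigma_n)\cup T^{j+1}(\Sigma_n)}$ as (4) demands. Shrinking it by a uniform factor destroys the compact containment needed in (2). The missing device is to normalize with $\Xi$ so that $\frac12\leq\real(\zeta^\dagger)<\frac32$, and to take the fjord component in $\Sigma_n\cup T(\Sigma_n)$ nearest to $\zeta$. Then let $D_\zeta$ be a thin neighborhood of an L-shaped arc from $\zeta$ to $y$. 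The lower bound $\imag(\zeta)>\lambda$ from (\ref{eqn:lambda}) keeps this neighborhood inside $\hat{\mathcal{Z}}^{[-1]}_n$ and away from $\mathbf{c}_1+\Z$.

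In the bounded regime you give no construction, and your remark about critical values needs correcting. For $\zeta\notin\mathbf{c}_1+\Z$, everything must live at scale $\lesssim\dist(\zeta,\mathbf{c}_1+\Z)$. A $D_\zeta$ containing a critical value makes (\ref{eqn:branched-covering-map}) branched, contrary to (4). Near a critical value, the holes come from the uniform quasiline $\partial\hat{\mathcal{Z}}^{[-1]}_n$, on which the critical values lie, not from fjord geometry. This is the paper's Case 1, which works at scale $\min\{\frac14,\dist(\zeta^\dagger,\mathbf{c}_1^\dagger+\Z)\}$. It joins $\zeta$ to a point $y\in\partial\hat{\mathcal{Z}}^{[-1]}_n$ by an arc $\alpha$ that stays at distance comparable to its size from $\mathbf{c}_1+\Z$. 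It then takes $D_\zeta$ to be a neighborhood of $\alpha$ with $D_\zeta\setminus\alpha$ of definite modulus, and places $B_\zeta$ outside $\hat{\mathcal{Z}}^{[-1]}_n$ near $y$. The critical-value case is separate and easy: straighten $\hat{\mathcal{Z}}^{[-1]}_n$ to a half-plane with $u_n$.

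Finally, your compactness-by-contradiction fallback does not work as stated. The degenerations at issue are points tending to a critical value, or points climbing a fjord to height $\to\infty$ while $\theta_n\to 0$. Their rescaling limits are not elements of $\overline{\towsec}$, so Theorem \ref{thm:renorm-limits} supplies no compactness for them. You also leave the survival of the covering structure unverified. No limiting argument is needed: uniformity of $\tau$ follows directly from the uniform dilatations of $\Xi$, $u_n$ and $\tilde{\chi}$ together with the explicit arcs.
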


\begin{proof}
    Suppose $\zeta$ is a critical value of $\ftilde_n$, say $\mathbf{c}_1 = \mathbf{c}_1(\ftilde_n)$. This case is straightforward because it lies on the boundary of the top pseudo-Siegel disk $\hat{\mathcal{Z}}^{[-1]}_n$ of $\ftilde_n$. By Theorem \ref{thm:renorm-limits} (4), there exists a uniform constant $\nu>0$ and a uniformly quasiconformal map $u_n: \UHP \to \UHP$ that commutes with $T$ and sends $\hat{\mathcal{Z}}^{[-1]}_n$ onto the upper half plane $\{\imag z > \nu \}$.
    Pick $D_\zeta$ to be any sufficiently small quasidisk in $\Sigma_n$ such that $u_n(D_\zeta)$ is a round disk centered at $u_n(\zeta)$.
    One half of $D_\zeta$ is outside of $\hat{\mathcal{Z}}^{[-1]}_n$ and it contains a disk $B_\zeta$ with the desired properties.
    Properties (1)--(4) are immediate.

    Now, consider the general case when $\zeta$ is not a critical value. By Theorem \ref{thm:renorm-limits} (6)(d), there exists a uniformly quasiconformal map $\Xi : \UHP \to \UHP$ that commutes with $T$ and sends the strip $\Sigma_n$ to the straight half-strip $\{0 < \real(w) < 1, \imag(w) > 0\}$. 
    From now on, objects in $\Xi$-coordinate will be denoted with a dagger, e.g. $\zeta^\dagger = \Xi(\zeta)$, $\mathbf{c}_1^{\dagger} = \Xi(\mathbf{c}_1)$, and $\hat{\mathcal{Z}}_n^{[-1],\dagger} = \Xi(\hat{\mathcal{Z}}_n^{[-1]})$.
    By composing with some integer translation, we will assume without loss of generality that
    \[
        \frac{1}{2} \leq \real(\zeta^\dagger) < \frac{3}{2}.
    \]
    
    Let $\threshold$ be the combinatorial threshold for the pseudo-Siegel disks. 
    Recall from Theorem \ref{thm:renorm-limits}(4)(c) that there exists a universal constant $R>0$ such that
    \[
        \hat{\mathcal{Z}}^{[-1]}_n \supset \left\{ w \in \C \: : \: \imag (w) > \frac{1}{2\pi} \log\frac{1}{R} \right\}.
    \]
    We will fix a sufficiently large constant $\lambda > 0$ such that 
    \begin{equation}
        \label{eqn:lambda}
        0 \leq \lambda - \max\left\{ \frac{1}{2\pi} \log\frac{1}{R}, \frac{1}{2\pi} \log \threshold + \boldsymbol{C} \right\} = O(1).
    \end{equation}
    The construction of the disks $B_\zeta$ and $D_\zeta$ is split into two cases.
    The estimates that appear below may depend on $\lambda$, but this is alright.
    \vspace{0.05in}
    
    \noindent \underline{Case 1:} $\imag(\zeta) \leq \lambda$.

    Let $E_*^\dagger$ be the largest open round disk that is centered at $\zeta^\dagger$, has radius $\leq \frac{1}{4}$, and is disjoint from $\mathbf{c}_1^\dagger + \Z$.
    Consider the union of disks 
    \[
        E^\dagger := \bigcup_{j \in \Z} T^j(E_*^\dagger).
    \]
    Let $\alpha_1^\dagger$ be the unique vertical line segment (possibly degenerate) in $\hat{\mathcal{Z}}^{[-1],\dagger}_n \backslash E^\dagger$ with upper endpoint $\zeta^\dagger$ and lower endpoint being a point on the boundary of $ \hat{\mathcal{Z}}^{[-1],\dagger}_n \backslash E^\dagger $. 
    Let $\alpha_2^\dagger$ be a circular subarc of $\partial E^\dagger \cap \hat{\mathcal{Z}}^{[-1],\dagger}_n$ (possibly degenerate) that starts from the endpoint of $\alpha_1^\dagger$ and ends at a point $y^\dagger$ on the boundary of $\hat{\mathcal{Z}}^{[-1],\dagger}_n$. 
    The arcs  $\alpha^\dagger := \alpha_1^\dagger \cup \alpha_2^\dagger$ and $\alpha := \Xi^{-1}(\alpha^\dagger)$
    have the following properties.
    \begin{itemize}
        \item $\alpha$ is contained in $\hat{\mathcal{Z}}^{[-1]}_n$ and it starts from $\zeta$ and ends at a point $y=\Xi^{-1}(y^\dagger)$ on the boundary of $\hat{\mathcal{Z}}^{[-1]}_n$.
        \item $\alpha^\dagger$ is contained in the strip $\{ \frac{1}{4} \leq \real (w) \leq \frac{7}{4}\}$.
        \item There exists a uniform constant $K > 1$ such that
        \[
            K^{-1} \dist(\alpha, \mathbf{c}_1 + \Z) \leq \diam(\alpha) \leq K.
        \]
    \end{itemize}
    
    Consequently, there exists an open disk neighborhood $D_\zeta$ of $\alpha$ such that $D_\zeta$ is contained in $\Sigma_n \cup T(\Sigma_n)$, avoids $\mathbf{c}_1 + \Z$, and $D_\zeta \backslash \alpha$ is an annulus with definite modulus. 
    We can further arrange such that the intersection $D_\zeta \cap \partial \hat{\mathcal{Z}}^{[-1]}_n$ is an interval neighborhood of $y$.
    With the definiteness of $D_\zeta \backslash \alpha$, we can also find an open round disk $B_\zeta$ near $y$ inside of $D_\zeta \backslash \hat{\mathcal{Z}}_n^{[-1]}$ that satisfies items (1) and (2).
    Assuming $D_\zeta$ is not too large, we can also assume that $D_\zeta$ has the lifting property described in item (3). 
    Lastly, item (4) follows from the construction.    
    \vspace{0.05in}

\begin{figure}
    \centering
    
    \begin{tikzpicture}
    \node[anchor=south west, inner sep=0] (image) at (0,0) {\includegraphics[width=0.97\linewidth]{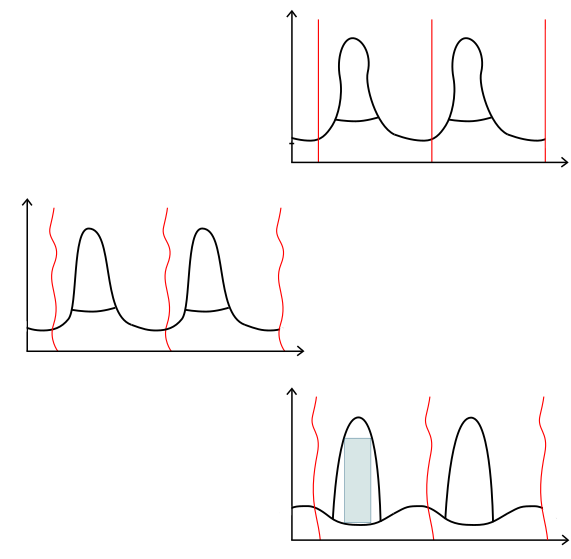}};
    \begin{scope}[
        x={(image.south east)},
        y={(image.north west)}
    ]   
        \node [blue, font=\bfseries] at (0.625,0.15) {$R$};
        \node [black, font=\bfseries] at (0.56,0.685) {\small $0$};
        \node [black, font=\bfseries] at (0.755,0.685) {\small $1$};
        \node [black, font=\bfseries] at (0.955,0.685) {\small $2$};
        \node [red, font=\bfseries] at (0.23,0.55) {\small $\Sigma_n$};
        \node [red, font=\bfseries] at (0.44,0.55) {\small $\Sigma_n+1$};
        \node [black, font=\bfseries] at (0.16,0.49) {\small $\textnormal{V}$};
        
        \draw[black,line width=0.5pt,-latex] (0.28,0.66) .. controls (0.3,0.75) and (0.4,0.80) .. (0.46,0.8);        
        \node [black, font=\bfseries] at (0.34,0.785) {$\Xi$};
        \draw[black,line width=0.5pt,-latex] (0.26,0.32) .. controls (0.28,0.21) and (0.4,0.18) .. (0.43,0.17);        
        \node [black, font=\bfseries] at (0.325,0.19) {$\tilde{\chi}$};
        
        \draw [blue, latex'-latex'] (0.49,0.07) -- (0.49,0.22);
        \node [blue, font=\bfseries] at (0.47,0.15) {$\mathbf{h}$};
        \draw [blue, latex'-latex'] (0.60,0.02) -- (0.65,0.02);
        \node [blue, font=\bfseries] at (0.625,0.00) {$\mathbf{w}$};

        \node [black, font=\bfseries] at (0.05,0.67) {\small $\imag(\zeta)$};
        \node [black, font=\bfseries] at (0.58,0.37) {\small $\real(\zeta)$};
        \node [black, font=\bfseries] at (0.46,0.96) {\small $\imag(z)$};
        \node [black, font=\bfseries] at (1.01,0.68) {\small $\real(z)$};
    \end{scope}
\end{tikzpicture}
    
    \caption{The maps $\Xi$ and $\tilde{\chi}$}
    \label{fig:xi-and-rectangle}
\end{figure}

    \noindent \underline{Case 2:} $\lambda < \imag(\zeta) \leq 
    \frac{1}{2\pi} \log\frac{1}{|\theta_n|} + \boldsymbol{C}$.

    By (\ref{eqn:lambda}), $|\theta_n| < \frac{1}{\threshold}$ and so $f_n$ has to be near-parabolic. 
    Consider the quasiconformal uniformization $\chi : (\hat{Z}^{[-1]}_n, 0) \to (\overline{\D}, i\theta_n)$ associated to $f_n$ as described in Theorem \ref{thm:uniform-fjords}. It lifts to a uniformly quasiconformal map $\tilde{\chi}$ on $\hat{\mathcal{Z}}^{[-1]}_n$ such that
    \[
        \chi \circ \exp = \exp \circ \tilde{\chi} + i\theta_n.
    \]
    
    Consider the connected component $\textnormal{V}$ of the closure of $\hat{\mathcal{Z}}^{[-1]}_n \backslash \hat{\mathcal{Z}}^{[0]}_n$ that is contained in $\Sigma_n \cup T(\Sigma_n)$ and is the closest to $\zeta$ in $\Xi$-coordinates. 
    Inside of $\tilde{\chi}(\textnormal{V})$, there is a straight rectangle $R$ of horizontal width $\mathbf{w} \asymp 1$ and vertical height $\mathbf{h}=\frac{1}{2\pi} \log \frac{1}{|\theta_n|} - O(1)$ such that the two bottom vertices are on the boundary of $\tilde{\chi}(\hat{\mathcal{Z}}^{[-1]}_n)$. See Figure \ref{fig:xi-and-rectangle}.
    There exists a point $y \in \tilde{\chi}^{-1}(R)$ such that
    \[
        0 \leq \imag(\zeta^\dagger)-\imag(y^\dagger) = O(1) \quad \text{ and } \quad \dist(\tilde{\chi}(y), \partial R) \asymp 1,
    \]
    where $y^\dagger = \Xi(y)$.
    Denote 
    \[
    \xi^\dagger := \real(\zeta^\dagger) + i \, \imag(y^\dagger).
    \]
    Let $\alpha_1^\dagger$ be the vertical line segment connecting $\zeta^\dagger$ and $\xi^\dagger$, and let $\alpha_2^\dagger$ the horizontal line segment connecting $\xi^\dagger$ and $y^\dagger$. 
    Then, the arc $\alpha := \Xi^{-1}(\alpha_1^\dagger \cup \alpha_2^\dagger)$ joins $\zeta$ and $y$ and has definite diameter $\asymp 1$.
    We set $D_\zeta$ to be the $\varepsilon$-neighborhood of $\alpha$, where $\varepsilon>0$ is a some definite constant such that $D_\zeta$ is still contained in both $\Sigma_n \cup T(\Sigma_n)$ and $\hat{\mathcal{Z}}^{[-1]}_n$.
    By design, $D_\zeta$ contains a round disk $B_\zeta$ centered at $y$ satisfying items (1) and (2).
    Lastly, items (3) and (4) follow from the construction.
\end{proof}

\subsection{Porosity}

We are now ready to prove the main results in this paper.

\begin{definition}
    Given a closed subset $X$ of the plane, we say that $X$ is \emph{(non-uniformly) porous} at a point $\zeta$ in $X$ if there exists $\varepsilon>0$ and a sequence of positive numbers $r_n$ decreasing to zero such that for every $n \geq 1$, the round disk $\D(\zeta,r_n)$ contains another round disk in $\C \backslash X$ of radius $\varepsilon r_n$.
\end{definition}
 
Every point of porosity of a set $X$ is not a Lebesgue density point of $X$. 
So if almost every point in $X$ is non-uniformly porous, then $X$ must have zero Lebesgue measure.

\begin{theorem}
\label{thm:almost-main-thm}
    The Mother Hedgehog $\mathcal{H}_0$ is porous at almost every point not in the Siegel set $\mathcal{Z}_0$.
\end{theorem}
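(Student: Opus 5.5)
By Proposition \ref{prop:brjuno}, for almost every $\zeta \in \mathcal{H}_0 \backslash \mathcal{Z}_0$ there are infinitely many levels $n$ with $\imag(\zeta_n;\theta_n) < \boldsymbol{C}$ (trivially so when $\theta_n = 0$). It suffices to show that $\mathcal{H}_0$ is porous at every such $\zeta$, with a porosity constant $\varepsilon$ depending only on $\tau$ from Lemma \ref{lem:porosity-lemma}. At each such level we will build a definite hole near $\zeta_n$ and pull it back to the dynamical plane of $\ftilde_0$ with bounded distortion. The radii obtained from infinitely many levels then decrease to zero, and a uniform $\varepsilon$ gives porosity.

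Fix such an $n \geq 1$. Lemma \ref{lem:porosity-lemma} gives disks $B_{\zeta_n} \subset D_{\zeta_n}$ with $B_{\zeta_n}\cap \hat{\mathcal{Z}}^{[0]}_n = \emptyset$, and $B_{\zeta_n} \cup \{\zeta_n\}$ of hyperbolic diameter at most $\tau$ in $D_{\zeta_n}$. Since $\hat{\mathcal{Z}}^{[0]}_n \supset \mathcal{H}_n$, the disk $B_{\zeta_n}$ avoids the hedgehog. We pull $D_{\zeta_n}$ back along the landing chain
\[
\zeta_{k+1} = \phi_k\big(\ftilde_k^{t_k}(\zeta_k)\big), \qquad k = n-1, \ldots, 0.
\]
By item (4) of the lemma, $D_{\zeta_n}$ lies in $T^j(\Sigma_n)$, or in the interior of the closure of $T^j(\Sigma_n)\cup T^{j+1}(\Sigma_n)$. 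In the first case $\phi_{n-1}^{-1}$ is defined on it directly. In the second case we use the univalent extension of $\phi_{n-1}$ across the glued side, compatible with $\phi\circ\ftilde = \phi + 1$ (as in Lemmas \ref{lem:univalent-extension} and \ref{lem:phi-PS-extension}), or equivalently we pass through $\ftilde_{n-1}$ once. We then pull back by $\ftilde_{n-1}^{-t_{n-1}}$ along the orbit of $\zeta_{n-1}$.

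Two facts carry this step. First, all these maps commute with $T$ and send $\mathcal{H}_k$ onto itself; pulling back keeps holes in the complement of the hedgehog because $\mathcal{H}_{k}$ is completely invariant under $\mathcal{F}_k$ and $\phi_k$ maps $\mathcal{H}_k \cap \mathcal{S}_k$ onto $\mathcal{H}_{k+1}\cap\Sigma_{k+1}$. Second, the inverse branches are univalent on the relevant neighborhoods. The only possible branching is at the top step, which is handled by item (3): it supplies the branched covering $D'_{\zeta_n} \to D_{\zeta_n}$ through $\zeta'$, of degree at most $2$. Using it, we replace $(D_{\zeta_n},\zeta_n,B_{\zeta_n})$ by $(D'_{\zeta_n}, \zeta', B')$, where $B'$ is a component of the preimage of $B_{\zeta_n}$. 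The hyperbolic diameter stays bounded by a constant depending on $\tau$ for a degree-$\leq 2$ cover, and $\zeta'$ is then pulled back univalently. The key point is that univalent maps are hyperbolic isometries between the image domains. So if $G$ denotes the composite inverse branch, the hole $G(B)$ and the point $\zeta$ have bounded hyperbolic diameter in $G(D)$, and $G(B)$ avoids $\mathcal{H}_0$.

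By the Koebe distortion theorem, bounded hyperbolic diameter in a simply connected domain means $G(B)\cup\{\zeta\}$ sits well inside $G(D)$ with bounded distortion. Hence $G(B)$ contains a round disk of radius $\asymp \diam(G(B)\cup\{\zeta\}) =: r$, lying in $\D(\zeta, r)$, with constants depending only on $\tau$. This is the porosity condition at scale $r$. The scales go to zero because $|\phi_k'| \asymp 1/|\theta_k| \geq 2$ in the high region by \ref{item-3}, and $\phi_k$ is expanding in general as a lift of a gluing map. Alternatively, if the scales did not shrink, the holes would accumulate at a fixed scale, which contradicts $\zeta\in \mathcal{H}_0$ having landing points at infinitely many levels.

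I expect the main obstacle to be justifying that the inverse branches along the chain extend univalently to the whole disk $D_{\zeta_n}$, and not just along the orbit. The intermediate iterates $\ftilde_k^{t_k}$ and the gluing maps $\phi_k$ are only defined on sectors, while the pulled-back disks may straddle sector sides and meet the critical orbit's grand orbit. My approach is to work inside the pseudo-Siegel disks. There $f_k^{[m]}$ is injective on $\hat{Z}^{[m-1]}_k$, with a neighborhood in its domain by Theorem \ref{thm:renorm-limits}(5)(a), and the triangulations $\hat{\Delta}$ are renormalization-invariant by Lemma \ref{lem:triangulation-invariance-01}. So I would show that $D_{\zeta_n}$, being inside $\Sigma_n\cup T(\Sigma_n)$ and away from critical values, lifts to a neighborhood of a union of at most two adjacent triangles, on which every inverse branch is univalent.
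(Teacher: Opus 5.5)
Your route is the paper's. Proposition \ref{prop:brjuno} gives infinitely many low levels, Lemma \ref{lem:porosity-lemma} gives a definite hole at each one, and the hole is pulled back to level $0$ by branches of controlled degree and transported with Koebe distortion. Your level-by-level chain of $\phi_k^{-1}$ and $\ftilde_k^{-t_k}$ composes to what the paper does in one step: the first entry time $s_n$ of $\zeta$ into $T^j(\mathcal{S}_0^n)$, followed by $\Upsilon_{n,j}^{-1}=\phi_{n-1}\circ\cdots\circ\phi_0$. The paper's bookkeeping is cleaner, because the only dynamical map left to control is $\ftilde_0^{s_n}$. Moreover, $s_n$ is shorter than the corresponding first return time $Q$ to $\mathcal{S}_0^n+\Z$, and that return map is exactly what $\ftilde_n$ models. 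The univalence you flag as the main obstacle is what the paper asserts, via items (3)--(4), as a univalent extension of $\Upsilon_{n,j}$ to $D_{\zeta_n}\cup\Sigma_n$; your triangle-based plan is a sensible way to justify it.

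However, your use of item (3) is wrong as written. The covering $\ftilde_n:(D'_{\zeta_n},\zeta')\to(D_{\zeta_n},\zeta_n)$ is an extra step of level-$n$ dynamics that is not part of the landing chain of $\zeta$. Pulling $(D'_{\zeta_n},\zeta',B')$ down to level $0$ therefore produces a hole next to the level-$0$ counterpart $\omega'$ of $\zeta'$, not next to $\zeta$; here $\omega'$ is a preimage of $\omega_n=\ftilde_0^{s_n}(\zeta)$ under the return map $\ftilde_0^{Q}$. The paper uses (3) differently. Since $\ftilde_0$ is injective on $\mathcal{H}_0$, one has $\zeta=\ftilde_0^{Q-s_n}(\omega')$ up to an integer translation, so $\zeta$ lies on the orbit segment from $\omega'$ to $\omega_n$. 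Item (3) records that the full pullback under $\ftilde_0^{Q}$ has degree $\le 2$, and this bounds the degree of $\ftilde_0^{s_n}$ on the component $D_n\ni\zeta$ of the preimage of $\Upsilon_{n,j}(D_{\zeta_n})$. A simpler repair is to discard the countably many $\zeta$ on the lift of the bi-infinite critical orbit of $f_0$. For every other $\zeta$ one never has $\zeta_n\in\mathbf{c}_1(\ftilde_n)+\Z$, so by (4) each $D_{\zeta_n}$ avoids the critical values and no branching occurs at all.

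Your justification that the scales tend to zero also does not hold up. Estimate \ref{item-3} gives expansion of $\phi_k$ only at levels where the landing point is high. There may be only finitely many such levels, for instance if $\theta_k=0$ for all large $k$. At the low levels, which are the ones you use, nothing controls $\phi_k'$, so ``$\phi_k$ is expanding in general'' is unsupported. Your fallback also fails: holes of a fixed size near $\zeta$ at every level would contradict nothing, since porosity needs arbitrarily small scales. The mechanism is confinement, not expansion. $D_n$ lies in the $\ftilde_0^{s_n}$-pullback of two adjacent copies of the level-$n$ sector, that is, in a bounded number of level-$n$ triangles of the renormalization triangulation around $\zeta$, together with a thin continuation outside $\hat{\mathcal{Z}}^{[n-1]}_0$. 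Since $B_n\cup\{\zeta\}$ has bounded hyperbolic diameter in $D_n$, Koebe gives $\diam B_n=O(\dist(\zeta,\partial D_n))$, and this tends to zero as these pieces thin out. The paper states this step without detail, but it rests on this confinement rather than on the derivative estimates.
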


This theorem implies Theorems \ref{main-thm-01} and \ref{main-thm-02}. It also implies that the Siegel set $Z_0(\fbold)$
is indeed equal to the interior of the Mother Hedgehog $H_0(\fbold)$; in particular, there are no wandering domains.

\begin{proof}
Let $\boldsymbol{C}>0$ be the constant from \S\ref{ss:setup} and pick a typical point $\zeta$ in $\mathcal{H}_0 \backslash \mathcal{Z}_0$.
Denote
\[
    \mathfrak{I}(\zeta) := \{ n \geq 1 \: : \: \imag(\zeta_n;\theta_n) < \boldsymbol{C} \},
\]
where $\zeta_n \in \mathcal{H}_n$, $n \geq 0$ is the sequence of landing points of $\zeta$. 
According to Proposition \ref{prop:brjuno}, typically $\mathfrak{I}(\zeta)$ is an infinite set. 

Recall the sectors $\mathcal{S}_n^m$ from \S\ref{ss:log-coordinates}.
For every $n \in \mathfrak{I}(\zeta)$, there exists a unique smallest element $s_n \in \Time^0 \cup \{0\}$ such that $\omega_n := \ftilde_0^{s_n}(\zeta)$ is contained in $T^{j}(\mathcal{S}_0^n)$ for some integer $j$. 
Let $\Upsilon_{n,j}: \Sigma_{n} \to T^j(\mathcal{S}_0^n)$ be the inverse of the composition
\begin{center}
    \begin{tikzcd}[column sep=large, row sep=large]
	T^j(\mathcal{S}_0^n) & \mathcal{S}_1^{n-1} & \ldots & \mathcal{S}_{n-1}^1 & \Sigma_{n}. 
	\arrow["\phi_0", from=1-1, to=1-2]
	\arrow["\phi_1", from=1-2, to=1-3]
	\arrow["\phi_{n-2}", from=1-3, to=1-4]
	\arrow["\phi_{n-1}", from=1-4, to=1-5]
\end{tikzcd}
\end{center}
Observe that $\Upsilon_{n,j}(\omega_n) = \zeta_n$.

For $n \in \mathfrak{I}(\zeta)$, consider the disks $B_{\zeta_n}$ and $D_{\zeta_n}$ from Lemma \ref{lem:porosity-lemma} associated to $\zeta_n$.
By items (3)--(4) of the lemma, the map $\Upsilon_{n,j}$ uniquely extends to a univalent map on $D_{\zeta_n} \cup \Sigma_n$ and it sends the pair $(D_{\zeta_n}, B_{\zeta_n})$ onto a pair of nested disks $(D'_n, B'_n)$ around $\omega_n$.
Let $D_n$ be the lift of $D'_n$ under $\ftilde_0^{s_n}$ containing $\zeta$.
By Lemma \ref{lem:porosity-lemma} (3)--(4), the map $\ftilde_0^{s_n}: D_n \to D'_n$ is either univalent or a double covering map branched at $\zeta$. 
By Lemma \ref{lem:porosity-lemma} (1)--(2) together with Koebe distortion control, the corresponding lift of $B'_n$ under this map contains a round disk $B_n$ in the complement of $\mathcal{H}_0$ with radius comparable to its distance to $\zeta$. 
As $n \to \infty$ in $\mathfrak{I}(\zeta)$, the diameter of $B_n$ has to shrink to zero. This implies that $\mathcal{H}_0$ is indeed porous at $\zeta$.
\end{proof}

As an immediate consequence, we have:

\begin{corollary}
\label{cor:interior-mother}
    The interior of $\mathcal{H}_0$ is equal to the Siegel set $\mathcal{Z}_0$.
\end{corollary}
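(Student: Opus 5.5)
We prove the two inclusions $\mathcal{Z}_0 \subseteq \operatorname{int}\mathcal{H}_0$ and $\operatorname{int}\mathcal{H}_0 \subseteq \mathcal{Z}_0$ separately.

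\emph{The inclusion $\mathcal{Z}_0 \subseteq \operatorname{int}\mathcal{H}_0$.} If $\fbold$ is not eventually Brjuno, $\mathcal{Z}_0$ is empty and there is nothing to check. Otherwise let $N$ be the smallest index with $\theta_N$ Brjuno. By Theorem \ref{thm:size-siegel}, $\mathcal{Z}_N(\fbold)$ is the lift of the Siegel disk $Z(f_N)$, an open set contained in $\mathcal{H}_N$. A point $\zeta \in \mathcal{Z}_0$ has $N$\textsuperscript{th} landing point $\zeta_N \in \mathcal{Z}_N(\fbold)$. This landing point is obtained from $\zeta$ by a composition of finitely many iterates $\ftilde_k^{t_k}$ and gluing maps $\phi_k$. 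Each of these is a local homeomorphism away from the (countable, discrete) critical grand orbit, and together they carry $\mathcal{H}_0$ into $\mathcal{H}_N$ compatibly with the renormalization triangulation (Lemma \ref{lem:triangulation-first-return}).
\begin{itemize}
\item[] \textbf{Generic points.} For $\zeta$ off the critical grand orbit, pull back a small neighborhood of $\zeta_N$ inside $\mathcal{Z}_N(\fbold)$. This gives a neighborhood of $\zeta$ all of whose points have landing points in $\mathcal{Z}_N(\fbold)$, so the neighborhood lies in $\mathcal{Z}_0 \subset \mathcal{H}_0$.
\item[] \textbf{Exceptional points.} Points on the critical grand orbit lie in $\partial\mathcal{H}_0 = \mathcal{P}_0$, hence never in a Siegel disk, so they cannot belong to $\mathcal{Z}_0$.
\end{itemize}
Hence $\mathcal{Z}_0$ is open and contained in $\mathcal{H}_0$, which gives $\mathcal{Z}_0 \subseteq \operatorname{int}\mathcal{H}_0$.

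\emph{The inclusion $\operatorname{int}\mathcal{H}_0 \subseteq \mathcal{Z}_0$.} Let $W$ be a component of $\operatorname{int}\mathcal{H}_0 \setminus \overline{\mathcal{Z}_0}$ and suppose, for contradiction, that $W$ is nonempty. Since $W$ is a nonempty open set, it has positive area. By Theorem \ref{thm:almost-main-thm}, almost every point of $\mathcal{H}_0 \setminus \mathcal{Z}_0$ is a porosity point of $\mathcal{H}_0$. In particular some point of $W$ is a porosity point. But every point of $W$ is an interior point of $\mathcal{H}_0$, so small disks around it lie inside $\mathcal{H}_0$ and cannot contain round disks in the complement. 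This contradiction shows
\[
\operatorname{int}\mathcal{H}_0 \subseteq \overline{\mathcal{Z}_0}.
\]

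\emph{Removing the closure.} It remains to show that $\operatorname{int}\mathcal{H}_0 \cap \partial\mathcal{Z}_0$ is empty. This is the main obstacle. Suppose $\zeta \in \partial\mathcal{Z}_0$ is an interior point of $\mathcal{H}_0$, and take a small disk $D \ni \zeta$ inside $\mathcal{H}_0$. By the previous paragraph, $D \setminus \overline{\mathcal{Z}_0}$ is open with zero area, hence empty, so $D \subset \overline{\mathcal{Z}_0}$.

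Now push $D$ forward to level $N$ by the landing maps, after slightly moving $\zeta$ off the critical grand orbit; this is possible because $\partial\mathcal{Z}_0 \cap D$ is uncountable. The image is an open set contained in $\mathcal{H}_N \cap \overline{\mathcal{Z}_N(\fbold)}$. On the other hand, $\partial\mathcal{Z}_N(\fbold)$ is the boundary of the Siegel disk, which lies in the postcritical set $\mathcal{P}_N = \partial\mathcal{H}_N$. An open subset of $\mathcal{H}_N$ cannot meet $\partial\mathcal{H}_N$, so the image lies inside $\mathcal{Z}_N(\fbold)$. This contradicts $\zeta \in \partial\mathcal{Z}_0$.

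The delicate point in this last step is that the landing maps are continuous and open on neighborhoods that avoid the critical grand orbit; this follows from items (3)--(4) of Lemma \ref{lem:porosity-lemma} and Lemma \ref{lem:triangulation-first-return}. Combining the two inclusions gives $\operatorname{int}\mathcal{H}_0 = \mathcal{Z}_0$.
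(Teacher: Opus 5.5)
Your proposal is correct and follows the paper's route. The paper simply declares the corollary an immediate consequence of the almost-everywhere porosity in Theorem \ref{thm:almost-main-thm}, and your argument supplies the topological details it leaves implicit: openness of $\mathcal{Z}_0$, porosity giving $\textnormal{int}\,\mathcal{H}_0 \subseteq \overline{\mathcal{Z}_0}$, and $\partial\mathcal{Z}_N \subset \partial\mathcal{H}_N$ together with openness of the landing maps to rule out interior points of $\mathcal{H}_0$ on $\partial\mathcal{Z}_0$. The only blemish is the unjustified claim that $\partial\mathcal{Z}_0 \cap D$ is uncountable, which you do not need: any $\zeta \in \textnormal{int}\,\mathcal{H}_0$ is already off the critical grand orbit (its points in $\mathcal{H}_0$ lie in $\partial\mathcal{H}_0$, as you noted yourself), and branched landing maps would be open anyway.
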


Recall that $\mathcal{Z}_0$ is defined to be the empty set when $\fbold$ is not eventually Brjuno. 
As such, this corollary implies the optimality of the Brjuno condition.
Together with Theorem \ref{thm:size-siegel}, this also completes the proof of Theorem \ref{side-theorem}.


\bibliographystyle{alpha}
 
{\small \bibliography{bibliography}}

@article {AO93,
    AUTHOR = {Aarts, Jan M. and Oversteegen, Lex G.},
     TITLE = {The geometry of {J}ulia sets},
   JOURNAL = {Trans. Amer. Math. Soc.},
    VOLUME = {338},
      YEAR = {1993},
    NUMBER = {2},
     PAGES = {897--918}
}

@incollection {AM05,
    AUTHOR = {Avila, Artur and Moreira, Carlos Gustavo},
     TITLE = {Phase-parameter relation and sharp statistical properties for general families of unimodal maps},
 BOOKTITLE = {Geometry and dynamics},
    SERIES = {Contemp. Math.},
    VOLUME = {389},
     PAGES = {1--42},
 PUBLISHER = {Amer. Math. Soc., Providence, RI},
      YEAR = {2005},
      ISBN = {0-8218-3851-2},
   MRCLASS = {37E05 (37E20 37F25 82C05)},
  MRNUMBER = {2181956},
MRREVIEWER = {Masato\ Tsujii},
       DOI = {10.1090/conm/389/07270},
       URL = {https://doi-org.brown.idm.oclc.org/10.1090/conm/389/07270},
}

@article{Bi16,
title={Positive area and inaccessible fixed points for hedgehogs}, 
volume={36}, 
number={6}, 
journal={Ergod. Th. Dynam. Sys.}, 
author={Biswas, Kingshook}, 
year={2016}, 
pages={1839--1850}
}

@article{Br71,
    author = {Brjuno, A. D.},
    title = {Analytical form of differential equations},
    journal = {Trans. Moscow Math. Soc.},
    year = {1971},
    volume = {25},
    pages = {131--288}
}

@article{BC04,
author = {Buff, Xavier and Ch\'eritat, Arnaud},
year = {2004},
pages = {1--24},
title = {Upper bound for the size of quadratic {Siegel} disks},
volume = {156},
journal = {Invent. Math.}
}

@article{BC12,
author = {Buff, Xavier and Ch\'eritat, Arnaud},
year = {2012},
pages = {673--746},
title = {Quadratic {Julia} sets with positive area},
volume = {176},
journal = {Ann. Math.}
}

@article{Che13,
author = {Cheraghi, Davoud},
year = {2013},
pages = {999--1035},
title = {Typical orbits of quadratic polynomials with a neutral fixed point: {Brjuno} type},
volume = {332},
number = {3},
journal = {Commun. Math. Phys.}
}

@article{Che19,
author = {Cheraghi, Davoud},
year = {2019},
pages = {59--138},
title = {Typical orbits of quadratic polynomials with a neutral fixed point: {Non-Brjuno} type},
volume = {52},
number = {1},
journal = {Ann. Sci. \'{E}c. Norm. Sup.}
}

@article{Che23,
    AUTHOR = {Cheraghi, Davoud},
     TITLE = {Arithmetic geometric model for the renormalisation of irrationally indifferent attractors},
   JOURNAL = {Nonlinearity},
    VOLUME = {36},
      YEAR = {2023},
    NUMBER = {12},
     PAGES = {6403--6475}
}

@article{Che25,
author = {Cheraghi, Davoud},
year = {2025},
pages = {1321--1383},
title = {Topology of irrationally indifferent attractors},
volume = {6},
number = {58},
journal = {Ann. Sci. \'{E}c. Norm. Sup.}
}

@misc{CS15,
author = {Cheraghi, Davoud and Shishikura, Mitsuhiro},
year = {2015},
title = {Satellite renormalisation of quadratic polynomials},
publisher = {arXiv},
eprint = {1509.07843},
doi = {10.48550/arXiv.1509.07843},
note = {\href{https://arxiv.org/abs/1509.07843}{arXiv.1509.07843}}
}

@misc{CDY,
author = {Cheraghi, Davoud and de Zotti, Alexandre and Yang, Fei},
publisher = {arXiv},
year = {2020},
copyright = {arXiv.org perpetual, non-exclusive license},
title = {Dimension paradox of irrationally indifferent attractors},
eprint = {2003.12340},
doi = {10.48550/arXiv.2003.12340},
note = {\href{https://arxiv.org/abs/2003.12340}{arXiv.2003.12340}}
}

@incollection{D83,
author = {Douady, Adrien},
title = {System\`es dynamiques holomorphes},
booktitle = {S\'eminaire Bourbaki : volume 1982/83, expos\'es 597-614},
series = {Ast\'erisque},
pages = {39--63},
year = {1983},
publisher = {Soci\'et\'e math\'ematique de France},
number = {105-106},
}

@incollection{D87,
author = {Douady, Adrien},
title = {Disques de {S}iegel et anneaux de {H}erman},
booktitle = {S\'eminaire Bourbaki : volume 1986/87, expos\'es 669-685},
journal = {Ast\'{e}risque},
series = {Ast\'erisque},
publisher = {Soci\'et\'e math\'ematique de France},
number = {152--153},
year = {1987},
pages = {4, 151--172},
issn = {0303-1179,2492-5926}
}

@article{DLS,
author = {Dudko, Dzmitry and Lyubich, Mikhail and Selinger, Nikita},
year = {2020},
pages = {653--733},
title = {Pacman renormalization and self-similarity of the {Mandelbrot} set near {Siegel} parameters},
volume = {33},
number = {3},
journal = {J. Amer. Math. Soc.}
}

@article{DL23,
author = {Dudko, Dzmitry and Lyubich, Mikhail},
year = {2023},
title = {{Local connectivity of the Mandelbrot set at some satellite parameters of bounded type}},
journal = {Geom. Funct. Anal.},
volume = {33},
pages={912--1047}
}

@misc{DL22,
author = {Dudko, Dzmitry and Lyubich, Mikhail},
publisher = {arXiv},
year = {2022},
copyright = {arXiv.org perpetual, non-exclusive license},
title = {Uniform a priori bounds for neutral renormalization},
eprint = {2210.09280},
doi = {10.48550/arXiv.2210.09280},
note = {\href{https://arxiv.org/abs/2210.09280}{arXiv.2210.09280}}
}

@article{DL26a,
author = {Dudko, Dzmitry and Lyubich, Mikhail},
journal={Publ. Math. Inst. Hautes {\'E}tudes Sci.},
year = {2026},
title = {{MLC at Feigenbaum points}},
volume = {143},
pages = {97--141}
}

@misc{DL26b,
author = {Dudko, Dzmitry and Lyubich, Mikhail},
year = {2026},
title = {Uniform a priori bounds for neutral renormalization. {Variation I:} Sector Renormalization},
note = {Manuscript}
}

@misc{DLL,
author = {Dudko, Dzmitry and Lim, Willie Rush and Lyubich, Mikhail},
title = {Rigidity of the attractor of neutral renormalization},
note = {Manuscript},
year = {2026}
}

@article{G84,
author = {Ghys, {\'E}tienne},
year = {1984},
pages = {385--388},
title = {Transformations Holomorphes au Voisinage d'Une Courbe de {Jordan}},
volume = {298},
number = {16},
journal = {C. R. Acad. Sci. Paris S\'er. I Math.}
}

@misc{IS,
author = {Inou, Hiroyuki and Shishikura, Mitsuhiro},
publisher = {arXiv},
year = {2006},
title = {The renormalization for parabolic fixed points and their perturbation},
note = {\href{https://www.math.kyoto-u.ac.jp/~mitsu/pararenorm/ParabolicRenormalization.pdf}{Preprint}}
}

@misc{K06,
author = {Kahn, Jeremy},
publisher = {arXiv},
year = {2006},
title = {A Priori Bounds for Some Infinitely Renormalizable Quadratics: I. Bounded Primitive Combinatorics},
note = {\href{https://arxiv.org/abs/math/0609045}{arXiv:math/0609045}},
eprint = {math/0609045}
}

@misc{Lim26,
author = {Lim, Willie Rush},
year = {2026},
title = {The combinatorics of sector renormalization},
publisher = {arXiv},
copyright = {arXiv.org perpetual, non-exclusive license},
eprint = {2607.11408},
doi = {10.48550/arXiv.2607.11408},
note = {\href{https://arxiv.org/abs/2607.11408}{arXiv:2607.11408}}
}

@misc{Lim,
author = {Lim, Willie Rush},
title = {The topology of {Mother Hedgehogs}},
note = {Manuscript in preparpation}
}

@misc{Lyu91,
author = {Lyubich, Mikhail},
year = {1991},
title = {On the {Lebesgue} measure of the {Julia} set of a quadratic polynomial},
doi = {10.48550/arXiv.math/9201285},
note = {Preprint IMS at Stony Brook, 1991/10, \href{https://arxiv.org/abs/math/9201285}{arXiv:math/9201285}}
}

@article{L99,
author = {Lyubich, Mikhail},
year = {1999},
pages = {319--420},
title = {{Feigenbaum-Coullet-Tresser} Universality and {Milnor}'s Hairiness Conjecture},
volume = {149},
number = {2},
journal = {Ann. Math. (2)}
}

@article{L02,
    AUTHOR = {Lyubich, Mikhail},
     TITLE = {Almost every real quadratic map is either regular or stochastic},
   JOURNAL = {Ann. Math. (2)},
    VOLUME = {156},
      YEAR = {2002},
    NUMBER = {1},
     PAGES = {1--78}
}

@book{McM94, 
    author = {McMullen, Curtis},
    place = {Princeton, NJ}, 
    publisher = {Princeton University Press},
    title = {Complex Dynamics and Renormalization},
    year = {1994},
    isbn = {9780691029818}
}

@book{McM96, 
    author = {McMullen, Curtis},
    place = {Princeton, NJ}, 
    publisher = {Princeton University Press},
    title = {Renormalization and 3-Manifolds Which Fiber over the Circle},
    year = {1996},
    isbn = {9780691011530}
}

@article{McM98,
author = {McMullen, Curtis},
year = {1998},
pages = {247--292},
title = {Self-similarity of {Siegel} disks and {Hausdorff} dimension of {Julia} sets},
volume = {180},
number = {2},
journal = {Acta Math.}
}

@incollection{Pal00,
  author    = {Palis, Jacob},
  title     = {A global view of dynamics and a conjecture on the denseness of finitude of attractors},
  booktitle = {G{\'e}om{\'e}trie complexe et syst{\`e}mes dynamiques - Colloque en l'honneur d'Adrien Douady Orsay, 1995},
  year      = {2000},
  publisher = {Soci{\'e}t{\'e} math{\'e}matique de France},
  pages     = {335--347}
}

@article{Pe96,
  title={Local connectivity of some {Julia} sets containing a circle with an irrational rotation},
  author={Petersen, Carsten Lunde},
  journal={Acta Math.},
  year={1996},
  volume={177},
  pages={163--224}
}

@article{PZ04,
  title={On the {Julia} set of a typical quadratic polynomial with a {Siegel} disk},
  author={Petersen, Carsten Lunde and Zakeri, Saeed},
  journal={Ann. Math.},
  year={2004},
  volume={159},
  pages={1--52}
}

@article{Shi98,
author = {Shishikura, Mitsuhiro},
year = {1998},
pages = {225--267},
title = {The {Hausdorff} dimension of the boundary of the {Mandelbrot} Set and {Julia} Sets},
volume = {147},
number={2},
journal = {Ann. Math. (2)}
}

@inproceedings{ShiICM,
  author    = {Shishikura, Mitsuhiro},
  title     = {Topological, complex and arithmetic dynamics on trees},
  booktitle = {Proceedings of the International Congress of Mathematicians, Vol. 1, 2 (Z\"urich, 1994)},
  pages     = {886--895},
  publisher = {Birkh\"auser},
  address   = {Basel},
  year      = {1995},
  editor    = {Chatterji, S. D.}
}

@article{SY24,
    author = {Shishikura, Mitsuhiro and Yang, Fei},
    title = {The high type quadratic {Siegel} disks are {Jordan} domains},
    journal = {J. Eur. Math. Soc.},
    year = {2024},
    volume = {27},
    number = {11},
    pages = {4501--4562}
}

@incollection{Yoc95,
     author = {Yoccoz, Jean-Christophe},
     title = {Th\'eor\`eme de {Siegel,} nombres de {Bruno} et polyn\^omes quadratiques},
     booktitle = {Petits diviseurs en dimension 1},
     series = {Ast\'erisque},
     pages = {1--88},
     publisher = {Soci\'et\'e math\'ematique de France},
     number = {231},
     year = {1995}
}

\end{document}